\documentclass{amsart}
  \usepackage[top=1in, bottom=1in, left=1.375in, right=1.375in]{geometry}
 \usepackage{amsmath,amssymb}
 \usepackage{algpseudocode}
 \usepackage{algorithm}
 \usepackage{algorithmicx}
 \usepackage{caption}
 \usepackage{subcaption}
 \usepackage{amsthm}
 \numberwithin{equation}{section}
 \usepackage{amsfonts}
 \usepackage{graphicx}
 \usepackage{hyperref}
 \usepackage{makecell}
 \usepackage{longtable}

\usepackage[shortlabels]{enumitem}
 \usepackage{amsmath}
\usepackage{amssymb}
\usepackage{amsthm}
\usepackage{amsmath}
\usepackage{setspace}
\usepackage{xcolor}
\usepackage{fancyhdr}
\usepackage{amssymb}
\usepackage{amsthm}
\usepackage{listings}
    \usepackage{cite}
    \usepackage{tabularx}
    \usepackage{graphicx}
    \usepackage{epstopdf}    
    \usepackage{epsfig}
    \usepackage{float}
    \usepackage{ listings}
    \usepackage{appendix}
 \theoremstyle{plain}

 \newtheorem{thm}{Theorem}[section]
 \newtheorem{prop}[thm]{Proposition}
 \newtheorem{lem}[thm]{Lemma}
 \newtheorem{cor}[thm]{Corollary}
 \theoremstyle{definition}

 \theoremstyle{remark}
 \newtheorem{remark}[thm]{Remark}

 \let\pa=\partial
 \let\al=\alpha
 \let\b=\beta
 \let\d=\delta
 \let\g=\gamma
 \let\e=\varepsilon

 \let \kp = \kappa
 \let\lam=\lambda

 \let\f=\frac
 \let \les = \lesssim
  \let \gtr = \gtrsim

 \let \th = \theta
  
 \let \pr = \prime

 \let\G= \Gamma
\let\B = \Big
 \let\D=\Delta

 \let\Om=\Omega
 \let\td = \tilde

 \let\teq \triangleq
 
 \let\pa=\partial
 \let \bsh = \backslash


 \def\cE{{\mathcal E}}

 \def\cM{{\mathcal M}}

 \def\cR{{\mathcal R}}

 \def\cU{{\mathcal U}}
 \def\cV{{\mathcal V}}

 \def\cY{{\mathcal Y}}
 \def\cZ{{\mathcal Z}}


 \def\cM{{\mathcal M}}

 \def\na{\nabla}

\def\one{\mathbf{1}}

 \newcommand{\bseq}{\begin{subequations}}
 \newcommand{\eseq}{\end{subequations}}

 \newcommand{\beq}{\begin{equation}}
 \newcommand{\eeq}{\end{equation}}
  \newcommand{\bal}{\begin{aligned} }
  \newcommand{\eal}{\end{aligned}}
    \newcommand{\bga}{ \begin{gathered} }
  \newcommand{\ega}{ \end{gathered} }
 \newcommand{\ben}{\begin{eqnarray}}
 \newcommand{\een}{\end{eqnarray}}
 \newcommand{\beno}{\begin{eqnarray*}}
 \newcommand{\eeno}{\end{eqnarray*}}




  \newcommand{\BZ}{\mathbb{Z}}

 \newcommand{\uu}{\mathbf{u}}

 \newcommand{\BB}{\mathbf{B}}

 \newcommand{\GG}{\mathbf{G}}
 \newcommand{\HH}{\mathbf{H}}

 \newcommand{\PP}{\mathbf{P}}
 
 \newcommand{\R}{\mathbb{R}}
\newcommand{\C}{\mathbb{C}}
 \newcommand{\N}{\mathbb{N}}

\newcommand{\sgn}{\mathrm{sgn}}


\newcommand \sn[1]{ { [ {#1}] } }
\newcommand \rn[1]{ { (#1) } }

\def\laml{\lambda_+}
\def\lams{\lambda_-}
\def\IC{\nu}
\def \ds{ \mathrm{ds}}

 \let \mfr = \mathfrak
 
 \let \mw = \mathrm

\author[T.~Buckmaster]{Tristan Buckmaster}
\address{Courant Institute of Mathematical Sciences, New York University, New York, NY 10012.}
\email{\href{buckmaster@cims.nyu.edu}{buckmaster@cims.nyu.edu}}

 \author[J.~Chen]{Jiajie Chen}
\address{Courant Institute of Mathematical Sciences, New York University, New York, NY 10012.}
\email{\href{jiajie.chen@cims.nyu.edu}{jiajie.chen@cims.nyu.edu}}

 \date{ \today}

\title[Blowup for defocusing nonlinear wave equation]{Blowup for the defocusing septic complex-valued nonlinear wave equation in $\mathbb{R}^{4+1}$}

 \begin{document}

\begin{abstract}
In this paper, we prove blowup for the defocusing septic complex-valued nonlinear wave equation in $\mathbb{R}^{4+1}$. This work builds on the earlier results of Shao, Wei, and Zhang \cite{shao2024self,shao2024blow}, reducing the order of the nonlinearity from $29$ to $7$ in $\mathbb{R}^{4+1}$. As in \cite{shao2024self,shao2024blow}, the proof hinges on a connection between solutions to the nonlinear wave equation and the relativistic Euler equations via a front compression blowup mechanism. More specifically, the problem is reduced to constructing smooth, radially symmetric, self-similar imploding profiles for the relativistic Euler equations.

As with implosion for the compressible Euler equations, the relativistic analogue admits a countable family of smooth imploding profiles. The result in \cite{shao2024self} represents the construction of the first profile in this family. In this paper, we construct a sequence of solutions corresponding to the higher-order profiles in the family. This allows us to saturate the inequalities necessary to show blowup for the defocusing complex-valued nonlinear wave equation with an integer order of nonlinearity and radial symmetry via this mechanism.

\end{abstract}

 \maketitle

\section{Introduction}

We investigate finite time blowup for the complex-valued defocusing nonlinear wave equation:
\beq\label{eq:wave}
 \pa_{tt} w = \D w - |w|^{p-1} w,
\eeq
for $ w(t, x) : \R \times \R^d \to \C$.  Given smooth and well-localized initial data $(w(0), \pa_t w(0)) $, the classical Cauchy theory states that
\eqref{eq:wave} admits a smooth and strong local-in-time solution $w(t)$ \cite{sogge1995lectures}.  The parameters $(d,p)$, can be naturally classified in terms of the critically of the conserved energy
\[
E(w(t)) \teq  \int_{\R^d} \f{1}{2} (|\pa_t w |^2 + |\na w|^2  ) + \f{1}{p+1} |w|^{p+1} d x,
\]
in terms of the scaling symmetry
\[
w(t, x) \to w_{\lam}(t, x) \teq \lam^{ \f{2}{p-1} } w( \lam t , \lam x), \quad \lam > 0.
\]
Precisely, we obtain three regimes: the subcritical case,  $d \leq 2 $ or $ p < 1 + \f{4}{d-2}$ for $d \geq 3$; the critical case,  $ p =1 + \f{4}{d- 2}$ and $d \geq 3$; and the supercritical case,
$ p >1 + \f{4}{d- 2}$ and $d \geq 3$.

The blowup of nonlinear wave equations has been extensively studied 
in the focusing case (cf.\ \cite{Alinhac,lindblad1990blow,John, Levine,Donninger2017,Duyckaerts-Kenig-Merle2012,Duyckaerts-Kenig-Merle2013,Duyckaerts-Yang2018,Jendrej2017,Kenig2015,Kenig-Merle2008,Krieger-Schlag2014,Krieger-Schlag-Tataru2009}). 
For the defocusing case, the equation is globally well-posed in the subcritical and critical case \cite{ginibre1985global,ginibre1989global,grillakis1990regularity,Grillakis1992,shatah1993regularity,shatah1994well,Jorgens1961,struwe1988globally}. 
Nevertheless, the question of blowup or global well-posedness for smooth solutions to defocusing nonlinear Schr\"odinger or wave equation has remained a long-standing open problem \cite{tao2006nonlinear}. Tao, in \cite{Tao2016}, showed if one considers a supercritical, high dimensional, defocusing nonlinear wave system $\Box u= (\nabla_{\mathbb R^m} F)(u)$, for some smooth, positive, carefully designed potential $F:\mathbb R^m\rightarrow \mathbb R$, one proves blowup in finite time. 
Recently, in the breakthrough work of Merle, Raphael, Rodnianski and Szeftel \cite{merle2022blow}, 
blowup for the supercritical defocusing nonlinear Schr\"odinger equation was proven via a novel front compression mechanism, whereby the phase of the solution blows up. The 
work \cite{merle2002blow} leverages a connection between the nonlinear Schr\"odinger equations and the compressible Euler equations with an added \emph{quantum pressure} term via Madelung’s transformation. Under the appropriate scaling assumptions -- which necessitates considering dimensions $\geq 5$ and high order nonlinearities -- the quantum pressure term becomes lower order, which enabled the use of self-similar imploding profiles to the compressible Euler equations constructed in \cite{merle2022implosion1}, as asymptotic self-similar profiles to the defocusing nonlinear Schr\"odinger equation.

Inspired by the work \cite{merle2022blow},  Shao,  Wei, and Zhang in \cite{shao2024self,shao2024blow} recently exploited a similar front compression mechanism to establish blowup for the defocusing nonlinear wave equation 
\eqref{eq:wave} via a self-similar profile to the relativistic Euler equations. 
Specifically, they proved finite time blowup of \eqref{eq:wave} from smooth compactly supported 
initial data with dimension $d$ and odd nonlinearity $p$ satisfying $d = 4, p \geq 29$ and $d \geq 5, p \geq 17$. See also Theorem \ref{thm:SWZ}.

\subsection{Self-similar ansatz}

We consider the phase-amplitude representation $ w = \rho e^{i \phi}$ with $\rho, \phi$ being real and rewrite \eqref{eq:wave} as the system of $(\rho, \phi)$. To this end, we compute
\[
\bal
\pa_{tt} w & = \B( \pa_{tt} \rho + i \pa_t ( \pa_t \phi \rho) 
+ i \pa_t \phi ( \pa_t \rho + i\pa_t \phi \rho)  \B) e^{i \phi}, \\
\D w & = (\D \rho + i \na \cdot ( \rho \na \phi ) + i \na \rho \cdot \na \phi - \rho |\na \phi|^2) e^{i \phi}.
\eal
\]
Substituting the above computation into \eqref{eq:wave}, dividing by $e^{i\phi}$ and then taking the real and imaginary part of the equation, we derive the equations for $\rho$ and $\phi$
\bseq\label{eq:wave_decomp}
\begin{align}
\pa_{tt} \rho  & = \rho |\pa_t \phi|^2 + \D \rho - \rho |\na \phi|^2 - |\rho|^{p-1} \rho , \label{eq:wave_w} \\
\pa_{tt} \phi \cdot \rho  & =  - 2 \pa_t \phi \pa_t \rho + \rho \D \phi + 2 \na \rho \cdot \na \phi  .
\label{eq:wave_phase} 
\end{align}
\eseq

We consider the general self-similar blowup ansatz 
\beq\label{eq:SS_ansatz}
\rho = \f{1}{(T-t)^a} W( \f{x}{ (T-t)^c} ),
\quad \phi = \f{1}{ (T-t)^b} \Phi( \f{x}{ (T-t)^c} ), 
\eeq
for some $a,b,c$ to be determined. 

In line with front compression, where the phase $\phi$ blows up, we consider $ b > 0$ for our self-similar ansatz \eqref{eq:SS_ansatz}. Moreover, to obtain a collapsing blowup, which is natural due to the conservation of the energy, we impose $c > 0$. We use the notation $A \asymp B$ to denote that $A $ and $B$ are comparable up to a $t-$independent constant. Under these assumptions, for $T - t \ll 1$, we get 
\beq\label{eq:SS_ansatz_power}
\bga 
    \pa_{tt} \rho \asymp (T-t)^{-a-2}, \ \D \rho \asymp (T-t)^{-a-2c}, \\ 
     \rho |\pa_t \phi|^2 \asymp (T-t)^{-a-2-2b}, \quad  \rho |\na \phi|^2 \asymp (T-t)^{-a-2b-2c}, \quad |\rho|^{p-1} \rho \asymp (T-t)^{ -pa}.
   \ega 
\eeq
Since $b>0$, we treat  $\pa_{tt} \rho,  \D \rho$  in \eqref{eq:wave_w} as lower order terms. 
For the $\phi$-equation \eqref{eq:wave_phase}, it is easy to observe that the scalings of each terms are balanced. Thus, dropping $\pa_{tt} \rho, \D \rho $ in \eqref{eq:wave_w} and then dividing $\rho$ on both sides of \eqref{eq:wave_w},  we derive the following leading order system of \eqref{eq:wave_w}
\beq\label{eq:vel}
 |\pa_t \phi|^2 -  |\na \phi|^2 = |\rho|^{p-1} .
\eeq
To balance these three terms, using \eqref{eq:SS_ansatz_power}, we impose the following relation among $a,b, c$ in \eqref{eq:SS_ansatz}
\beq\label{eq:SS_ansatz2}
 c = 1, \quad (p-1) a = 2 (b+ 1), \quad b > 0.
\eeq

Note that equations \eqref{eq:vel} and \eqref{eq:wave_phase} reduce to isentropic relativistic Euler equation. In fact, by introducing $\ell = \f{4}{p-1} + 1$, the flat Lorentz metric 
$g = (g^{\mu \nu})$ and derivative $\pa^\mu$ as
\[
 g^{00} = -1, \quad g^{\mu \nu} = \d_{\mu \nu}, \quad  \forall (\mu, \nu) \neq (0, 0),
 \quad \pa^{\mu} = g^{\mu \nu} \pa_{\nu}, 
\]
the relativistic velocity $\uu = (u^0, u^1, .., u^d)$, the energy density $\varrho$, the energy momentum tensor $T^{\mu \nu}$, and the pressure $P$ subject to the isothermal equation of state as follows
\bseq\label{eq:rel_euler}
\beq\label{eq:rel_eulera}
\varrho =\rho^{p+1},
\quad u^0 = \rho^{-\f{p-1}{2} } \pa_t \phi, \quad u^i =\rho^{-\f{p-1}{2} }  \pa^i \phi, 
\quad T^{\mu \nu} = (P + \varrho) u^{\mu} u^{\nu} + P g^{\mu \nu},
\eeq
we derive the relativistic Euler equations 
\beq
u^{\mu} \pa_{\mu} \varrho +(P + \varrho) \pa_{\mu} u^{\mu} = 0,
\quad \pa_{\mu} T^{\mu \nu} = 0, \quad 
P = \f{1}{\ell} \varrho, 
\eeq
\eseq
with normalization $\| \uu \|_{g}^2 = g_{\mu \nu} u^{\mu} u^{\nu} = -1 $. The above connection between the defocusing wave equations and the relativistic Euler equations has been used in \cite{shao2024self}.

The local existence of smooth solutions to the relativistic Euler equations was established in \cite{MU}, with some global existence results in \cite{ST,Ruan_Zhu,Chen_Schrecker}.
Meanwhile, the equations can develop a finite time singularity \cite{guo1999formation,pan2006blowup,Ch2007}. 
For more discussions on the mathematical aspects of relativistic fluids, see the surveys \cite{disconzi2023recent,abbrescia2023relativistic}.

\subsection{Main results}
By imposing the radial symmetry on $\rho, \phi$ and plugging the self-similar ansatz \eqref{eq:SS_ansatz} in 
equations \eqref{eq:wave_phase} and \eqref{eq:vel}, we can derive the ODE \eqref{eq:ODE} for the self-similar profiles $W, \Phi$ \eqref{eq:SS_ansatz}. See the derivation in Section \ref{sec:ODE_sub}. 

To use the front compression mechanism for smooth blowup, we require $(d, p)$ to satisfy
\beq\label{eq:range_dp}
  \ell + \ell^{1/2} < d - 1 , \quad \ell = 1 + \f{4}{p-1} > 1,
\eeq
which implies $d  \geq 4$ and the non-linear exponent $ p \geq p(d)$. 
For $d = 4$ and integer $p$, this implies $p \geq 7$. See Section \ref{sec:ODE_sub} for more details about these constraints. In this paper, we consider the lowest dimension $d = 4$ in this setting  and the smallest integer exponent $ p = 7$ in this dimension. 

Our main result is stated as follows.
\begin{thm}\label{thm:main}
Let $d = 4, p = 7, \ell = \f{4}{p-1}+1$, and $n$ be an odd number and large enough. There exists $\g_n$ accumulating at $\ell^{-1/2}$ with $ \g_n >\ell^{-1/2} , b_n = \f{ d-1 }{ \ell (\g_n + 1)} - 1>0 $  such that the ODE \eqref{eq:ODE} admits a smooth solution $V^\rn{\g_n}  \in C^{\infty}[0, \infty)$ with $V^\rn{\g_n}(0) = 0$, 
\beq\label{eq:thm_prop}
V^\rn{\g_n}(Z) \in (-1, 1),\quad V^\rn{\g_n}(Z) < Z ,
\eeq
for any $Z > 0$, and $V(Z) = Z \td V(Z^2)$ for some function $\td  V \in C^{\infty}[0, \infty)$. Moreover, the solution gives rise to a smooth radially symmetric self-similar profile $(W, \Phi)$ in \eqref{eq:SS_ansatz} and a solution to the 
relativistic Euler equations \eqref{eq:wave_phase} and \eqref{eq:vel} on $[0, T) \times \R^d$:
\beq\label{eq:euler_implod}
u^0(t, x) = U^0(Z),  \quad u^i(t, x) = U(Z) \f{x_i}{r},
\quad \varrho(t, x) = (T-t)^{  - \f{(d-1) (\ell + 1)}{ \ell (\g +1) }  }  W^{p+1}(Z) ,
\eeq
where $r = |x|, Z = \f{r}{T- t}$. The solution develops an imploding singularity at $t = T, x = 0$ and satisfies the asymptotics 
\beq\label{eq:blow_asym}
\lim_{Z \to \infty} U^0(Z) = U^0_{\infty},
\quad \lim_{Z \to \infty} U(Z) = U_{\infty}, 
\quad \lim_{Z \to \infty} \varrho(t, x) = \varrho_{\infty} |x|^{ - \f{(d-1) (\ell + 1)}{ \ell (\g +1) }  }
\eeq
for any $x \neq 0$ and some constants $ U^0_{\infty} \neq 0, U_{\infty} , \varrho_{\infty} > 0$

\end{thm}


To further prove blowup of the nonlinear wave equation \eqref{eq:wave}, we use \cite[Theorem 1.1]{shao2024blow}: 
\begin{thm}[Theorem 1.1 \cite{shao2024blow}]\label{thm:SWZ}
Let $d \in \BZ \cap [4, \infty), p \in 2 \BZ_+ + 1, \ell =  \f{4}{p-1} + 1$ 
with $d-1> \ell + \ell^{1/2}$. Suppose that there exists $\g$ with $\g > \ell^{-1/2}$ and $(d-1) > \ell(\g + 1)$ 
\footnote{
  The parameter $\b, k$ in \cite{shao2024blow} corresponds to $\b = 1 + b = \f{d-1}{\ell(\g + 1)} $ 
\eqref{eq:SS_ansatz}, \eqref{eq:para} and $k = d-1$ in this paper. The condition $\b \in (1, k / (\ell + \ell^{1/2}))$ in \cite[Theorem 1]{shao2024blow} is equivalent to $\g > \ell^{-1/2}$ and 
$d- 1 > \ell(\g+1) $\eqref{eq:para_ineq1}. 
}
such that the ODE \eqref{eq:ODE} with parameters $(d, p, \g)$ admits a smooth solution $V(Z)$ defined in $Z \in [0, \infty)$ satisfying $V(Z) \in (-1, 1), V(0) =0$ and $V(Z) = Z \td V(Z^2)$ for some function $\td  V \in C^{\infty}[0, \infty)$. 

Then there exists smooth compactly supported functions $w_0, w_1: \R^d \to \C$ such that the defocusing nonlinear wave equation \eqref{eq:wave} with nonlinearity $p$ develops a finite time singularity from the initial data 
$w(0) = w_0, \pa_t w(0) = w_1$. 
\end{thm}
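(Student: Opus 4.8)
The plan is to follow the front-compression scheme of \cite{merle2022blow}, in the wave-equation incarnation of \cite{shao2024self,shao2024blow}, using the ODE solution $V$ as the seed of an approximate self-similar solution of \eqref{eq:wave}. First I would reconstruct from $V$ the smooth radial self-similar profile $(W,\Phi)$ for the reduced system \eqref{eq:wave_phase}--\eqref{eq:vel} (following the reconstruction in Theorem \ref{thm:main} via the ODE derivation of Section \ref{sec:ODE_sub}): the hypotheses $V\in(-1,1)$, $V(0)=0$, $V(Z)=Z\td V(Z^2)$, $\g>\ell^{-1/2}$ and $d-1>\ell(\g+1)$ ensure $W>0$ on $[0,\infty)$, smoothness across $Z=0$ (from the parity built into $V$) and across the backward light cone (from $V(Z)<Z$, which keeps the relevant characteristic speed strictly subluminal), together with the power-law tail recorded in \eqref{eq:blow_asym}. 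Passing to self-similar coordinates $s=-\log(T-t)$, $Z=x/(T-t)$ (so $c=1$), the resulting $w=\rho e^{i\phi}$ solves \eqref{eq:wave} exactly except for the two terms $\pa_{tt}\rho$ and $\D\rho$ discarded in \eqref{eq:wave_w}, which by \eqref{eq:SS_ansatz_power} carry strictly less singular powers of $T-t$ than the balanced terms precisely because $b>0$; hence $w$ is a genuine approximate imploding solution.

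Second, I would set up a perturbative stability analysis around $(W,\Phi)$ in self-similar variables. Writing the exact solution as the profile plus a perturbation (either in $(\rho,\phi)$ or, equivalently, in the relativistic-Euler unknowns $(U^0,U,\varrho)$), one linearizes the \emph{full} system. The linearized operator is, to leading order, a first-order transport operator along the self-similar flow together with a diagonal damping produced by the rescaling; the discarded wave terms $\pa_{tt}\rho,\D\rho$ contribute a forcing and a lower-order (in the self-similar weight) second-order perturbation, controllable because $b>0$ and the tail is subsonic. I would work in high-regularity weighted Sobolev spaces $H^k$ adapted to the light-cone geometry, with $k$ chosen so that (i) the septic nonlinearity and the $\pa_{tt}\rho,\D\rho$ terms close without loss of derivatives, and (ii) the scaling damping dominates the transport-driven growth. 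This is where the quantitative condition $d-1>\ell+\ell^{1/2}$ of \eqref{eq:range_dp} enters: it bounds the limiting characteristic speed of $(W,\Phi)$ as $Z\to\infty$ and makes the linearization repulsive at the chosen regularity.

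Third, since the linearized flow may carry finitely many unstable directions (from low spherical/frequency modes at the fixed point), I would not expect generic data to converge; instead I would run a bootstrap/continuity argument in $s$ with a finite-codimension closeness condition, and use a Brouwer-type topological argument over a finite-dimensional ball of modulation parameters and initial perturbations to select data for which the unstable components never leave the bootstrap regime. The bootstrap then shows the rescaled solution stays $O(e^{-\delta s})$-close to $(W,\Phi)$ for all $s\ge 0$, so $w$ exists on $[0,T)\times\R^d$ and develops an imploding singularity at $(T,0)$, with the amplitude and phase gradient diverging at the rates dictated by $(a,b,c)$ as in \eqref{eq:euler_implod}--\eqref{eq:blow_asym}. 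To obtain \emph{compactly supported} data, I would truncate the (merely smooth) profile data outside a large ball; by finite speed of propagation for \eqref{eq:wave}, the modification stays inside a light cone that does not reach $(T,0)$ before time $T$, so the truncated data $(w_0,w_1)\in C_c^\infty(\R^d;\C)$ still produces finite-time blowup.

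I expect the main obstacle to be closing the high-regularity energy estimates in the bootstrap, i.e.\ proving that the linearization around $(W,\Phi)$ is genuinely dissipative at the chosen Sobolev level so that it absorbs both the derivative loss from the septic nonlinearity and the perturbation coming from $\pa_{tt}\rho,\D\rho$. This rests delicately on coercivity and spectral properties of the profile and on the precise arithmetic of the exponents $(a,b,c,\ell,\g)$ within the admissible window \eqref{eq:range_dp}; a secondary difficulty is the bookkeeping of the unstable modes and ensuring that the topological selection argument is simultaneously compatible with the bootstrap norms and with the light-cone truncation.
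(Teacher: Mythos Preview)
The paper does not prove this statement at all: Theorem~\ref{thm:SWZ} is quoted verbatim as Theorem~1.1 of \cite{shao2024blow} and used as a black box. The only work the present paper does around it is to check, in the paragraph following the statement, that the parameters $(d,p,\g_n)$ and the ODE solution $V^{(\g_n)}$ produced by Theorem~\ref{thm:main} satisfy the hypotheses of Theorem~\ref{thm:SWZ}, thereby yielding Corollary~\ref{cor:blowup_wave}. So there is no ``paper's own proof'' to compare against.

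Your outline is a reasonable high-level summary of the strategy of \cite{shao2024blow} (and its Schr\"odinger antecedent \cite{merle2022blow}): reconstruct $(W,\Phi)$ from $V$, pass to self-similar variables, linearize around the profile, close a high-regularity weighted energy bootstrap modulo finitely many unstable directions, select data by a Brouwer-type argument, and truncate using finite speed of propagation. As a sketch it is on the right track, but two points deserve caution. First, the inequality $V(Z)<Z$ is not listed among the hypotheses of Theorem~\ref{thm:SWZ}; in this paper it is a \emph{conclusion} of Theorem~\ref{thm:main}, and in \cite{shao2024blow} the needed qualitative facts about the profile (positivity of $W$, subsonic behavior, non-degeneracy at the sonic point) are derived from the stated hypotheses $V\in(-1,1)$, $V(0)=0$, $V=Z\td V(Z^2)$ together with the parameter constraints, so you should not invoke $V<Z$ as an assumption. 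Second, your identification of where the constraint $d-1>\ell+\ell^{1/2}$ enters is imprecise: it is not primarily a statement about the characteristic speed at infinity but rather what guarantees the existence of an admissible $\g$ with $\g>\ell^{-1/2}$ and $d-1>\ell(\g+1)$, i.e.\ $b>0$, which is exactly what makes the discarded terms $\pa_{tt}\rho,\D\rho$ genuinely lower order in the self-similar scaling \eqref{eq:SS_ansatz_power}. The hard analytic content---coercivity of the linearization at high regularity and the handling of the finite-dimensional instability---is indeed the crux of \cite{shao2024blow}, and your proposal correctly flags it as the main obstacle, but none of that is carried out in the paper under review.
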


In Theorem \ref{thm:main}, since $\g_n > \ell^{-1/2}$ and $\g_n$ converges to $\ell^{-1/2}$ as $n\to \infty$, 
for $n$ large enough, the parameters $(d=4, p=7, \g_n)$ satisfies the inequalities in Theorem \ref{thm:SWZ}. Moreover, the smooth solution $V^\rn{\g_n}$ constructed in Theorem \ref{thm:main} satisfies the assumptions in 
Theorem \ref{thm:SWZ}. Thus, using Theorems \ref{thm:main} and \ref{thm:SWZ}, we establish: 
\begin{cor}\label{cor:blowup_wave}
There exists smooth compactly supported functions $w_0, w_1: \R^4 \to \C$ such that the 
septic defocusing nonlinear wave equation \eqref{eq:wave} with $p=7$ develops a finite time singularity from the initial data 
$w(0) = w_0, \pa_t w(0) = w_1$. 
\end{cor}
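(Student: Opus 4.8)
The plan is to deduce Corollary \ref{cor:blowup_wave} by combining Theorem \ref{thm:main} with Theorem \ref{thm:SWZ}: Theorem \ref{thm:main} produces, for $d = 4$ and $p = 7$, a family of smooth self-similar ODE profiles indexed by parameters $\g_n \downarrow \ell^{-1/2}$, and Theorem \ref{thm:SWZ} converts any such profile (for an admissible parameter triple) into smooth compactly supported data for which \eqref{eq:wave} blows up in finite time. So the only thing to carry out is to check that the profiles furnished by Theorem \ref{thm:main} satisfy the hypotheses of Theorem \ref{thm:SWZ} once $n$ is large, and then to quote Theorem \ref{thm:SWZ}.

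First I would fix $d = 4$, $p = 7$ and record $\ell = \f{4}{p-1} + 1 = \f{5}{3}$, so that $p \in 2\BZ_+ + 1$, $d \in \BZ \cap [4,\infty)$, and
\[
\ell + \ell^{1/2} = \f{5}{3} + \sqrt{\f{5}{3}} \approx 2.96 < 3 = d - 1 ,
\]
which is exactly the standing constraint \eqref{eq:range_dp} shared by both theorems, and is the reason $p = 7$ is admissible in dimension four. Next I would invoke Theorem \ref{thm:main} to obtain, for every sufficiently large odd $n$, a value $\g_n > \ell^{-1/2}$ with $\g_n \to \ell^{-1/2}$ as $n \to \infty$ and $b_n = \f{d-1}{\ell(\g_n + 1)} - 1 > 0$, together with a solution $V^\rn{\g_n} \in C^{\infty}[0,\infty)$ of the ODE \eqref{eq:ODE} with parameters $(d, p, \g_n)$ such that $V^\rn{\g_n}(0) = 0$, $V^\rn{\g_n}(Z) \in (-1, 1)$ for all $Z > 0$, and $V^\rn{\g_n}(Z) = Z\,\td V(Z^2)$ for some $\td V \in C^{\infty}[0, \infty)$.

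Then I would verify the parameter hypotheses of Theorem \ref{thm:SWZ} for the triple $(d, p, \g) = (4, 7, \g_n)$. The condition $\g_n > \ell^{-1/2}$ is immediate, and the condition $d - 1 > \ell(\g_n + 1)$ is precisely $b_n > 0$, which is part of the conclusion of Theorem \ref{thm:main}; alternatively it follows directly from $\ell(\g_n + 1) \to \ell(\ell^{-1/2} + 1) = \ell^{1/2} + \ell < 3 = d - 1$, so the inequality holds for all large $n$. Moreover $V^\rn{\g_n}$ is exactly the type of solution Theorem \ref{thm:SWZ} asks for: smooth on $[0,\infty)$, valued in $(-1,1)$, vanishing at $0$, and an odd smooth function of $Z$ via $V(Z) = Z\td V(Z^2)$. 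Fixing any odd $n$ large enough for both theorems to apply, Theorem \ref{thm:SWZ} then supplies smooth compactly supported $w_0, w_1 : \R^4 \to \C$ such that the septic defocusing nonlinear wave equation \eqref{eq:wave} develops a finite time singularity from $w(0) = w_0$, $\pa_t w(0) = w_1$, which is the statement of the corollary.

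The point worth emphasizing is that the corollary itself presents no genuine obstacle — the entire content lies in its two inputs. The one relevant subtlety is \emph{why the higher-order profiles are needed}: for $(d,p) = (4,7)$ the admissible window for $\g$ in Theorem \ref{thm:SWZ} is $\g \in (\ell^{-1/2},\, \tfrac{d-1}{\ell} - 1) = (\sqrt{3/5},\, 4/5)$, a short interval whose nonemptiness is exactly \eqref{eq:range_dp}; the first profile constructed in \cite{shao2024self} need not have its parameter in this window, whereas the profiles of Theorem \ref{thm:main}, with $\g_n$ decreasing to $\ell^{-1/2}$, eventually land inside it. Thus the hard part is not here but in Theorem \ref{thm:main} — constructing smooth imploding profiles with $\g$ arbitrarily close to $\ell^{-1/2}$ — and in Theorem \ref{thm:SWZ}, which upgrades such a profile to genuine blowup of the wave equation; the corollary is merely their conjunction.
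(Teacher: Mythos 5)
Your proof is correct and follows essentially the same route as the paper: combine Theorem \ref{thm:main} and Theorem \ref{thm:SWZ}, checking that $(d,p,\g_n)=(4,7,\g_n)$ satisfies the hypotheses of Theorem \ref{thm:SWZ} once $n$ is large (in particular $\g_n \in (\ell^{-1/2}, \tfrac{d-1}{\ell}-1)$, which follows from $\g_n \downarrow \ell^{-1/2}$ and the nondegeneracy of the interval guaranteed by $\ell + \ell^{1/2} < d-1$). Your closing paragraph explaining why profiles with $\g$ near $\ell^{-1/2}$ are needed — because the admissible window $(\sqrt{3/5},\,4/5)$ for $(d,p)=(4,7)$ is narrow — is accurate and is a useful gloss that the paper makes only implicitly.
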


\begin{remark}
In dimension $d=4$ with nonlinearity $p \in 2 \N + 1$, the range of parameter in the supercritical case 
$p > 1 + \f{4}{d-2}$ implies that $ p \geq 5$ and $p \in 2 \N + 1 $. Our result resolves the case  $ p=7$. 
It is conceivable that our methods for proving Theorem \ref{thm:main} can apply to a specific pair of parameters $(d, p)$ with $p \in 2 \N + 1$, provided they satisfy \eqref{eq:range_dp}. Note that the parameters $(d, p)$ in the supercritical case $p > 1 + \f{4}{p-2}$ with $d \geq 4$ and $p \in 2 \N + 1$ that do not satisfy \eqref{eq:range_dp} are $(d, p) = (4, 5), (5, 3)$. 

Similar to the results in \cite{shao2024blow}, we only construct blowup for \eqref{eq:wave} with a complex-valued solution. Blowup of \eqref{eq:wave} with real-valued solution remains open.
\end{remark}

The main difficulty to prove Theorem \ref{thm:main} is that the ODE \eqref{eq:ODE} of $(Z, V)$ is singular near a sonic point. Additionally, the ODE degenerates near the sonic point as $\g \to \ell^{-1/2}$.  We remark that, due to the constraint \eqref{eq:range_dp}, constructing the blowup solution to \eqref{eq:wave} with smaller $p$ requires considering $\g$ close to $\ell^{-1/2}$. To overcome these difficulties, we renormalize the ODE \eqref{eq:ODE} to obtain a new ODE of $(Y, U)$ \eqref{eq:UY_ODE}, which is non-degenerate near the sonic point as $\g \to \ell^{-1/2}$. 
To analyze the $(Y, U)$-ODE near the sonic point, we perform power series expansion of $U(Y)$ and estimate the asymptotics of the power series coefficients $U_n$ using induction. Using barrier arguments, a shooting argument, and a few monotone properties, we extend the local power series solution to a global solution of the ODE. 

The proof involves light computer assistance, mainly to derive the power series coefficients $U_i$ for $i \leq 500$ and to verify the signs of a few lower-order polynomials. 
These calculations can be performed on a personal laptop in a few seconds. See more details in Appendix \ref{app:comp}.

We will derive the ODE \eqref{eq:ODE} and its renormalization \eqref{eq:UY_ODE} and analyze its basic properties in Section \ref{sec:ODE}. We  outline the proof and the organization of Sections \ref{sec:pow_sonic}-\ref{sec:proof} in Section \ref{sec:strategy}.

\subsection{Comparison with the methods in existing works}


Our proof share some similarities with \cite{buckmaster2022smooth}, including some barrier arguments,  estimates of the power series coefficients using induction, and computer-assisted proofs. Different from \cite{buckmaster2022smooth}, we need to renormalize the ODE to overcome the degeneracy as $\g \to \ell^{-1/2}$.

In \cite{buckmaster2022smooth}, the estimates of the asymptotics of 
power series coefficients $U_n$ involve ad-hoc partitions of $n$. We develop a systematic approach for the estimates by identifying a few leading order terms in the recursive formula of $U_n$ and treating the remaining terms perturbatively. We check a few properties of $U_i, i=1,2,..,N$ for $N$ suitably large with computer assistance and then perform induction for the remaining coefficients. This allows us to impose a stronger induction hypothesis, resulting in a streamlined approach to estimating the higher order coefficients, 
which further leads to a simplified splitting in $n$.  
Moreover, our scheme of estimates does not utilize special forms of the ODE and are readily generalized to ODE 
\eqref{eq:ODE} or \eqref{eq:UY_ODE} with numerator and denominator being higher order polynomials. Additional simplifications of the combinatorial estimates are made via renormalization of the power series coefficients.
See Section \ref{sec:pow_sonic}.

We follow the connection between the defocusing nonlinear wave equation and the relativistic Euler equation used in \cite{shao2024self} to derive the ODE \eqref{eq:ODE} governing the profiles, use the same renormalization as in \cite{shao2024self}, and adopt a few basic derivations for the ODEs from \cite{shao2024self}. However, there are a few key differences. Firstly, we apply renormalization to overcome the degeneracy of the ODE as $\g \to \ell^{-1/2}$, whereas in \cite{shao2024self}, renormalization is used to simplify computations and obtain blowup results for a wider range of $(d, p)$. We can also employ other renormalization to overcome this degeneracy; see Remark \ref{rem:chnage}. We adopt the renormalization from \cite{shao2024self} to simplify certain presentations and derivations that are less essential. Secondly, since we consider a smaller nonlinearity ($p = 7$) than those considered in \cite{shao2024self}, we need to use much higher-order barrier functions near the sonic point than those in \cite{shao2024self}, which are based on quartic or lower-order polynomials. Thirdly, by taking $\g \to \ell^{-1/2}$, we obtain a large parameter $\kp(\g)$ \eqref{eq:kappa} and use it and light computer assistance to bypass several detailed computations in 
\cite{shao2024self}.

  We note that in \cite{guo2022gravitational}, Guo, Hadzic, Jang, and Schrecker employed similar arguments, such as Taylor expansions, dynamical systems analysis, and computer-assisted proofs, to construct smooth self-similar solutions for the gravitational Euler-Poisson system.

\subsection{Related results}

In this section, we review related results on singularity formation and computer-assisted proofs in fluid mechanics



\vspace{0.05in}
\paragraph{\bf{Smooth Implosions}}

The construction of the self-similar imploding singularity in Theorem \ref{thm:main} is closely related to that in compressible fluids.
Guderley \cite{Gu1942} was the first to construct radial, non-smooth, imploding self-similar singularities along with converging shock waves in compressible fluids. 
While Guderley’s setting has been extensively studied, the existence of a finite-time smooth, radially symmetric implosion (without shock waves) was only recently established in \cite{merle2022implosion1,merle2022implosion2,merle2022blow}. Subsequently, radially symmetric implosion in $\R^3$ with a larger range of adiabatic exponents was established in \cite{buckmaster2022smooth}, and non-radial implosion was established in \cite{cao2023non,cao2024non}. While these results are inspired by Guderley's setting, constructing $C^\infty$ self-similar implosion profiles is challenging due to the degeneracy at the sonic point, requiring sophisticated mathematical techniques \cite{merle2022implosion1} or computer-assisted methods \cite{buckmaster2022smooth}. 
The smooth imploding solutions are proved to be potentially highly unstable \cite{merle2022implosion2,buckmaster2022smooth,cao2023non,merle2022blow}, with numerical studies of instabilities presented in \cite{biasi2021self}. 
The self-similar profile in Guderley's setting was recently constructed in \cite{2023arXiv231018483J}. By perturbing the radial implosion \cite{merle2022implosion1}, exploiting axisymmetry, and proving the full stability of the perturbation generated by angular velocity, vorticity blowup in the compressible Euler equations was established in \cite{chen2024Euler} for the case of $\R^2$ and in \cite{chen2024vorticity} for $\R^d$ with $d \geq 3$.

\vspace{0.1in}
\paragraph{\bf{Shock formation}}



The prototypical singularity for the compressible Euler equations is the development of a shock wave. Rigorous work regarding shocks traces back to the work of Lax \cite{Lax1964}. Shock wave singularities in the multi-dimensional, irrotational, isentropic setting was first analyzed by Christodoulou in the work \cite{Ch2007} (cf.\ \cite{ChMi2014}). This work was later extended to  include non-trivial vorticity in the 2-dimensional setting by Luk and Speck \cite{LuSp2018}. Restricting to 2-dimensions and assuming azimuthial-symmetry, the first complete description of the formation of a shock singularity, including its self-similar structure was given in a work of the first author, Shkoller, and Vicol \cite{BuShVi2022}. This latter work was extended by the authors to 3-dimensions in the absence of symmetry assumptions in \cite{BuShVi2023a} and to the 3-dimensional non-isentropic setting in \cite{BuShVi2023b}. Returning to the setting of  2-dimensions under azimuthial-symmetry, the first full description of shock development past the first singularity was proven in a work by the first author, Drivas, Shkoller, and Vicol \cite{BuDrShVi2022}. Shock development in the absence of symmetry remains an open problem; however, recently Shkoller, and Vicol in the work \cite{ShVi2023} constructed compact in time maximal development in the 2-dimensional setting, which is a crucial step towards resolving this major open problem (partial results in direction of maximal development were earlier attained in \cite{AbSp2022}).

\vspace{0.1in}
\paragraph{\bf{Computer-assisted proofs in fluids}}

In recent years, there have been substantial developments in computer-assisted proofs in mathematical fluid mechanics. We highlight a few advancements in incompressible fluids: singularity formation in the 3D Euler equations \cite{ChenHou2023a,ChenHou2023b} and related models \cite{chen2019finite,chen2021HL}, constructing nontrivial global smooth solutions to SQG \cite{castro2020global}, and some applications to Navier-Stokes \cite{arioli2021uniqueness,van2021spontaneous}.  See also the survey \cite{gomez2019computer}.

\section{Autonomous ODE for the self-similar profile }\label{sec:ODE}

In this section, we derive the ODE governing the self-similar profile \eqref{eq:SS_ansatz} and perform renormalization to overcome some degeneracy. 

\subsection{ODE for the profile}\label{sec:ODE_sub}
We introduce $\ell = \ell(p)$ and a free parameter $ \g$ to 
rewrite $(a, b)$ in \eqref{eq:SS_ansatz}, \eqref{eq:SS_ansatz2} as follows 
 \beq\label{eq:para}
 \ell = \f{4}{p-1} + 1, \quad 
 b = \f{ d-1}{ \ell (\g+1)} - 1, \quad   
a = \f{2 (d-1) }{ (p-1) \ell (\g + 1) } .
\eeq
where $d$ is the dimension. We impose radial symmetry on the solution $(\rho, \phi)$. As a result, the profiles $W, \Phi$ in \eqref{eq:SS_ansatz2} are radially symmetric. We introduce the radial and self-similar variables 
\bseq\label{eq:SS_var}
\beq
r = |x|, \quad z = \f{x}{ T- t}, \quad  Z = |z| = \f{|x|}{T-t} .
\eeq
From the ansatz \eqref{eq:SS_ansatz} and \eqref{eq:SS_ansatz2}, it is easy to see that 
$  \rho^{- \f{p-1}{2} } \pa_t \phi(x, t) , \rho^{- \f{p-1}{2} } \pa_r \phi(x, t)$ only depend on $Z$. Thus, to solve \eqref{eq:vel}, we consider 
\beq
  \rho^{- \f{p-1}{2} } \pa_t \phi(x, t) =  \f{1}{  (1 - V(Z)^2)^{1/2}} ,  \quad  \rho^{- \f{p-1}{2} } (\pa_r \phi)(x, t) =  \f{V(Z)}{  (1 - V(Z)^2)^{1/2}} .
\eeq
\eseq
The relation \eqref{eq:SS_var} and \eqref{eq:wave_phase} imply that $V(Z)$ solves the following ODE for $Z \geq 0$
\beq\label{eq:ODE}
\bal
  \f{ d V}{d Z}  & = \f{ \D_V(Z, V)}{ \D_Z(Z, V) },  \\
    \D_V  & = (d-1) (1- V^2) \B(  \f{1}{\g + 1} (1 -V^2) Z -  V (1 - V Z)  \B), \\
  \D_Z  & = Z( (1 - Z V)^2 - \ell (V - Z)^2 ) .
\eal
\eeq
This ODE was first derived in \cite{shao2024self}. For completeness, we derive the ODE in Appendix \ref{app:ODE_deri}. 
\footnote{
We have substitute $k =  d - 1, m = \f{d-1}{1 + \g}$ in the ODEs derived in  \cite[Section 2]{shao2024self} to reduce the number of variables. Moreover, we have used $\b = \f{(d-1) (\ell + 1) }{ \ell (\g+1)}$ to remove the parameter $\b$ used in \cite{shao2024self}. Note that the meaning of $\b$ is different in \cite{shao2024self} and \cite{shao2024blow}.
}


\vspace{0.1in}
\paragraph{\bf{Constraints on the parameters}}

From $b > 0$ \eqref{eq:SS_ansatz2}, which is related to the front compression mechanism and the relation \eqref{eq:para}, we first require $\g$ to satisfy
\beq\label{eq:para_ineq1}
 \f{d-1}{\ell (\g + 1)} > 1 \iff d-1 > \ell(\g + 1). 
\eeq

To obtain a smooth ODE solution, we have an additional constraint on $\g$ established in \cite[Lemma 2.1]{shao2024self}.

\begin{lem}[Lemma 2.1 \cite{shao2024self}]\label{lem:gamma_low}
If $V(Z) : [0, 1] \to (-1, 1)$ is a $C^1$ solution to \eqref{eq:ODE} with $V(0) =0$ and $\ell > 1, \g > -1, d >1$, then $\g > \ell^{-1/2}$.\footnote{
  In \cite[Lemma 2.1]{shao2024self}, the assumptions $m > 0, k >0$ are equivalent to $\g > -1, d > 1$ in our notations due to the relation $m = \f{k}{\g + 1} , k = d-1$. 
  \cite[Lemma 2.1]{shao2024self} implies that $ k > m (1 + \ell^{-1/2})$, which is equivalent to $\g > \ell^{-1/2}$. Again, we do not use the parameters $k, m$ as \cite{shao2024self} to reduce the number of variables.
}
\end{lem}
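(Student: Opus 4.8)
The final statement is Lemma~\ref{lem:gamma_low}, quoted from \cite[Lemma 2.1]{shao2024self}: if $V:[0,1]\to(-1,1)$ is a $C^1$ solution of \eqref{eq:ODE} with $V(0)=0$ and $\ell>1,\g>-1,d>1$, then necessarily $\g>\ell^{-1/2}$.

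\medskip

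\textbf{Proof proposal.} The plan is to argue by contradiction, examining the sign of the denominator $\D_Z(Z,V)=Z\big((1-ZV)^2-\ell(V-Z)^2\big)$ and the numerator $\D_V$ along a putative $C^1$ solution. First I would record the behavior at the origin: since $V(0)=0$, near $Z=0$ one has $\D_Z\sim Z(1-\ell Z^2)\sim Z>0$ and $\D_V\sim (d-1)\cdot\frac{1}{\g+1}Z>0$, so $V'(0)=\frac{d-1}{\g+1}>0$ and $V$ is increasing and positive for small $Z>0$. More precisely, writing $V(Z)=Z\tilde V(Z^2)$ type behavior is natural, and the linearization gives $V(Z)\approx \frac{d-1}{\g+1}Z$ for small $Z$; I would want to note that $\frac{d-1}{\g+1}$ could be bigger or smaller than $1$, which is exactly where the dichotomy enters.

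\medskip

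The key object is the ``sonic'' factor $S(Z,V):=(1-ZV)^2-\ell(V-Z)^2 = \big(1-ZV-\sqrt{\ell}\,(V-Z)\big)\big(1-ZV+\sqrt{\ell}\,(V-Z)\big)$. I would study the two lines (in the $(Z,V)$ plane) where each factor vanishes and locate where the solution curve $Z\mapsto(Z,V(Z))$ sits relative to them. At $Z=0$, $S(0,V(0))=1>0$. If the solution is to remain $C^1$ on all of $[0,1]$, the denominator $\D_Z$ cannot vanish on $(0,1]$ unless the numerator vanishes simultaneously there; so either $S>0$ throughout $(0,1)$, or the curve passes through a genuine sonic point where $\D_V=\D_Z=0$. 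I would analyze the constraint imposed at such a point: setting $(1-ZV)^2=\ell(V-Z)^2$ and $\D_V=0$ simultaneously and solving, one extracts an algebraic relation forcing $\g\ge \ell^{-1/2}$ (with the boundary $Z=1$ playing a special role, since $V<1$ forces $(V-Z)\ne 0$ there). The cleanest route is probably: evaluate/estimate things at $Z=1$. At $Z=1$, $\D_Z(1,V)=(1-V)^2-\ell(V-1)^2=(1-V)^2(1-\ell)<0$ since $\ell>1$ and $V\ne 1$. Since $\D_Z(0^+,\cdot)>0$ and $\D_Z(1,\cdot)<0$, the denominator must vanish at some $Z_*\in(0,1)$; for $V$ to stay $C^1$ there, the numerator $\D_V$ must also vanish at $Z_*$, i.e.\ $(Z_*,V(Z_*))$ is a true sonic point. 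Then I impose both $\D_V(Z_*,V_*)=0$ and $(1-Z_*V_*)=\pm\sqrt{\ell}(V_*-Z_*)$, and do the elimination: $\D_V=0$ with $1-V_*^2\ne 0$ gives $\frac{1}{\g+1}(1-V_*^2)Z_* = V_*(1-V_*Z_*)$, hence $1-V_*Z_* = \frac{(1-V_*^2)Z_*}{(\g+1)V_*}$; substituting into the sonic relation and simplifying yields an identity in $Z_*,V_*,\g,\ell$ whose solvability (with $V_*\in(-1,1)$, $Z_*\in(0,1)$, and the ``physically correct'' branch of the $\pm$, namely the one reachable from the data at the origin) forces $\g\ge \ell^{-1/2}$; the strict inequality $\g>\ell^{-1/2}$ then comes from ruling out the degenerate equality case (which would correspond to a double root / non-$C^1$ behavior).

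\medskip

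\textbf{Main obstacle.} The delicate part is not the existence of the sonic crossing $Z_*\in(0,1)$ — that follows immediately from the intermediate value theorem applied to $\D_Z$ — but rather the sign/branch bookkeeping in the elimination step: one must verify that the solution emanating from $V(0)=0$ with $V'(0)>0$ stays on the branch $1-Z_*V_* = +\sqrt{\ell}(V_*-Z_*)$ (or determine which branch it is), and that along that branch the algebraic constraint genuinely forces $\g>\ell^{-1/2}$ rather than merely being consistent with some values $\g\le\ell^{-1/2}$. This requires tracking the sign of $\D_V$ (equivalently, monotonicity of $V$) on the approach to $Z_*$ and ensuring $V_*-Z_*$ has the expected sign, i.e.\ controlling whether the curve has crossed the diagonal $V=Z$. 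Handling the endpoint possibility — that the only ``sonic'' behavior occurs exactly at $Z=1$ — and the borderline case $\g=\ell^{-1/2}$ (where the sonic point degenerates) are the remaining technical points; since this is a cited lemma, I would follow the treatment of \cite[Lemma 2.1]{shao2024self} for these details.
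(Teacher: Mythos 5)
The paper does not contain a proof of Lemma~\ref{lem:gamma_low} --- it is quoted directly from \cite[Lemma 2.1]{shao2024self} --- so there is no internal argument to compare your proposal against. On its own merits, your outline (continuity of $\D_Z(Z,V(Z))$ in $Z$, the sign change between $Z=0^+$ and $Z=1$ forced by $\ell>1$ and $V(1)\ne 1$, hence a zero $Z_*\in(0,1)$ where $C^1$ regularity also forces $\D_V=0$, followed by algebraic elimination) is the natural strategy and the first two steps are sound.

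The ``branch bookkeeping'' you flag but defer is, however, exactly where the substance lies, and your tentative guess about it is backwards. From $\D_V(Z_*,V_*)=0$ with $V_*\in(-1,1)$, $Z_*\in(0,1)$ one gets $Z_*(1+\g V_*^2)=(\g+1)V_*$, whence $V_*\in(0,1)$; substituting $1-Z_*V_*=\tfrac{1-V_*^2}{1+\g V_*^2}$ and $V_*-Z_*=-\tfrac{\g V_*(1-V_*^2)}{1+\g V_*^2}$ into $(1-Z_*V_*)^2=\ell(V_*-Z_*)^2$ gives only $\ell\g^2 V_*^2=1$, i.e.\ $|\g|>\ell^{-1/2}$, and the alternative $\g<-\ell^{-1/2}$ is not ruled out by this computation. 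To get the stated one-sided conclusion one must identify \emph{which} factor $Z-Z_\pm(V)$ of $\D_Z$ vanishes at the sonic point: with $Z_V,Z_\pm$ as in \eqref{eq:root_mono_ZV0}, the equations $Z_V(V)=Z_\pm(V)$ factor as $(V^2-1)(\pm\sqrt{\ell}\,\g V - 1)=0$, so the interior sonic point sits on $Z=Z_+(V)$ precisely when $\g>\ell^{-1/2}$ (and then $V_*<Z_*$), and on $Z=Z_-(V)$ precisely when $\g<-\ell^{-1/2}$ (and then $V_*>Z_*$). The branch you wrote down, $1-Z_*V_*=+\sqrt{\ell}\,(V_*-Z_*)$, is the $Z=Z_-$ factor, i.e.\ the $\g<0$ case --- the opposite of what is needed. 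The missing step is a genuine geometric argument that the trajectory emanating from $(0,0)$ first meets $Z=Z_+(V)$ rather than $Z=Z_-(V)$ (for instance, combining an intermediate-value argument for $Z-Z_+(V(Z))$ on $(0,1)$ with the uniqueness of the interior sonic point with $V>0$); until this is supplied, the proposal only establishes $|\g|>\ell^{-1/2}$, not $\g>\ell^{-1/2}$.
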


Combining \eqref{eq:para_ineq1} and the lower bound $\g > \ell^{-1/2}$ from the Lemma \ref{lem:gamma_low}, we obtain
\beq\label{eq:para_ineq2}
\quad \ell + \ell^{1/2} < d-1,
 \quad \ell < \ell_u(d):= ( \sqrt{ d - \f{3}{4}} - \f{1}{2} )^2 .
\eeq

Since $\ell>1$, we need $d \geq 4$. Recall the power of non-linearity $p = \f{4}{\ell-1}+1, \ell(p) = 1 + \f{4}{p-1}$ \eqref{eq:para}. Let $p(d)$ be the value with $\ell(p(d)) = \ell_u(d)$.  For $d = 4,5,6$, the constraint \eqref{eq:para_ineq2} implies
\[
 p > p(d), \quad p(4) \approx 6.737, \quad p(5) \approx 3.7808, \quad p(6) \approx 2.8110.
\]

In the remainder of the paper, we will focus on the lowest dimension $d = 4$ and the smallest integer power $p=7$ in this dimension. Then, we have 
\beq\label{eq:para1}
 d = 4, \quad p = 7, \quad \ell = \f{5}{3}.
\eeq
We still have one free parameter $\g $, which will be chosen so that $ 0 < \g - \ell^{-1/2} \ll 1$.

\vspace{0.1in}

\paragraph{\bf{Double roots and the sonic point}}

Solving $\D_V(P) = \D_Z(P) = 0$, we get the special points 
\beq\label{eq:root_P}
\begin{gathered}
 P_O =   (0, 0) , 
\quad P_s = ( Z_0, V_0 ) = \B(  \tfrac{ (\g + 1)  \sqrt{\ell}}{ \ell\g + 1  } , 
 \tfrac{ 1} { \g \sqrt{\ell} } \B), \quad P_2 = (1, 1), \quad P_3 = (0, 1), 
\end{gathered}
\eeq
where $P_s$ is the sonic point. The letters $O, s$ are short for \textit{origin, sonic}, respectively.

  The smoothness of the ODE solution to \eqref{eq:ODE} is closely related to 
  the Jacobian of a renormalization of the ODE \eqref{eq:ODE} by the factor $\Delta_Z$: 
\beq\label{eq:grad}
M_P := 
 \begin{pmatrix}
\pa_V \D_V  & \pa_Z \D_V \\
\pa_V \D_Z  & \pa_Z \D_Z \\
 \end{pmatrix}.
\eeq

A direct computation shows that the entries degenerate: $|M_{P, i j}| \les |\g^2 \ell - 1|$, as $\g \to \g_* = \ell^{-1/2}$,  which complicates the analysis of the ODE near the sonic point.


\vspace{0.1in}
\paragraph{\bf{Notations and parameters}}

Throughout the paper, we will use $(Z, V)$ for the variables in the original ODE system \eqref{eq:ODE},  and $(Y, U)$ for the renormalized ODE \eqref{eq:UY_ODE} to be introduced. We further introduce some parameters 
\beq\label{eq:para2}
\bal
 & \e = \ell \g^2 - 1, \quad A = d+ 1- (d-1- 2 \ell) \g,
\quad B = 2 d -1 - \ell,  \\
\eal
\eeq
and will use them to denote the coefficients of a polynomial in the renormalized ODE of $(U, Y)$.

We fix the following constants 
\beq\label{eq:para3}
\hat \d = 0.049, \quad  \d = 0.05 ,
\eeq
and will use them to denote the size of relative error for asymptotic series around the sonic point (see Lemma \ref{lem:asym} and Corollary \ref{cor:asym}).

We denote $A \les B$ if $A \leq C B$ for some absolute constant $C >0$, and denote $ A \asymp B$ if $A \les B$ and $B \les A$.
We denote $A \les_m B$ if $A \leq C(m) B$ for some constant $C(m)>0$ depending on $m$, and define the notations $ A \asymp_m B$ similarly.

\subsection{Renormalization}

To overcome the degeneracy of the Jacobian $M_P$ \eqref{eq:grad} as $\g \to \g_*$, we study an ODE system equivalent to \eqref{eq:ODE} by performing a renormalization of the system. We adopt the change of coordinate  $(Z, V) \to (Y, U) = (\cY, \cU)(Z, V)$ from \cite{shao2024self} with
\beq\label{eq:sys_UY}
\bal
\cY(Z, V) = \f{ (1 - V^2)Z - (\g + 1) V (1 - V Z) }{  Z(1 - V^2)},
\quad \cU(Z, V) =  \f{ (\g + 1)^2 (1 - V Z)^2}{  (1 - V^2) Z^2}.  \\
\eal
\eeq
We will see later that the Jacobian of the new ODE system in $(Y, U)$ at the sonic point 
\eqref{eq:grad_Q} is non-degenerate as $\g \to \g_*$. There are a few other change of coordinates that achieve this purpose. See Remark \ref{rem:chnage}. The above transform \eqref{eq:sys_UY} leads to lower order polynomials $\D_U, \D_Y$ in $U$, which simplifies some analysis. Note that we do not use this property in an essential way.

We can invert the transform from $(Y, U)$ to $(Z, V) = (\cZ, \cV)(Y, U)$ using the following formulas 
\beq\label{eq:UY_to_VZ}
\cZ(Y, U) = \f{ \sqrt{ U + (1 - Y)^2}}{ \f{1}{1 + \g} U + 1 - Y },
\quad \cV(Y, U) = \f{1 - Y}{ \sqrt{ U + (1 - Y)^2 } }  .
\eeq

The bijective properties are proved in \cite[Lemma 3.8]{shao2024self} :
\begin{lem}\label{lem:bijec}
Denote $\cR_{ZV} = \{ (Z, V) : 0 < V < 1, 0 < Z V < 1 \}, \cR_{YU} 
= \{ (Y, U) : U > 0, Y < 1 \}$. Then $(\cY, \cU): \cR_{ZV} \to \cR_{YU}$ is a bijection. 
\end{lem}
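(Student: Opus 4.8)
\textbf{Proof proposal for Lemma \ref{lem:bijec}.}

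The plan is to verify directly that the maps $(\cY,\cU)$ and $(\cZ,\cV)$ defined in \eqref{eq:sys_UY} and \eqref{eq:UY_to_VZ} are mutually inverse bijections between the two open regions, which reduces the whole statement to algebra plus a check that each map lands in the correct region. First I would record the two auxiliary identities that make the computation transparent. From \eqref{eq:sys_UY}, on $\cR_{ZV}$ one has $\sqrt{U}=\tfrac{(\g+1)(1-VZ)}{\sqrt{1-V^2}\,Z}$, since $0<VZ<1$ and $0<V<1$ force $1-VZ>0$ and $1-V^2>0$, so the square root is unambiguous and positive; in particular $U>0$. For the $Y$ component, a short manipulation gives $1-Y = \tfrac{(\g+1)V(1-VZ)}{Z(1-V^2)}$, which combined with the expression for $\sqrt U$ yields the clean ratio $\tfrac{1-Y}{\sqrt U}=\tfrac{V}{\sqrt{1-V^2}}$; since the right side is positive this shows $1-Y>0$, i.e.\ $Y<1$, so $(\cY,\cU)$ indeed maps $\cR_{ZV}$ into $\cR_{YU}$.

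Next I would establish the inverse direction. Squaring the last displayed identity gives $(1-Y)^2/U = V^2/(1-V^2)$, hence $U+(1-Y)^2 = U/(1-V^2)$ and therefore $\sqrt{U+(1-Y)^2}=\sqrt U/\sqrt{1-V^2}$ (both sides positive on $\cR_{YU}$ and on the image). Dividing the identity for $1-Y$ by this quantity reproduces exactly the formula $\cV(Y,U)=\tfrac{1-Y}{\sqrt{U+(1-Y)^2}}=V$. For the $Z$ component, I would compute $\tfrac{1}{1+\g}U + 1 - Y$ using the two auxiliary identities: both terms carry a factor $\tfrac{(1-VZ)}{Z(1-V^2)}$ times $\tfrac{(\g+1)(1-VZ)}{Z(1-V^2)}\cdot\tfrac{1}{\g+1}$ and $\tfrac{(\g+1)V}{Z(1-V^2)}$ respectively — collecting them gives $\tfrac{(\g+1)(1-VZ)}{Z(1-V^2)}\big(\tfrac{1-VZ}{Z}+V\big)=\tfrac{(\g+1)(1-VZ)}{Z^2(1-V^2)}$, so that $\cZ(Y,U)=\sqrt{U+(1-Y)^2}\big/\big(\tfrac{1}{1+\g}U+1-Y\big) = \tfrac{\sqrt U/\sqrt{1-V^2}}{(\g+1)(1-VZ)/(Z^2(1-V^2))}= Z$, using the formula for $\sqrt U$. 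This shows $(\cZ,\cV)\circ(\cY,\cU)=\mathrm{id}$ on $\cR_{ZV}$. The reverse composition $(\cY,\cU)\circ(\cZ,\cV)=\mathrm{id}$ on $\cR_{YU}$ follows by the same identities run backwards, and checking that $(\cZ,\cV)$ maps $\cR_{YU}$ into $\cR_{ZV}$ amounts to the inequalities $0<\cV<1$ (clear since $0<1-Y<\sqrt{U+(1-Y)^2}$ when $U>0$) and $0<\cZ\,\cV<1$; the latter reads $\tfrac{1-Y}{\frac{1}{1+\g}U+1-Y}\in(0,1)$, which holds because $U>0$ and $\g>-1$ make the denominator strictly larger than the positive numerator.

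Since the paper already references this as \cite[Lemma 3.8]{shao2024self}, I would present the above as a self-contained verification rather than re-deriving it from scratch, and simply remark that both maps are manifestly smooth (indeed rational/algebraic with non-vanishing denominators on the respective regions), so the bijection is in fact a diffeomorphism. The only mild subtlety — and the one place I would be careful — is the consistent choice of sign for the square roots: one must check at the outset that $1-V^2>0$, $1-VZ>0$ on $\cR_{ZV}$ and $U+(1-Y)^2>0$, $1-Y>0$ on $\cR_{YU}$ so that every $\sqrt{\,\cdot\,}$ appearing is the positive branch, after which the algebra is routine. I do not anticipate a genuine obstacle here; the content of the lemma is entirely the bookkeeping of these positivity constraints.
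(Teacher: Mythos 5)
The paper does not actually prove this lemma; it cites it as \cite[Lemma 3.8]{shao2024self} and notes that ``a direct computation yields that the inversion of $(\cY,\cU)$ is given by $(\cZ,\cV)$.'' Your proposal fills in that direct computation in a self-contained way, and the algebra checks out. The key identities you isolate — $1-\cY=\tfrac{(\g+1)V(1-VZ)}{Z(1-V^2)}$, $\sqrt{\cU}=\tfrac{(\g+1)(1-VZ)}{\sqrt{1-V^2}\,Z}$, hence $\tfrac{1-\cY}{\sqrt{\cU}}=\tfrac{V}{\sqrt{1-V^2}}$ and $U+(1-Y)^2=U/(1-V^2)$ — are correct and do make the inverse formulas transparent; the image-containment checks ($\cU>0$, $\cY<1$ in one direction; $0<\cV<1$ and $0<\cZ\cV<1$ in the other, the latter using $\g>-1$, which holds here since $\g>\ell^{-1/2}>0$) are also right.

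One small point worth tightening: proving $(\cZ,\cV)\circ(\cY,\cU)=\mathrm{id}$ on $\cR_{ZV}$ together with the two image containments only gives injectivity of $(\cY,\cU)$; surjectivity genuinely requires the other composition $(\cY,\cU)\circ(\cZ,\cV)=\mathrm{id}$ on $\cR_{YU}$, which you dispatch with ``the same identities run backwards.'' That composition does in fact work out (one finds $1-\cV\cZ=\tfrac{U/(1+\g)}{U/(1+\g)+1-Y}$, $1-\cV^2=\tfrac{U}{U+(1-Y)^2}$, and the denominators cancel cleanly to give back $(Y,U)$), but it is a separate short computation rather than a literal reversal, so it would be cleaner to carry it out rather than assert it. With that minor elaboration your proof is complete and matches the spirit of the paper's remark that a direct computation establishes the inverse relationship.
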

A direct computation yields that the inversion of $(\cY, \cU)$ is given by $(\cZ, \cV)$. Thus, for $(Z, V) \in \cR_{ZV}$ or $(Y, U) \in \cR_{YU}$, we have 
\beq\label{eq:invert}
(\cY, \cU) \circ (\cZ, \cV ) =\mw{Id},  \quad (\cZ, \cV ) \circ ( \cY, \cU)  = \mw{Id} .
\eeq

Suppose that $(Z, V(Z))$ solves \eqref{eq:ODE}. We have the following ODE for $(Y, U) 
= (\cY, \cU)(Z, V(Z)) $:
\beq\label{eq:UY_ODE}
\bal
  \f{d U}{d Y} & = \f{\D_U(Y, U)}{\D_Y(Y, U)}, \\
  \D_U & = 2 U ( U + f(Y) + (d-1) Y(1 - Y)),\\
 \D_Y  & = ( d Y - 1) U + (Y -1) f(Y),\quad f(Y) = - \e - A Y + B Y^2 ,
\eal
\eeq
where $\e, A, B$ are defined in \eqref{eq:para2}. We refer to \cite[Section 3.2]{shao2024self} for the derivations of \eqref{eq:UY_ODE}.

Denote by $d_U, d_Y$ the degree of $\D_U, \D_Y$ as a polynomial in $U$, respectively. We have 
\bseq
\beq\label{eq:poly_degUY}
d_U = 2, \quad d_Y = 1.
\eeq
We can expand $\D_U, \D_Y$ as polynomials in $U$ 
\beq\label{eq:ODE_UY}
 \D_U = \sum_{i \leq  d_U} F_i(Y) U^i,
 \quad \D_Y =  \sum_{i\leq d_Y} G_i(Y) U^i , 
\eeq
where $F_i, G_i$ are polynomials in $Y$. We further define the maximum degree of these polynomials in $Y$ as follows 
\beq\label{eq:poly_deg}
d_F = \max_i \deg F_i(Y),
\quad d_G = \max_i \deg G_i(Y) .
\eeq

For the above ODE system \eqref{eq:UY_ODE}, we have 
\beq\label{eq:poly_deg2}
d_F = 2 , \quad d_G = 3 .
\eeq
\eseq

In this new coordinate system $(Y, U)$, the points $P_O, P_s$ defined in \eqref{eq:root_P} 
map to 
\beq\label{eq:sonic_Q}
Q_O = (Y = Y_O, U = \infty), \quad 
Q_s = (Y_0, U_0), 
\quad Y_O = \f{1}{d}, \quad Y_0 = 0,  \quad U_0 =\e.
\eeq

  \begin{remark}\label{rem:chnage}
  One can choose other change of variables to make the matrix $M_P$ \eqref{eq:grad} nonsingular and analyze the ODE in the new system. One candidate is the following simpler transform 
  \[
\cU(Z, V) = \f{ 1 - V Z }{1  - V^2 }, \quad \cY(Z, V) = \f{ V-  Z}{ 1 - V^2 } .
\]

In the new system, the gradient at the sonic point does not degenerate in the limit $\g \to \g_*$
\[
  |\na_{U, Y} \D_U| \asymp 1, \quad  | \na_{U, Y} \D_Y | \asymp 1.
\]
The reason is that the transformation from $(Z, V)$ to $(Y, U)$ is singular near $P_s$ as $\g \to \g_*$ with $ |\na_{Z, V} (\cU, \cY) |_{P_s}  \sim (\g - \g_*)^{-1}$, which compensates the degeneracy: $|\na \D_Z|  , |\na \D_v |\asymp \g - \g_*$.

  \end{remark}

\subsubsection{Change of coordinates}

Recall the maps $\cZ, \cV$ from $(Y, U)$ to $(Z, V)$ \eqref{eq:sys_UY} and 
$(\cY, \cU)$ from  $(Z, V)$ to $(Y, U)$ \eqref{eq:UY_to_VZ}. Denote by $\cM_1, \cM_2$ the Jacobians 
\beq\label{eq:Jacob_M}
\cM_1(Y, U) = 
\begin{pmatrix}
 \pa_Y \cZ  & \pa_U \cZ  \\
 \pa_Y \cV & \pa_U \cV  \\
\end{pmatrix}(Y, U),
\quad \cM_2(Z, V) = 
\begin{pmatrix}
 \pa_Z \cY & \pa_V \cY \\ 
 \pa_Z \cU & \pa_V \cU \\
\end{pmatrix}(Z, V)  .
\eeq
For $(Z, V) \in \cR_{Z V}, (Y, U) \in \cR_{YU}$ with $\cR_{ZV}, \cR_{YU}$ defined in Lemma \ref{lem:bijec},
the denominators in the maps \eqref{eq:sys_UY}, \eqref{eq:UY_to_VZ} do not vanish. Thus, the matrices $\cM_1, \cM_2$ are not singular. Using \eqref{eq:invert},  we have 
\beq\label{eq:invert2}
\cM_1( ( Y, U) \cM_2(Z, V) = \mw{Id},
\quad 
\mw{for} \quad  (Y, U)=(\cY, \cU)(Z, V) .
\eeq
 Since $M_i$ is not singular, we get $\det(\cM_i) \neq 0$. 

Along the ODE solution curve $(Y, U(Y))$ or $(Z, V(Z))$, the ODE \eqref{eq:UY_ODE} and the above change of coordinates implies 
\[
 \f{\D_U}{\D_Y} =  \f{d U}{d Y} = \f{ \f{d \cU(Z, V(Z))}{d Z} }{ \f{d \cY(Z, V(Z))}{d Z}  }
 = \f{ \cM_{2, 21} + \cM_{2, 22}  \f{\D_V}{\D_Z} }{ \cM_{2, 11} + \cM_{2, 12}  \f{\D_V}{\D_Z}  } 
 = \f{ \cM_{2, 21} \D_Z + \cM_{2, 22} \D_V }{ \cM_{2, 11} \D_Z + \cM_{2, 12}  \D_V   } , 
\]
where $Q_{ij}$ denotes the $ij$-th component of a matrix $Q$. Thus, there exists some continuous function $m(Y, U)$ such that 
\beq\label{eq:Mv}
\cM_2 \cdot 
\begin{pmatrix}
\D_Z \\
\D_V \\
\end{pmatrix} = m(Y, U) 
\begin{pmatrix}
\D_Y \\
\D_U \\
\end{pmatrix} , 
\quad 
\cM_1 \cdot 
\begin{pmatrix}
\D_Y \\
\D_U \\
\end{pmatrix} = m^{-1}
\begin{pmatrix}
\D_Z \\
\D_V \\
\end{pmatrix} , 
\quad m \neq 0,
\eeq
for $(Z, V) = ( \cZ, \cV) (Y, U) \in \cR_{ZV} \bsh P_s$ or equivalently $(Y, U) \in \cR_{YU} \bsh Q_s $. For $(Y, U) \in \cR_{YU} \bsh Q_s$, since $\det(\cM_2) \neq 0$ 
and $(\D_Z, \D_V) \neq (0,0)$, we obtain $m(Y, U) \neq 0$.

\subsection{Roots of $\D_Z, \D_V, \D_Y, \D_U$}

We can decompose $\D_V, \D_Z$, defined in  \eqref{eq:ODE} as follows
\beq\label{eq:root_mono_ZV0}
\bal
\D_V &= \f{d-1}{\g+1} ( 1 - V^2) ( 1 + \g V^2 ) ( Z - Z_V(V)),
\quad Z_V(V) = \f{(1 + \g) V}{  1 + \g V^2  } , \\
\D_Z & = Z( V^2 - \ell  )( Z -  Z_{\pm}(V)) ,
\quad Z_{\pm}(V)= \f{ \pm \ell^{1/2} V + 1}{ V \pm \ell^{1/2}}.
\eal
\eeq
The derivations follow from a direct computation. 
See Figure \ref{fig:coordinate1} for an illustration of the curves $Z_V(V)$ (red)
and $Z_{\pm}(V)$ (blue). For $\g < 1 < \ell , |V|<1$, a direct computation yields 
\beq\label{eq:root_mono_ZV}
\bga
Z_V^{\pr}(V) = (1 + \g) \f{ 1 + \g V^2 - 2 \g V^2}{ (1 + \g V^2)^2 } > 0,
\quad Z_{\pm}^{\pr}(V)  = \f{\ell-1}{(V \pm \ell^{1/2})^2} > 0, \\
\quad Z_+(V) - Z_-(V) = \f{2 \ell^{1/2}(V^2-1) }{V^2 - \ell} > 0.
\ega
\eeq

\begin{figure}[t]
  \centering
  \includegraphics[width=0.6\linewidth]{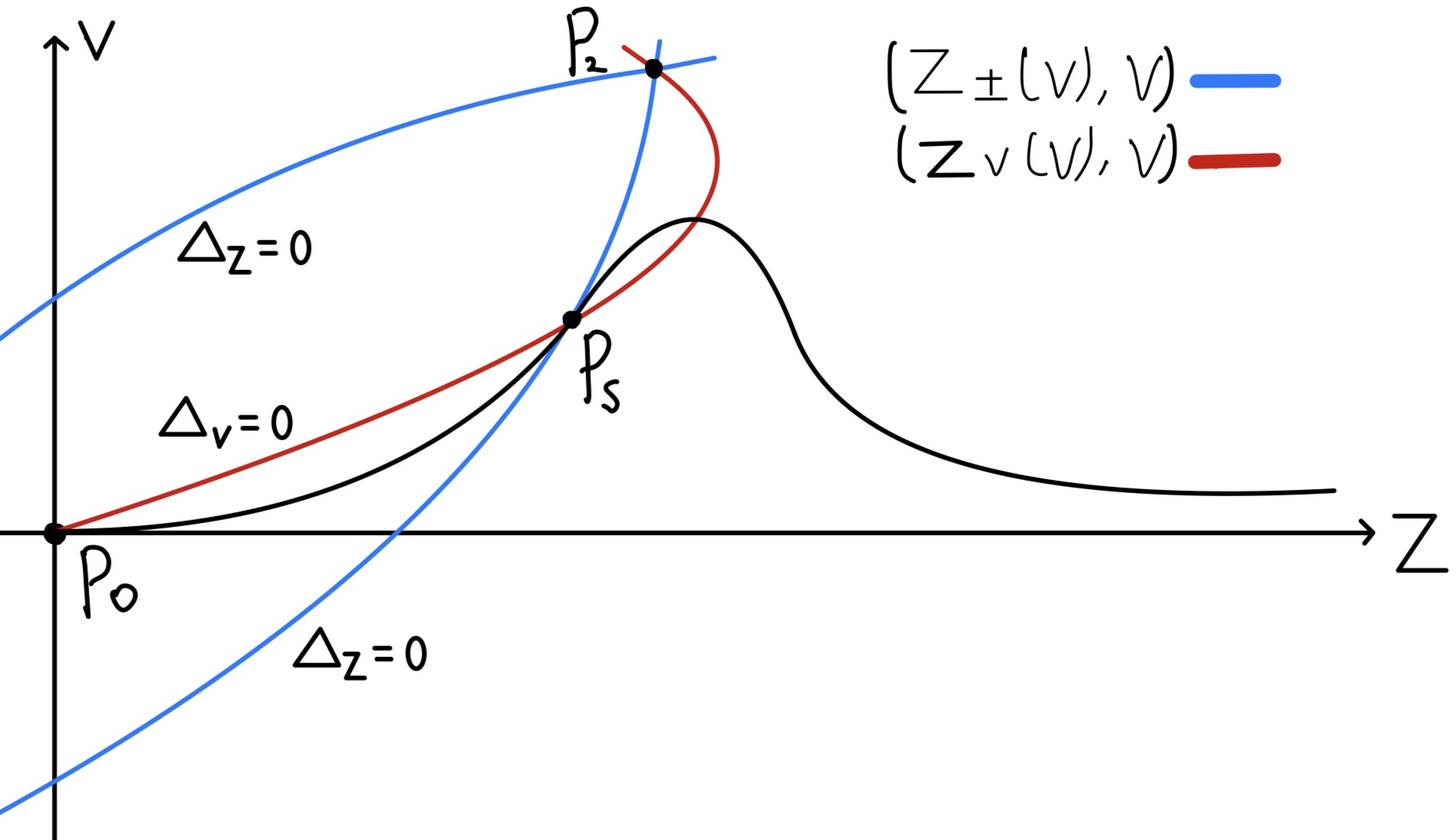}
  \caption{Illustrations of phase portrait of the $(Z, V)$-ODE \eqref{eq:ODE}.
The solution curve is in black and $Z_{\pm}(V), Z_{V}(V)$ defined in \eqref{eq:root_mono_ZV0} are roots of $\D_Z, \D_V$. 
   }
  \label{fig:coordinate1}
\end{figure}

Plugging the formulas of $Z, V$ \eqref{eq:UY_to_VZ} in $\D_Z$ \eqref{eq:ODE}
with $(Y, U) = (\cY, \cU)(Z, V)$, we can also rewrite $\D_Z$ as 
\beq\label{eq:DZ_UY}
\D_Z = Z ( (1 - Z V)^2 - \ell (V-Z)^2 ) 
= \f{Z U^2 ( U + (1 -Y)^2 - \ell (Y + \g)^2 ) }{  ( U + (1 - Y)^2 ) ( U + (1 + \g)(1 - Y)  )^2 }
\B|_{(Y, U) = (\cY, \cU)(Z, V) } .
\eeq
We refer to \cite[Section 3.2]{shao2024self} for the derivation of \eqref{eq:DZ_UY}. 
The numerator can be written as $Z U^2 (U - U_g(Y))$, where we define 
\beq\label{eq:root_DZ}
U_g(Y) = \ell(Y + \g)^2 - (1 - Y)^2.
\eeq

We will use the function $U_g(Y)$ in Section \ref{sec:Q_lower} to control the sign of $\D_Z$ on the solution curve.

We can decompose $\D_U, \D_Y$ \eqref{eq:UY_ODE} as follows 
 \bseq\label{eq:root_UY}
\beq
\D_U = 2 U ( U - U_{\D_U}(Y) ), 
\quad \D_Y = (d Y- 1) (U - U_{\D_Y}(Y) ) ,
\eeq
 with $ U_{\D_U}(Y)), U_{\D_Y}(Y)$ given by 
\beq
U_{\D_Y }(Y) = - \f{ (Y-1) f(Y)}{ d Y - 1},
\quad U_{ \D_U }(Y) = - f(Y) - (d-1) Y(1- Y). 
\eeq
 \eseq

See Figure \ref{fig:coordinate2} for an illustration of the curves $U_{\D_Y }(Y)$ (red), $U_{\D_U}(Y)$ (blue).

We have the following basic properties about $U_{\D_Y}, U_{\D_U}, U_g$. 

\begin{lem}\label{lem:mono}
Suppose that $(d, \ell, \g)$ satisfies $ \ell^{-1/2} < \g < 1, 3 + 2 \ell > d, \ell < d- 1$. For $ Y < 0$, we have $U_{\D_Y}^{\pr}(Y) > 0, U_{\D_U}^{\pr}(Y) > 0$.
For $ - \g < Y < 0$, we have $U_g^{\pr}(Y)>0$. 
\end{lem}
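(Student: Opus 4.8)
The plan is to prove each of the three monotonicity claims directly by computing the derivative of the explicit rational function in question and showing it is positive on the stated interval, using the parameter constraints $\ell^{-1/2}<\g<1$, $3+2\ell>d$, $\ell<d-1$. Recall from \eqref{eq:para2} that $f(Y)=-\e-AY+BY^2$ with $\e=\ell\g^2-1>0$ (since $\g>\ell^{-1/2}$), $A=d+1-(d-1-2\ell)\g$, and $B=2d-1-\ell$; note $B>0$ because $\ell<d-1<2d-1$, and $A>0$ because $3+2\ell>d$ forces $d-1-2\ell<2$, while a short estimate using $\g<1$ and $d\ge4$ gives $A=d+1-(d-1-2\ell)\g>d+1-(d-1-2\ell)=2+2\ell>0$ (and if $d-1-2\ell\le 0$ then trivially $A\ge d+1>0$). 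These sign facts for $\e,A,B$ are the inputs I will feed into the three derivative computations.

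For $U_{\D_U}(Y)=-f(Y)-(d-1)Y(1-Y)=\e+AY-BY^2-(d-1)Y+(d-1)Y^2$, differentiating gives $U_{\D_U}^{\pr}(Y)=A-(d-1)-2(B-(d-1))Y = A-(d-1)+2(d+1-\ell)Y$ after simplifying $B-(d-1)=d-\ell$. Wait — I should recompute carefully: $U_{\D_U}^{\pr}(Y) = A - (d-1) - 2\bigl(B-(d-1)\bigr)Y$. Since $B-(d-1)=d-\ell>0$ (as $\ell<d-1<d$) and $Y<0$, the term $-2(d-\ell)Y>0$, so it suffices that $A-(d-1)\ge 0$, i.e. $A\ge d-1$; and indeed $A=d+1-(d-1-2\ell)\g \ge d+1-(d-1-2\ell)=2+2\ell>2>0$ when $d-1-2\ell>0$, so $A-(d-1) = 2-(d-1-2\ell)(1-\g)\cdot(\text{sign check})$... this needs the bound $(d-1-2\ell)(1-\g)<2$, which follows from $3+2\ell>d\iff d-1-2\ell<2$ together with $0<1-\g<1$. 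If $d-1-2\ell\le 0$ then $A\ge d+1>d-1$ directly. Hence $U_{\D_U}^{\pr}(Y)>0$ for $Y<0$. For $U_g(Y)=\ell(Y+\g)^2-(1-Y)^2$ we get $U_g^{\pr}(Y)=2\ell(Y+\g)+2(1-Y)=2(\ell-1)Y+2(\ell\g+1)$, and on $-\g<Y<0$ both $2\ell(Y+\g)>0$ and $2(1-Y)>0$, so $U_g^{\pr}(Y)>0$ immediately.

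The genuinely delicate case is $U_{\D_Y}(Y)=-\dfrac{(Y-1)f(Y)}{dY-1}$, a rational function with a pole at $Y=1/d>0$, hence smooth on $Y<0$; I will write $U_{\D_Y}^{\pr}(Y)=\dfrac{N(Y)}{(dY-1)^2}$ where $N(Y)=-\bigl[(f(Y)+(Y-1)f^{\pr}(Y))(dY-1)-d(Y-1)f(Y)\bigr]$ is a cubic in $Y$, and show $N(Y)>0$ for all $Y<0$. The main obstacle is controlling the sign of this cubic uniformly in $\g\in(\ell^{-1/2},1)$ (and in $d$ subject to the constraints), since it does not factor cleanly. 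My strategy is to expand $N(Y)=n_3Y^3+n_2Y^2+n_1Y+n_0$, determine the sign of each coefficient from the sign information on $\e,A,B$ (expecting $n_0=-f(0)=\e>0$ after accounting for the boundary term, and the leading behaviour governed by $B,d$), and then argue that for $Y<0$ the positive contributions dominate — either because all relevant coefficients line up in sign against the powers of a negative $Y$, or by grouping $N(Y)$ into manifestly positive pieces. In the specialized setting $d=4$, $\ell=5/3$ of Theorem \ref{thm:main}, where this lemma is actually applied, the coefficients become explicit polynomials in $\g$ of low degree, so as a fallback the positivity of $N$ on $Y<0$ reduces to checking signs of a few one-variable polynomials in $\g$ on $(\ell^{-1/2},1)$, which is exactly the kind of low-degree polynomial sign verification the paper already delegates to light computer assistance (Appendix \ref{app:comp}). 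I expect the clean general argument to go through, with the $d=4$ reduction as a safety net.
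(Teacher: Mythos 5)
Your treatments of $U_{\D_U}$ and $U_g$ are correct and essentially match the paper's. (For $U_g$ your observation that both summands $2\ell(Y+\g)$ and $2(1-Y)$ are individually positive on $-\g<Y<0$ is in fact slightly cleaner than the paper's regrouping.) There is a small algebra slip in the $U_{\D_U}$ discussion — you write $A-(d-1)=2-(d-1-2\ell)(1-\g)$ and then ask for $(d-1-2\ell)(1-\g)<2$, whereas in fact $A-(d-1)=2-(d-1-2\ell)\g$, so the relevant bound is $(d-1-2\ell)\g<2$; your surrounding reasoning ($d-1-2\ell<2$, $0<\g<1$, with the trivial case $d-1-2\ell\le0$) still closes the gap, but the displayed identity should be corrected.

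The genuine gap is in the $U_{\D_Y}$ case, which is the heart of the lemma. You correctly set up $U_{\D_Y}^{\pr}(Y)=N(Y)/(dY-1)^2$ with $N$ a cubic and correctly anticipate the strategy — expand $N$ in powers of $Y$ and check that each monomial is nonnegative on $Y\le 0$ — but you stop short of the actual expansion and sign check, saying only ``I expect the clean general argument to go through,'' with a fallback to computer assistance for $d=4$. The paper does carry this out: writing $(Y-1)f(Y)=BY^3-(A+B)Y^2+(A-\e)Y+\e$, one finds
\[
C(Y)=-2BdY^3+\bigl(Ad+B(3+d)\bigr)Y^2-2(A+B)Y+A+(d-1)\e,
\]
and since $\e>0$, $A>0$, $B>d>0$, each of the four monomials is $\ge 0$ for $Y\le 0$, with the constant term strictly positive; no computer assistance and no restriction to $d=4$ is needed. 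Your guess that the constant coefficient equals $\e$ is also off: it is $A+(d-1)\e$. In short, the plan is right and matches the paper's route, but the claimed conclusion for $U_{\D_Y}$ is not actually established by the proposal — the cubic's coefficients must be computed and their signs verified, which you have outlined but not done.
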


The above properties hold for a wider range of parameters $(\ell, d, \g)$, but we restrict the range to simplify the proof. The parameters in \eqref{eq:para1} with $\ell^{-1/2} < \g < 1$ satisfy the above assumptions.

\begin{proof}
Using the definition of $f(Y)$ \eqref{eq:UY_ODE}, we compute
\[
\bal
 U_{\D_U}^{\pr}(Y) & = - (- A + 2 B Y) - (d-1) +2(d-1) Y 
 = A - (d-1) + 2 (d-1 - B) Y  \\
 & = 2 - (d-1 -2 \ell) \g + 2 (\ell - d) Y  .
 \eal
\]
Since $0 <\g < 1, \ell < d-1,  3 + 2 \ell > d$, and $d \geq 1$, using the formula of $A$ \eqref{eq:para2}, we obtain  
\beq\label{eq:mono_Asign}
A \geq 2 - (d-1 -2 \ell) \g  > \min(2, 2 - (d-1-2\ell) )
= \min(2, 3 + 2 \ell - d ) > 0. 
\eeq
For $Y < 0$, it follows that $2 - (d-1 -2 \ell) \g > 0, 2(\ell - d) Y > 0$, and $ U_{\D_U}^{\pr}(Y) > 0$.

For $U_g^{\pr}$ with $Y \in (-\g, 0)$, we compute 
\[
\bal
U_g^{\pr}(Y) &= 2 \ell (Y + \g) -2(Y-1)
= 2(\ell - 1) Y + 2 \ell \g + 2 \\
& > - 2 (\ell - 1) \g  + 2 \ell \g + 2
= 2 \g+ 2 > 0.
\eal
\]

For $ U_{\D_Y}^{\pr}(Y)$, we compute 
\[
 U_{\D_Y}^{\pr} = \f{  C(Y)  }{(dY-1)^2},
 \quad C(Y) = - ( (Y-1) f(Y) )^{\pr}(d Y-1) + d(Y-1) f(Y).
\]
Thus, we only need to show $C(Y) > 0$. Using the formula of $f(Y)$ \eqref{eq:UY_ODE}, we compute 
\[
\bal
(Y-1) f(Y)
& = (Y-1) (-\e - A Y + BY^2) =  B Y^3 - (A + B) Y^2 + (A-\e) Y + \e , \\
( (Y-1) f(Y))^{\pr}
& = 3  B Y^2 - 2(A + B) Y + A-\e .
\eal
\]

It follows 
\[
\bal
C(Y) & = - ( 3  B Y^2 - 2(A + B) Y + A-\e   ) (d Y - 1)
+ d (  B Y^3 - (A + B) Y^2 + (A-\e) Y + \e ) \\
& = - 2 B d Y^3 + (A d + B(3 +d)) Y^2 -2(A + B) Y + A +  (d-1 )\e.
\eal
\]

Recall $A > 0$ from \eqref{eq:mono_Asign}. Since $ \g > \ell^{-1/2}$ and $\ell < d-1$, we get $\e > 0$ and $B = 2d-1-\ell > d > 0$ \eqref{eq:para2}. Using the estimates of the sign of each term and $Y \leq 0$, we obtain that each monomial in $C(Y)$ is non-negative for $Y \leq 0$. Since
$ A +  (d-1 )\e > 0$, we prove $C(Y) > 0$.
\end{proof}

\subsection{Eigen-system near $Q_s$}
We define 
\bseq\label{eq:grad_Q}
\beq
M_Q := \begin{pmatrix}
\pa_U \D_U  & \pa_Y \D_U \\
\pa_U \D_Y  & \pa_Y \D_Y \\
 \end{pmatrix} \B|_{Q= Q_s}= 
   \begin{pmatrix}
c_1 & c_3 \\
c_2   &  c_4  \\
 \end{pmatrix}.
 \eeq
Using the definitions in \eqref{eq:UY_ODE} and a direct computation, 
at $ (Y, U) =Q_s =(0, \e )$, we yield 
\footnote{
 These formulas have also been derived in \cite[Sections 4.1, 4.2]{shao2024self}.  }
\beq\label{eq:grad_Q_b}
\bal
c_1 & = 4 U + 2 (f(Y)+ k Y(1 - Y))
= 2 \e,  \  && c_3 = 2 U ( d-1 - 2 k Y - A + 2 B Y) 
= 2 \e( d -1 -A), \\
 c_2 &= - 1, \  && c_4 = d U + f(0) - (-A)= (d-1 ) \e + A , \\
\eal
\eeq
\eseq
where $\e, A, B$ are defined in \eqref{eq:para3}. The eigenvalues of $M_Q$ are given by 
\bseq\label{eq:grad_lam_two}
\beq\label{eq:grad_lam}
\bal
 & \lam^2 - (c_1 + c_4)\lam + (c_1 c_4 - c_2 c_3) = 0, \\
 & \lam_{\pm} = \f{  c_1 + c_4 \pm \sqrt{  (c_1 - c_4)^2 + 4 c_2 c_3   }  }{2 } .
 \eal
\eeq
Using the above formulas of $c_i$ and \eqref{eq:para1}, \eqref{eq:para2}, we compute 
\beq\label{eq:grad_lamb}
\bga
 c_1 + c_4 = (d+1) \e + A > 0,  \quad  c_1 c_4 - c_2 c_3 = 2 (d-1) \e (\e + 1) > 0,  \\
 c_3 = 2 \e ( (d-1- 2 \ell) \g - 2) < 0, \quad  (c_1 + c_4)^2   -4(c_1 c_4 - c_2 c_3) > 0 .
\ega
\eeq
 \eseq
We view all the parameters as functions in $\g$. From \eqref{eq:grad_lam_two}, for $\g > \ell^{-1/2}$, we obtain 
\beq\label{eq:lam_sign}
 0 < \lams < \laml  .
\eeq
Since $\e|_{\g = \ell^{-1/2}} = 0$ \eqref{eq:para2}, using the above estimates and \eqref{eq:para2}, we obtain 
\beq\label{eq:lam_limit}
 \lim_{ \g \to (\ell^{-1/2})^+ } \lams(\g) = 0,
 \quad  \lim_{ \g \to ( \ell^{-1/2})^+ } \laml(\g) = \laml(\ell^{-1/2}) > 0.
\eeq

We introduce the parameter 
\beq\label{eq:kappa}
\kp(\g) = \f{\laml}{\lams},
\eeq
related to the asymptotics of the power series of $U$ near the sonic point. 
Using \eqref{eq:grad_lam_two}, we obtain
\beq\label{eq:grad_smooth}
\f{ ( \kp(\g) + 1)^2 }{\kp(\g)}
= \f{ (\laml + \lams)^2}{ \laml \lams}
 = \f{ ( c_1 + c_4)^2}{ c_1 c_4 - c_2 c_3}
 = \f{ ( (d + 1 ) \ell \g - (d - 1-2 \ell ) )^2 }{2 (d-1) \ell (\ell \g^2 - 1)} .
\eeq

We want to study the smooth self-similar profile with $\kp(\g)$ sufficiently large, which provides an important large parameter in our analysis. Since $\g \ell > \ell^{1/2}> 1$, the numerator is always larger than $4$. Thus, we want to choose $\g > \g_*$ with $\g$ close to $\g_*$
\beq\label{eq:gamma_star}
 \g \to  (\g_*)^+,  \quad \g_* =  \ell^{-1/2 } ,
\eeq
which is consistent with the constraint in Lemma \ref{lem:gamma_low}. Note that given $\kp$, we can determine $\g$ via \eqref{eq:grad_smooth}. It is not difficult to see that $\kp(\g)$ decreases in $\g$ for $\g \in (\ell^{1/2}, \ell^{1/2} + c)$ with some absolute constant $c>0$, $\kp(\g) $ is a smooth bijection, 
and it admits an inverse map $\g = \G(\kp)$:
\beq\label{eq:kappa_bi}
\bga
\kp(\g) : I_{\g} \to I_{\kp}, 
\quad \G(\kp) : I_{\kp} \to I_{\g} ,  \quad \G(\kp(\g)) = \g ,  \\
I_{\g} \teq (\ell^{1/2}, \ell^{1/2} + c) ,
\quad I_{\kp} \teq  ( C_{\kp}, \infty )  ,
\quad  C_{\kp} = \kp(\ell^{1/2} + c). 
\ega 
\eeq

Thus, for each integer $n > C_{\kp}$, there exists strictly decreasing $\g_n > \ell^{1/2}$ such that 
\beq\label{eq:kappa_n}
\kp(\g_n) = n > C_{\kp} .
\eeq
In the remainder of the paper,  we drop the dependence of $\kp$ on $\g$ for simplicity. Whenever we refer to choosing $\kappa$ sufficiently large, it is equivalent to taking $\gamma$ sufficiently close to $\ell^{-1/2}$.

Next, we use the ODE \eqref{eq:ODE_UY} to compute the slope of the solution $(Y, U(Y))$ at 
the sonic point $Q_s$, which is given by $(1, U_1)$ with 
\[
U_1 = \f{d U}{d Y} \B|_{ Y = Y_0} .
\]
Denote $ \xi = Y - Y_0$. Applying Taylor expansion near  $Q_s =(Y_0, U_0)$ \eqref{eq:grad_Q}, we get 
\beq\label{eq:eqn_U10}
 \D_{\al}(Y, U(Y)) |_{Q_s} = \f{d}{d Y}  \D_{\al}(Y, U(Y)) \B|_{ Y = Y_0} \xi + O( \xi^2)
  =( \pa_Y \D_{\al} + \pa_U \D_{\al} \cdot U_1 ) \xi + O( \xi^2).
\eeq
Using the derivation for $\na_{Y, U} \D_{\al}$ from \eqref{eq:grad_Q} and the ODE \eqref{eq:ODE_UY}, we yield 
\beq\label{eq:eqn_U1}
 U_1  = \f{ c_1 U_1 + c_3}{ c_2 U_1 + c_4}, \quad c_2 U_1^2 + (c_4 - c_1) U_1 - c_3 = 0 .
\eeq
It is not difficult to show that \eqref{eq:eqn_U1} implies that $(1 , U_1)$ is an eigenfunction of  $M_Q$ \eqref{eq:grad_Q}. Moreover, using \eqref{eq:grad_Q_b} and \eqref{eq:grad_lamb}, we get $-c_3 / c_2 = c_3 < 0$. Thus, the above equation has two roots $U_{1, -}< 0 < U_{1, +}$. The eigenfunction associated with eigenvalue $\lam$ is parallel to 
\[
\uu_{\lam} = \begin{pmatrix}
c_3  \\
\lam - c_1  \\
\end{pmatrix}
=
\begin{pmatrix}
\lam - c_4 \\
c_2 \\
\end{pmatrix}  .
\]
We want to construct a smooth curve that pasts through $Q_s$ along the positive direction $(1, U_{1, +})$. See the black curve in Figure \ref{fig:coordinate2} for an illustration of the direction. Using \eqref{eq:grad_lam_two}, \eqref{eq:grad_Q}, \eqref{eq:para1}, \eqref{eq:para2}, for $\g$ close to $\ell^{-1/2}$, which implies $0< \lams , \e \ll 1$, we get that the direction $(1, U_{1, +})$ corresponds to $u_{\lams}$. We fix $U_1 = U_{1, +}$. Since $(1, U_1)$ and $u_{\lams}$ are parallel, we can represent $U_1$ and $\kp(\g)$ \eqref{eq:grad_smooth} as follows 
\beq\label{eq:grad_lam2}
\begin{gathered}
 U_1 = \f{\lams - c_4}{c_2}, \quad \lams = c_2 U_1 + c_4,
\quad \laml = c_1 + c_4 - \lams = - c_2 U_1 + c_1,  \\
 \kp(\g) = \f{\laml}{\lams} = \f{ c_1 - c_2 U_1}{  c_2 U_1 + c_4 }.  
\end{gathered}
\eeq

\subsection{Outline of the proof}\label{sec:strategy}

In this section, we outline the proof of Theorem \ref{thm:main}. 

In Section \ref{sec:pow_sonic}, we analyze the ODE \eqref{eq:UY_ODE} near the sonic point $ Q_s = ( Y_0, U_0)$ \eqref{eq:sonic_Q} by constructing power series solution $U(Y)$ and estimating the asymptotics of the power series coefficients $U_n$ using induction. 

In Section \ref{sec:shoot}, we use a double barrier argument, a shooting argument, to extend the local power series solution $U(Y)$ to $Y \in [ -c, Y_O - c ]$ for some small $c$, which along with a gluing argument gives rise to a smooth ODE solution $V(Z)$ to \eqref{eq:ODE} for $Z \in [0, Z_0 + \e_1]$ with small $\e_1 > 0$. 
It corresponds to a solution curve $(Z, V)$ connecting $P_O, P_s$. See the black curve in Figure \ref{fig:coordinate1} or the curves above $Q_s$ in Figure \ref{fig:coordinate2} for  illustrations.

In Section \ref{sec:Q_lower}, we use a barrier argument and some monotone properties to 
extend the local power series solution $U(Y)$ for $Y < 0$. 
We further study a desingularized ODE \eqref{eq:ODE_ds} of $(Y_\ds, U_\ds)$ and extend the solution curve $(Y_\ds, U_\ds)$ below $Q_s$ across $Y = 0$. See the black curve below $Q_s$ in Figure \ref{fig:coordinate2} for an illustration.

In Section \ref{sec:proof}, we glue the solutions of the $(Z, V)$-ODE \eqref{eq:ODE}, the $(Y, U)$-ODE \eqref{eq:UY_ODE}, and the desingularized ODE $(Y_\ds, U_\ds)$ \eqref{eq:ODE_ds}, and use some monotonicity properties of the $(Z, V)$-ODE \eqref{eq:ODE} to obtain a global solution to the $(Z, V)$ ODE. 
We then proceed to prove Theorem \ref{thm:main}.

\section{Power series near the sonic point}\label{sec:pow_sonic}

In this section, we study the behavior of the ODE \eqref{eq:UY_ODE} near the sonic point $ Q_s = ( Y_0, U_0)$ \eqref{eq:sonic_Q} using power series expansion
 \beq\label{eq:Tal_U}
 Y = y_0 + \xi, \quad 
U \teq \sum_{ n \geq 0} u_n \xi^n.
\eeq

Although we have $Y_0 = 0$ in our case, we aim to develop a general method which does not depend on fine properties of the ODEs, e.g., specific value or degree of the polynomials. 

\subsection{Recursive formula }\label{sec:recur}

In this section, our goal is to establish Lemma \ref{lem:pow_recur}
for the recursive formula of the power series coefficients $U_i$ of the ODE solution near the sonic point $Q_s = (Y_0, U_0)$.

\vspace{0.1in}
\paragraph{\bf{Notations}}
Throughout this section, we use $y_0,  u_0, u_1,..$ to denote \textit{variables}, and $Y_0, U_0, U_1,..$ to denote the \textit{value} of the power series coefficients of the solution $(Y, U)$ to the ODE \eqref{eq:UY_ODE}.

Firstly, for any two power series $A, B$ with coefficients $\{A_n\}_{n\geq 0} ,  \{B_n\}_{n\geq 0}$, i.e. 
$C(Y) = \sum_{n \geq 0} C_n \xi^n  , C= A, B$, we get 
\beq\label{eq:power_prod}
AB = \sum_{n \geq 0}  ( AB )_n  \xi^n ,
\quad   (AB)_n = \sum_{i \leq n}   A_i B_{n-i}.
\eeq

Given any power series $U$  with coefficients $\{ u_i \}_{i\geq 0}$ and $Y = y_0 + \xi$ \eqref{eq:Tal_U}, since $\D_Y, \D_U$ are polynomials of $U, Y$, using the above convolution formula, we can obtain the power series of $\D_{\al}$
\beq\label{eq:Tal_del}
\bal
  \D_\al( Y, U ) & = \sum_{n \geq 0}  \D_{\al, n}(y_0, u_0, u_1,.., u_n) \xi^n ,
  \quad \al = U, Y .
\eal
\eeq
Applying chain rule, one can obtain that $\D_{\al, n}$ only depends on $y_0, u_0,.., u_n$. For example, we have the following formula for the first term $\D_{\al, 1}$
\beq\label{eq:Tal_del1}
 \D_{\al, 1}(y_0, u_0, u_1) = \f{d}{d \xi } \D_{\al}(Y, U ) \B|_{ \xi = 0}
 = u_1 \cdot \pa_U \D_{\al}( y_0, u_0)
 + \pa_Y \D_{\al}(y_0, u_0) .
\eeq

We introduce the following functions 
\beq\label{eq:recur_const}
c_{\al, j}( y_0, u_0, .., u_j)  \teq \f{1}{j !}  \B( \f{d^j }{d \xi^j }  (\pa_U \D_{\al})(Y, U ) \B)  \B|_{\xi = 0} ,
 \quad \al = U, \ Y .
\eeq
It is easy to see that $c_{\al, j}$ only depends on $y_0, u_0, u_1, .., u_j$ using the chain rule.  
We will use them in Lemmas \ref{lem:pow_lead}, \ref{lem:pow_recur}. 
For simplicity, we drop the dependence of $c_{\al, j}$ on $y_0, u_i$.

We have the following formula regarding the coefficients of $\D_{\al, n}, \al= U, Y$.
\begin{lem}\label{lem:pow_lead}
Let $c_{\al, i}$ be the functions defined in \eqref{eq:recur_const}. For any $(i, j, n)$ with $n \geq 1,  i \geq 0$,  $ j > i$, and $\al = U, Y$, the coefficients $\D_{\al, \cdot}$ defined via \eqref{eq:Tal_del} satisfies
\bseq\label{eq:pow_lead}
\begin{align}
  \pa_{u_n} \D_{\al, n + i}(y_0, u_0, .., u_{n+i}) & =  c_{\al, i} , 
   \label{eq:pow_lead_a} \\
   \pa_{u_j} \pa_{u_n} \D_{\al,  n+ i}(y_0, u_0, .., u_{ n+ i}) & = 0. 
      \label{eq:pow_lead_b} 
\end{align}
Specifically, $ \pa_{u_n} \D_{\al, n + i}(y_0, u_0, .., u_{n+ i})$ does not depend on $u_j$ for $j > i$. 
For any $i < n$, we have 
\beq\label{eq:pow_lead_c}
   \pa_{u_n} \D_{\al,  i}(y_0, u_0, .., u_{i}) = 0. 
\eeq
\eseq

\end{lem}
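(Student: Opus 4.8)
The plan is to extract everything from the single observation that, along the power series $Y = y_0 + \xi$, $U = \sum_{m\geq 0} u_m \xi^m$, the composition $(\pa_U \D_\al)(Y,U)$ is itself a power series in $\xi$ whose $j$-th coefficient is precisely $c_{\al,j}(y_0,u_0,\dots,u_j)$ by definition \eqref{eq:recur_const}, and that this coefficient depends only on $u_0,\dots,u_j$ (chain rule). The second key input is the identity $\pa_{u_n} U = \xi^n$ viewed formally, or more precisely $\pa_{u_n}\bigl(\sum_m u_m\xi^m\bigr) = \xi^n$ and $\pa_{u_n} Y = 0$, together with the fact that $\D_\al(Y,U)$ is a polynomial in $(Y,U)$, so differentiating under the "evaluate at the series" operation is legitimate.

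First I would write $\D_\al(Y,U(Y)) = \sum_{m\geq 0}\D_{\al,m}\xi^m$ and differentiate both sides with respect to the variable $u_n$. By the chain rule for the polynomial $\D_\al$ in its second slot, $\pa_{u_n}\D_\al(Y,U) = (\pa_U\D_\al)(Y,U)\cdot \pa_{u_n}U = (\pa_U\D_\al)(Y,U)\cdot \xi^n$. Now expand $(\pa_U\D_\al)(Y,U) = \sum_{j\geq 0} c_{\al,j}\,\xi^j$ using \eqref{eq:recur_const}. Multiplying by $\xi^n$ and matching the coefficient of $\xi^{n+i}$ on both sides gives $\pa_{u_n}\D_{\al,n+i} = c_{\al,i}$, which is \eqref{eq:pow_lead_a}; matching the coefficient of $\xi^{i}$ for $i<n$ on the right gives $0$ (since $\xi^n$ times a power series has no terms of degree below $n$), which is \eqref{eq:pow_lead_c}. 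For \eqref{eq:pow_lead_b}, differentiate the relation $\pa_{u_n}\D_{\al,n+i} = c_{\al,i}(y_0,u_0,\dots,u_i)$ once more with respect to $u_j$ with $j>i$: since $c_{\al,i}$ depends only on $y_0,u_0,\dots,u_i$, the derivative vanishes. The final sentence of the lemma ("does not depend on $u_j$ for $j>i$") is just a restatement of \eqref{eq:pow_lead_b}.

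To make the coefficient-matching rigorous rather than formal, I would phrase it in the polynomial ring: since $\D_\al$ is a polynomial of degree $d_U$ (resp.\ $d_Y$) in $U$ and some finite degree in $Y$, substituting the truncated series $U = \sum_{m\leq N} u_m\xi^m$ and working modulo $\xi^{N+1}$ makes every manipulation a finite computation in $\mathbb{R}[y_0,u_0,\dots,u_N][\xi]/(\xi^{N+1})$, and the partial derivatives $\pa_{u_n}$ act as ordinary derivations there. Taking $N$ larger than any index in play, the identities above hold as polynomial identities. I would also note that $c_{\al,j}$ depending only on $u_0,\dots,u_j$ is exactly the statement that the $j$-th Taylor coefficient of a composition $g(Y,U(\xi))$ involves only derivatives $u_0,\dots,u_j$ — standard Faà di Bruno bookkeeping.

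The main obstacle is essentially bookkeeping rather than conceptual: one must be careful that $\D_{\al,n+i}$ as written takes $n+i+1$ arguments $u_0,\dots,u_{n+i}$, and that the claim is about the single partial $\pa_{u_n}$ among those, so the indexing (which coefficient of $\xi$ picks up which $u_m$) has to be tracked precisely; the cleanest way to avoid error is the truncated-series / quotient-ring formulation above, where $\pa_{u_n}U = \xi^n$ is an exact identity and the rest follows by reading off coefficients. No delicate estimates are involved here — the genuinely hard analytic work (the asymptotics of $U_n$) comes later and rests on this lemma.
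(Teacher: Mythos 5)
Your proof is correct and takes essentially the same route as the paper: differentiate the series identity $\D_\al(Y,U)=\sum_m \D_{\al,m}\xi^m$ with respect to $u_n$, use the chain rule $\pa_{u_n}\D_\al(Y,U)=(\pa_U\D_\al)(Y,U)\cdot\xi^n$, expand $\pa_U\D_\al$ as $\sum_j c_{\al,j}\xi^j$, and match the coefficient of $\xi^{n+i}$ to get \eqref{eq:pow_lead_a} (and $\xi^i$, $i<n$, for \eqref{eq:pow_lead_c}); then \eqref{eq:pow_lead_b} follows since $c_{\al,i}$ depends only on $y_0,u_0,\ldots,u_i$. The only cosmetic difference is that the paper disposes of \eqref{eq:pow_lead_c} directly by noting $\D_{\al,i}$ involves only $u_0,\ldots,u_i$, whereas you read it off from the same coefficient-matching identity — both are fine, and your quotient-ring framing is a clean way to make the formal manipulations rigorous.
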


\begin{proof}

Recall the expansion \eqref{eq:Tal_U}. For $ i <n$, since $\D_{\al, i}$ only depends on $y_0, u_0, .., u_i$, we obtain \eqref{eq:pow_lead_c} trivially. 

Since $U = \sum_{i\geq 0} u_j \xi^j$, taking $\pa_{u_n} $ on both sides of \eqref{eq:Tal_del} 
and then using \eqref{eq:pow_lead_b}, we yield
\[
 (\pa_U \D_{\al})(Y,  U ) \cdot \xi^n
  = \sum_{ j \geq n}  \pa_{u_n} \D_{\al, j}(y_0, u_0, .., u_{n}, .., u_j) 
 \xi^j 
\]


Matching the coefficients of $\xi^{n+ i}$ on both sides and then evaluating at $\xi = 0$, we get 
\beq\label{eq:pow_lead_pf}
 c_{\al, i} =  \f{1}{i!} \B( \f{d^i}{ d \xi^i} (\pa_U \D_{\al})( Y, U  \B) \B|_{\xi = 0} = \pa_{u_n} \D_{\al, n + i}(y_0, u_0, .., u_{n+ i}) 
\eeq
and prove \eqref{eq:pow_lead_a}. It is not difficult to see that the left hand side of \eqref{eq:pow_lead_pf} only depends on $U$ via $u_0, u_1, .., u_i$. Therefore, if $ j > i$, we yield 
\[
 \pa_{u_j}  \pa_{u_n} \D_{\al, n+i} = \pa_{u_j} c_{\al, i} = 0,
\]
and prove \eqref{eq:pow_lead_b}. 
\end{proof}

\subsubsection{Derivation of the recursive formulas}

Now, we derive the recursive formula for the power series coefficients $U_n, n \geq 0$ of $U$ \eqref{eq:Tal_U} near $Q_s = (Y_0, U_0)$: 
\beq\label{eq:Tal_U2}
 U = \sum_{i \geq 0} U_i \xi^i, \quad Y = Y_0 + \xi 
\eeq
with $U$ satisfying the ODE \eqref{eq:ODE}.

Firstly, recall the notations from \eqref{eq:sonic_Q} and \eqref{eq:grad_Q}. We have $Q_s = (  U_0,  Y_0)$ and 
\beq\label{eq:Tal_nota}
\D_{\al}(Q_s) = 0, \quad \al = U, Y .
\eeq
Thus, the leading coefficient satisfies $\D_{\al, 0} = 0$ for $\al = U, Y$. Using the ODE \eqref{eq:ODE} together with the expansions \eqref{eq:Tal_del} and \eqref{eq:Tal_U2}, we obtain 
\[
 \D_Y(Y, U(Y)) \f{d U}{d Y} = \D_U(Y, U(Y )) , \quad  
  \f{ d U}{d Y} = \sum_{i \geq 0} (i+1) \xi^{i}   U_{i+1} .
\]
Matching the coefficients of $\xi^n$, we get
\beq\label{eq:recur_0}
 \sum_{ 1 \leq i \leq n}  \D_{Y, i} \cdot (n-i+1)  U_{n- i + 1} -   \D_{U, n} = 0, 
\eeq
where $\D_{Y, i}, \D_{U, i}$ evaluate on $y_0= Y_0, u_j =  U_j,  j\leq i$.

Let us define
\beq\label{eq:recur_topN_coe0}
\mfr R_n(y_0, u_0, .., u_n) =  \sum_{ 1 \leq i \leq n}  \D_{Y, i}(y_0, u_0, .., u_n)  \cdot (n-i+1)  u_{n- i + 1} -  \D_{U, n} (y_0, u_0, .., u_n) .
\eeq
From \eqref{eq:recur_0}, evaluating at ($Y_0, U_0, .., U_n$), we have
\beq\label{eq:recur_Rn0} 
\mfr R_n(Y_0, U_0, .., U_n) =0.
\eeq
Next, we derive another formula of $\mfr R_n$ by determining the explicit dependence on the $N$ terms $u_n,.., u_{n-N+1}$. Then, by evaluating 
$\mfr R_n$ on $y_0 = Y_0, u_i = U_i, i \geq 0$, we can obtain the recursive formula of $U_n$ in terms of $U_i$ for  $i < n$. We will choose 
\beq\label{eq:para_N}
N = O(1) \ll n, \quad n - N > \f{n}{2}+2 .
\eeq
When we use the recursive formula later, e.g., in Section \ref{sec:asym_coe}, we choose $n\geq n_1$ with $(n_1, N)$ given in \eqref{eq:induc_para}. 

\vspace{0.1in}
\paragraph{\bf{Expansion of $\mfr R_n$}} 
For $ m$ with $ n - N < m \leq n$, taking $u_m$ derivative on \eqref{eq:recur_topN_coe0} and using \eqref{eq:pow_lead_c}, we obtain
\[
\bal
 \pa_{u_m} \mfr R_n
 & =  \sum_{ m \leq i \leq n} \pa_{u_m} \D_{Y, i} (n-i + 1) u_{n-i+1} 
 +  m \D_{Y, n+1 - m } - \pa_{u_m} \D_{U, n} ,
 \eal
\]
and then using \eqref{eq:pow_lead_a} and changing $ i = m + j$, we get 
\[
 \pa_{u_m} \mfr R_n 
 = \sum_{ 0\leq j \leq n- m} c_{Y, j} ( n - m - j + 1) u_{n+1 - m -j}
 +   m \D_{Y, n+1 - m } - c_{U, n- m}.
\] These identities hold for functions $ \mfr R_n, c_{\al, i}, \D_{\al, i}$ evaluated in any $y_0, u_j, j \geq 0$. 

For $ m$ with $n - N< m \leq n $ and $N < (n-2)/2$ (see the bound in $N$ \eqref{eq:para_N}), we have $n + 1 - m , n+1- m -j \leq N < m$. Thus, the right-hand side of the above formula depends only on $y_0, u_0, u_1, .., u_N$ and is independent of $u_{m}$, and $\mfr R_n$ depends linearly on $u_m$.

To simplify the notations, we introduce the following functions 
\bseq\label{eq:recur_topN_coe3}
\beq\label{eq:recur_topN_coe3a}
\bal
e_{ l} 
& = \sum_{0 \leq j \leq l } c_{Y, j} (l +1  - j)  u_{l +1 - j}
- c_{U, l } ,
\eal
\eeq
where $c_{Y,j}, c_{U,j}$ are defined in \eqref{eq:recur_const}.
Then we can rewrite $ \pa_{u_m} \mfr R_n $ as follows 
\beq\label{eq:recur_topN_coe3b}
a_{n, m}(y_0, u_0, .., u_{n-m}, u_{n-m + 1}) \teq  \pa_{u_m} \mfr R_n 
=  e_{ n- m}  + m \D_{Y, n -m + 1}. 
\eeq
\eseq

The above discussion implies the following expansion of $\mfr R_n$ in the top $N$ terms 
\beq\label{eq:recur_topN_0}
\mfr R_n(y_0, u_0, .., u_n) = \sum_{ n-N < m \leq n}   a_{n, m} u_m + \mfr R_n(y_0, u_0, .., u_{n - N}, 0, ..,0 ) .
\eeq

Combining \eqref{eq:recur_Rn0} and \eqref{eq:recur_topN_coe0}, we have the following results.

\begin{lem}\label{lem:pow_recur}
Given $  U_0,  Y_0, U_1$, for $ 2 N < n -2$, the coefficients $\{ U_i \}_{i\geq 0}$ of the power series  \eqref{eq:Tal_U} solving \eqref{eq:ODE} near $(U_0, Y_0)$ satisfy
\beq\label{eq:recur_topN}
\bal
\sum_{ n -N < m \leq n} a_{n, m}  U_m
& = \D_{U, n}( Y_0, U_0, ..,  U_{n-N}, 0,..,0 ) \\
 & \quad - \sum_{ N+1 \leq i \leq n} (n+1 - i)  U_{n +1 - i}
 \D_{Y, i}( Y_0, U_0, ..,  U_{n -N}, 0.., 0),
 \eal
\eeq
where we abuse notation by denoting $a_{n, m}$ the value of the functions $a_{n, m}$ 
\eqref{eq:recur_topN_coe3} evaluating on $y_0 = Y_0, u_j = U_j$ for $j \geq 0$.
For $m$ with $n - N < m \leq n$, $a_{n, m}$ only depend on $Y_0, U_0, U_1, .., U_N$. Here, we use the convention 
\[\D_{Y, i}( Y_0, U_0, ..,  U_{n -N}, 0.., 0)
= \D_{Y, i}(Y_0, U_0,  .., U_i)\]
if $n- N \geq i$.
In particular, for $ N = 1$ and $ n \geq 2$,  we have
\beq\label{eq:recur_top}
\bal
( n \lams - \laml)  U_n 
 & = \D_{U, n}( Y_0, U_0, ..,  U_{n-1}, 0 )
 - \sum_{ 2 \leq i \leq n} (n+1 - i)  U_{n +1 - i}
 \D_{Y, i}( Y_0, U_0, .. , U_{n-1}, 0) ,
 \eal
\eeq
where $\lams, \laml$ are defined in \eqref{eq:grad_lam} and the right hand side is independent of $U_n$.

Recall $c_1, c_2$ from \eqref{eq:grad_Q}. With $U_n$, we can further update $\D_{U,n}, \D_{Y,n}$ as follows 
\beq\label{eq:recur_top_update}
\bal
 \D_{U,n}( Y_0, U_0, ..,  U_{n-1}, U_n )
  & =  \D_{U,n}( Y_0, U_0, ..,  U_{n-1}, 0 )
  + c_1 U_n , \\
\D_{Y,n}( Y_0, U_0, ..,  U_{n-1}, U_n )
  & =  \D_{Y,n}( Y_0, U_0, ..,  U_{n-1}, 0 )
  + c_2 U_n  .
  \eal
\eeq

\end{lem}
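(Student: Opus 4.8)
The plan is to unwind the definitions and match power-series coefficients, keeping careful track of which coefficients $U_m$ with $m$ close to $n$ appear linearly. First I would start from the ODE in the form $\D_Y(Y, U(Y)) \,\tfrac{dU}{dY} = \D_U(Y, U(Y))$, substitute the expansions \eqref{eq:Tal_del} for $\D_{\al}(Y,U(Y))$ and the series $\tfrac{dU}{dY} = \sum_{i\geq 0}(i+1)U_{i+1}\xi^i$, and extract the coefficient of $\xi^n$. This gives \eqref{eq:recur_0}, i.e. $\mfr R_n(Y_0, U_0, \dots, U_n) = 0$ with $\mfr R_n$ as in \eqref{eq:recur_topN_coe0}. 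The point is that $\mfr R_n$ is an identity of functions of the formal variables $y_0, u_0, \dots, u_n$, so I may analyze it abstractly and only evaluate at the solution values at the end.

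Next I would establish the linear-in-top-terms decomposition \eqref{eq:recur_topN_0}. For $m$ with $n-N < m \leq n$, differentiating \eqref{eq:recur_topN_coe0} in $u_m$ and applying Lemma \ref{lem:pow_lead} (both \eqref{eq:pow_lead_a} to replace $\pa_{u_m}\D_{\al,i}$ by $c_{\al, i-m}$, and \eqref{eq:pow_lead_c} to kill the terms with $i < m$) yields the closed formula \eqref{eq:recur_topN_coe3b} for $a_{n,m} = \pa_{u_m}\mfr R_n$. The key observation — which needs the constraint \eqref{eq:para_N}, i.e. $2N < n-2$ — is that every index appearing on the right side of that formula, namely $n+1-m$ and $n+1-m-j$ for $0 \leq j \leq n-m$, is at most $N < n-N < m$, so $a_{n,m}$ depends only on $y_0, u_0, \dots, u_N$ and, crucially, not on any $u_{m'}$ with $m' > N$. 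Hence $\mfr R_n$ is jointly linear in the top $N$ coefficients $u_{n-N+1}, \dots, u_n$ with coefficients $a_{n,m}$ frozen at the low-order data, giving \eqref{eq:recur_topN_0}. Evaluating at $y_0 = Y_0$, $u_j = U_j$ and using $\mfr R_n(Y_0, \dots, U_n) = 0$, then moving the $u_m \equiv 0$-residual term $\mfr R_n(Y_0, U_0, \dots, U_{n-N}, 0, \dots, 0)$ to the other side and expanding it via \eqref{eq:recur_topN_coe0} produces \eqref{eq:recur_topN}; the stated convention for $\D_{Y,i}(Y_0, U_0, \dots, U_{n-N}, 0, \dots, 0)$ when $n - N \geq i$ is immediate since $\D_{Y,i}$ only depends on $u_0, \dots, u_i$.

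For the specialization $N = 1$, $n \geq 2$: only $m = n$ survives in the sum, and $a_{n,n} = e_0 + n\,\D_{Y,1}$. I would compute $e_0 = c_{Y,0}\,u_1 - c_{U,0} = U_1\,\pa_U\D_Y(Y_0,U_0) - \pa_U\D_U(Y_0,U_0)$ and $\D_{Y,1} = U_1\,\pa_U\D_Y(Y_0,U_0) + \pa_Y\D_Y(Y_0,U_0)$ from \eqref{eq:Tal_del1} and \eqref{eq:recur_const}, so that $a_{n,n} = U_1(n+1)\,\pa_U\D_Y + n\,\pa_Y\D_Y - \pa_U\D_U$ evaluated at $Q_s$; in the notation of \eqref{eq:grad_Q} this is $c_2 U_1(n+1) + n c_4 - c_1$. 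Using the eigen-relations \eqref{eq:grad_lam2}, namely $\lams = c_2 U_1 + c_4$ and $\laml = c_1 - c_2 U_1$, this collapses to $n\lams - \laml$, yielding \eqref{eq:recur_top}. Finally, \eqref{eq:recur_top_update} is just the statement that $\D_{U,n}$ and $\D_{Y,n}$ are affine in $u_n$ with slopes $\pa_{u_n}\D_{U,n} = c_{U,0} = c_1$ and $\pa_{u_n}\D_{Y,n} = c_{Y,0} = c_2$ by \eqref{eq:pow_lead_a} with $i = 0$, evaluated at $Q_s$.

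The main obstacle I anticipate is the bookkeeping in the differentiation step: correctly tracking the index shift $i = m + j$, verifying that Lemma \ref{lem:pow_lead} applies with the right index (it requires $j > i$ for the vanishing of second derivatives, and $i < n$ in \eqref{eq:pow_lead_c}, which must be checked against the range of $m$ and the hypothesis $2N < n-2$), and making sure the residual term and the surviving linear terms are partitioned without double-counting — in particular that the $i = n+1-m$ term in $\pa_{u_m}\mfr R_n$ which produced $m\,\D_{Y,n+1-m}$ is genuinely $u_m$-independent, which again is exactly where $n+1-m \leq N < m$ is used. The algebraic simplification to $n\lams - \laml$ is routine once the eigenvector identities \eqref{eq:eqn_U1}–\eqref{eq:grad_lam2} are in hand, so I do not expect difficulty there.
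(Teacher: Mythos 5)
Your proposal is correct and follows essentially the same route as the paper: extract the coefficient of $\xi^n$ to get $\mfr R_n=0$, use Lemma \ref{lem:pow_lead} to expose the linear dependence on $u_{n-N+1},\dots,u_n$ with $u$-independent coefficients $a_{n,m}$, evaluate at the solution values, and for $N=1$ simplify $a_{n,n}$ to $n\lams-\laml$ via \eqref{eq:grad_lam2} and $c_{\al,0}=c_1,c_2$. One tiny point worth making explicit: when you expand the residual $\mfr R_n(Y_0,\dots,U_{n-N},0,\dots,0)$ via \eqref{eq:recur_topN_coe0}, the terms with $i\le N$ drop because the factor $u_{n-i+1}$ is being set to zero there ($n-i+1>n-N$), which is why the sum in \eqref{eq:recur_topN} starts at $i=N+1$.
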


\begin{remark}[Parameters and coefficients]
Since in the rest of the paper, we only use the values of the functions $c_{\al, i}, e_{i}, a_{n, m}$ \eqref{eq:recur_const}, \eqref{eq:recur_topN_coe3} evaluating on $y_0 = Y_0, u_j = U_j$ for any $j \geq 0$, we abuse notation by using $c_{\al, i}, e_{i}, a_{n, m}$ to denote the values. 
\end{remark}

\begin{proof}

Evaluating \eqref{eq:recur_topN_0} on $y_0 = Y_0, u_j = U_j$ for any $j \geq 0$, and applying 
$\mfr R_n = 0$ \eqref{eq:recur_0} and the definition \eqref{eq:recur_topN_coe0} to $\mfr R_n(Y_0, U_0, .., U_{n-N}, 0,0, .., 0)$, we prove the identity \eqref{eq:recur_topN}. From \eqref{eq:Tal_del}, we know that $\D_{\al, i}$ only depends on $Y_0, U_0, .., U_i$.

Note that the right hand side of \eqref{eq:recur_top} is the same as \eqref{eq:recur_topN} with $N=1$. Thus to prove \eqref{eq:recur_top}, 
we only need to evaluate $a_{n, n}$ on $Y_0, U_j, j \geq 0$. We first compute $  c_{U, 0}(Y_0, U_0), c_{Y, 0}(Y_0, U_0)$. Using the definition \eqref{eq:recur_const} and the notations \eqref{eq:grad_Q}, we have
\beq\label{eq:recur_const_2}
\begin{gathered}
  c_{U, 0}(Y_0, U_0) = \pa_U \D_U(Y_0, U_0)= c_1, \quad  c_{Y, 0}(Y_0, U_0)
 = \pa_U \D_Y(Y_0, U_0) =  c_2 .
 \end{gathered}
\eeq
Using \eqref{eq:recur_topN_coe3} and the formulas of $c_{U, 0}, c_{Y, 0}$ \eqref{eq:recur_const_2}, $\D_{Y, 1}$ \eqref{eq:Tal_del1}, and then $\lams, \laml$ \eqref{eq:grad_lam2}, we obtain 
\[
\bal
a_{n, n}  & = e_{1,0} + n e_{2, 0}
= c_{Y, 0}U_1 - c_{U, 0} + n \D_{Y, 1}
= c_2 U_1 - c_1 + n  ( U_1 \pa_U \D_Y + \pa_Y \D_Y ) \\
& =  c_2 U_1 - c_1 + n  ( c_2 U_1  + c_4 ) = n \lams - \laml ,
\eal 
\]
and prove \eqref{eq:recur_top}. Using \eqref{eq:pow_lead_a} with $i=0$, we obtain 
\[
\pa_{u_n} \D_{U, n}(y_0, u_0, .., u_n) = c_{U,0} = c_1, \quad 
\pa_{u_n} \D_{Y, n}(y_0, u_0, .., u_n) = c_{Y,0} = c_2.  
\]
From \eqref{eq:recur_const_2}, we obtain that $c_1, c_2$ is independent of $U_n$ \eqref{eq:recur_const_2}. Thus, we prove \eqref{eq:recur_top_update}. 
\end{proof}

To simplify the estimate of the power series coefficients, we consider the renormalized coefficient $\hat U_n$
\bseq\label{eq:cat_num}
\beq
U_n = \hat U_n \mfr C_n , \quad  \mfr C_n = \f{1}{n+1} \binom{2n}{n}, 
\eeq
where $\mfr C_n$ is the Catalan number and $\mfr C_n$ satisfies the following asymptotics and identity 
\beq\label{eq:cat_num_b}
\f{ \mfr C_{n+1}}{ \mfr C_n } = \f{4 n + 2}{ n + 2} ,
\quad 
\lim_{i \to \infty} \f{ \mfr C_{i+1}}{ \mfr C_i } = 4, 
\quad  \sum_{i=0}^n \mfr C_{i} \mfr C_{n-i} =\mfr C_{n+1}. 
\eeq
for any $ n \geq 0$. See \cite[Section 1]{stanley2015catalan} for the identity. Moreover, for $n \geq 4$, we yield 
\beq\label{eq:cat_num_c}
 3 \mfr C_n \leq \mfr  C_{n+1}  < 4 \mfr C_n. 
\eeq

\eseq

\subsection{Asymptotics of the coefficients}\label{sec:asym_coe}

In this Section, we develop refine estimates of $U_n, \hat U_n$, which are crucial for the barrier argument Sections \ref{sec:shoot}, \ref{sec:Q_lower}. Although the relations \eqref{eq:recur_topN} and \eqref{eq:recur_top} seem to be complicated, we will show that the top two terms $a_{n, n}  U_n$ and $a_{n, n-1}  U_{n-1}$ are the dominated terms and determine the asymptotics of $U_n$. Other terms can be treated perturbatively. Using Lemma \ref{lem:pow_lead} and \eqref{eq:recur_topN_coe3}, one can show that 
\beq\label{eq:coe_grow_ansatz}
  - \f{a_{n, n-1}}{ a_{n, n}}  =  \f{n \kp}{ \kp - n} \cdot \f{ \D_{Y, 2} }{ \laml} 
+   O( |n-\kp|^{-1}) .
\eeq
We expect that the asymptotic growth rate of $U_n / U_{n-1}$ is given by the above rate and will justify it in \eqref{eq:induc_asym} in Lemma \ref{lem:asym} and Corollary \ref{cor:asym}, equation \eqref{eq:induc_asym2}.

We have the following estimate for binomial coefficients.

\begin{lem}\label{lem:binom}
For any $ n \geq 1$ and $ 1 \leq j \leq n /2$, we have $\binom{n}{j} \geq \f{ 4^j}{3 j^{1/2}}$.

\end{lem}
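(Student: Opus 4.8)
The plan is to prove the bound $\binom{n}{j} \geq \frac{4^j}{3 j^{1/2}}$ for $1 \leq j \leq n/2$ by a two-step reduction: first reduce to the ``extremal'' case $n = 2j$, then establish the inequality $\binom{2j}{j} \geq \frac{4^j}{3 j^{1/2}}$ directly. For the first step, I would fix $j$ and show that $\binom{n}{j}$ is non-decreasing in $n$ for $n \geq j$ (which is immediate from the identity $\binom{n+1}{j} = \binom{n}{j} \cdot \frac{n+1}{n+1-j}$, whose multiplier exceeds $1$), so that among all admissible $n \geq 2j$ the smallest value of $\binom{n}{j}$ is attained at $n = 2j$. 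Hence it suffices to prove $\binom{2j}{j} \geq \frac{4^j}{3\sqrt{j}}$ for all $j \geq 1$.

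For the central binomial coefficient, the standard lower bound is $\binom{2j}{j} \geq \frac{4^j}{2\sqrt{j}}$, which would already suffice since $\frac{1}{2} > \frac{1}{3}$. I would obtain this either by the classical induction argument — the base case $j=1$ gives $\binom{2}{1} = 2 \geq \frac{4}{2\sqrt{1}} = 2$, and the inductive step uses $\frac{\binom{2j+2}{j+1}}{\binom{2j}{j}} = \frac{(2j+1)(2j+2)}{(j+1)^2} = \frac{2(2j+1)}{j+1} = 4 \cdot \frac{2j+1}{2j+2}$, combined with $\frac{2j+1}{2j+2} \geq \sqrt{\frac{j}{j+1}}$ (equivalent, after squaring, to $(2j+1)^2(j+1) \geq 4j(j+1)^2$, i.e. $(2j+1)^2 \geq 4j(j+1) = (2j+1)^2 - 1$, which holds) — or alternatively by invoking Wallis-type estimates. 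Either route is routine.

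Since the paper is ``light computer assistance'' in flavour and values of small binomials are trivially checked, an even more pedestrian alternative for the write-up is: verify the claim directly for $j \in \{1,2,3,4\}$ by hand, and for $j \geq 5$ use $\binom{2j}{j} \geq \frac{4^j}{\sqrt{\pi j}} \cdot (1 - o(1))$ or the clean bound $\binom{2j}{j} \geq \frac{4^j}{2\sqrt{j}}$ above; I would present the single inductive proof of $\binom{2j}{j} \geq \frac{4^j}{2\sqrt{j}}$ as it is cleanest and self-contained.

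I do not anticipate a genuine obstacle here; the only mild subtlety is making sure the monotonicity-in-$n$ reduction is stated for the right range ($n \geq 2j$, which is what the hypothesis $j \leq n/2$ gives) and that the constant slack between $\frac{1}{2}$ and $\frac{1}{3}$ is invoked so that no edge case of the $\sqrt{j}$-estimate is tight. The whole argument is a few lines once the reduction to $n = 2j$ is in place.
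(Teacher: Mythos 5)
The paper does not give a proof of Lemma~\ref{lem:binom}; it simply defers to the cited reference (Lemma 5.5 of \cite{buckmaster2022smooth}). Your argument is a complete and correct self-contained proof of the statement. The two-step reduction is clean: $\binom{n}{j}$ is increasing in $n$ via $\binom{n+1}{j}/\binom{n}{j}=\frac{n+1}{n+1-j}>1$, so for $n\geq 2j$ it suffices to handle $n=2j$; and the inductive proof of the central binomial bound $\binom{2j}{j}\geq 4^j/(2\sqrt{j})$ is correct — the ratio computation $\binom{2j+2}{j+1}/\binom{2j}{j}=4\cdot\frac{2j+1}{2j+2}$ and the comparison $\frac{2j+1}{2j+2}\geq\sqrt{j/(j+1)}$ (equivalently $(2j+1)^2\geq 4j(j+1)$, which holds with equality plus $1$) both check out, and the base case $j=1$ holds with equality. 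The resulting constant $1/2$ strictly beats the stated $1/3$, so the lemma follows with slack. This is essentially the standard argument for such bounds, and there is no gap; the only cosmetic point is that you might as well state explicitly that the monotonicity reduction uses $j\geq 1$ (to ensure the ratio exceeds $1$), which is granted by the hypothesis.
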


The proof is elementary and we refer to \cite[Lemma 5.5]{buckmaster2022smooth}.

We have the following estimates of the asymptotics of $\hat U_n$.
\footnote{In Lemma \ref{lem:asym} and its proof, we do not assume that $\kp \in (n, n+1)$.}

\begin{lem}\label{lem:asym}

Let $n_0$ be the parameter chosen in \eqref{eq:induc_para}. There exists $\kp_0$ large enough such that for any $\kp > \kp_0$ and $\kp \notin \mathbb{Z}$, the following statements hold true. 

\bseq\label{eq:lem_asym}
 For any $ 0\leq j \leq n <  \kp + 2$, we get 
\footnote{
 In the upper bound of \eqref{eq:induc_unif}, we use $|\hat U_n|$ instead of the more symmetric form $|\hat U_0 \hat U_n|$ since $\hat U_0$ can be $0$. In our case, we have $\hat U_0, U_0 \to 0$ \eqref{eq:sonic_Q} as $\g \to \ell^{-1/2}$.
}
\beq\label{eq:induc_unif}
|\hat U_j  \hat U_{n - j} | \leq \bar C_1  |  \hat U_n| ,
\quad \bar C_1 = 12.46.
\eeq

 For any $ n_0 \leq n < \kp$, we have 
\beq
U_n , \  \hat U_n> 0 . \label{eq:U_sign} 
\eeq

For any $ n_0 \leq n < \kp + 2$, we have 
\beq
  |\hat U_{n-1}|  < |\hat U_{n}| . \label{eq:induc_asym_b}
\eeq

Let $\hat \d = 0.049 $ be chosen in \eqref{eq:para3}, and denote 
\beq\label{eq:C_asym}
C_* = \lim_{ \g \to  ( \ell^{-1/2} )^+}  \f{\D_{Y, 2}}{ \laml} .
\eeq

 For any $n$ with $ n_0 \leq n < \kp + 2$, we get
\begin{align}
  \B| \hat U_{n } -   \f{ C_*}{4} \cdot \f{ n \kp}{ \kp - n } \hat U_{n-1} \B| 
 & \leq \hat \d \B|    \f{ C_*}{4} \cdot \f{ n \kp}{ \kp - n } \hat U_{n-1}  \B| , 
\label{eq:induc_asym} 
\end{align}
\eseq
\end{lem}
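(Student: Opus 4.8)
\textbf{Proof strategy for Lemma \ref{lem:asym}.}

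The plan is to prove all four assertions \eqref{eq:induc_unif}, \eqref{eq:U_sign}, \eqref{eq:induc_asym_b}, \eqref{eq:induc_asym} simultaneously by a single strong induction on $n$, starting from a finite base case $n = n_0, n_0+1, \dots, N_*$ that is verified with computer assistance (the coefficients $U_i$ for $i \le 500$ are computed explicitly, so for $\kp = \G(n)$ with $n$ large one checks the signs and the asymptotic ratio bound directly). The inductive engine is the recursive formula \eqref{eq:recur_top}: dividing by $a_{n,n} = n\lams - \laml = \lams(n - \kp)$, one has
\[
U_n = \frac{1}{\lams(n-\kp)}\left( \D_{U,n}(Y_0,\dots,U_{n-1},0) - \sum_{2\le i\le n}(n+1-i)U_{n+1-i}\,\D_{Y,i}(Y_0,\dots,U_{n-1},0)\right).
\]
The key structural point, already flagged in \eqref{eq:coe_grow_ansatz}, is that on the right-hand side the single term $-a_{n,n-1}U_{n-1}$ (coming from the $i=2$ contribution, which carries the factor $\D_{Y,2}$) dominates: it is of size $\asymp \frac{n\kp}{|\kp-n|}|U_{n-1}|$, while every other term is smaller by a factor $O(1/n)$ or better. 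So I would first isolate $a_{n,n-1}U_{n-1}$ as the leading term, rewrite everything in terms of the renormalized coefficients $\hat U_n = U_n/\mfr C_n$ using the Catalan recursion $\mfr C_{n+1}/\mfr C_n = (4n+2)/(n+2) \to 4$ (this is precisely why the factor $C_*/4$ appears in \eqref{eq:induc_asym}), and then bound the ``remainder'' — all terms with $i \ge 3$ plus the non-leading pieces of $\D_{U,n}$ and $\D_{Y,2}$.

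The remainder estimate is where the convolution bound \eqref{eq:induc_unif} does the work: since $\D_{U,n}$ and $\D_{Y,i}$ are polynomial expressions in $Y,U$ of bounded degree ($d_U = 2$, $d_Y = 1$, $d_F = 2$, $d_G = 3$), their power-series coefficients $\D_{\al,n}$ are finite sums of convolutions of the $U_j$'s of the form $\sum_{j} (\text{poly in }j,n)\, U_j U_{n-j}$ plus lower-order linear pieces. Feeding in the inductive hypothesis $|\hat U_j \hat U_{n-j}| \le \bar C_1 |\hat U_n|$ and the Catalan identity $\sum_{i} \mfr C_i \mfr C_{n-i} = \mfr C_{n+1} \le 4\mfr C_n$ (cf.\ \eqref{eq:cat_num_b}, \eqref{eq:cat_num_c}), one converts each convolution $\sum_j U_j U_{n-j}$ into a bound of the shape $C\,\mfr C_n |\hat U_n|$ — but crucially this bounds $|U_n|$ itself, so one must be careful to peel off the two extreme terms $j=0$ and $j=n$ (which reproduce $c_1 U_n$ and are already accounted for inside $a_{n,n}$) before applying the hypothesis to the genuinely lower-index convolution. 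After dividing by $|a_{n,n}| = \lams|\kp - n|$ and comparing with the leading term, the combinatorial gain is a factor $\les 1/n$, so choosing $n_0$ (equivalently $\kp_0$) large makes the relative remainder strictly below $\hat\d = 0.049$, which closes \eqref{eq:induc_asym}. From \eqref{eq:induc_asym} the sign statement \eqref{eq:U_sign} and the monotonicity \eqref{eq:induc_asym_b} follow by inspecting the sign of $\frac{C_*}{4}\cdot\frac{n\kp}{\kp-n}$ for $n_0 \le n < \kp$ (note $C_* > 0$ must be checked from \eqref{eq:C_asym}, and for $n < \kp$ the factor $\frac{n\kp}{\kp-n}$ exceeds $1$ once $n$ is large enough relative to $\kp$ being not too close — this requires the hypothesis $\kp \notin \mathbb{Z}$ to keep $|\kp - n|$ bounded below, and some care in the regime $n$ just below $\kp$). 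Re-deriving the convolution bound \eqref{eq:induc_unif} for the index $n$ from those for smaller indices is then a separate but parallel computation: one splits the sum $\sum_j \hat U_j \hat U_{n-j}$ at $j \le n/2$, uses \eqref{eq:induc_asym_b} to say the terms grow toward the ends, uses Lemma \ref{lem:binom} and the Catalan asymptotics to control the ratio $\mfr C_j \mfr C_{n-j}/\mfr C_n$, and pins down the constant $\bar C_1 = 12.46$ by a finite computation at the base case plus the large-$n$ tail estimate.

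The main obstacle I expect is the interface between the two competing regimes for the factor $\frac{n\kp}{\kp-n}$: when $n \ll \kp$ this factor is $\asymp n$ and the leading term is comfortably dominant, but as $n \to \kp^-$ the denominator $\kp - n$ shrinks, the leading term blows up, and one needs the hypothesis $n < \kp$ (together with $\kp \notin \mathbb Z$, giving $|\kp - n| \ge \mathrm{dist}(\kp,\mathbb Z) > 0$ at integer $n$, but for non-integer arguments one still needs $n$ to avoid $\kp$) to keep the recursion from degenerating — and this is exactly the window $n_0 \le n < \kp$ in which \eqref{eq:U_sign} is asserted, while \eqref{eq:induc_unif}, \eqref{eq:induc_asym_b}, \eqref{eq:induc_asym} are asked for the slightly larger range $n < \kp + 2$. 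Handling $n \in \{\lceil\kp\rceil, \lceil\kp\rceil+1\}$, where $a_{n,n} = \lams(n-\kp)$ changes sign and the ``leading term'' interpretation flips, will require treating those one or two indices by hand (which the computer-assisted base-case verification can absorb if $\kp_0$ is chosen so that $\kp < N_*$ is impossible, i.e.\ by always taking $\kp$ large the transition indices are themselves large and the tail estimate still applies with the sign bookkeeping adjusted). The second delicate point is simply constant-tracking: the numbers $\bar C_1 = 12.46$ and $\hat\d = 0.049$ are tight enough that the induction only closes if the $O(1/n)$ and $O(|\kp - n|^{-1})$ errors are estimated with explicit, not merely asymptotic, constants, so every polynomial-in-$j$ weight appearing in $\D_{\al,n}$ must be bounded crudely but honestly, and the choice of $n_0$ in \eqref{eq:induc_para} is dictated by making those explicit errors fit under the stated margins.
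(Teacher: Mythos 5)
Your overall architecture matches the paper: a joint strong induction on all four assertions, a finite computer-verified base case, Catalan renormalization with the asymptotic ratio $\mfr C_n/\mfr C_{n-1}\to 4$ explaining the factor $C_*/4$, isolation of the leading term $a_{n,n-1}U_{n-1}\sim n\D_{Y,2}U_{n-1}$, and a continuity-in-$\g$ step. However, there is a genuine gap in the mechanism you propose for the remainder estimate.

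You credit the uniform convolution bound \eqref{eq:induc_unif} with making the remainder small, claiming this yields a gain of $\lesssim 1/n$ relative to the leading term. This does not close. Using \eqref{eq:induc_unif} and the Catalan identity, after peeling off $j=0,n$, the convolution $\sum_{1\le j\le n-1}|\hat U_j\hat U_{n-j}|\mfr C_j\mfr C_{n-j}$ is bounded by $\bar C_1|\hat U_n|\mfr C_{n+1}\asymp|U_n|$. But the leading term $|a_{n,n-1}U_{n-1}|\asymp n|U_{n-1}|$, and for $\kp\in(n,n+1)$ the asymptotic relation \eqref{eq:induc_asym} makes $|U_n|\asymp C_*\tfrac{n\kp}{\kp-n}|U_{n-1}|\gtrsim n^2|U_{n-1}|$, which is a factor of $n$ \emph{larger} than the leading term, not smaller. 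So the uniform bound alone is off by a power of $n$. The paper instead establishes, as a \emph{consequence} of the full inductive hypothesis (crucially including the growth rate \eqref{eq:induc_asym}), the exponential-decay refinement \eqref{eq:induc_decay}: $|\hat U_{j+l-1}\hat U_{m-j}|\le\max(|\hat U_l|,1)|\hat U_{m-1}|\,2^{-\min(j-1,m-j-l)}$. This pins the convolution not at scale $|\hat U_n|$ but at scale $|\hat U_{n-1}|$ with geometric decay in the interior indices, which is exactly what supplies the missing factor of $1/n$. Deriving \eqref{eq:induc_decay} is a separate non-trivial step (Section \ref{sec:induc_conseq}), using Lemma \ref{lem:binom} and the explicit error margin $\hat\d$; it is not a reshuffling of \eqref{eq:induc_unif}.

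A related structural difference: you run the induction with the one-step recursion \eqref{eq:recur_top}, i.e.\ $N=1$. The paper deliberately uses the $N$-term recursion \eqref{eq:recur_topN} with $N=30$, precisely so that the truncated-tail terms carry extra factors $2^{-N}$, $q_{n_1}^{-(n-1-j)}$ with $q_{n_1}\asymp n_1$, which are needed to fit the remainder under the tight threshold $\hat\d=0.049$ (see \eqref{eq:power_Ul2}, \eqref{eq:induc_est_J1}--\eqref{eq:induc_est_err}, and Lemma \ref{lem:induc_claim}). The $N=1$ form suffices for the cruder analyticity bound in Proposition \ref{prop:analy}, but not for the sharp ratio asymptotics being proved here. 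Without both the decay estimate \eqref{eq:induc_decay} and an $N$-term truncation with $N$ moderately large, the induction does not close with the stated constants.

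Smaller remarks: (i) the sign/monotonicity assertions \eqref{eq:U_sign}, \eqref{eq:induc_asym_b} do follow easily from \eqref{eq:induc_asym} as you say, and the hypothesis $\kp\notin\BZ$ indeed keeps $a_{n,n}=\lams(n-\kp)$ from vanishing; (ii) your worry about the transition $n>\kp$ is already built into the statement's range $n<\kp+2$, and the sign flip of $\kp-n$ for $n>\kp$ is precisely why \eqref{eq:U_sign} is restricted to $n<\kp$ — no extra hand treatment is needed; (iii) the continuity-in-$\g$ transfer from the $\g=\ell^{-1/2}$ base case (where the $U_i$'s are computed) to $\g$ slightly above, used implicitly in your base-case discussion, should be made explicit, as in the paper's Lemmas \ref{lem:asym_claim}, \ref{lem:induc_claim}.
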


In \eqref{eq:induc_unif}, \eqref{eq:induc_asym_b}, \eqref{eq:induc_asym}, we consider the range of $n$: $ n < \kp +2$ larger than $n < \kp$ for later estimates in Section \ref{sec:shoot}. From \eqref{eq:induc_asym}, for $n > \kp$, $U_n$ changes sign, and \eqref{eq:U_sign} does not hold.

\vspace{0.1in}
\paragraph{\bf{Ideas and strategy}}

We expect that the asymptotics of $\hat U_n$ is given by \eqref{eq:induc_asym}, which quantifies the growth rate \eqref{eq:coe_grow_ansatz} for $U_n / U_{n-1}$. Since $|U_i|$ grows very fast in $i$, we can treat the remaining terms $U_i, i\leq n-2$ in \eqref{eq:recur_top}, \eqref{eq:recur_topN} perturbatively.

Recall $d_Y, d_U$ from \eqref{eq:poly_degUY}. To prove the induction, we introduce a few parameters 
\beq\label{eq:induc_para}
l_0 = \max(d_Y, d_U) = 2, \quad 
n_0 = 20, \quad  j_0 = 25, \quad  N = 30,  \quad  n_1 = 450 .
\eeq

 We require $n \geq n_0$ in \eqref{eq:U_sign}-\eqref{eq:induc_asym}  with $n_0$ not too small, since these estimates may not hold true for small $n$. We list a few conditions satisfying by the parameters \eqref{eq:induc_para} in the following lemma.
We verify them with computer assistance and will use them to prove Lemma \ref{lem:asym}. 
See more discussions on the computer assistance in Appendix \ref{app:comp}.

\begin{lem}[Computer-assisted]\label{lem:asym_claim}
Let $l_0, j_0, n_0, n_1$ be the parameters in \eqref{eq:induc_small}, $C_*$ in \eqref{eq:C_asym},
and $\bar C_1$ in \eqref{eq:induc_unif}. For $\g = \ell^{-1/2}$ (thus $\kp =\infty$ \eqref{eq:grad_smooth}), the following statements hold true.

(a) Inequality \eqref{eq:induc_unif} holds true for any 
$ 0 \leq n \leq n_1$, 
\eqref{eq:U_sign},\eqref{eq:induc_asym_b},\eqref{eq:induc_asym} hold true for any $ n_0\leq n \leq n_1$, and 
$\bar C_1 > \max(1, |\hat U_0|)$. 

(b) For any $j, l$ with $1 \leq j \leq j_0, 0\leq l \leq l_0$, we have
\beq\label{eq:induc_small}
  ( \f{C_*}{4}(1 -\hat  \d )  )^{j-1} (n_1 + 1 - N - j_0)^{j-1}  \geq  2^{j-1}  \frac{ | \hat U_{j+l-1} | }{ \max( |\hat U_l|, 1) }  .
\eeq

(c) The parameter $M_1$ defined below satisfies 
\beq\label{comp:induc_1}
M_1 \teq \sup_{ l \leq l_0}  \f{ |\hat U_{n_0}|}{ n_0!  \max( |\hat U_l|, 1) } (   \f{ C_*}{4} )^{l - n_0 } ,
\quad \B( \f{ 1}{j_0 + l_0 } \B)^{l_0}  \cdot 
 \f{1}{M_1}  
\cdot  \f{ ( \f{9}{5})^{j_0}}{ 3 j_0^{1/2 } }  > 1.
\eeq
\end{lem}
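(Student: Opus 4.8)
The plan is to observe that Lemma \ref{lem:asym_claim} is entirely a statement about finitely many explicitly computable real numbers at the single parameter value $\g = \ell^{-1/2}$ (equivalently $\e = 0$, $\kp = \infty$), so it is amenable to a direct finite numerical verification rather than an analytic argument. First I would make precise that at $\g = \ell^{-1/2}$ the ODE \eqref{eq:UY_ODE} has all coefficients $\e, A, B$ given by explicit rational (or algebraic) numbers through \eqref{eq:para1}, \eqref{eq:para2}, and $U_0 = \e = 0$, $Y_0 = 0$. The slope $U_1$ is the positive root of the quadratic $c_2 U_1^2 + (c_4 - c_1)U_1 - c_3 = 0$ from \eqref{eq:eqn_U1}, which at $\e = 0$ degenerates; one must check that $U_1$ is still well-defined and equals the appropriate limit (this is where $C_*$ in \eqref{eq:C_asym} enters: $C_* = \lim_{\g \to (\ell^{-1/2})^+} \D_{Y,2}/\laml$ is the limiting growth constant). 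Then the recursion \eqref{eq:recur_top} with $\lams \to 0$, $\laml \to \laml(\ell^{-1/2}) > 0$ becomes, in the limit, $-\laml U_n = \D_{U,n}(Y_0, U_0, \ldots, U_{n-1}, 0) - \sum_{2 \le i \le n}(n+1-i)U_{n+1-i}\D_{Y,i}(\cdots)$, which determines $U_0, U_1, \ldots, U_{n_1}$ uniquely and explicitly. I would state that these coefficients are computed symbolically (or in interval arithmetic) up to $n = n_1 = 450$, and the renormalized $\hat U_n = U_n / \mfr C_n$ are extracted via \eqref{eq:cat_num}.

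Next, for part (a), I would simply record that with $\kp = \infty$ the factors $\f{n\kp}{\kp - n}$ in \eqref{eq:induc_asym} degenerate to $n$ and the constraints $n < \kp + 2$ become vacuous, so the inequalities \eqref{eq:induc_unif}, \eqref{eq:U_sign}, \eqref{eq:induc_asym_b}, \eqref{eq:induc_asym} reduce to finitely many inequalities among the computed numbers $\hat U_0, \ldots, \hat U_{n_1}$ and the constants $\bar C_1 = 12.46$, $\hat\d = 0.049$, $C_*$; each is checked directly. The bound $\bar C_1 > \max(1, |\hat U_0|)$ follows since $\hat U_0 = U_0/\mfr C_0 = \e = 0$ at this parameter. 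For part (b), inequality \eqref{eq:induc_small} is again a finite check: for each $j \in \{1, \ldots, j_0\}$ and $l \in \{0, 1, 2\}$ one compares the left side (a product of explicit powers of $\tfrac{C_*}{4}(1-\hat\d)$ and of $n_1 + 1 - N - j_0 = 450 + 1 - 30 - 25 = 396$) against the right side involving $|\hat U_{j+l-1}|/\max(|\hat U_l|, 1)$; since $j + l - 1 \le j_0 + l_0 - 1 = 26 \le n_1$, all referenced coefficients are among those already computed. For part (c), $M_1$ is a maximum over $l \le l_0$ of an explicit expression in $\hat U_{n_0} = \hat U_{20}$, $n_0! = 20!$, $\hat U_l$, and $\tfrac{C_*}{4}$, all explicit; then the displayed inequality $\bigl(\tfrac{1}{j_0 + l_0}\bigr)^{l_0} \cdot \tfrac{1}{M_1} \cdot \tfrac{(9/5)^{j_0}}{3 j_0^{1/2}} > 1$ is one final numerical comparison.

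Throughout, I would be careful about rigor of the numerics: rather than floating-point, I would perform the coefficient computation in exact rational arithmetic (since all inputs are rational once $\ell = 5/3$ and $\g = \ell^{-1/2} = \sqrt{3/5}$ — note $C_*$ and $U_1$ may involve $\sqrt{\ell}$, so the field is $\mathbb{Q}(\sqrt{15})$ or similar, still exact), or in rigorous interval arithmetic, so that each asserted strict inequality is certified with a definite sign. I would cite Appendix \ref{app:comp} for the implementation details and note that the recursion \eqref{eq:recur_topN} / \eqref{eq:recur_top}, together with the explicit polynomial forms of $\D_U, \D_Y$ in \eqref{eq:UY_ODE} and the convolution formula \eqref{eq:power_prod}, gives a completely mechanical rule for generating $\D_{U,n}$ and $\D_{Y,i}$ from $U_0, \ldots, U_n$.

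The main obstacle I anticipate is not any deep mathematics but two bookkeeping points: first, correctly handling the $\e \to 0$ / $\kp \to \infty$ degeneration so that $U_1$, $C_*$, and the limiting recursion are the genuine limits of the $\g > \ell^{-1/2}$ quantities (one should verify continuity/analyticity of $\hat U_n(\g)$ near $\g_*$, which follows because $n\lams - \laml \to -\laml \ne 0$ stays bounded away from zero for each fixed $n$, so no division-by-zero occurs in \eqref{eq:recur_top} in the limit); and second, ensuring the numerical computation of $450$-odd coefficients — whose magnitudes grow roughly like $\mfr C_n (C_*/4)^n n!$-ish, i.e. enormously — is done with enough precision (hence exact or interval arithmetic) that the strict inequalities, some of which may be close, are genuinely certified rather than merely apparent in double precision. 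Everything else is a routine, if lengthy, finite verification.
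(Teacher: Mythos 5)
Your proposal is correct and matches the paper's approach: Lemma~\ref{lem:asym_claim} is verified by computing the power series coefficients $\hat U_0,\dots,\hat U_{n_1}$ at the degenerate parameter $\g=\ell^{-1/2}$ via the recursion \eqref{eq:recur_top} (where $a_{n,n}=n\lams-\laml\to-\laml\neq0$, so no division by zero occurs) and then checking the finitely many inequalities in parts (a)--(c) numerically; the paper defers exactly this to Appendix~\ref{app:comp}, where it is implemented in SageMath. Your observations about $\hat U_0=\e=0$ at this parameter, about the $\kp\to\infty$ simplification $\f{n\kp}{\kp-n}\to n$, and about needing rigorous (interval or exact) arithmetic are all consistent with what the paper does; the continuity of $U_n(\g)$ near $\g_*$ that you flag is indeed invoked by the paper immediately after the lemma to transfer the verification to $\g$ slightly above $\ell^{-1/2}$.
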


Let us motivate the choice of the parameters in \eqref{eq:induc_para}. First, we fix $l_0$ as in \eqref{eq:induc_para}. Next, we choose $n_0$ large enough so that conditions \eqref{eq:induc_asym}, \eqref{eq:induc_asym_b}, and \eqref{eq:U_sign} begin to hold. We then choose $j_0$ large enough to verify \eqref{comp:induc_1}. With $(j_0, l_0)$ fixed, \eqref{eq:induc_small} holds for any 
$n_1, N$ with $n_1 - N \geq n^*(j_0, l_0)$. We choose $N$ large and then take $n_1 = n_1(N) \geq n^*(j_0, l_0) + N$ to be sufficiently large. See the discussion below \eqref{eq:I_ijl_est}.

Using \eqref{eq:recur_top} and \eqref{eq:lam_limit}, we can obtain that $U_n$ is continuous in $\g$ as $\g \to \ell^{-1/2}$ for any $n \leq n_1$.  With Lemma \ref{lem:asym_claim} and continuity of $U_n$ in $\g$, 
 Lemma \ref{lem:asym} holds for all $n \leq n_1$ and $\kp$ large enough (equivalent to $\g - \ell^{-1/2}$ small). Below, we assume that the inductive hypothesis is true for the case of $ \leq n-1$ and we will prove the case of $n$ with $ n > n_1 > n_0 + 2 N $. In Section \ref{sec:induc_conseq}, we derive the consequence of the inductive hypothesis. In Section \ref{sec:induc_del}, we estimate the coefficients of the power series of $\D_{U}, \D_Y$. In Section \ref{subsec:induc_pf}, we estimate $\hat U_{n}$ and prove the induction.

\begin{remark}

The proof presented below does not take advantage of the specific forms of $\D_Y, \D_U$,
which are polynomials in $U$ with low degree $(\leq 2)$. We can prove it by checking the desired properties of $U_i, i \leq n_1$ with $n_1$ large enough with computer assistance and then use induction to prove the properties of $U_i$ with large enough $i$.

\end{remark}

\vspace{0.1in}
\paragraph{\bf{Constants in the estimates}}
Recall the constants $e_{l}$ from \eqref{eq:recur_topN_coe3}. 
In later proof, we will use the following constants, which have size of $O(1)$ compared to $\kp$ and $n$ \bseq\label{eq:induc_C}
\begin{align}
    b_{2, l} &= \bar C_1^{l} \max( |\hat U_1|,1), \ l \geq 0,  \quad b_{2, -1} =  1 ,
    \quad  b_3   = \max_{ i\leq N, 0\leq l \leq l_0}  \B| \f{\hat U_{i}}{\hat U_{i+l}} \B| , \label{eq:induc_C_b12}       \\
   q_n  &= \f{1}{4} (n - N) (1 - \hat \d)   C_* . \label{eq:induc_C_q} 
\end{align}
\eseq

We use $ b_{2, l}$ to denote the constants in the estimates of $U_{\mw{TR}, j}^l$ 
in \eqref{eq:power_Ul1} for $ l \geq 0$. The special value $b_{ 2,-1}$ is used to denote the constant in the exceptional case. The parameter $b_3$ is used to denote the constant in \eqref{eq:I_ijl_est1}. We use $q_n$ to denote the decay rate in the estimates of $\hat U_j, U_{\mw{TR, j}}^l$ in \eqref{eq:power_Ul2}.

\subsection{Consequence of the inductive hypothesis}\label{sec:induc_conseq}

Suppose that the inductive hypothesis holds true for the case of $\leq n - 1$ and 
$\kp + 2> n \geq n_1$. We show that for $(l, j, m)$ with 
\beq\label{eq:induc_decay_ass}
0\leq l \leq l_0 < j_0,
 \quad 1 \leq j, \quad j + l \leq m , \quad   n_1 - N \leq m \leq n < \kp + 2,
 \eeq
where $l_0, j_0, n_1$ are chosen in \eqref{eq:induc_para}, we have the inequality
\footnote{
We use $\max( |\hat U_l | , 1) |\hat U_{m-1}|$ instead of $|\hat U_l \hat U_{m-1}|$  in the upper bound since $\hat U_l$ would be $0$.
}
\bseq\label{eq:induc_decay}
\begin{align}\label{eq:induc_decay_a}
 |\hat U_{j + l - 1} \hat U_{m  - j} | & \leq  \max( |\hat U_l|, 1) |\hat U_{m -1 } | 2^{-\min(j-1, m-j-l)} . 
\end{align}
The estimate is trivial if $m - j = m-1$ or $j+l-1 = m-1$. Due to \eqref{eq:induc_decay_ass}, it remains to consider $m-j, j+l-1 \leq m-2 < \kp$.  Since \eqref{eq:induc_decay_a} is symmetric in $j+l-1$ and $m-j$,  we may without loss of generality assume 
\beq\label{eq:induc_decayb}
j+l-1 \leq m -j\ \leq m- 2. 
\eeq

Since $0 \leq l \leq l_0$,  $ n_0\leq  m < \kp + 2$, from the choices of $l_0, n_0$ in \eqref{eq:induc_para}, we get 
\beq\label{eq:range_j}
 j \leq  (m+1)/2 < \kp - 2.
\eeq
\eseq

To prove \eqref{eq:induc_decay}, we bound $  | \f{ \hat U_{m-1}}{\hat U_{m - j}} | $ from below 
and $  \f{ | \hat U_{j+l-1} | }{ \max( |\hat U_l | ,1 ) } $ from above.  Since $m \geq n_1-N \geq 2  n_0 + 2$ due to \eqref{eq:induc_para} and $1 \leq j \leq (m+1)/ 2$, using \eqref{eq:induc_asym} repeatedly, we get
\beq\label{eq:induc_rate_low}
\bal
\B| \f{ \hat U_{m-1}}{\hat U_{m-j}} \B| \geq 
  (    \f{ C_*(1 -  \hat \d)}{4} )^{j-1} \prod_{ m+1 - j\leq i\leq m -1} 
  \B|\f{i \kp}{\kp - i} \B|
  \geq 
    (    \f{ C_*(1 -  \hat \d)}{4} )^{j-1} 
  \f{ (m-1)!}{(m-j)!}  \prod_{ m+1 - j\leq i\leq m -2} 
  \B|\f{ \kp}{\kp - i} \B| ,
\eal
\eeq
where in the last inequality, we have used $|\f{\kp}{\kp - (m-1)}| > 1$ since 
\[
|\kp - (m-1)| <  \max(\kp - (m-1), m-1- \kp) < \kp  
\]
for $n_1 - N \leq m < \kp + 2$.  Since we consider $m \leq n < \kp + 2$, $\kp - (m-1)$ can be negative.

Recall $j_0, l_0, n_0$ from \eqref{eq:induc_para} and $M_1$ from \eqref{comp:induc_1}: 
\beq\label{eq:induc_M1}
M_1 = \sup_{ l \leq l_0}  \f{ |\hat U_{n_0}|}{ n_0!  \max( |\hat U_l|, 1) } (   \f{ C_*}{4} )^{l - n_0 } .
\eeq

The parameter $M_1$ can be seen as the accumulated error between the actual ratio 
$|\hat U_{n_0}  | / | \hat U_l |$ and the desired asymptotics in \eqref{eq:induc_asym} with $\kp$ much larger than $n_0, l$. 
Next, we estimate $  \f{ | \hat U_{j+l-1} | }{ \max( |\hat U_l|, 1) } $ from above and consider $j \geq j_0 + 1$ and $j \leq j_0$. 

\vspace{0.1in}
\paragraph{\bf{Case 1: $j \geq j_0 + 1$}}
Since $j_0 \geq n_0 \geq l_0 \geq l$ due to \eqref{eq:induc_para} and \eqref{eq:induc_decay_ass}, we get 
\[
\bal
 \f{ | \hat U_{j+l-1} | }{  \max( |\hat U_l|, 1) } 
 =  \B| \f{ \hat U_{j+l-1} }{ \hat U_{n_0}}  \cdot \f{\hat U_{n_0}}{  \max( |\hat U_l|, 1)} \B| 
\leq 
(1 + \hat \d )^{j + l-1- n_0} (  \f{ C_* }{4}  )^{ j -1}
\prod_{ n_0 +1 \leq i \leq j + l -1 }  \f{ i \kp }{ \kp- i} 
\cdot 
 n_0! M_1 . \\
\eal
\]
Each product is non-negative since $j+l-1 \leq m-2 < \kp$ \eqref{eq:induc_decayb}, \eqref{eq:induc_decay_ass}. Moreover, since $l + 2\leq l_0 + 2 < n_0+1$ \eqref{eq:induc_para} and 
$\f{\kp}{\kp -i} > 1$ for $i < \kp$, we obtain 
\[
 \f{ | \hat U_{j+l-1} | }{  \max( |\hat U_l|, 1) } 
  \leq ( \f{1}{4} (1 + \hat \d)  C_*  )^{ j -1}  
(j + l-1)!  \prod_{  l+ 2 \leq i \leq j + l -1 }  \f{ \kp }{ \kp- i} 
\cdot  M_1 . 
\]

Since $\f{\kp}{\kp-i}$ for $i\leq m- 2  < \kp$ \eqref{eq:induc_decay_ass}, \eqref{eq:induc_decayb} is increasing in $i$ and $j + l - 1\leq m - 2$, we yield 
\beq\label{eq:induc_rate_up}
 \f{ | \hat U_{j+l-1} | }{  \max( |\hat U_l|, 1) }  \leq ( \f{1}{4} (1 + \hat \d)   C_* )^{ j -1}  
(j + l-1)!  \prod_{ m+1 -j \leq i \leq m - 2 }  \f{ \kp }{ \kp- i} 
\cdot  M_1  .
\eeq

Combining the estimates \eqref{eq:induc_rate_low}, \eqref{eq:induc_rate_up}, setting
\[
I=\frac{ \max( |\hat U_l|, 1) \cdot |\hat U_{m -1 } |}{|\hat U_{j + l - 1} \hat U_{ m  - j} |} , 
\]
we have 
\[
I
\geq \f{1}{M_1} \f{(m-1)! } { (m-j)! (j+l-1) !} \f{(1 - \hat \d)^{j-1} }{ (1 + \hat \d )^{j-1} } 
= \f{1}{M_1} \binom{m-1}{j-1} \f{(1 - \hat \d)^{j-1} }{ (1 + \hat \d )^{j-1} }  \cdot 
\f{1}{ (j+l-1) (j+l-2) \cdots j} .
\]

We want to show that $I  \geq 2^{j-1}$. Since $l \leq l_0$ \eqref{eq:induc_decay_ass} and  $j-1 \geq j_0$ in this case, using $j+l_0-1 \leq ( j_0 + l_0)\frac{j-1}{j_0}$, we obtain 
\[
 (j+ l - 1) (j+l-2) \cdots j \leq 
(j + l_0 - 1)^{l_0}  \leq  \B( \f{ j_0 + l_0 }{j_0} \B)^{l_0} (j-1)^{l_0}\,.
\]

 In addition, since $ j-1 \leq \f{m-1}{2}$ \eqref{eq:induc_decayb}, Lemma \ref{lem:binom} implies
 \[\binom{m-1}{j-1} \geq  \f{4^{j-1}}{3 {(j-1)}^{1/2}} .\]

Combining the above estimates, we derive
\[\begin{aligned}
I &\geq 
\B( \f{ j_0}{j_0 + l_0 } \B)^{l_0} \f{1}{(j-1)^{ l_0}} \cdot 
 \f{1}{M_1} \cdot \f{4^{j-1}}{ 3(j-1)^{1/2}} \cdot  \f{(1 - \hat \d)^{j-1}}{ (1 + \hat \d )^{j-1} } 
 \\&= \B( \f{ j_0}{j_0 + l_0 } \B)^{l_0}  \cdot 
 \f{1}{M_1} \cdot \f{4^{j-1}}{ 3(j-1)^{1/2+ l_0}} \cdot  \f{(1 - \hat \d)^{j-1}}{ (1 + \hat \d )^{j-1} } .
\end{aligned}
\]

Due to the choices of $j_0, l_0$ in \eqref{eq:induc_para}, for any $j > j_0$, we have $j - 1 \geq j_0 >  2 l_0 + 1$. Since $\f{2 (1 - \hat \d)}{1 + \hat \d} > \f{9}{5}$ and 
 $ (\log \f{9}{5}) \cdot (j-1) \geq 2  (\log \f{9}{5}) (l_0 + \f{1}{2}) >l_0 + \f{1}{2}$, we obtain that $ \f{ ( \f{9}{5})^{j-1} }{ (j-1)^{1/2 + l_0}}$ is increasing in $j$. Thus, we obtain 
\beq\label{eq:induc_M12}
I\geq 2^{j-1} 
\B( \f{ j_0}{j_0 + l_0 } \B)^{l_0}  \cdot 
 \f{1}{M_1}  
\cdot  \f{ ( \f{9}{5})^{j_0}}{ 3 j_0^{1/2 + l_0} } 
= 2^{j-1} 
\B( \f{ 1}{j_0 + l_0 } \B)^{l_0}  \cdot 
 \f{1}{M_1}  
\cdot  \f{ ( \f{9}{5})^{j_0}}{ 3 j_0^{1/2 } }  .
\eeq

Recall $(l_0, j_0)$ from \eqref{eq:induc_para}. We use the condition \eqref{comp:induc_1} in Lemma \ref{lem:asym_claim} to obtain $I \geq 2^{j-1}$.

\vspace{0.1in}
\paragraph{\bf{Case 2: $j \leq j_0 $}} 
Applying  $|\f{\kp}{\kp-i}| > 1$ for $i < \kp + 2$, 
$ j \leq j_0$, and $ n_1- N \leq m$
to \eqref{eq:induc_rate_low}, we get
\[
 | \f{ \hat U_{m-1}}{\hat U_{m-j}} |  \geq 
  (  \f{1}{4}  C_*(1 -  \hat \d) )^{j-1}
 (m +1 - j_0)^{j-1} 
\geq 
  (  \f{1}{4}  C_*(1 -  \hat \d) )^{j-1}
(n_1 + 1 - N  - j_0)^{j-1} .
\]
Using the above estimate and \eqref{eq:induc_small}, we obtain \eqref{eq:induc_decay} 
\[
 \B| \f{ \hat U_{m-1}}{\hat U_{m-j}} \B|  \geq 2^{j-1}  \f{  | \hat U_{j+l-1} | }{ \max( |\hat U_l|, 1) }  .
\]

\subsection{Estimate of $\D_{\al, i}$}\label{sec:induc_del}

We will use \eqref{eq:recur_topN} to estimate the coefficients $\hat U_n$. Below, we estimate
$\D_{\al,i}(Y_0, U_0,.., U_{n- N}, 0.., 0) $. We introduce the truncated power series of $U$ up to $\xi^{n-N}$
\beq\label{eq:U_trunc}
U_{\mw{TR}} := \sum_{j\leq n - N} U_j \xi^j.
\eeq
Since we choose $N$ in \eqref{eq:induc_para} and fix $n$ in the following estimates, to simplify the notation, we drop the dependence of $U_{ \mw{TR}} $ on $n, N$. Note that the above power series involves $U_i$ rather than $\hat U_i$. We recall the relation between $U_i, \hat U_i$ from \eqref{eq:cat_num}.

Denote by $U_{ \mw{TR}, j}^l$ the coefficient of $\xi^j$ in the power series expansion of $(U_{ \mw{TR}} )^l$.  Next, we estimate $U_{ \mw{TR},j}^l$ for $l = 0, 1,..., \max(d_Y, d_U)$ \eqref{eq:poly_degUY} and $j \leq n $. 

For $l\geq 2$ and $j \leq n  $, we get 
\beq\label{eq:U_TR_j}
 U_{ \mw{TR},j}^l = \sum_{i_1 + i_2 + \cdots + i_l = j, \  i_s \leq n- N } U_{i_1} U_{i_2} \cdots U_{i_l} 
 = \sum_{i_1 + i_2 + \cdots + i_l = j, \ i_s \leq n- N } \hat U_{i_1} \hat U_{i_2} \cdots \hat U_{i_l} \mfr C_{i_1}\mfr C_{i_2} \cdots \mfr C_{i_l} .
\eeq

We use the inductive hypothesis to obtain two estimates of $\hat U_{i_1} \hat U_{i_2} \cdots \hat U_{i_l}$ with $l \geq 2$. 
The first estimate applies to $ n - N < j \leq n$ and the second applies to $j \leq n-1$.  

\vspace{0.1in}
\paragraph{\bf{First estimate with $n- N< j \leq n $.}} 
Without loss of generality, we assume $i_1 \leq i_2 \leq \cdots \leq i_l$. Since $i_l \geq 1, i_1 + i_2 + \cdots + i_l = j$ and $j \leq n$, we get $i_1, \cdots, i_{l-1} \leq n-1$. Using \eqref{eq:induc_unif} repeatedly, we get
\beq\label{eq:U_TR_est1}
\bal
& |\hat U_{i_1} \hat U_{i_2} \cdots \hat U_{i_{l-1}} |
\leq \bar C_1^{l-2} |  \hat U_{ i_1 + i_2 + \cdots + i_{l-1}} | . \\
\eal
\eeq

Due to $i_l = \max_{k< l} i_k$, the constraint on $i_k$ \eqref{eq:U_TR_j}, and $n \geq n_1 > N (1 + l_0)$ by the choices of $n_1, l_0, N$ \eqref{eq:induc_para}, we obtain that $i_k$ satisfies 
\beq\label{eq:U_TR_est2}
 n- N \geq i_l\geq \f{n-N}{l} \geq \f{n-N}{l_0} > N, 
 \quad i_1 + .. + i_{l-1} = j- i_l \leq n - N.
\eeq
It is easy to verify that the conditions \eqref{eq:induc_decay_ass} hold for 
$(l,j, m) \to (1, i_1 + i_2 + \cdots + i_{l-1},  j ) $. Thus, using \eqref{eq:induc_decay} with $(l,j, m) \to (1, i_1 + i_2 + \cdots + i_{l-1},  j ) $ and \eqref{eq:U_TR_est1}, we yield
\[
\begin{aligned}
 |\hat U_{i_1} \hat U_{i_2} \cdots \hat U_{i_{l-1}} \hat U_{i_l}| &\leq 
 \bar C_1^{l-2} |  \hat U_{ i_1 + i_2 + \cdots + i_{l-1}} 
 \hat U_{i_l} |  \\& \leq \bar C_1^{l- 2}  
 \max( | \hat U_1 |,1) |\hat U_{j-1}| \max( 2^{- (i_l-1) }, 2^{- (j -i_l-1)} ) .
 \end{aligned}
\]

Since $j$ satisfies $j-1 \geq n -N \geq n_1 - N > n_0$ due to \eqref{eq:induc_para}, we use 
\eqref{eq:induc_asym_b} to obtain $|\hat U_{j-1}| \leq | \hat U_{n-1}|$. From \eqref{eq:U_TR_est2}, we have 
\[
 i_l - 1 \geq N-1 \geq N -1 + j - n,
 \quad j - i_l - 1 \geq j - (n-N) - 1 = N - 1 + j- n,
\]

Thus the above estimate imply 
\bseq\label{eq:U_TR_est3}
\beq
 |\hat U_{i_1} \hat U_{i_2} \cdots \hat U_{i_{l-1}} \hat U_{i_l}| \leq \bar C_1^{l- 2} 
 2^{-( j + N - n - 1)}  \max( | \hat U_1 |,1) |\hat U_{n-1}| 
 = 2^{- (j + N - n -1)} b_{ 2, l-2} |\hat U_{n-1}| ,
\eeq
where $b_{2,l}$ is defined in \eqref{eq:induc_C}. 

\vspace{0.1in}
\paragraph{\bf{Second estimate with $ j \leq n-1$.}} 
Using \eqref{eq:induc_unif} and $i_1 + i_2 + \cdots + i_l = j \leq n-1$, we get another estimate
\beq
 |\hat U_{i_1} \hat U_{i_2} \cdots \hat U_{i_{l-1}} \hat U_{i_l} | 
 \leq \bar C_1^{l - 1} |  \hat U_{j} |  .
\eeq

Using the identity for the Catalan number \eqref{eq:cat_num_b} repeatedly, we get 
\beq
\sum_{i_1 + i_2 + \cdots + i_l = j, i_s \leq n - N} \mfr C_{i_1}\mfr C_{i_2} \cdots \mfr C_{i_l}
\leq 
\sum_{a + i_3 + \cdots + i_l = j + 1, i_s \leq n - N}
\mfr C_{a} \mfr C_{i_3} \cdots \mfr C_{i_l} 
\leq ... \leq \mfr C_{j+l-1}.
\eeq
\eseq

Plugging the above estimates \eqref{eq:U_TR_est3} in \eqref{eq:U_TR_j}, for $l \geq 2$, we obtain 
\bseq\label{eq:power_Ul1}
\begin{align}
| U_{\mw{TR},j}^l | & \leq \bar C_1^{l-1}  | \hat U_{j} | \mfr C_{j+l-1}, 
&& j \leq n-1, \label{eq:power_Ul1a} \\
| U_{ \mw{TR},j}^l | & \leq   b_{ 2, l-2}  2^{- (j+N-n-1)} |  \hat U_{n-1} | \mfr C_{j+l-1}, 
  && n - N <  j \leq n \label{eq:power_Ul1b}.
\end{align}
\eseq

  For $l = 1$, we just use the definition \eqref{eq:U_trunc} and \eqref{eq:cat_num} to obtain the $j-$th coefficient of $U_{ \mw{TR} } $: $U_{ \mw{TR},j}^l = U_{\mw{TR},j} = U_j \one_{j\leq n -N}
= \mfr C_j  \hat U_j \one_{j\leq n -N}$. Since we define $ b_{2, -1} = 1$ \eqref{eq:induc_C_b12}, 
 the above estimates hold trivially for $l = 1$.

 \begin{remark}
 The estimate \eqref{eq:power_Ul1b} applies to $j=n$. 
 We will \textit{only} use this estimate in Section \ref{sec:induc_est_J23} to bound $J_2, J_3$ \eqref{eq:induc_RHS_J}, which involves $U_{ \mw{TR},n}^l$.

 \end{remark}

For $n - N < j \leq n - 1$ and any $n > n_1$, using \eqref{eq:induc_asym}, we get 
\beq\label{eq:power_topN2}
| \hat U_j | <   ( \f{1}{4} (n - N) (1 - \hat \d )  C_* )^{-( n - 1 - j)} | \hat U_{n - 1} |
= q_n^{- (n - 1 - j)} | \hat U_{n - 1} |
< q_{n_1}^{- (n - 1 - j)} | \hat U_{n - 1} |,
\eeq
where we have used $q_n > q_{n_1}$ for $n > n_1$ in the last inequality since $q_n$ defined in \eqref{eq:induc_C} is increasing in $n$.  Moreover, $q_n$ is of order $n$. Combining \eqref{eq:power_topN2} and \eqref{eq:power_Ul1a}, we obtain another estimate of $|U_{ \mw{TR},j}^l |$, which along with \eqref{eq:power_Ul1b} imply
\beq\label{eq:power_Ul2}
|U_{ \mw{TR},j}^l | < \mfr C_{j+l-1} |\hat U_{n-1}|
\min\B(  \bar C_1^{l-1}
 q_{n_1}^{-( n-1-j)},  \  
b_{2, l-2}  2^{- (j+N-n-1)} \B) ,  
\eeq
for $n - N < j \leq n - 1$.

\subsection{Proof of the induction}\label{sec:induc_pf}

We decompose the right side (RS) of \eqref{eq:recur_topN} into three parts as follows: the first term, the summation over $i$ with $ N+1 \leq i \leq n-1$ and with $i = n$
\beq\label{eq:induc_RHS_J}
\bal
\mw{RS} & = J_3 - J_1 - J_2 ,  \\ 
J_1 &= 
  \sum_{ N+1 \leq i \leq n-1} (n+1 - i)  U_{n+1 - i} \D_{Y,  i}( Y_0, U_0, ...,  U_{n- N}, 0,0 ) , \\
\quad 
J_2 &=   U_1 \D_{Y,n}(Y_0, U_0, ...,  U_{n- N}, 0,0 ) , \\ 
J_3 &=  \D_{U,n}(Y_0, U_0, ...,  U_{n- N}, 0,0 ) .
\eal
\eeq

We estimate $J_i$ in Sections \ref{sec:induc_est_J1}, \ref{sec:induc_est_J23}, and the lower order terms on the left side  of \eqref{eq:recur_topN} in Section \ref{sec:induc_est_err}. Our goal is to show that they are small compared to $n |\hat U_{n-1} \mfr C_{n-1}|$, e.g.,
\[
|J_i| \ll n |\hat U_{n-1} \mfr C_{n-1}|.
\]
 We combine these estimates to prove the induction in Section \ref{subsec:induc_pf}.

\subsubsection{Estimate of $J_1$}\label{sec:induc_est_J1}

Recall $\D_Y(Y, U_{ \mw{TR} } ) = \sum_{l \leq d_Y } G_l(Y) (U_{ \mw{TR} })^l $ from \eqref{eq:ODE_UY}. Expanding the coefficients of $\D_{Y,i}$ using \eqref{eq:power_prod}, we get
\beq\label{eq:induc_J1_est1}
\bal
|J_1| & \leq 
\B| \sum_{N + 1 \leq i \leq n-1} 
  \sum_{ 0 \leq l \leq d_Y}  \sum_{j \leq \min(i, \deg G_l )}
(n+1 - i)  U_{n+1 - i} ( G_l)_j U_{ \mw{TR}, i -j}^l \B|, 
\eal
\eeq
where $a_i$ denotes the $i$-th order coefficient of in the power series of $a$. For $l=0$, since $(U_{ \mw{TR} })^0 = 1$ and $N + 1 > d_G \geq \deg G_l$ from the definitions \eqref{eq:induc_para} of $N$ 
and \eqref{eq:poly_deg} for $d_G$, we get 
\[
(G_l)_j U_{ \mw{TR}, i-j}^l  \one_{j \leq \deg G_l ,  N + 1 \leq i}
= (G_l)_j \one_{i=j}\one_{j \leq \deg G_l,  N + 1 \leq i}
= (G_l)_i \one_{ N+1 \leq i \leq \deg  G_l } = 0.
\]

Therefore, we may restrict the summation to $l \geq 1$ in \eqref{eq:induc_J1_est1}. Next, we estimate
\beq\label{eq:I_ijl}
I_{i, j,  l} \teq |U_{n+1-i}  U_{ \mw{TR},i-j}^l |
= \mfr C_{n+1-i} |\hat U_{n+1-i}  U_{ \mw{TR},i-j}^l |
\eeq
and consider $(i, j)$ with $ n- N <  i - j \leq n-1 $ and $i-j \leq n- N, i \geq N+1$ separately. 
We have $i \geq N+1$ due to the constraint in \eqref{eq:induc_J1_est1}. Since we fix $n$ below, 
we drop the dependence on $n$.

\vspace{0.05in}
\paragraph{\bf{Case 1:  $n- N <  i - j \leq n-1 $ }}

In this case, using \eqref{eq:power_Ul2}, we get
\[
\bal
I_{i, j, l} &\leq
 \mfr C_{n + 1 -i} \mfr C_{i-j + l-1 }
 | \hat U_{ n + 1 - i } \hat U_{n-1}| \min( \bar C_1^{l-1} q_{n_1}^{-(n-1 - (i-j))}, b_{2, l-2} 2^{-( i - j + N - n -1)} ) ,\\
 \eal
\]
 In this case, since 
 $n+1 - i \leq N$ and $n+ 1  - (i-j) \leq N + l_0$ ($j \leq d_G \leq l_0$ \eqref{eq:induc_para}), we can bound 
 $|\hat U_{n+1 - i} / \hat U_{n+1 - i + j}|$ by some constant $b_3$ \eqref{eq:induc_C} independent of $n$ and obtain 
\[
\bal
I_{i, j, l} & \leq  b_3 | \hat U_{ n + 1 -(i-j) } | \mfr C_{n+1 -i} \mfr C_{i- j + l-1 }
 | \hat U_{n-1}| \min( \bar C_1^{l-1} q_{n_1}^{-(n-1 - (i-j))}, b_{2, l-2} 2^{-( i -j + N - n -1)} ) . \\
 \eal
\] 

Denote 
\[
 m = n-1 - (i- j) .
\]

From the bound of $i- j$, we get  $0\leq m \leq N-2$ and can further estimate $I_{i, j, l}$ as follows 
\beq\label{eq:I_ijl_est1}
\bal
I_{i,j,l}& =   b_3 | \hat U_{m + 2}|   \mfr C_{n+1 -i} \mfr C_{i-j + l-1 }
 | \hat U_{n-1}|   \min( \bar C_1^{l-1} q_{n_1}^{-m}, b_{2, l-2} q^{-( N -2-m)} )  \\
& \leq  
  b_3 \mfr C_{n + 1 -i} \mfr C_{i- j + l-1 }
 | \hat U_{n-1}|  \max_{ 0\leq m \leq N-2}  
\B( | \hat U_{m + 2}|  \min( \bar C_1^{l-1}  q_{n_1}^{-m}, b_{2, l-2} 2^{-( N -2-m)} )  \B) .
\eal
\eeq

The bound in the maximum is very small by choosing $N$ relatively large and then $n_1$ large.

\vspace{0.05in}
\paragraph{\bf{Case 2: $  i- j\leq n - N, i \geq N+1$.}}

In this case, since $i \geq N+1$, which implies $i-j > N - l_0 >0 $ \eqref{eq:induc_para},  applying \eqref{eq:power_Ul1} to $U_{ \mw{TR},i- j}^l$, we estimate $I_{i,j,l}$ \eqref{eq:I_ijl} as
\bseq\label{eq:I_ijl_case2}
\beq
I_{i,j,l} =\mfr C_{n+1-i} |\hat U_{n+1-i}  U_{ \mw{TR},i-j}^l |
\leq 
 \mfr C_{n + 1 -i} 
  \mfr C_{i- j + l-1} 
\bar C_1^{ l -1}
 | \hat U_{ n + 1 - i }
 \hat U_{ i - j} | .
\eeq

If $ j \leq 1$, using \eqref{eq:induc_decay} with $l = 2-j$, $i \geq N+1$, and $i- j \leq n -N$  again, we obtain
\beq
\bal
 | \hat U_{ n + 1 - i }
 \hat U_{ i - j } | 
 & \leq | \hat U_{n  - 1} | \max( | \hat U_{2- j} | , 1)
 2^{ - \min( n-1-(n+1 -i), n-1- (i- j) )} \\
 & \leq  | \hat U_{n  - 1}| \max( |\hat U_{2- j} | , 1)  2^{-(N- 1)} .
 \eal
\eeq

If $ 2\leq j \leq \deg G_l $, using \eqref{eq:induc_decay} with $l=0$, similarly, we get 
\[
\bal
 | \hat U_{ n + 1 - i }
 \hat U_{ i - j} | 
 & \leq |\hat U_{n+1 - j}| \max( |\hat U_{ 0} | , 1)  2^{ - \min(i-j, n+1 - i)} . 
\eal
\]
Since $\deg G_l \leq d_G$ (see \eqref{eq:poly_deg}), $i$ satisfies $   i - j \leq n- N$ and $i \geq N+1$, we obtain 
\[
i - j \geq N+ 1 - d_G,  \quad n+ 1 - i \geq N - j + 1 \geq N + 1 - d_G,
\quad \min(i- j, n+1 - i) \geq N +1 - d_G. 
\]
Since $n$ satisfies $n-1 \geq n+1 - j \geq n_1 + 1 -l_0 >  n_0$, we 
use \eqref{eq:induc_asym_b} to obtain  $|\hat U_{n+1- j} | \leq |\hat U_{n-1}|$. Using the above three estimates, we simplify the bound as
\beq
 | \hat U_{ n + 1 - i }
 \hat U_{ i - j} |  \leq | \hat U_{n -1}| \max(| \hat U_0|, 1)  2^{ - (N+ 1 - d_G)}  .
\eeq
\eseq

Combining the estimates in \eqref{eq:I_ijl_case2}, we establish 
\beq\label{eq:I_ijl_est2}
I_{i,j,l}  \leq 
2^{-N}  \max\B(  \max( |\hat U_0|, 1) 2^{ d_G- 1}, 2 \max_{m\leq 2} (\max( |\hat U_m| , 1) ) \B) 
\mfr C_{n+1 -i}  \mfr C_{i- j + l -1} | \hat U_{n-1} | .
\eeq

\paragraph{\bf{Summary}} Summing two estimates of $I_{ijl}$ \eqref{eq:I_ijl_est1}, \eqref{eq:I_ijl_est2} for two cases, we establish
\beq\label{eq:I_ijl_est}
 |I_{i,j,l}| \leq \IC_l  \mfr C_{n+1 -i}  \mfr C_{i-j + l -1} | \hat U_{n-1} | , \\
\eeq
for any $ l \geq 1$, where $\IC_l$ is defined as 
\beq\label{eq:induc_C_C}
\bal
 \IC_l & \teq  b_3  \max_{ 0\leq m \leq N-2}  
\B( | \hat U_{m + 2}|  \min( \bar C_1^{l -1} q_{n_1}^{-m}, b_{2, l -2} 2^{-( N -2-m)} )  \B)  \\
 & \quad  + 2^{-N}  \max\B(  \max( |\hat U_0|,1) 2^{ d_G- 1}, 2 \max_{m\leq 2} ( \max( |\hat U_m| ,1) ) \B)  ,
 \eal
 \eeq
 and $b_{2, l -2}, q_{n_1}$ are defined in \eqref{eq:induc_C}. The first term in $\IC_l$ corresponds to the bound \eqref{eq:I_ijl_est1} for Case 1, and the second term for the bound \eqref{eq:I_ijl_est2} in Case 2.

Since $q_{n_1}$ defined in \eqref{eq:induc_C_q} is of order $n_1$, by first choosing $N$ large so that the second part in $\IC_l$ is small and then $n_1$ large 
\footnote{
 Choosing $n_1$ large means that we verify the estimates in Lemma \ref{lem:asym} with computer assistance up to the case $n \leq n_1$ with large $n_1$. 
}
so that the first part is small, we can make $\IC_l$ very small.  Therefore, combining the above estimates, we can estimate $J_1$ as follows 
\[
|J_1 |  \leq n \sum_{N+ 1 \leq i\leq n-1} \sum_{1\leq l \leq d_Y, j \leq \deg G_l } | (G_l)_j |
\IC_l \mfr C_{n + 1 -i}  \mfr C_{i-j + l-1}  |\hat U_{n-1} | .
\]
Summing over $i$ using \eqref{eq:cat_num_b} for $\mfr C_i$, we yield 
\bseq\label{eq:induc_est_J1}
\beq
|J_1|  \leq  \sum_{1\leq l \leq d_Y, j \leq \deg G_l } n | (G_l)_j |  \IC_l  |\hat U_{n-1}| \mfr C_{n - j + l + 1}   \leq n |\hat U_{n-1}| \mfr C_{n + d_Y + 1} 
\sum_{ 1\leq l \leq d_Y, j \leq \deg G_l } | (G_l)_j |  \IC_l   \\
\eeq
Using $\mfr C_{i+1} \leq 4 \mfr C_i$ \eqref{eq:cat_num_c} for any $i \geq 0$, we further bound 
$ \mfr C_{n + d_Y+1} \leq 4^{d_Y+2} \mfr C_{n-1}$ and obtain 
\beq
| J_1| \leq n |\hat U_{n-1}|  \mfr C_{n -1} \cdot 4^{d_Y+2}
\sum_{ 1\leq l \leq d_Y, j \leq \deg G_l } | (G_l)_j |  \IC_l  
\teq n |\hat U_{n-1}| \mfr C_{n-1} \cdot C_{J_1}
\eeq
\eseq
Since $\IC_l$ can be made sufficiently small by choosing $N, n_1$ in order, $C_{J_1}$ can be made very small.

\subsubsection{Estimate of $J_2, J_3$}\label{sec:induc_est_J23}

Recall $J_2, J_3$ from \eqref{eq:induc_RHS_J}. We first estimate $J_2$ .
Using the expansion of $\D_Y(Y, U_{ \mw{TR} })$ from \eqref{eq:ODE_UY}, we get 
\beq\label{eq:J2}
|J_2| = | U_1 \D_{Y, n}|
= |  U_1 \sum_{ 0\leq l \leq d_Y, j \leq \deg G_l  } (G_l)_j U^l_{ \mw{TR}, n - j}| .
\eeq
For $l=0$, since $U_{ \mw{TR} }^0= 1$ and $n$ satisfies $n\geq n_1 >  \max \deg G_i$ \eqref{eq:induc_para} and \eqref{eq:poly_deg}, we obtain
\[
  (G_0)_j (U_{ \mw{TR} })^0_{n-j} = (G_0)_j \one_{j=n} = (G_0)_n =0 .
\]
Thus, we may restrict the summation to $l\geq 1$. Applying \eqref{eq:power_Ul1b} to $U_{ \mw{TR},n-j}^l$, we obtain 
\bseq\label{eq:induc_est_J2}
\beq
\bal
|J_2| &  \leq 
\B|  U_1 \hat U_{n-1}  \sum_{ 1 \leq  l \leq d_Y, j \leq \deg G_l } 
(G_l)_j
b_{2, l-2} 2^{- ( n - j + N - n -1 )} \mfr C_{n-j + l-1} \B| \\
& \leq 
|\hat U_{n-1}| \mfr C_{n + d_Y - 1} 2^{-N}
| U_1 | \sum_{ 1 \leq l \leq d_Y, j \leq \deg G_l }  | (G_l)_j | b_{ 2, l- 2} 2^{j+1}  .\\
\eal
\eeq
Since $\mfr C_{i+1} / \mfr C_i \leq 4$ for any $ i \geq 0$ (see \eqref{eq:cat_num_b}),  
we further bound $\mfr C_{n + d_Y - 1}  \leq \mfr C_{n  - 1} 4^{d_Y}$ to establish 
\beq
|J_2| 
\leq  C_{J_2}  |\hat U_{n-1} \mfr C_{n-1}|, \quad 
C_{J_2 } =   2^{-N + 2 d_G}
| U_1 | \sum_{ 1 \leq l \leq d_Y, j \leq \deg G_l }  | (G_l)_j | b_{2, l - 2} 2^{j + 1} .
\eeq
\eseq

Similarly, we estimate $\D_{U,n}$ in \eqref{eq:recur_topN} using the expansion \eqref{eq:ODE_UY} as follows 
\bseq\label{eq:induc_est_J3}
\beq
\bal
 J_3 & = |\D_{U, n }(  U_0,  U_1, ..  U_{n-N}, 0, .., 0)|\\&
 \leq 
 |\hat U_{n-1}| \mfr C_{n + d_Y - 1} 2^{-N}
 \sum_{ 1 \leq l \leq d_U, j \leq \deg F_l }  | (F_l)_j | b_{2, l - 2} 2^{j +1}   \leq C_{J_3} |\hat U_{n-1} \mfr C_{n-1}| , \\
 \eal
 \eeq
 where $C_{J_3}$ is given by
 \beq
 C_{J_3} =  2^{-N +2 d_Y}
 \sum_{ 1 \leq l \leq d_U, j \leq \deg F_l }  | (F_l)_j | b_{ 2, l - 2} 2^{ j +1} .
\eeq
\eseq

\subsubsection{Estimate the LHS of \eqref{eq:recur_topN}}\label{sec:induc_est_err}
Below, we estimate the left side (LS) of \eqref{eq:recur_topN}. 
We decompose the left side as follows and treat the terms of $U_m, m \leq n-2$ perturbatively
\beq\label{eq:induc_LHS_err}
\mw{LS} = \sum_{ n -N < m \leq n} a_{n, m}  U_m
= a_{n, n}  U_n
+ a_{n, n-1}  U_{n-1} + \cE_n ,   
\quad \cE_n := \sum_{ n -N < m \leq n-2} a_{n, m}  U_m .
\eeq

Using the estimate \eqref{eq:power_topN2} for $\hat U_m$, we estimate the error term $\cE_n$ as follows 
\[
|\cE_n |
\leq \sum_{ n- N < m \leq n - 2} |a_{n, m}| \mfr C_{m} q_{n_1}^{-(n-1-m)}   |\hat U_{n-1}|.
\]
Below, we further estimate $a_{n, m}$ and $\mfr C_m$. For $n > n_1$,  using \eqref{eq:recur_topN_coe3} we get
\[
|a_{n, m}| = |e_{ n-m} + m \D_{Y, n-m + 1}|  \leq n  ( | e_{n-m} | n_1^{-1} + |\D_{Y, n-m + 1}| ) .
\]
Since $m$ satisfies $n > m $ and $m > n -N \geq n_1 - N  > 4$ \eqref{eq:induc_para},  using  \eqref{eq:cat_num}, we obtain $\mfr C_{m} |\hat U_{n-1}| < 3^{m+ 1-n } \mfr C_{n-1} |\hat U_{n-1}| = 3^{ m+ 1 - n} |U_{n-1}|$. Combining these estimates for $\cE_n, a_{n, m}$ and denoting $l = n-m$, we conclude 
\bseq\label{eq:induc_est_err}
\beq
\bal
|\cE_n | & \leq  n | U_{n-1}| \sum_{ 2\leq  n -m \leq N-2 }
( |e_{ n-m}|  n_1^{-1}+   |\D_{Y, n-m+1}|  ) (3q_{n_1} )^{- (n-m-1) }  .
 \\
\eal
\eeq
Using  $l = n-m$, we can rewrite the above summation and obtain the following estimates 
\beq
|\cE_n | \leq C_{\cE}(n_1) \cdot n | U_{n-1}|, \quad 
C_{\cE}(n_1) = \sum_{ 2\leq  l \leq N-2 }
( |e_{ l}|  n_1^{-1}+   |\D_{Y, l + 1}|  ) (3q_{n_1} )^{- (l-1) }  .
\eeq
\eseq
For fixed $N$, since $l-1 \geq 1$, $C_{\cE}$ is of order $n_1^{-1}$.

\subsubsection{Proof of the induction}\label{subsec:induc_pf}


Now, we are in a position to prove the induction and Lemma \ref{lem:asym}, which is based on the following lemma verified with computer assistance. 

\begin{lem}[Computer-assisted]\label{lem:induc_claim}
Recall $\hat \d = 0.049$ \eqref{eq:para3}. For the parameters chosen in \eqref{eq:induc_para} and constants $C_{J_i}, C_{\cE}$ defined in 
\eqref{eq:induc_est_J1}, \eqref{eq:induc_est_J2}, \eqref{eq:induc_est_J3}, \eqref{eq:induc_est_err} with $\g = \g_*=  \ell^{-1/2}$, we have 
\bseq\label{eq:induc_claim}
\begin{align}
 \B( n_1^{-1} (C_{J_2} + C_{J_3} + |e_{1 } | + | \D_{Y, 2} | ) +  C_{J_1} 
+  C_{\cE}(n_1) \B) \B|_{\g = \g_* }  &  < \hat \d , \\
  \max( |\hat U_1|, 1) &  < q_{n_1} , \\
 \f{3}{2 (4 n_1 -2)} \cdot C_* + \f{0.05}{\laml( \g_*)}   & <  \f{C_*}{4} \cdot \hat \d . 
\label{eq:induc_claim_c} 
\end{align}
\eseq
\end{lem}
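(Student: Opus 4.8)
The plan is to certify the three inequalities of \eqref{eq:induc_claim} by an explicit, computer-verified computation at $\g = \g_* = \ell^{-1/2}$, performed with rigorous interval arithmetic. The first step is to specialize all the relevant quantities to $\g_*$. With $d = 4$, $\ell = \f{5}{3}$ and $\g_* = \sqrt{3/5}$ one has $\e = \ell\g_*^2 - 1 = 0$, hence (using \eqref{eq:para2}, \eqref{eq:grad_Q_b}, \eqref{eq:grad_lamb}) $U_0 = \e = 0$, $c_1 = c_3 = 0$, $c_4 = A$, $c_1 c_4 - c_2 c_3 = 0$ and $c_1 + c_4 = A$; therefore $\lams(\g_*) = 0$, $\laml(\g_*) = A = 5 + \f{\sqrt{15}}{15}$ and $\kp = \infty$. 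The positive root of \eqref{eq:eqn_U1} is $U_1 = U_{1,+} = (c_1 - c_4)/c_2 = A$. Since $\lams = 0$, the recursion \eqref{eq:recur_top} says that $-\laml\, U_n$ equals the right-hand side of \eqref{eq:recur_top}, which is an explicit expression in $U_0,\dots,U_{n-1}$; so one generates $U_0, U_1,\dots,U_{n_1}$ exactly in $\mathbb{Q}(\sqrt{15})$ (equivalently, with rational interval enclosures). A short computation then gives $U_2 = 3A - B - 6$ and, from the form of $\D_Y$ in \eqref{eq:UY_ODE}, $\D_{Y,2}(\g_*) = dU_1 - U_2 - A - B = 6$, so that $C_* = \D_{Y,2}/\laml = 6/A$ in \eqref{eq:C_asym}; the limit defining $C_*$ is attained because all the quantities involved depend continuously on $\g$ near $\g_*$ and $\laml(\g_*) > 0$.

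Next I would assemble the constants appearing in \eqref{eq:induc_claim}: the (low-degree) polynomial coefficients $(F_l)_j$, $(G_l)_j$ of $\D_U$, $\D_Y$ read off from \eqref{eq:UY_ODE}, \eqref{eq:ODE_UY}; the renormalized coefficients $\hat U_i = U_i/\mfr C_i$; the auxiliary constants $\bar C_1 = 12.46$, $b_{2,l} = \bar C_1^l\max(|\hat U_1|,1)$, $b_{2,-1} = 1$, $b_3$ and $q_{n_1} = \f{1}{4}(n_1 - N)(1-\hat\d)C_*$ from \eqref{eq:induc_C}; the numbers $c_{\al,j}$, $e_l$, $\D_{\al,i}$ for the finitely many small indices that occur (here $i \le N - 1$ and $l \le N - 2$); and finally $\IC_l$ from \eqref{eq:induc_C_C} and $C_{J_1}, C_{J_2}, C_{J_3}, C_{\cE}(n_1)$ from \eqref{eq:induc_est_J1}, \eqref{eq:induc_est_J2}, \eqref{eq:induc_est_J3}, \eqref{eq:induc_est_err}. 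Each of these is a finite arithmetic expression in the data already computed.

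With these numerical values in hand, the three checks are essentially routine. The bound $\max(|\hat U_1|,1) < q_{n_1}$ is immediate, since $|\hat U_1| = A \approx 5.26$ while $q_{n_1} = \f{1}{4}\cdot 420\cdot 0.951\cdot (6/A) \approx 114$. Inequality \eqref{eq:induc_claim_c}, after inserting $n_1 = 450$, reads $\f{3}{3596}\,C_* + \f{0.05}{A} < \f{C_*}{4}\,\hat\d$, i.e. roughly $9.5\cdot 10^{-4} + 9.5\cdot 10^{-3} < 1.4\cdot 10^{-2}$, which holds. For the first inequality, $C_{J_1}$ is of order $10^{-5}$: inside $\IC_l$ the $\min$ always selects the smaller of the two geometric factors $q_{n_1}^{-m}$ and $2^{-(N-2-m)}$, and for every $0 \le m \le N - 2$ this minimum outweighs the (eventually factorially growing) factor $|\hat U_{m+2}|$; the constants $C_{J_2}, C_{J_3}$ carry an overall factor $2^{-N + O(1)}$ and are likewise negligible. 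The dominant contributions are thus $n_1^{-1}|\D_{Y,2}| = 6/450 \approx 0.0133$, $n_1^{-1}|e_1| \approx 0.0097$, and $C_{\cE}(n_1)$, which is dominated by its $l = 2$ summand $\approx |\D_{Y,3}|/(3q_{n_1}) \approx 0.025$; the total is $\approx 0.048$, just below $\hat\d = 0.049$.

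The main obstacle is not conceptual but one of precision: the first inequality is tight — the gap below $\hat\d$ is only of order $10^{-3}$ — so the computation must be carried out with verified enclosures rather than floating point, and the parameters $N$ and $n_1$ must be taken large enough (the choices $N = 30$, $n_1 = 450$ in \eqref{eq:induc_para} suffice) that $C_{J_1}, C_{J_2}, C_{J_3}$ are genuinely negligible and that the $n_1^{-1}$ prefactor together with the $(3q_{n_1})^{-1}$ in the $l = 2$ term of $C_{\cE}$ are sufficiently small. This tightness is precisely why $\hat\d$ is fixed at $0.049$ in \eqref{eq:para3}; the role of the computer is to certify that at $\g_*$ the assembled constants fall on the correct side of all three inequalities. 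These computations are light and run in a few seconds.
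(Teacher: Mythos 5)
Your proposal is correct and follows the same approach as the paper's computer-assisted verification in \textbf{Wave\_induction.ipynb}: generate $U_0,\dots,U_{n_1}$ at $\g=\g_*$ via the recursion \eqref{eq:recur_top} (which, since $\lams(\g_*)=0$, reads $-\laml U_n = \mathrm{RHS}$), assemble the constants $C_{J_i}$, $C_{\cE}$, $q_{n_1}$, $C_*$ from their definitions, and check the three scalar inequalities by rigorous arithmetic. Your explicit intermediate computations (notably the clean identity $\D_{Y,2}(\g_*)=6$ and hence $C_*=6/A$, and the order-of-magnitude estimates showing $C_{J_1},C_{J_2},C_{J_3}$ negligible with the total $\approx 0.048 < 0.049$) are consistent with what the code produces, and you correctly identify the tightness of the first inequality as the reason $\hat\d$ is pinned at $0.049$; this matches the paper's procedure in Appendix B.
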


\begin{proof}

\textbf{Estimate \eqref{eq:induc_asym}.}
Recall $a_{n, n-1} = e_1 + (n-1) \D_{Y, 2}$ \eqref{eq:recur_topN_coe3b}. 
Combining the estimates \eqref{eq:induc_RHS_J}, \eqref{eq:induc_est_J1}, \eqref{eq:induc_est_J2}, \eqref{eq:induc_est_J3} of $J_i$ for the right side of \eqref{eq:recur_topN} 
and the estimates \eqref{eq:induc_est_err},  \eqref{eq:induc_LHS_err}  for the left side of \eqref{eq:recur_topN}, we can estimate the relation between $\hat U_n, \hat U_{n-1}$ as follows 
\[
\bal
| a_{n, n}  U_n 
+  n \D_{Y, 2}  U_{n-1}| 
& \leq (|e_{1 } |+ | \D_{Y, 2}|)  U_{n-1} + | a_{n, n}  U_n 
+  a_{n,n-1}  U_{n-1}| \\
& =  (|e_{1 } |+ | \D_{Y, 2}|)  U_{n-1}  
+ | J_3 - J_1 - J_2  - \cE_n| \\
& \leq n | U_{n-1}| 
\B(  n_1^{-1} (C_{J_2} + C_{J_3} + |e_{1 } | + | \D_{Y, 2} | ) +  C_{J_1} 
+  C_{\cE}(n_1) \B) . \\
\eal
\]

Using continuity of the functions in $\g$ and Lemma \ref{lem:induc_claim}, for $\kp$ sufficiently large, we obtain 
\[
| a_{n,n} U_n 
+  n \D_{Y, 2}  U_{n-1}| < 0.05 n |U_{n-1}|.
\]

From Lemma \ref{lem:pow_lead} and \eqref{eq:kappa}, we obtain $a_{n, n} = n \lams - \laml = \f{n - \kp}{ \kp} \laml$. Using \eqref{eq:cat_num} and diving the above estimate by $|a_{n,n} \hat C_n \hat U_{n-1}| $, we compute 
\[
  \B| \f{\hat U_n}{\hat U_{n-1}} - \f{n \kp \D_{Y, 2}}{  (n - \kp) \laml }  \f{\mfr C_{n-1}}{\mfr C_n} \B| <  0.05   \f{\mfr C_{n-1}}{\mfr C_n} \f{ n \kp}{ (n - \kp) \laml } .
\]
 Using triangle inequality, we obtain
\[
  \B| \f{\hat U_n}{\hat U_{n-1}} - \f{C_*}{4} \f{n \kp  }{  (n - \kp) } \B|
 \leq C(\g)   \f{n \kp  }{  (n - \kp) } ,
 \quad 
 C(\g) = \f{\mfr C_{n-1}}{ \mfr C_n} |  \f{\D_{Y,2}}{  \laml} -  C_* |
 + | \f{\mfr C_{n-1}}{ \mfr C_n} - \f{1}{4}|  | \f{\D_{Y,2}}{  \laml} |
 + \f{0.05}{\laml}. 
\]

From \eqref{eq:cat_num}, we have $ | \f{\mfr C_{n-1}}{\mfr C_n} - \f{1}{4} | = | \f{n+1}{ 4 n -2} - \f{1}{4} | \leq \f{3}{2 (4 n -2)} $. For $ n \geq n_1$, 
using \eqref{eq:induc_claim_c}, the limit \eqref{eq:C_asym} and continuity of $\laml(\g)$ in $\g$, we obtain 
\[
\limsup_{\g \to \g_*} C(\g) \leq 
 \f{3}{2 (4 n_1 -2)} \cdot C_* + \f{0.05}{\laml( \g_*)}    < \f{C_*}{4} \hat \d . 
\]
Thus, for $\kp$ sufficiently large (equivalent to $\g $ close to $\g_*$), we establish  $C(\g) <\f{C_*}{4} \hat \d $ and \eqref{eq:induc_asym}.

\textbf{Estimates \eqref{eq:U_sign}.} The sign of $\hat U_n$ follows from 
$\hat U_{n-1} > 0$ in \eqref{eq:U_sign} and the relation \eqref{eq:induc_asym}.

\textbf{Estimates \eqref{eq:induc_unif} and \eqref{eq:induc_asym_b}}.
Using the new asymptotics for $\hat U_n$ and $q_n$ defined in \eqref{eq:induc_C_q} and $q_n > q_{n_1}$ for $n > n_1$, we obtain $|\hat U_{n-1} | < q_n^{-1} |\hat U_n| \leq q_{n_1}^{-1} |\hat U_n| $. Since $q_{n_1} > 1$ from \eqref{eq:induc_claim}, we verify 
$|\hat U_{n-1} | <  |\hat U_n|$ in \eqref{eq:induc_asym_b}. Combining this estimate, \eqref{eq:induc_decay} with $l = 1$, and using $q_{n_1} > \max( |\hat U_1|, 1)$ from Lemma \ref{lem:induc_claim}, we establish
\[
   |\hat U_j \hat U_{n-j}| \leq \max( |\hat U_1|, 1) |  \hat U_{n-1}| 
   < q_{n_1} |  \hat U_{n-1}|  < |\hat U_n|. 
\]
for any $1\leq j\leq n-1$. Since $\bar C_1 \geq \max( |\hat U_0|, 1)$ from item (a) in Lemma \ref{lem:asym_claim}, we verify \eqref{eq:induc_unif}.

We prove all the estimates for $n$ in the induction and thus establish Lemma \ref{lem:asym}. 
\end{proof}

\subsection{Convergence of power series}

In this section, we establish the following  estimates of the power series coefficients $\hat U_n$ and establish the convergence of the power series. 

\begin{prop}\label{prop:analy}
Recall the definitions of $\g_m, C_{\kp}, \G$ from \eqref{eq:kappa_bi}, \eqref{eq:kappa_n}. Let $\al = ( 4 \max( d_U, d_Y) )^{-1} $ with $d_U, d_Y$ defined in \eqref{eq:poly_degUY}, $m > C_{\kp}$ be a positive integer, and $I \in (\g_{m+1}, \g_m)$ be a closed interval. There exists a constant $D$ depending on $I$ such that the renormalized power series coefficient \eqref{eq:cat_num} with parameter $\g$ satisfies 
\beq\label{eq:local_Ui}
| \hat U_n(\g) | \leq D^{\max(\al, n -2)}.
\eeq
In particular, for any $\kp \in \kp(I) \subset (m, m + 1 )$, the solution $U^\rn{\kp}(Y) = \sum_{n \geq 0} U_n(\G(\kp)) Y^n$ with $\g = \G(\kp)$ defined in \eqref{eq:kappa_bi} and $U_n = \mfr C_n \hat U_n$ is analytic in $ |Y| < \f{1}{4 D}$. Moreover, $U^\rn{\kp}(Y)$ is continuous in $\kp \in \kp(I) \subset (m, m + 1 )$ and $|Y|\leq (8 D)^{-1}$. 

\end{prop}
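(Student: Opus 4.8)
The plan is to prove the coefficient bound \eqref{eq:local_Ui} by strong induction on $n$, \emph{uniformly} for $\g$ in the compact interval $I$, and then to obtain analyticity and continuity as soft consequences of uniform geometric convergence. Fix $I\subset(\g_{m+1},\g_m)$ closed. By \eqref{eq:kappa_bi}, \eqref{eq:kappa_n} and monotonicity of $\kp(\cdot)$, the ratio $\kp=\kp(\g)$ ranges over the compact set $\kp(I)\subset(m,m+1)$, so $\kp$ stays a definite distance from the integers $m,m+1$, $|n-\kp|$ is bounded below for every integer $n$, and $\lams(\g)=\laml(\g)/\kp$ is bounded below by $\lams^{\min}\teq\min_{\g\in I}\lams(\g)>0$; all these bounds depend only on $I$. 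Since the denominator $n\lams-\laml=(n-\kp)\lams$ in \eqref{eq:recur_top} never vanishes on $I$ and the right side of \eqref{eq:recur_top} is a polynomial in $U_0,\dots,U_{n-1},Y_0$ with coefficients continuous in $\g$, an easy induction on $n$ shows each $U_n(\g)$, hence each $\hat U_n(\g)$ from \eqref{eq:cat_num}, is continuous on $I$; in particular $\sup_{\g\in I}|\hat U_n(\g)|<\infty$ for every fixed $n$.

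The constants are chosen in the order $n_\ast\to D$. First pick $n_\ast=n_\ast(I)$ with $n_\ast\ge 2(m+1)$, so that $|n-\kp|\ge n/2$ for $n>n_\ast$, and large enough to absorb a smallness condition $n_\ast\ge C'(I)/\lams^{\min}$ coming from the inductive step below. Then pick $D=D(I)$ so large that $D^{\max(\al,n-2)}\ge\sup_{\g\in I}|\hat U_n(\g)|$ for all $n\le n_\ast$ (base cases), that $D\ge C(I)/\lams^{\min}$, and that the finitely many Catalan tail sums $\sum_{a\le a_1}\mfr C_a D^{-a}$ appearing below are $\le 2$; there is no circularity since $C(I),C'(I),n_\ast$ will not depend on $D$. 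For the inductive step at $n>n_\ast$ I would start from \eqref{eq:recur_top}, bound the denominator by $|n\lams-\laml|\ge\tfrac12 n\lams^{\min}$, and expand the right side using $\D_U=F_0+F_1U+F_2U^2$, $\D_Y=G_0+G_1U$ from \eqref{eq:ODE_UY}, so that $\D_{U,n}$ and the products $(n+1-i)U_{n+1-i}\D_{Y,i}$ reduce to finite combinations of single coefficients $U_j$ and binary convolutions $\sum_{a+b=m}(\,\cdot\,)U_aU_b$. Writing $U_j=\mfr C_j\hat U_j$, using the induction hypothesis $|\hat U_j|\le D^{\max(\al,j-2)}$ for $j<n$, and invoking the Catalan identities $\sum_{a+b=m}\mfr C_a\mfr C_b=\mfr C_{m+1}$ and $\mfr C_{m+1}\le 4\mfr C_m$ from \eqref{eq:cat_num_b}, \eqref{eq:cat_num_c}, every convolution is bounded \emph{without} losing a power of $m$. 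The decisive observation is that in those sums the finitely many low-index factors $\hat U_0,\hat U_1,\dots,\hat U_{a_1}$ must be bounded by their actual ($D$-independent, $I$-dependent) values rather than by $D^{\al}$; this makes the dominant contribution of each convolution come from its nearly linear boundary pieces $\mathrm{const}\cdot U_{n-1},\mathrm{const}\cdot U_{n-2},\dots$ (carrying the factor $n$ from the weight $n+1-i$), while the genuinely quadratic bulk is smaller by a factor $O(1/D)$, and $\D_{U,n}$ (which has no $n$ weight) contributes only after an extra gain $1/(n\lams^{\min})$. Collecting terms yields an estimate of the shape
\[
|\hat U_n(\g)|\le D^{\,n-3}\Big(\frac{C(I)}{\lams^{\min}}+\frac{C'(I)}{n\,\lams^{\min}}\Big)\le D^{\,n-3}\cdot D=D^{\,n-2}=D^{\max(\al,n-2)}
\]
for $n>n_\ast$, by the choices of $D$ and $n_\ast$ (recall $\al<1\le n-2$ here), which closes the induction.

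Granting \eqref{eq:local_Ui}, the rest is routine. Since $\mfr C_n\le 4^n$ and $\al<1$, $|U_n(\g)|=\mfr C_n|\hat U_n(\g)|\le 4^nD^{\max(\al,n-2)}$, so $\limsup_n|U_n(\g)|^{1/n}\le 4D$ and hence $U^\rn{\kp}(Y)=\sum_{n\ge0} U_n(\G(\kp))\,Y^n$ is analytic in $|Y|<(4D)^{-1}$. For $|Y|\le(8D)^{-1}$ one has $|U_n(\G(\kp))Y^n|\le \max(1,D^{-2})\,2^{-n}$ uniformly over $\kp\in\kp(I)$, and each coefficient map $\kp\mapsto U_n(\G(\kp))$ is continuous (composition of the continuous $\G$ with the continuous function $U_n(\cdot)$ on $I$); therefore the series converges uniformly on $\kp(I)\times\{|Y|\le(8D)^{-1}\}$, so $U^\rn{\kp}(Y)$ is continuous there. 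The main obstacle is precisely the inductive step's right-side estimate: showing that the quadratic convolution structure of \eqref{eq:recur_top} does not spoil geometric growth. Its resolution rests on the Catalan renormalization being tuned so that $\sum\mfr C_a\mfr C_b=\mfr C_{m+1}$ absorbs convolutions with no extra power of $m$, and on retaining the true (bounded) magnitudes of the low-order coefficients so that convolutions are dominated by their effectively linear boundary terms; a secondary point, which is why $D$ depends on the particular closed subinterval $I$ and not merely on $m$, is that $\kp$ must be kept away from $m,m+1$ so that the resonant denominators $n-\kp$ stay bounded below.
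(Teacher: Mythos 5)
Your proposal is correct and follows essentially the same route as the paper: induction via the $N=1$ recursion \eqref{eq:recur_top} with the uniform lower bound $|a_{n,n}|\geq c_I n$ on the compact interval $I$, Catalan convolution identities to avoid losing powers of $n$, a gain in the $D$-exponent from the split between low-index and high-index factors (the paper encodes this with the $\max(\al,i_s-2)$ exponent trick, you express it by keeping the small coefficients at their actual $I$-dependent sizes — both close the induction), and then analyticity and continuity from uniform geometric convergence plus finite truncation. One cosmetic slip: the $J_3$-type (no-$n$-weight) contribution gives $D^{n-2}/n$ rather than $D^{n-3}/n$ after dividing by $|a_{n,n}|$, so the collected bound should read $C(I)D^{n-3}+C'(I)D^{n-2}/n$; this still closes for $D,n_*$ large and does not affect the argument.
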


Here, we use the notation $U^\rn{\kp}$ rather than $U^\rn{\g}$ to indicate the dependence on the parameter $\g$ (or equivalently $\kp$). This convention will simplify our notations in Sections \ref{sec:shoot}, \ref{sec:Q_lower} when referring to this local analytic solution.

\begin{proof}

Firstly, for any $\g \in I$, using \eqref{eq:recur_top} and \eqref{eq:kappa}, we obtain 
\[
a_{n,n} = n \lams - \laml = ( \f{n}{\kp} -1 ) \laml = \f{n - \kp}{\kp} \laml. 
\]
For $\kp = \kp(\g) $ with $\g \in I \subset (\g_{m+1}, \g_m)$, 
from the definition of $\g_{\cdot}$ \eqref{eq:kappa_n},  we obtain $\kp \in J \subset (m, m+1)$ for a close interval $J = \kp(I)$. 
Thus, we obtain $|n - \kp| \geq c_I n$ with some $c_I > 0$ uniformly for any $n \geq 0$ and $\g \in I$. 
From \eqref{eq:lam_sign}, \eqref{eq:grad_lam_two}, we obtain $ \laml \geq 
\f{1}{2}(\lams + \lam-) = \f{1}{2}(c_1 + c_4) \gtr 1$. Combining these estimates, we obtain 
\beq\label{eq:an_low}
|a_{n, n}| \geq c_I \max(n, 1) , 
\eeq
uniformly for $\g \in I$. In particular, $|a_{n,n}|^{-1}$ is bounded uniformly for $\g \in I$.

Next, we prove \eqref{eq:local_Ui} by induction. We fix an arbitrary $\g \in I$ and drop the dependence of $U_i(\g)$ on $\g$ to simplify the notation. For $n_L > 2 \kp$ to be determined, we choose $D = D(n_L)$ large so that \eqref{eq:local_Ui} holds for $ n \leq n_L $. 
Suppose that \eqref{eq:local_Ui} holds for the case of $ \leq n -1 $ with $n \geq n_L$. 
We use \eqref{eq:recur_top} to bound $U_n$. We follow the arguments in Sections \ref{sec:induc_del}, \ref{sec:induc_pf} with $N = 1$.


Following \eqref{eq:U_trunc} with $N=1$, we introduce the truncated power series $U_{\mw{TR}} := \sum_{j\leq n - 1} U_j \xi^j.$ For $2 \leq l \leq \max(d_U, d_Y)$ and $j \leq n$,  the $j-th$ power series coefficient of $U_{\mw{TR}}^l$ is given by 
\eqref{eq:U_TR_j} 
\beq\label{eq:U_TR_j_N1}
 U_{ \mw{TR},j}^l 
 = \sum_{i_1 + i_2 + \cdots + i_l = j, \ i_s \leq j - 1 } \hat U_{i_1} \hat U_{i_2} \cdots \hat U_{i_l} \mfr C_{i_1}\mfr C_{i_2} \cdots \mfr C_{i_l} .
 \eeq

Using the inductive hypothesis \eqref{eq:local_Ui}, we get 
\beq\label{eq:U_TR_N1_coe}
 |\hat U_{i_1} \hat U_{i_2} \cdots \hat U_{i_l}|
 \leq D^S, 
 \quad S = \sum_{s \leq l} \max(\al, i_s - 2). 
\eeq

Without loss of generality, we assume $i_1 ,.., i_k >  2$ and $i_{k+1} ,.., i_l \leq 2 $ 
for some $k$ with $0\leq k \leq l$. If $k = 0, 1$ and $j \geq 3$, using the definition of $\al$ in Proposition \ref{prop:analy} and $i_1 \leq j-1 \leq n-1$, we get 
\[
S = \max( \al , i_1-2) +  (l-1) \al 
\leq j - 3 + l \al \leq j - 2.
\]

If $k \geq 2$ and $j \geq 3$, we obtain 
\[
S \leq \sum_{s \leq k} (i_s - 2) + (l-k) \al
 \leq j - 2 k + l \al  < j - 2.
\]

If $j \leq 2$, we obtain $i_1, .., i_l \leq 1$ and 
\[
S \leq l \al \leq 1/4.
\]

Plugging the above estimates in \eqref{eq:U_TR_N1_coe}, applying the identities \eqref{eq:cat_num_b} repeatedly,  we obtain
\beq\label{eq:U_TR_N1_coe2}
 |U_{ \mw{TR},j}^l |
\leq D^{  \max( j- 2 , 1/4)} 
  \sum_{i_1 + i_2 + \cdots + i_l = j, \ i_s \leq j - 1 }  \mfr C_{i_1}\mfr C_{i_2} \cdots \mfr C_{i_l} 
\les   D^{  \max( j- 2 , 1/4)}   \mfr C_{ j + l-1}.
\eeq
for $l\geq 2$ and $j \leq n$. Since $U_{ \mw{TR},j}^1 =  U_j \one_{ j\leq n-1}$ and
$U_{ \mw{TR},j}^0 = \one_{j=0}$, \eqref{eq:U_TR_N1_coe2} also hold for $l=0, 1$. 

Next, we bound the right hand side of \eqref{eq:recur_top}. Following Sections \ref{sec:induc_pf}, we only need to bound $J_i$ in \eqref{eq:induc_RHS_J} with $N = 1$. For $J_1$ \eqref{eq:induc_RHS_J}, using the expansion \eqref{eq:induc_J1_est1} with $N=1$, we get
\[
\bal
|J_1| & \leq 
\B| \sum_{2 \leq i \leq n-1} 
  \sum_{ 0 \leq l \leq d_Y}  \sum_{j \leq \min(i, \deg G_l)}
(n+1 - i)  U_{n+1 - i} ( G_l)_j U_{ \mw{TR}, i -j}^l \B| .
\eal
\]
Applying \eqref{eq:U_TR_N1_coe2} for $U_{ \mw{TR}, i -j}^l$, 
inductive hypothesis \eqref{eq:local_Ui} for $U_{n+1 - i}=  \mfr C_{n+1-i} \hat U_{n+1-i}$, we obtain 
\[
|J_1|  \les n 
\sum_{2 \leq i \leq n-1} 
  \sum_{ 0 \leq l \leq d_Y}  \sum_{j \leq \min(i, \deg G_l)}
  \mfr C_{n+1 - i} \mfr C_{i - j + l -1}
  D^{\max( n+1-i-2, \f{1}{4} ) + \max( i -j-2, \f{1}{4} )}
\]

Since $n +1 - i \leq n-1, i-j \leq n-1$, and $n$ is large, it is easy to get that the exponent is bounded by 
$n - 3 + \f{1}{4}$.
Using \eqref{eq:cat_num}, we get 
\[
|J_1| \les n D^{n-3 + 1/4} \mfr C_{n + d_Y + 1} \les  n D^{n-3 + 1/4} \mfr C_n.
\]

For $J_2$ \eqref{eq:induc_RHS_J}, using \eqref{eq:J2}, \eqref{eq:U_TR_N1_coe2}, and \eqref{eq:cat_num_c}, we get 
\[
|J_2| = |  U_1 \sum_{ 0\leq l \leq d_Y, j \leq \deg G_l} (G_l)_j U^l_{ \mw{TR}, n - j}| 
\les D^{n-2} \mfr C_{n + d_Y -1}
\les D^{n-2} \mfr C_{n}. 
\]

For $J_3$ \eqref{eq:induc_RHS_J}, we bound it similarly 
\[
|J_3| \les D^{n-2} \mfr C_{n}. 
\]

Plugging the above estimates and using \eqref{eq:recur_top} and \eqref{eq:an_low}, we obtain 
\[
 | U_n|  = |\mfr C_n \hat U_n| \leq 
C(I) \f{1}{n} ( |J_1| + |J_2| + |J_3|) \leq 
 C(I) \f{1}{n} ( n D^{n-3+1/4} \mfr C_n + D^{n-2} \mfr C_n )
\]
for some constants $C(I)$ only depending on the interval $I$. We choose $n_L, D$ with
\[
 n_{L} > \max(2 \kp, 2 C(I)), \quad 
 D^{\al} > \max_{i\leq n_L}( |U_i|), \quad 
 D^{1/4} > \max( 4 C(I) ,1) .
 \]
Then for any $n \geq n_{L, 2}$, we prove $|\hat U_n| \leq D^{n-2}$ and the induction argument.

  From \eqref{eq:cat_num}, we have $\mfr C_n \les 4^n$, which along with \eqref{eq:local_Ui} implies $| U_n | \les 4^n D^{n-2}$ for any $n \geq 3$. Thus the power series $U^\rn{\kp}$ converges for any $Y$ with $|Y| < (4D)^{-1}$. 

\vspace{0.05in}
\paragraph{\bf{Continuity}}

Lastly, we prove the continuity of $U^\rn{\kp}$ in $ (\kp, Y) \in  \Om$, where \\ 
$\Om \teq J \times  [-(8D)^{-1}, (8D)^{-1} ] $ and $J = \kp(I) \subset (m, m+1)$. Fix an arbitrary $\e^{\pr} > 0$. We decompose 
\[
U^\rn{\kp}(Y) = \sum_{i\leq m_1} U_i( \g) Y^i + \sum_{ i > m_1} U_i( \g ) Y^i
\teq P(Y, \kp) + Q(Y, \kp), \quad \g = \G(\kp) \in I.
\]
where $\G(\kp)$ is the inverse map  defined in \eqref{eq:kappa_bi}. From the above estimate of $U_i$, by choosing $m_1$ large, we obtain $|Q(Y, \kp)| < \e^{\pr} / 4 $ uniformly in $(\kp, Y) \in \Om$. From \eqref{eq:an_low} and \eqref{eq:recur_top}, each term $U_i(\g)$ with $i \leq m_1$ is uniformly continuous in $\g \in I$.
Since the map $\g = \G(\kp)$ is uniformly continuous in $\kp$ for $\kp \in J$, we obtain that $P(Y, \kp)$ is uniformly continuous in $Y, \kp$. 
In particular, there exists small $\d >0$ such that 
$ |U^\rn{\kp}(Y)  - U^\rn{\kp^{\pr}}(Y^{\pr})|  < \e^{\pr}$ for any $|(\kp, Y) - (\kp^{\pr}, Y^{\pr}) | < \d $. 
\end{proof}

\subsection{Refined asymptotics}

As a consequence of \eqref{eq:induc_asym} in Lemma \ref{lem:asym} and \eqref{eq:cat_num}, we have the following estimates of the original coefficients $U_n$.
\begin{cor}\label{cor:asym}
Let $C_*, \d$ be the constants defined in \eqref{eq:C_asym},\eqref{eq:para3}. There exists $n_2 , \kp_1$ large enough with $n_2 < \kp_1$ such that for any $\kp, n$ with $\kp  > \kp_1$ and $ \kp + 2> n > n_2$, we have
\beq\label{eq:induc_asym2}
\bal
  \B|  U_n -    C_*  U_{n-1} \f{ n \kp}{\kp - n } \B| 
 & \leq  \d \B|   C_*  U_{n-1 } \f{ n \kp}{ \kp - n } \B| , \quad  \d = 0.05 , \\ 
 \eal
\eeq
\end{cor}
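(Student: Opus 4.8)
The plan is to deduce Corollary~\ref{cor:asym} directly from the estimate \eqref{eq:induc_asym} of Lemma~\ref{lem:asym} by translating the statement about the renormalized coefficients $\hat U_n$ into one about the original coefficients $U_n = \hat U_n \mfr C_n$. First I would rewrite \eqref{eq:induc_asym} using $\hat U_n = U_n / \mfr C_n$: the inequality there says
\[
\B| \f{U_n}{\mfr C_n} - \f{C_*}{4} \cdot \f{n\kp}{\kp - n} \f{U_{n-1}}{\mfr C_{n-1}} \B| \leq \hat\d \B| \f{C_*}{4} \cdot \f{n\kp}{\kp-n} \f{U_{n-1}}{\mfr C_{n-1}} \B|.
\]
Multiplying through by $\mfr C_n$ and using $\mfr C_n / \mfr C_{n-1} = (4n-2)/(n+1)$ from \eqref{eq:cat_num_b}, the central quantity becomes $\f{C_*}{4} \cdot \f{4n-2}{n+1} \cdot \f{n\kp}{\kp-n} U_{n-1}$, which I want to compare with the target $C_* \f{n\kp}{\kp-n} U_{n-1}$ appearing in \eqref{eq:induc_asym2}. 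The ratio of the coefficients is $\f14 \cdot \f{4n-2}{n+1} = \f{4n-2}{4(n+1)}$, which tends to $1$ as $n\to\infty$; precisely $\big|\f{4n-2}{4(n+1)} - 1\big| = \f{6}{4(n+1)} = \f{3}{2(n+1)} \leq \f{3}{2(n_2+1)}$.

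The key step is then a triangle-inequality argument. Writing $R = C_* \f{n\kp}{\kp-n} U_{n-1}$ for the target main term and $R' = \f{C_*}{4}\f{\mfr C_n}{\mfr C_{n-1}}\f{n\kp}{\kp-n} U_{n-1}$ for the one coming from $\hat U_n$, we have from the rescaled \eqref{eq:induc_asym} that $|U_n - R'| \leq \hat\d |R'|$, and from the Catalan ratio estimate that $|R' - R| \leq \f{3}{2(n_2+1)} |R|$, hence $|R'| \leq (1 + \f{3}{2(n_2+1)})|R|$. Combining,
\[
|U_n - R| \leq |U_n - R'| + |R' - R| \leq \hat\d |R'| + \f{3}{2(n_2+1)}|R| \leq \B( \hat\d \B(1 + \f{3}{2(n_2+1)}\B) + \f{3}{2(n_2+1)} \B) |R|.
\]
Since $\hat\d = 0.049$ and $\d = 0.05$, the prefactor $\hat\d(1 + \f{3}{2(n_2+1)}) + \f{3}{2(n_2+1)}$ is strictly less than $\d = 0.05$ once $n_2$ is chosen large enough (any $n_2$ with $\f{3}{2(n_2+1)} < \f{0.001}{1.049}$ works, e.g. $n_2 \geq 2000$ comfortably suffices, but we only need existence). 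I would then set $\kp_1 = \max(\kp_0, n_2)$ with $\kp_0$ the threshold from Lemma~\ref{lem:asym}, and $n_2$ additionally at least $n_0$ so that the hypotheses $n_0 \le n < \kp + 2$ of Lemma~\ref{lem:asym} are met for all $n$ in the range $n_2 < n < \kp + 2$; this is exactly the range asserted in Corollary~\ref{cor:asym}.

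I do not expect any genuine obstacle here — the corollary is a routine bookkeeping consequence of Lemma~\ref{lem:asym} together with the elementary Catalan-number ratio identity \eqref{eq:cat_num_b}. The only mild care needed is (i) to keep the sign/absolute-value structure correct so that the triangle inequality is applied to $|R'|$ and $|R|$ consistently, using that $\kp - n$ may be negative when $n > \kp$ (but $n\kp/(\kp-n)$ is still a well-defined nonzero real, so the absolute values absorb the sign), and (ii) to record explicitly how large $n_2$ must be so that both the Catalan ratio correction is small and the Lemma~\ref{lem:asym} hypotheses hold. Both are immediate. If one wanted to be fully explicit about constants, one checks $0.049 \cdot 1.001 + 0.001 < 0.05$, which holds with room to spare, confirming the choice $\d = 0.05$ is consistent.
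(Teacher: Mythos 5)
Your proposal is correct and follows essentially the same route as the paper: both deduce the claim from \eqref{eq:induc_asym} via $U_n = \hat U_n \mfr C_n$, the Catalan ratio fact $\mfr C_n/(4\mfr C_{n-1}) = \f{4n-2}{4(n+1)}\to 1$, and the margin $\hat\d = 0.049 < \d = 0.05$ which absorbs the $O(n^{-1})$ discrepancy once $n_2$ is large. The paper phrases this as showing the ratio $R_n = U_n/\big(C_* U_{n-1}\f{n\kp}{\kp-n}\big)$ equals $\f{\mfr C_n}{4\mfr C_{n-1}}\hat R_n$ and lies in $[1-\d,1+\d]$, while you bound the difference via a triangle inequality; these are the same computation packaged differently, and your constant-chasing ($\hat\d(1+\e)+\e<\d$ for $\e$ small) checks out.
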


  \begin{proof} 
Using $U_n = \hat U_n \mfr C_n$ \eqref{eq:cat_num}, we decompose 
\[
R_n \teq  \f{U_{ n}}{   C_* U_{n-1} \f{ n \kp}{\kp - n } }
= \f{\mfr C_{ n }}{  \mfr C_{n-1} }  \cdot \f{  \hat U_n }{   C_* \hat  U_{n-1} \f{ n \kp}{\kp - n } }
= \f{\mfr C_{ n}}{ 4 \mfr C_{n-1} } \cdot \hat R_n,
\quad \hat R_n = \f{   \hat U_{n}}{   C_* \hat  U_{n-1} \f{ n \kp}{ 4( \kp - n ) } }. 
\]

From the asymptotics \eqref{eq:cat_num_b}, we obtain $\lim_{i\to \infty} \f{ \mfr C_n }{  4  \mfr C_{n-1} }  = 1$. Since $ | \hat R_n -1 | < \hat \d$ from \eqref{eq:induc_asym}, and $ \hat \d < \d$ \eqref{eq:para3}, taking $n_2$ sufficiently large so that $\f{ \mfr C_{n} }{  4  \mfr C_{n-1} } $ is sufficiently close to $1$, we prove $R_n \in [1 -\d, 1 + \d]$ for any $n > n_2$.
\end{proof}

Using Lemma \ref{lem:asym} and Corollary \ref{cor:asym}, we have the following refined estimates of $U_i$, which will be crucially used to estimate barrier functions.

\begin{lem}\label{lem:asym_refine}


Let $\d, n_2$ be the parameters defined in \eqref{eq:para3} and Corollary \ref{cor:asym}. 
There exists $\bar C > n_2$ large enough, such that for any $(n ,\kp)$ with $ n > \bar C, \kp \in (n, n+1)$, the following statement holds.

For any $(m, l)$ with $  l \leq m \leq n $ and $m + l \geq n$ and $ q = \f{2 (1-\d)}{1 + \d} > \f{3}{2}$, we have
\beq\label{eq:Um_convex}
|U_m U_l | \leq C q^{ - (n-m) } U_n \max( |U_{k}|, 1) ,
\quad k = l + m - n.
\eeq

For any $(m, l)$ with $0 \leq l\leq m \leq n - 1$, we have
\begin{subequations}\label{eq:Um_asym_ref}
\begin{align}
& U_m \leq C_l  ( (1 + \d) C_* \kp)^{  m -l } , \quad  l + m \leq n-1,  \label{eq:Um_asym_ref1} \\
& U_m \leq C_l ( C_* \kp)^{ m-l } 4^{-m}, \quad  m \leq n / 8, 
\label{eq:Um_asym_ref2}  \\
& U_m \leq C_q ( (1 + \d) C_* \kp)^m  q^{\min( n-1-m, m)}, \quad q \in ( \f{1}{4}, \f{1}{2})
\label{eq:Um_asym_ref3}. 
\end{align}
\end{subequations}

For any $m \in \BZ $ and $\tau \in \R $ with $ \tau \in ( \f{1}{2}, 1- 10^{-2}), 2 n -1 \geq m \geq \tau n + n$, then for
\bseq\label{eq:Usq_grow}
\beq
T_m \teq \sum_{ i+ j = m, i, j \leq n} U_i U_j.
\eeq
we have the estimate
\beq\label{eq:Usq_grow_b}
T_{m+1} > \B( \f{\tau (1 -\d)}{ ( 1 - \tau) } - O_{\tau, \d}(n^{-1}) \B) ( C_* \kp) T_m\,.
\eeq
\eseq

We have 
\bseq\label{eq:Um_grow2}
\begin{align}
   U_n^{m/n} & \geq C 2^{\min(n-m, m ) }U_m, && m \leq n-1 ,  \label{eq:Um_grow2a} \\
       U_n^{m/n} & \geq  C_{n-m} n^{2/3} U_m ,   && 2n / 3 \leq m \leq n-1 , \label{eq:Um_grow2b} \\
   U_n  & \geq C |\kp-n|^{-1} ((1-\d) C_* n)^{n} , \label{eq:Um_grow2c} 
\end{align}
\eseq

\end{lem}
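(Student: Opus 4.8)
All of the estimates in Lemma \ref{lem:asym_refine} are consequences of two inputs: the refined ratio asymptotics \eqref{eq:induc_asym}, \eqref{eq:induc_asym2} (which pin down $U_m/U_{m-1}$ up to a relative error $\hat\delta$ or $\delta$) together with the sign/monotonicity facts \eqref{eq:U_sign}, \eqref{eq:induc_asym_b}, \eqref{eq:induc_unif} from Lemma \ref{lem:asym}, and the Catalan identities \eqref{eq:cat_num_b}, \eqref{eq:cat_num_c}. So the proof is a bookkeeping exercise: choose $\bar C$ large enough that for all $n>\bar C$ the ``$O(n^{-1})$'' corrections in \eqref{eq:induc_asym2} and in $\mfr C_{n}/(4\mfr C_{n-1})$ are smaller than any fixed slack we need, and then telescope. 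The first thing I would do is record the telescoped form of \eqref{eq:induc_asym2}: for $n_2<m\le n<\kp+2$,
\[
\prod_{m< i\le n}\frac{i\kp}{\kp-i}(1-\delta)^{\,n-m}\;\le\;\frac{U_n}{C_*^{\,n-m}U_m}\;\le\;\prod_{m< i\le n}\frac{i\kp}{\kp-i}(1+\delta)^{\,n-m},
\]
and note $\prod_{m<i\le n}\frac{\kp}{\kp-i}\ge 1$ when $\kp\in(n,n+1)$ because each factor $\kp/(\kp-i)>1$ for $i<\kp$ (exactly the argument already used around \eqref{eq:induc_rate_low}); when $\kp\in(n,n+1)$ the single factor $i=n$ contributes $\kp/(\kp-n)\ge 1$ as well. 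Symmetrically one gets upper bounds by throwing away those factors $\ge1$. This is the engine for everything.

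\textbf{The convex/log-convexity bound \eqref{eq:Um_convex}.} This is the analogue of \eqref{eq:induc_decay} at the level of the unnormalized $U$'s. With $k=l+m-n$ one has $(m-k)+(l-k)=n-k$, so I would write $U_mU_l/(U_nU_k)$ as a ratio of two telescoping products, $\big(\prod_{k<i\le m}\tfrac{i\kp}{\kp-i}\big)\big(\prod_{k<i\le l}\tfrac{i\kp}{\kp-i}\big)\big/\prod_{k<i\le n}\tfrac{i\kp}{\kp-i}$ times $C_*$-powers and $(1\pm\delta)$-factors that cancel to give $(1\pm\delta)^{-(n-m)}$; after pairing the $i\kp/(\kp-i)$ factors via the binomial-type inequality $\binom{n-k}{m-k}\ge \binom{n-k}{\text{middle}}\ge \tfrac{4^{n-k}}{3(n-k)^{1/2}}\cdot(\dots)$ exactly as in Case 1 of Section \ref{sec:induc_conseq}, and using $\kp/(\kp-i)$ monotone increasing in $i$, one extracts a factor $\ge (2(1-\delta)/(1+\delta))^{\,n-m}=q^{\,n-m}$, absorbing the $(n-k)^{1/2}$ and the finitely many small-index ratios into the constant $C$ and into $\max(|U_k|,1)$. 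This is essentially a re-run of the Case 1 computation, so I'd cite it rather than repeat it.

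\textbf{The growth bounds \eqref{eq:Um_asym_ref} and \eqref{eq:Um_grow2}.} These are pure telescoping. For \eqref{eq:Um_asym_ref1}: from $m$ down to $l$, $U_m/U_l=\prod_{l<i\le m}(U_i/U_{i-1})$ and each ratio is $\le (1+\hat\delta)\tfrac{C_*}{4}\tfrac{i\kp}{\kp-i}\le (1+\delta)C_*\kp\cdot\tfrac{i}{4(\kp-i)}$; since $l+m\le n-1<\kp$, $\kp-i>\kp-m\ge 1$ and $i/(4(\kp-i))$ stays bounded, giving $(m-l)$ factors each $\le$ const, hence the stated $((1+\delta)C_*\kp)^{m-l}$ with $C_l$ absorbing $U_l$ and the bounded product; \eqref{eq:Um_asym_ref2} is the same but for $m\le n/8$ one has $\kp-i\ge\kp-n/8\gtrsim n$ so each ratio carries an extra $1/4$, producing the $4^{-m}$; \eqref{eq:Um_asym_ref3} interpolates, splitting the product at $m/2$ and at $n-1-m$. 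For \eqref{eq:Um_grow2a}: $U_n^{m/n}\ge (CU_m^{n/m})^{m/n}=C^{m/n}U_m$ will not do directly, so instead use $\log U_n-\tfrac{m}{n}\log U_n\cdot\text{(rescale)}$; cleaner is $U_n/U_m=\prod_{m<i\le n}(U_i/U_{i-1})\ge \prod_{m<i\le n}\tfrac{C_*(1-\delta)}{4}\tfrac{i\kp}{\kp-i}$, compare with $U_n^{(n-m)/n}$ via the crude lower bound $U_n\ge ((1-\delta)C_*n/8)^{\,n}$-type estimate (which is \eqref{eq:Um_grow2c}, itself obtained by telescoping from $n_2$ to $n$ using $\kp/(\kp-i)\ge 1$ and $i\ge n_2$), and check the exponent; \eqref{eq:Um_grow2b} is the refinement for $m\ge 2n/3$ where one keeps the $n^{2/3}$ from the product $\prod \kp/(\kp-i)$ over $i$ near $n$.

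\textbf{The ``square'' growth \eqref{eq:Usq_grow_b}.} Here $T_m=\sum_{i+j=m,\,i,j\le n}U_iU_j$ with $m\ge (1+\tau)n$, so every admissible $(i,j)$ has both $i,j\ge m-n\ge \tau n$, hence $i,j$ are all in the ``large index, correct sign'' regime. I would show the sum is dominated by its endpoint terms $i=n,j=m-n$ (and $i=m-n,j=n$): using \eqref{eq:Um_convex} (or directly the ratio asymptotics) each interior term $U_iU_j$ with $i<n$ is smaller than the endpoint by a factor $q^{-(n-i)}\le q^{-1}<2/3$, so $T_m\asymp 2U_nU_{m-n}$ up to $(1+O(n^{-1}))$. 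Then $T_{m+1}/T_m\approx U_{m+1-n}/U_{m-n}\approx \tfrac{C_*}{4}\tfrac{(m+1-n)\kp}{\kp-(m+1-n)}$, and with $m-n\approx \tau n$, $\kp\approx n$, this ratio is $\approx C_*\kp\cdot\tfrac{\tau}{4(1-\tau)}\cdot(\text{wait: }4?)$ — more carefully $\tfrac{(m+1-n)}{\kp-(m+1-n)}\to \tfrac{\tau n}{n-\tau n}=\tfrac{\tau}{1-\tau}$ and the $C_*/4$ combines with $\mfr C$-ratios in $U_{m+1-n}/U_{m-n}$... but these are the \emph{unnormalized} $U$'s so the Catalan ratio $\mfr C_{m+1-n}/\mfr C_{m-n}\to 4$ cancels the $1/4$, leaving $C_*\kp\cdot\tfrac{\tau}{1-\tau}$ modulo $(1\pm\delta)$ from \eqref{eq:induc_asym2} and $O_{\tau,\delta}(n^{-1})$ from the passage to the limit and the endpoint-domination error. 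That yields \eqref{eq:Usq_grow_b}.

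\textbf{Main obstacle.} The analysis is routine once the telescoped ratio inequality is in hand; the one genuinely delicate point is the regime $\kp\in(n,n+1)$ where the factor $\kp-n$ is small and \emph{positive} but $\kp-i$ for $i$ slightly above... no, $i\le n<\kp$ always here, so all $\kp-i>0$; the real care is in \eqref{eq:Um_grow2c} and the denominators $\kp-m$ in \eqref{eq:Um_asym_ref}, where I must make sure the constants $C_l,C_q$ do not blow up as $\kp\downarrow n$ — they don't, because in \eqref{eq:Um_asym_ref} we have $m\le n-1<\kp-1$ so $\kp-m>1$, and the $|\kp-n|^{-1}$ in \eqref{eq:Um_grow2c} is stated explicitly. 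The other fiddly point is \eqref{eq:Usq_grow_b}: showing the sum $T_m$ is endpoint-dominated uniformly, for which \eqref{eq:Um_convex} is exactly the tool, but one must check the geometric series $\sum_k q^{-k}$ converges with a constant independent of $n,\kp,\tau$ (it does, $q>3/2$), and track how the $\max(|U_k|,1)$ factor in \eqref{eq:Um_convex} behaves — here $k=i+j-n$ ranges over small values $\le m-n$... actually for the endpoint comparison one takes $m=n$ in \eqref{eq:Um_convex}'s role differently; I would instead just compare consecutive interior terms directly via \eqref{eq:induc_asym2}, which is cleaner and avoids the $\max(|U_k|,1)$ nuisance altogether.
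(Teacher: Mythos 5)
Your overall plan — telescope the ratio asymptotics \eqref{eq:induc_asym2} and combine with binomial/Catalan facts — matches the paper's route, and your treatments of \eqref{eq:Um_asym_ref1}, \eqref{eq:Um_asym_ref2}, \eqref{eq:Um_asym_ref3}, \eqref{eq:Um_grow2c}, and (after your self-correction) \eqref{eq:Usq_grow} are essentially what the paper does. For \eqref{eq:Um_convex} you propose a binomial-coefficient argument \`a la Case~1 of Section~\ref{sec:induc_conseq}, which does work (the $\sqrt{n-m}$ loss is beaten by the slack between $4\lambda$ and $2\lambda$ with $\lambda=(1-\delta)/(1+\delta)$), but the paper's own proof is cleaner: after the $C_*$- and $(1\pm\delta)$-cancellations it pairs the telescoped product via a change of variables $(i,j)\to(m+s,k+s)$ and uses the identity $\frac{(m+s)(\kappa-k-s)}{(\kappa-m-s)(k+s)}=1+\frac{(m-k)\kappa}{(\kappa-m-s)(k+s)}>2$ for every $s$, so the factor $q^{n-m}$ appears directly with no $\sqrt{\cdot}$ to absorb.

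The one genuine gap is \eqref{eq:Um_grow2a}. Your proposed route is to compare $U_n/U_m$ with $U_n^{(n-m)/n}$ using a \emph{lower} bound on $U_n$ such as \eqref{eq:Um_grow2c}. This goes the wrong way: writing $U_n^{m/n}/U_m=(U_n/U_m)\cdot U_n^{-(n-m)/n}$ you need an \emph{upper} bound on $U_n$, and no upper bound uniform in $\kappa\in(n,n+1)$ exists, since $U_n\asymp|\kappa-n|^{-1}$ blows up as $\kappa\downarrow n$. Even if you sidestep that by replacing the $U_n$-factor with $((1\pm\delta)C_*\kappa)^m$-type expressions and \eqref{eq:Um_growth_up}, the $(1-\delta)/(1+\delta)$ mismatch accumulates as $\lambda^m$ with $\lambda<1$, which for $m$ close to $n$ is too small to be compensated by $4^{\min(m,n-1-m)}=4^{n-1-m}$; the estimate then fails in the range $m>n/2$. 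The paper instead writes $\frac{U_n^{m/n}}{U_m}=\bigl(\frac{U_n}{U_m}\bigr)^{m/n}U_m^{-(n-m)/n}$, applies a lower bound for $U_n/U_m$ (which is uniform in $\kappa$ thanks to $\kappa-i<n+1-i$) and the upper bound \eqref{eq:Um_growth_up} for $U_m$ (valid since $m\le n-1$); with both factors carrying binomial coefficients, the $\lambda$-power comes out as $\lambda^{(n-m)m/n}$, which is balanced by $4^{\min(m,n-m)}$ in \emph{both} regimes $m\lessgtr n/2$. You should adopt this decomposition; the direct comparison you sketch does not close.

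A minor bookkeeping remark: your telescoped inequality at the start correctly uses \eqref{eq:induc_asym2} (so $C_*$, not $C_*/4$), but in the discussion of \eqref{eq:Um_grow2a} you write $\prod\tfrac{C_*(1-\delta)}{4}\tfrac{i\kappa}{\kappa-i}$, which is the $\hat U$-version; since the lemma is about the unnormalized $U$, stick with \eqref{eq:induc_asym2} throughout or track the Catalan ratios explicitly.
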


Estimate \eqref{eq:Um_convex} is a refinement of \eqref{eq:induc_unif}. 
In \eqref{eq:Um_asym_ref}, we compare $U_m$ with the reference scale $(C_* \kp)^m$ and show that the former is much smaller if $m$ is away from $n$ or $0$, and it is not much larger if $m$ is close to $n$. 
Estimate \eqref{eq:Um_asym_ref3} will be useful for $ m \in [c_1 n, c_2 n] $ with $c_1, c_2$ being some absolute constants. Such an estimate is not covered by \eqref{eq:Um_asym_ref1},
\eqref{eq:Um_asym_ref2} as we cannot choose $l$ comparable to $n$ in \eqref{eq:Um_asym_ref1},
\eqref{eq:Um_asym_ref2} (otherwise \eqref{eq:Um_asym_ref1},
\eqref{eq:Um_asym_ref2} are trivial). In \eqref{eq:Usq_grow}, we show that $T_m / (C_* \kp)^m$ grows exponentially to estimate the power series of $U^2$ in later proof of Proposition \ref{prop:bar_final}. We will only apply \eqref{eq:Usq_grow} with $\tau$ away from $1$. In \eqref{eq:Um_grow2}, we show that $U_n$ is very large relative to $U_m$ and $C_* n$. 
In the following proof, we will treat the parameters \eqref{eq:induc_para}, e.g. $n_2$ in Corollary \ref{cor:asym} and the bound of $U_i, i \leq n_2$ as absolute constants.

\begin{proof}
\textbf{Proof of \eqref{eq:Um_convex}.} 
For $n$ large enough, we get $m \geq \f{n}{2}>n_2$. Since $\kp \cdot \f{i}{\kp -i} \asymp 1$ for $ i\leq n_2$, using \eqref{eq:induc_asym2}, we obtain 
\beq\label{eq:Um_growth}
\f{U_n}{U_m} \gtr ((1 - \d) C_* \kp)^{n-m} \prod_{ m < i \leq n} \f{ i}{ \kp -i},
\quad \f{ |U_l|}{ \max( |U_{k} |, 1 ) } \les 
  ((1 + \d) C_* \kp)^{l-k} \prod_{ k < j \leq l} \f{ j}{ \kp -j}.
\eeq


Since $ n - m = l - k$ \eqref{eq:Um_convex}, using a change of variables $(i, j) \to ( m + s, k + s)$, and combining the above two estimates, we obtain 
\[
 \f{U_n   \max( |U_{k} |, 1 )}{ | U_l| U_m}
 \gtr   \B( \f{1 - \d}{1 + \d} \B)^{n-m} \prod_{0< s \leq n -m } 
 \f{ (m+s) (\kp - k -s) }{
 (\kp - m -s)( k+ s) } .
\]
 We estimate the product. Since 
$ k +n= l + m $, 
$k \leq l \leq m \leq n $, $s \geq 1$, and $\kp <n+1$, we obtain
\[
\bal
m - k - (\kp -m -s)
&= 2 m + s - k - \kp
\geq 2 m - k - n = 2 m - l - m = m -l \geq 0, \\
 \kp - ( k + s) & > n - (k + n- m)= m -k \geq 0.
 \eal
\]

Thus, for $0< s \leq n-m$, we estimate each term in the product as 
\[
 \f{ (m+s) (\kp - k -s) }{
 (\kp - m -s)( k+ s) }
 = 1 + \f{ (m-k)\kp}{(\kp - m -s)(k+s)} > 1 + 1 = 2.
\]
Combining the above estimates, we prove \eqref{eq:Um_convex}.


\noindent\textbf{\bf{Proof of \eqref{eq:Um_asym_ref1}.}} 
The estimate is trivial for $m\leq n_2$. 
For $ m > n_2$, using \eqref{eq:induc_asym2} and $ \f{ i}{\kp -i} \cdot \kp \asymp 1$ for $i\leq n_2$, we obtain 
\beq\label{eq:Um_asym_pf1}
\bal
 U_m  & \leq C ( (1 + \d)  C_* \kp)^{ m - l } 
 |U_{n_2}| \cdot  \f{ m (m-1) \cdots (l + 1) }{ (\kp -m)(\kp-m-1) \cdots (\kp- l - 1) }  .
\eal
\eeq

Since we assume $l  + m \leq n-1$ and $ n< \kp$, we get $ m - i < \kp - l - 1 - i$ for $ i =0, 1, .., m- l -1 $, which along with \eqref{eq:Um_asym_pf1} implies \eqref{eq:Um_asym_ref1}. 

\noindent \textbf{\bf{Proof of \eqref{eq:Um_asym_ref2}.}} We only need to consider $ l \leq m \leq n/8$. In this case, for all $i \leq m$, we have $ \f{i}{\kp- i} \leq \f{m}{\kp-m} < \f{1}{7}$. Since $(1 + \d) \f{1}{7} < \f{1}{4}$ \eqref{eq:para3}, using the above estimate, we yield
\[
U_m \leq C_l ((1 + \d)  C_* \kp)^{ m -l  } 7^{-m}
\leq C_l (  C_* \kp)^{ m -l } 4^{-m} .
\]

\noindent \textbf{\bf{Proof of \eqref{eq:Um_asym_ref3}.}}
Choosing $l = n_2$ in the above estimate \eqref{eq:Um_asym_pf1} and using $\kp > n, n_2 ! \les 1$, and  $\kp / (\kp -i) \asymp 1$ for $i \leq n_2$,  we further obtain
\beq\label{eq:Um_growth_up}
U_m \leq C ( (1 + \d )  C_* \kp)^m \prod_{ 1 \leq i \leq m} \f{i}{\kp -i} 
\leq  C ( (1 + \d )  C_* \kp)^m \binom{n-1}{m}^{-1}. 
\eeq
Using the above estimate and Lemma \ref{lem:binom}, we prove \eqref{eq:Um_asym_ref3}.

\noindent \textbf{\bf{Proof of \eqref{eq:Usq_grow}.}}
If $m = 2n-1$, we get 
\[
T_{2n} = U_n^2 \geq  C n \kp U_{n-1} U_n ,\quad     T_{2n-1} = 2 U_{n-1} U_n.
\] 
Since $\d$ is given \eqref{eq:para3} and $\tau < 1 -10^{-2}$, for $n$ large enough, we obtain \eqref{eq:Usq_grow}.  

Next, we consider $m \leq 2n-2$. Denote $l = m - n$. From the assumption of $m$ above \eqref{eq:Usq_grow}, we obtain $ n-1 \geq l \geq \tau n > \f{n}{2}$. By requiring $n_q$ large, since $n > n_q$, we also have $l > n_2$. 
Since $\kp < n+1$, using \eqref{eq:induc_asym2}, we get 
\[
  \f{ U_{l +1 } }{U_l } > (1 - \d) ( C_* \kp) \f{l+1}{\kp-l-1}
  \geq \f{\tau n}{ n - \tau n}  (1 - \d) ( C_* \kp)
  = b   C_* \kp,
  \quad 
  b = \f{\tau (1 - \d)}{ (1 - \tau) } .
\]

As a result, we yield 
\bseq\label{eq:est_Tm}
\beq
\sum_{i+j = m+1, i, j\leq n-1} U_i U_j 
> b 
 C_* \kp \sum_{i+j = m+1, i, j\leq n-1} U_{i-1} U_j 
\geq b 
 C_* \kp \sum_{i+j = m, i, j\leq n-2} U_{i} U_j .
\eeq

Since $U_{n-1} \les n^{-2} U_n$ \eqref{eq:induc_asym2} for $\kp \in (n, n+1)$,  using the above estimate again, we get 
\beq
\bal
2 U_{m-n+1} U_n & >  2 b  C_* \kp   U_{m-n} U_n,  \\
n^{-1} T_{m+1} & \gtr  n^{-1} U_{m-n+1} U_n
\gtr 
 \kp U_{m-n+1} U_{n-1} .
 \eal
\eeq
\eseq

Summing the estimates in \eqref{eq:est_Tm}, we prove 
\[
T_{m+1} + C  n^{-1} T_{m+1} > b C_* \kp \sum_{i+j=m, i, j \leq n-2} U_i U_j 
+ 2 b C_* \kp (  U_{m-n} U_n  + U_{m - n +1} U_{n-1} ) = b  C_* \kp T_m .
\]
Dividing $1+ C  n^{-1}$ on both sides, we prove \eqref{eq:Usq_grow}.

\noindent \textbf{\bf{Proof of \eqref{eq:Um_grow2c}}.} 
Next, we prove \eqref{eq:Um_grow2c}, \eqref{eq:Um_grow2a}, and \eqref{eq:Um_grow2b} in order. 
Recall $n_2$ chosen in Corollary \ref{cor:asym}. Using \eqref{eq:induc_asym2} and $\kp - m < n+1-m$ for $m=1,2,..,n-1$, we prove 
\[
\bal
U_n &\geq C ( (1- \d) C_* \kp)^{n- n_2} \prod_{n_2 +1 \leq l \leq n} \f{l}{\kp-l}
\geq C ( (1- \d) C_* \kp)^{n}\prod_{1 \leq l \leq n} \f{l}{\kp-l} \\
& \geq C (\kp-n)^{-1}  ( (1- \d) C_* \kp)^{n} \prod_{1 \leq l \leq n} \f{l}{ n+1-l}
= C (\kp-n)^{-1}  ( (1- \d) C_* \kp)^{n} .
\eal
\]

\noindent \textbf{\bf{Proof of \eqref{eq:Um_grow2a}.}}
 If $m \leq n_2$, the estimate \eqref{eq:Um_grow2a} follows from \eqref{eq:Um_grow2c}. Next, we consider $n_2< m \leq n-1$.  Using estimates similar to \eqref{eq:Um_growth} and  $\f{i}{\kp -i} > \f{i}{n+1 - i}$ for $\kp \in (n, n+1)$,
  we get
  \beq\label{eq:Um_growth_low}
  \bal
\f{U_n}{U_m} & \geq C ( (1 - \d)  C_* \kp )^{ (n-m) } 
 \prod_{ m+1\leq i \leq n} \f{i}{ n+1-i} 
\geq C   ( (1 - \d)  C_* \kp )^{ (n-m) }
 \binom{n}{n-m} .
 \eal
  \eeq

Since $ m \leq n-1$, combining \eqref{eq:Um_growth_low} and the upper bound of $U_m$ \eqref{eq:Um_growth_up}, we estimate
\[
    \f{U_n^{m/n}}{U_m}
    =  (\f{ U_n}{U_m})^{ \f{m}{n} } (U_m)^{ - \f{n-m}{n} } 
\geq C \f{ ( (1 -\d) C_* \kp)^{ (n-m) m/n} }{ ( (1 + \d) C_* \kp)^{m (n-m)/n} }
 \binom{n}{n-m}^{m/n} 
 \binom{n-1}{m}^{ (n-m)/n}.
\]
The common factor $C_* \kp$ in the denominator and numerator is cancelled.

Denote $\lam = \f{1-\d}{1 + \d}< 1$. We have $ 4 \lam > 2$. We further consider $m \leq \f{n-1}{2}$ and $m > \f{n-1}{2}$. (a) For $ m \leq \f{n-1}{2}$, which gives $m< \f{n}{2}$, applying Lemma \ref{lem:binom} to $ \binom{n-1}{m},  \binom{n}{m} =  \binom{n}{n-m}$, we get 
\[
 \f{ U_n^{m/n}}{ U_m} \geq C m^{-1/2} \lam^{ \f{ (n-m)m}{n}} 4^{ \f{ m^2}{n}  + \f{ m(n-m)}{n}} 
 \geq 
 C m^{-1/2} \lam^{ \f{ (n-m)m}{n} } 4^{ m}  
 \geq C m^{-1/2} (4 \lam )^m> C 2^m.
\]
For $n >m > \f{n-1}{2}$, which implies $m \geq \f{n}{2}$, using $\binom{n-1}{m} = \binom{n-1}{n-1-m}$ and Lemma \ref{lem:binom}, we get 
\[
U_n^{m/n} / U_m \geq C (n-m)^{-1/2}  \lam^{ \f{(n-m)m}{n}}
4^{(n-m)} 
\geq C (n-m)^{-1/2} (4 \lam)^{n-m}  > C 2^{n- m} .
\]
We prove \eqref{eq:Um_grow2a}.

\noindent \textbf{\bf{Proof of \eqref{eq:Um_grow2b} }}
For \eqref{eq:Um_grow2b}, since $m \geq \f{2n}{3} > n_2$, applying \eqref{eq:Um_growth_low} 
and then using $ \f{l}{n+1- l} > 2$ for $l \geq m + 1 \geq 1 + \f{2 n}{3}$, and $2 ( 1 -\d) > 1 + \d$ \eqref{eq:para3}, we get 
\[
 \f{U_n}{U_m} \geq C ( (1 - \d) C_* \kp)^{n-m} \prod_{ m+1 \leq l \leq n} \f{ l}{ n + 1- l} 
 \geq C ( (1 - \d) C_* \kp)^{n-m} 2^{n-m} n
 > C ( (1 + \d) C_* \kp)^{n-m} n.
\]
 Combining the above estimates and using $1 > \f{m}{n} \geq \f{2}{3}$, we prove
\[
\f{ U_n^{m/n}}{U_m} >  \f{   ( C n ( (1+\d) C_* \kp)^{n-m} U_{m} )^{ \f{m}{n} }}{U_m} > C n^{2/3}   \f{   (  ( (1+\d) C_* \kp)^{n-m}  )^{ \f{m}{n} }}{  ( (1 + \d) C_* \kp )^{ (1 -\f{m}{n})  m } }
= C n^{2/3}.
\]
\end{proof}

\section{Smooth solution connecting $P_O$ and $P_s$}\label{sec:shoot}

In this section, we study the phase portrait of $(Y, U)$ above the point $Q_O$, which corresponds to the region $Z \in [0, Z_0]$ in the original ODE \eqref{eq:ODE}. Our goal is to prove the following result. 
\begin{prop}\label{prop:Qs_QO}
There exists $C >0$ large enough such that for any $n$ with $n > C$, there exists $\kp_n \in (n, n+1)$ and $\e_1 > 0$, such that the following statement holds true. 
The ODE \eqref{eq:ODE} admits a solution $V^\rn{\kp_n}(Z) \in C^{\infty}([0, Z_0 + \e_1 ])$ with 
$V^\rn{\kp_n}(0)= 0,  V^\rn{\kp_n}( Z_0 )=  V_0$, and 
\beq\label{eq:Qs_QO_prop1}
 \quad  V^\rn{\kp_n}( Z) < Z, \quad V^\rn{\kp_n}( Z) \in (-1, 1)
\eeq
for any $Z \in (0, Z_0 + \e_1]$, and $V^\rn{\kp_n} (Z) = Z g (Z^2)$ for some function $g \in C^{\infty}[0, Z_0 + \e_1 ])$. Moreover, it agrees with the local analytic function $U^{(\kp_n)}(Y)$ near $Y = 0$ with $(0, U^{(\kp_n)}(0))= Q_s$ constructed in Proposition \ref{prop:analy} in the following sense 
\beq\label{eq:sonic_agree}
 (Z, V^\rn{\kp_n}(Z)) = (\cZ, \cV)( Y, U^{(\kp_n)}(Y))
\eeq
for $|Y| < \e_1$ with small $\e_1 > 0$, where $(\cZ, \cV)$ are the maps defined in \eqref{eq:UY_to_VZ}.

\end{prop}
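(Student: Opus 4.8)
The plan is to build the solution in three pieces: the local analytic solution at the sonic point provided by Proposition \ref{prop:analy}, a continuation in the $(Y,U)$-variables down to a neighborhood of $Q_O$ obtained by a shooting/barrier argument, and finally a translation back to the $(Z,V)$-variables with a verification of the regularity and inequalities in \eqref{eq:Qs_QO_prop1}. First I would fix $\kappa\in(n,n+1)$, equivalently $\gamma=\Gamma(\kappa)$ close to $\gamma_*=\ell^{-1/2}$ via \eqref{eq:kappa_bi}, so that all the asymptotics of Lemma \ref{lem:asym}, Corollary \ref{cor:asym}, and the refined estimates of Lemma \ref{lem:asym_refine} are available. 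Proposition \ref{prop:analy} then yields an analytic $U^{(\kappa)}(Y)=\sum_{i\ge0}U_i Y^i$ on $|Y|<(4D)^{-1}$ with $U^{(\kappa)}(0)=U_0=\e$, passing through $Q_s$ in the eigendirection $(1,U_{1,+})$ associated with the small eigenvalue $\lambda_-$. Because $\kappa\in(n,n+1)$ and the $U_i$ are eventually positive and grow at the rate dictated by \eqref{eq:induc_asym2}, the power series has a concrete, controllable shape near $Y=0$; this is exactly the input the barrier construction needs.

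Next I would set up the double-barrier/shooting argument in the $(Y,U)$-plane on the interval $Y\in[-c,\,Y_O-c]=[-c,\tfrac1d-c]$ for a small fixed $c>0$. The idea is to trap the solution curve between an upper barrier $U^+(Y)$ and a lower barrier $U^-(Y)$ whose construction uses high-order polynomial (or rational) approximants to $U^{(\kappa)}$ near the sonic point — here the large parameter $\kappa$ and the precise coefficient bounds from Lemma \ref{lem:asym_refine} let one verify the differential inequalities $\Delta_U/\Delta_Y < (U^+)'$ on $\{U=U^+\}$ and $>(U^-)'$ on $\{U=U^-\}$ by reducing them to sign checks on finitely many lower-order polynomials (the computer-assisted ingredient mentioned in the introduction and Appendix \ref{app:comp}). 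One has to keep the trajectory inside the region $\cR_{YU}=\{U>0,\ Y<1\}$ and away from the zero set of $\Delta_Y$ (the curve $U=U_{\Delta_Y}(Y)$), using the monotonicity $U_{\Delta_Y}'>0$, $U_{\Delta_U}'>0$ from Lemma \ref{lem:mono}, and the sign of $\Delta_Z$ controlled by $U_g(Y)$ from \eqref{eq:root_DZ}. A continuity/connectedness (shooting) argument in the free parameter — concretely, fine-tuning $\kappa$ within $(n,n+1)$, which changes $\gamma$ continuously and with it $U^{(\kappa)}$ by the continuity statement in Proposition \ref{prop:analy} — then produces a value $\kappa_n\in(n,n+1)$ for which the trajectory, followed backward from $Q_s$, reaches a neighborhood of $Q_O$ cleanly; this is the step I expect to be the main obstacle, since it requires simultaneously controlling the trajectory against both barriers, staying in $\cR_{YU}$, and landing at $Q_O$ (where $U=\infty$) with the correct behavior, all uniformly as $\gamma\to\gamma_*$ where the ODE degenerates in the original variables.

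Finally I would translate back. By Lemma \ref{lem:bijec} the map $(\cY,\cU)$ is a bijection between $\cR_{ZV}$ and $\cR_{YU}$ with smooth inverse $(\cZ,\cV)$ given by \eqref{eq:UY_to_VZ}, and by \eqref{eq:invert2}, \eqref{eq:Mv} the Jacobians are nonsingular away from $Q_s\leftrightarrow P_s$; hence composing $U^{(\kappa_n)}$ with $(\cZ,\cV)$ gives a $C^\infty$ curve $(Z,V^{(\kappa_n)}(Z))$ solving \eqref{eq:ODE} on $[0,Z_0+\e_1]$, agreeing with $U^{(\kappa_n)}$ near $Y=0$ in the sense of \eqref{eq:sonic_agree}, with $V^{(\kappa_n)}(0)=0$ (the image of $Q_O$) and $V^{(\kappa_n)}(Z_0)=V_0$ (the image of $Q_s$). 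The bounds $V^{(\kappa_n)}(Z)\in(-1,1)$ and $V^{(\kappa_n)}(Z)<Z$ follow from the location of the trajectory relative to the curves $Z_V(V)$, $Z_\pm(V)$ of \eqref{eq:root_mono_ZV0} and the sign of $\Delta_V$; concretely $V<Z$ is equivalent to $\Delta_V>0$-type information near $P_O$ and is propagated along the solution using the monotonicity relations in \eqref{eq:root_mono_ZV}. The evenness statement $V^{(\kappa_n)}(Z)=Z g(Z^2)$, $g\in C^\infty$, comes from the structure of the ODE \eqref{eq:ODE} at $Z=0$: $\Delta_V/\Delta_Z$ is an odd smooth function of $V$ divided by $Z$ times an even expression, so $dV/dZ$ at $Z=0$ forces $V(0)=0$ with $V$ odd, and a standard power-series/ODE uniqueness argument at the regular singular point $Z=0$ gives the claimed form. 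I would carry out the three pieces in that order: (i) fix $\gamma$ near $\gamma_*$ and invoke Proposition \ref{prop:analy}; (ii) the barrier+shooting continuation in $(Y,U)$ to obtain $\kappa_n$; (iii) the change of variables and verification of \eqref{eq:Qs_QO_prop1}, \eqref{eq:sonic_agree}, and the regularity at $Z=0$.
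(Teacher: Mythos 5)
Your high-level plan (local analytic branch at $Q_s$ via Proposition~\ref{prop:analy}, a barrier/shooting argument in the $(Y,U)$-plane, then pull back with $(\cZ,\cV)$ and verify \eqref{eq:Qs_QO_prop1}) matches the paper's, and your observations about Lemma~\ref{lem:bijec}, the nonsingularity of the Jacobians away from $Q_s$, and the role of $U_g$, $U_{\D_Y}$, $U_{\D_U}$ are all on target. But there is a genuine gap in the step you yourself flag as ``the main obstacle'': landing cleanly at $Q_O$ where $U=\infty$. You cannot shoot a single trajectory from $Q_s$ toward $Q_O$ and impose a condition there directly, because the $(Y,U)$-coordinates blow up and any neighborhood of $Q_O$ contains infinitely many trajectories. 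The missing idea is a \emph{second} local analytic branch: the paper constructs a unique smooth solution $V_F^{(\kappa)}$ of \eqref{eq:ODE} near $Z=0$ (Proposition~\ref{prop:V_near0}, imported from \cite{shao2024self}), pulls it back via $(\cY,\cU)$ to a curve $(Y_F^{(\kappa)},U_F^{(\kappa)})$ near $Q_O$, and proves precise asymptotics $U_F \sim C_\infty/(Y_F - Y_O)$ (Lemma~\ref{lem:UF_asym}). The computer-verified Lemma~\ref{lem:limit} then shows this curve lies strictly between the far-field barriers $B_u^f$ and $B_l^f$ near $Y_O$. The actual shooting function is a \emph{matching} functional $g(\kappa)=U_{\mathrm{ext}}^{(\kappa)}(Y_F^{(\kappa)}(\delta_Y))-U_F^{(\kappa)}(\delta_Y)$ comparing the two branches at a fixed $Y$ near $Y_O$, and the intermediate value theorem gives the $\kappa^*$. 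Without the $Q_O$-branch your shooting function has no target.

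A secondary issue is that your description conflates two different barrier families that the paper needs for different purposes. The \emph{far-field} barriers $B_u^f,B_l^f$ of \eqref{eq:bar_far} are fixed low-degree rational functions valid on all of $(0,Y_O)$; they bound the region $\Omega_B^f$, define where the solution can exit, and are compared against the $Q_O$-branch via Lemma~\ref{lem:UF_asym}. The \emph{local} barriers $\BB^{\mathrm{ne}}_{[n]}$ of \eqref{eq:bar_loc} are degree-$(n+1)$ polynomials valid only in a small $\kappa$- and $\b$-dependent neighborhood of $Y=0$; they are what exploit the fine coefficient asymptotics of Lemma~\ref{lem:asym_refine}, and their role is to force exit through one specific edge of $\Omega_B^f$ for $\kappa$ close to $n$ versus close to $n+1$ (Propositions~\ref{prop:bar_loc}--\ref{prop:bar_exit}). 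Writing ``high-order polynomial approximants'' as if a single upper/lower pair suffices on the whole interval skips the interaction between these two scales, which is exactly where the paper spends most of Section~\ref{sec:shoot}. The final translation step and the regularity/evenness argument at $Z=0$ in your proposal are essentially correct, and the monotonicity observations you make from \eqref{eq:root_mono_ZV} match the paper's Section~\ref{sec:other_prop}.
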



Throughout this section, we use $n$ to denote an integer rather than a dummy index and will first consider $\kp \in (n-1, n)$ or $\kp \in (n, n+1)$. In Section \ref{sec:shoot_Qs_QO}, we will consider $\kp \in (n, n+1)$ and prove Proposition \ref{prop:Qs_QO} using a shooting argument.

\vspace{0.1in}
\paragraph{\bf{Ideas and barrier argument}}
We first construct the far-field lower $B_l^f$ and upper barriers $B_u^f$ with  $ B_u^f \leq B_l^f$ (see \eqref{eq:bar_far_l} and \eqref{eq:bar_far_u} for the definitions). If $(Y_*, U(Y_*))$ is outside the region 
\beq\label{eq:bar_region}
\Om_{B}^f = \{ (Y, U) : B_u^f(Y) < U < B_l^f(Y) , \  0 < Y < Y_O \},
\eeq
the solution $(Y, U(Y))$ will remain outside $\Om_B^f$ for any $Y \in (Y_*, Y_O) $ (see Section \ref{sec:bar_far} and Proposition \ref{prop:bar_f}). We define the boundaries of $\Om_B^f$
\beq\label{eq:bar_region_edge}
E_{B, 1} = \{  (Y, B_u^f(Y)): Y \in (0,  Y_O ) \}, 
\quad E_{B, 2}  = \{  (Y, B_l^f(Y)): Y \in (0,  Y_O ) \} .
\eeq
See Figure \ref{fig:coordinate2} for illustrations of $B_u^f, B_l^f, \Om_B^f$, and the solution curve.

We construct the local upper and lower barriers $\BB_u^{\mw{ne}}(Y), \BB_l^{ \mw{ne}}(Y)$ 
based on $\BB_\sn{n}^\mw{ne}$ \eqref{eq:bar_loc}.

In Propositions \ref{prop:bar_loc}, \ref{prop:bar_loc_valid}, we will show that $\BB_u^{ \mw{ne} }(Y)$ is a upper barrier for $U(Y)$, which is valid for $Y \in (0, t_{bar})$. Then, 
in Propositions \ref{prop:bar_inter}, \ref{prop:bar_exit}, we show that $\BB_u^{ \mw{ne} }(Y)$ intersects  $B_u^f$ at some $t_I < t_{bar}$ for some $\kp \in (n-\e, n)$ with large $n$ and small $\e$. This implies that the solution exits the region $\Om_B^f$ via $E_{B, 1}$ for this $\kp$. We perform a similar argument for the lower barriers $\BB_l^{ \mw{ne} }, B_l^f$ and some $\kp \in (n, n+\e)$ with large $\kp$ and small $\e$. See Sections \ref{sec:bar_loc}, \ref{sec:Qs_QO_inter}.

In Section \ref{sec:shoot_Qs_QO}, using continuity, we construct a smooth solution from $Q_s$ to $(Y, U(Y))$ with $Y$ close to $Y_O$, and the solution agrees with the local analytic solution $(Z, V(Z))$ to \eqref{eq:ODE} near $Z=0$ via the map $(\cY, \cU)$ \eqref{eq:sys_UY}. Using the inverse map $(\cV, \cZ)$ \eqref{eq:UY_to_VZ} proves Proposition \ref{prop:Qs_QO}.

Here, \textit{u, l, ne, f} are short for \textit{upper, lower, near}(for local), \textit{far}, respectively.

\begin{figure}[t]
  \centering
  \includegraphics[width=0.6\linewidth]{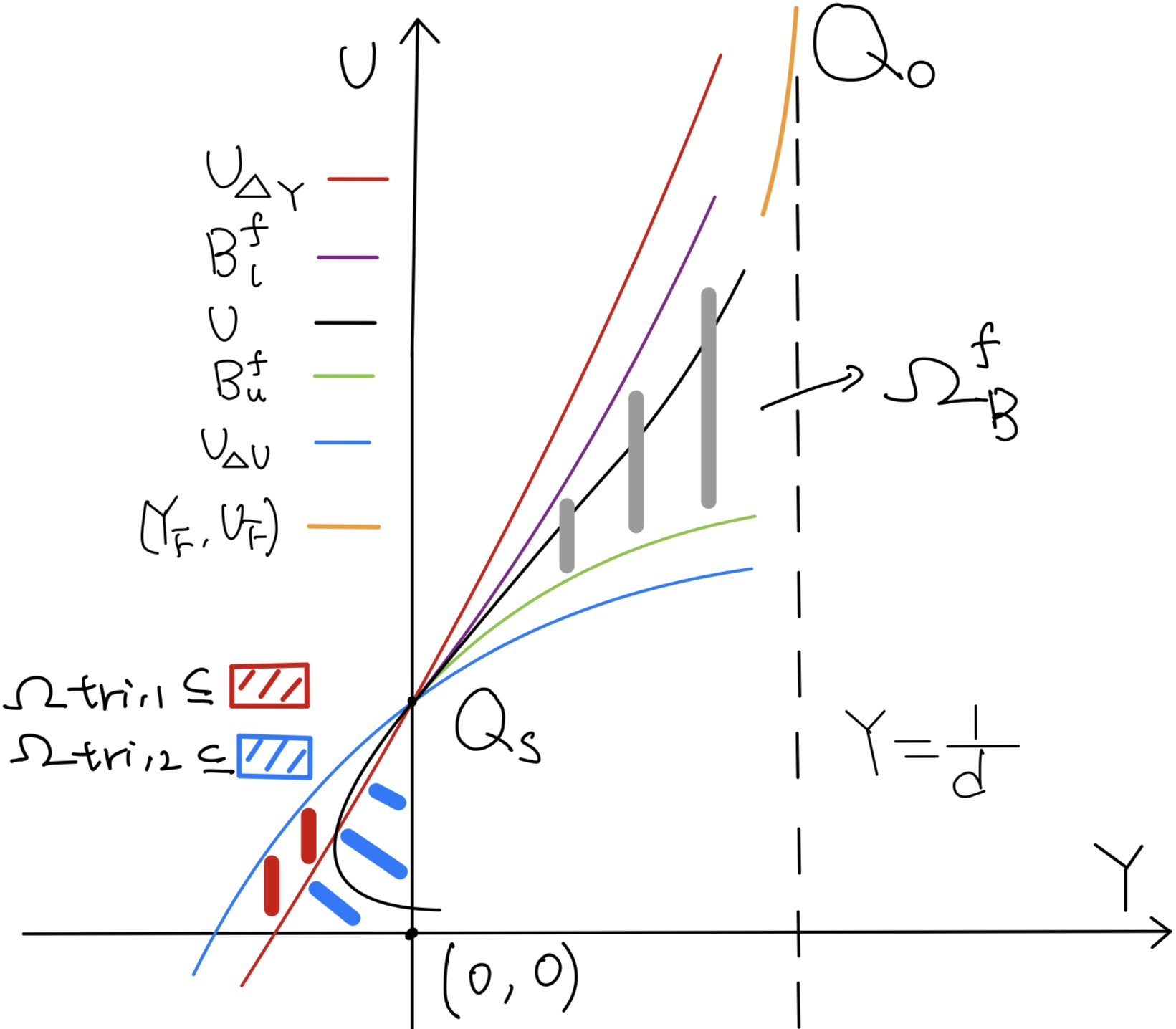}
  \caption{Illustrations of phase portrait of the $(Y, U)$-ODE \eqref{eq:ODE}.
The black curve represents the $C^{\infty}$ solution curve, 
$(Y_F, U_F)$ (orange) defined in \eqref{eq:QO_glue} is the solution curve near $Q_O= (1/d, \infty)$, 
$B_l^f$ (purple) and $B_u^f$ (green) are barrier functions defined in \eqref{eq:bar_far_l}, \eqref{eq:bar_far_u},  $U_{\D_Y}$ (red) and $U_{\D_U}$ (blue) defined in \eqref{eq:root_UY} are roots of $\D_Y, \D_U$.  
The domain $\Om_B^f$ is defined in \eqref{eq:bar_region} and $\Om_{\mw{tri},i}$ in \eqref{eq:dom_tri}. 
   }
 \label{fig:coordinate2}
\end{figure}

\subsection{Far-field barriers}\label{sec:bar_far}

We construct the far-field upper barrier 
\bseq\label{eq:bar_far}
\beq\label{eq:bar_far_u}
B_u^f(Y) = U_0 +  U_1 Y  + 2 Y^2 ,
\eeq
and the far-field lower barrier 
\beq\label{eq:bar_far_l}
B^f_{l}(Y) = \f{ e_1 Y + e_2 Y^2 - U_0}{ d Y - 1 },
\eeq
with $e_1, e_2$ satisfying
\begin{align}
\f{d}{dY} B_{l}^f(Y) |_{Y = 0}  & = U_1, \label{eq:bar_far_l_cond1} \\
\f{d^2}{dY^2} 
\B( (B^{f}_l)^{\prime}(Y) \D_Y(Y, B^f_l(Y)) - \D_U(Y, B^f_l(Y))   < 0  \B) \B|_{Y = 0} &= - 40 . 
\label{eq:bar_far_l_cond2} 
\end{align}
\eseq

For \eqref{eq:bar_far_l_cond1},  a direct calculation obtains the derivative and solves $e_1$:
\[
 d U_0 - e_1 = U_1, \quad e_1 = d U_0 - U_1 .
\] 
We impose  \eqref{eq:bar_far_l_cond2} to show that the quantity in the bracket in  \eqref{eq:bar_far_l_cond2} is negative. See \eqref{eq:bar_far_ds_a}.  It is easy to see that \eqref{eq:bar_far_l_cond2} is linear in $e_2$, and we can solve $e_2$ easily with symbolic computation. 

By definition, $B_l^f, B_u^f$ agree with $U(Y)$ near $Y = 0$ up to $O(Y^2)$. Moreover, using \eqref{eq:eqn_U10}, \eqref{eq:eqn_U1}, and \eqref{eq:grad_Q}, near $Y=0$, we obtain 
\[
 (B^{f}_{\al})^{\prime}(Y) \D_Y(Y, B^f_{\al}(Y)) - \D_U(Y, B^f_{\al}(Y)) = O(Y^2),\quad \al \in \{ l, u\}.
\]

Recall the roots $U_{\D_Y}, U_{\D_Z}$ from \eqref{eq:root_UY}, and $U_g$ from \eqref{eq:root_DZ}.
In Propositions \ref{prop:bar_f}, \ref{prop:bar_loc_valid}, we will show the following relative positions of curves near $Q_s$
\beq\label{eq:rela}
 U_{\D_U}(Y) < B_u^f(Y) < U(Y) < B_l^f(Y) < U_{\D_Y}(Y),
 \quad U_g(Y) < B_u^f(Y),
\eeq
for $0 < Y \ll 1$. See Figure \ref{fig:coordinate2} for illustrations of the relative positions
of these curves.

We have the following results.

\begin{prop}[Computer-assisted]\label{prop:bar_f}

There exists a large $C$ such that for any $\kp > C$, the following statements hold true.
The functions $B_l^f, B_u^f$ \eqref{eq:bar_far} satisfies $B_l^f(0) = B_u^f(0) = 0$ and 
\bseq\label{eq:bar_far_prop}
\begin{align}
 & \pa_Y^2 B_u^{f}(0) <  \pa_Y^2 U(0) < \pa_Y^2 B_l^f(0), && \label{eq:bar_far_valida} \\
 & B_l^f(Y) < U_{\D_Y}(Y) , &&0<Y\leq  Y_O , \label{eq:bar_far_validc} \\
&  U_{\D_U}(Y)  <  B_u^f(Y) < B_l^f(Y) , &&  0<Y\leq  Y_O  , \label{eq:bar_far_validd}   \\
  &(B^{f}_l)^{\prime}(Y) \D_Y(Y, B^f_l(Y)) - \D_U(Y, B^f_l(Y))     < 0, 
&&0<Y\leq  Y_O
   \label{eq:bar_far_ds_a}  \\
  &  (B^{f}_u)^{\prime}(Y) \D_Y(Y, B^f_u(Y)) - \D_U(Y, B^f_u(Y))    > 0, 
  &&0<Y\leq  Y_O \label{eq:bar_far_ds_b} 
  \end{align}
\eseq
where $Y_O = \f{1}{d}$. Moreover, the function $U_g$ \eqref{eq:root_DZ} and $U_{\D_U}$ \eqref{eq:root_UY} satisfies 
\beq\label{eq:root_ineq}
  \pa_Y U(0) > c > 0, \quad  \pa_Y U(0) -  \pa_Y U_{\D_U}(0) > c > 0, 
  \quad 
    \pa_Y U(0) -  \pa_Y U_g(0) > c > 0 ,
\eeq
for some constant $c$ independent of $\kp$. The map $\cZ$ \eqref{eq:UY_ODE} along the solution curve satisfies 
\beq\label{eq:dZ_dY}
 \f{d \cZ(Y, U(Y)}{ d Y} \B|_{Y= 0} 
=  \f{d \cZ(Y, U_0 + U_1 Y ) }{ d Y} \B|_{Y= 0}  < 0.
\eeq


As a result, we have $\Om_B^f \subset \cR_{YU}$ for $\cR_{YU}$ defined in Lemma \ref{lem:bijec}, and
\begin{align}
 U_g(Y) < B_u^f(Y) , \quad  \D_Z( (\cZ, \cV)(Y, U) ) &> 0  , &&  (Y, U) \in \Om_B^f ,   \label{eq:DelZ_sign}  \\
 \D_Y(Y, U) & > 0  , &&  (Y, U) \in \Om_B^f . \label{eq:DelY_sign}  
  \end{align}
and $B_{u}^{f}, B_l^f$ are an upper, and lower barrier for the ODE with $Y \in [0, Y_O)$,  respectively.

\end{prop}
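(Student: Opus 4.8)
The plan is to verify each of the stated properties by a combination of (i) symbolic computation at $Y=0$ using the already-derived Taylor data $U_0=\e$, $U_1=U_{1,+}$, and the explicit formulas \eqref{eq:grad_Q_b}, \eqref{eq:eqn_U1}, \eqref{eq:root_UY}, \eqref{eq:root_DZ}, \eqref{eq:UY_to_VZ}; (ii) a limiting argument as $\g\to\g_*$ (equivalently $\kp\to\infty$), where $\e\to 0$, $U_0\to 0$, and the degenerate quantities simplify; and (iii) elementary sign analysis of the resulting low-degree polynomials in $Y$ on the compact interval $[0,Y_O]$, $Y_O=1/d$, carried out with light computer assistance as announced in Appendix \ref{app:comp}. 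I would organize the verification in the order the inequalities are listed, since later ones reuse earlier ones.

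First, the value at the origin: $B_l^f(0)=B_u^f(0)=U_0=\e\to 0$ as $\g\to\g_*$; strictly speaking $B_l^f(0)=-U_0/(-1)=U_0$ from \eqref{eq:bar_far_l}, so both vanish in the limit. For \eqref{eq:bar_far_valida} I would compute $\pa_Y^2 B_u^f(0)=4$ directly from \eqref{eq:bar_far_u}, compute $\pa_Y^2 U(0)=2U_2$ from the recursion \eqref{eq:recur_top} with $n=2$, and compute $\pa_Y^2 B_l^f(0)$ from \eqref{eq:bar_far_l} using the solved values of $e_1,e_2$; in the limit $\kp\to\infty$ these become three explicit numbers, and by continuity in $\g$ the strict ordering persists for $\kp$ large. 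The construction condition \eqref{eq:bar_far_l_cond2} is what forces $(B_l^f)'\D_Y-\D_U$ to have a negative second derivative at $0$, and combined with the fact (noted in the text, via \eqref{eq:eqn_U10}–\eqref{eq:eqn_U1}) that this quantity is $O(Y^2)$ near $Y=0$, it gives \eqref{eq:bar_far_ds_a} for $0<Y\ll 1$; to extend it to all of $(0,Y_O]$ I would clear denominators — the expression $(B_l^f)'\D_Y-\D_U$ times $(dY-1)^2$ is a polynomial in $Y$ of bounded degree whose coefficients converge as $\kp\to\infty$ — and check that the limiting polynomial is strictly negative on $(0,1/d]$, which is a finite sign check. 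The analogous bound \eqref{eq:bar_far_ds_b} is handled identically with $B_u^f$, which is a genuine quadratic so no denominator appears. Inequalities \eqref{eq:bar_far_validc}, \eqref{eq:bar_far_validd} and the first half of \eqref{eq:root_ineq} reduce to sign checks of polynomials $B_l^f - U_{\D_Y}$, $B_u^f - U_{\D_U}$, $B_u^f - B_l^f$ (after multiplying by $dY-1<0$) on $[0,Y_O]$, using that $U_{\D_Y},U_{\D_U}$ are rational with denominator $dY-1$ and that $\e\to 0$; the derivative comparisons in \eqref{eq:root_ineq} follow from $\pa_Y U(0)=U_1>0$ together with $\pa_Y U_{\D_U}(0)$ and $\pa_Y U_g(0)$ computed from \eqref{eq:root_UY}, \eqref{eq:root_DZ}, all with $\kp$-independent limiting values, whence the constant $c$.

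For the structural consequences: $\Om_B^f\subset\cR_{YU}$ holds because on $\Om_B^f$ we have $0<Y<Y_O<1$ and $U>B_u^f(Y)>U_{\D_U}(Y)\ge$ (a computation shows) a positive lower bound for $Y$ bounded away from $0$, while near $Y=0$ positivity of $U$ comes from $U>B_u^f>0$ once $U_0=\e>0$; thus $U>0,Y<1$. The sign $\D_Z((\cZ,\cV)(Y,U))>0$ on $\Om_B^f$ is derived from the factorization \eqref{eq:DZ_UY}: the prefactor is positive on $\cR_{YU}$ and the sign of $\D_Z$ equals the sign of $U-U_g(Y)$, so it suffices that $U_g(Y)<B_u^f(Y)$ on $(0,Y_O)$ — this is the last assertion of \eqref{eq:root_ineq} integrated from $Y=0$, using $U_g(0)=\ell\g^2-1-(\,?\,)$, more precisely $U_g(0)-B_u^f(0)=\ell\g^2-1-\e=0$ (since $U_0=\e=\ell\g^2-1$, and one checks $U_g(0)=\ell\g^2-1$ too up to the limit), together with $\pa_Y U_g(0)<\pa_Y B_u^f(0)$ from \eqref{eq:root_ineq} and a sign check of the quadratic $B_u^f-U_g$ on the rest of $[0,Y_O]$. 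Then \eqref{eq:DelY_sign}, $\D_Y>0$ on $\Om_B^f$, follows from the factorization \eqref{eq:root_UY}: $\D_Y=(dY-1)(U-U_{\D_Y}(Y))$, and on $\Om_B^f$ we have $dY-1<0$ and $U<B_l^f(Y)<U_{\D_Y}(Y)$ by \eqref{eq:bar_far_validc}, so the product is positive. Finally, that $B_u^f$ and $B_l^f$ are respectively upper and lower barriers follows from \eqref{eq:bar_far_ds_b}, \eqref{eq:bar_far_ds_a}: on the graph $U=B_u^f(Y)$ one has $(B_u^f)'\D_Y-\D_U>0$ with $\D_Y>0$ (by \eqref{eq:DelY_sign} applied at the boundary), so $(B_u^f)'>\D_U/\D_Y=dU/dY$, meaning the solution cannot cross $B_u^f$ from below as $Y$ increases; symmetrically for $B_l^f$. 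The main obstacle I anticipate is purely bookkeeping: controlling the $\kp$-dependence uniformly, i.e.\ making precise that all the polynomials whose signs must be checked have coefficients converging as $\kp\to\infty$ (equivalently $\g\to\g_*$, $\e\to 0$) to the coefficients of an explicit polynomial, so that a finite number of strict sign checks at $\kp=\infty$ plus continuity yields the result for all large $\kp$; the computer assistance is used exactly to certify those limiting sign checks and the value of $e_2$.
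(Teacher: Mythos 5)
Your proposal takes essentially the same route as the paper: each inequality is reduced to a polynomial or rational sign condition on $[0, Y_O]$ whose coefficients converge as $\g\to\ell^{-1/2}$, so that a finite interval-arithmetic check of the strict limiting sign, plus continuity in $\g$, yields the result for all $\kp$ large; the structural consequences then follow from the factorizations of $\D_Y,\D_Z$ together with the monotonicity of $B_u^f$, as you describe.

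Two minor points to tighten. First, you do not address \eqref{eq:dZ_dY}, which is one of the claims; it reduces to another explicit scalar check once one observes (as the paper does) that since $U(Y)=U_0+U_1 Y + O(Y^2)$, only $U_0,U_1$ enter $\tfrac{d}{dY}\cZ(Y,U(Y))\big|_{Y=0}$. Second, when justifying the barrier property you assert $\D_Y>0$ on the graph $U=B_u^f(Y)$ ``by \eqref{eq:DelY_sign} applied at the boundary,'' but \eqref{eq:DelY_sign} is only stated on the open region $\Om_B^f$, which excludes that graph; the correct step, as in the paper, is the direct factorization $\D_Y(Y,B_{\al}^f(Y)) = (dY-1)\bigl(B_{\al}^f(Y)-U_{\D_Y}(Y)\bigr) > 0$, using $B_u^f < B_l^f < U_{\D_Y}$ and $dY-1<0$ on $(0,Y_O)$. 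Neither issue affects the soundness of the overall strategy.
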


The proof involves computer assistance, and we refer to Appendix \ref{app:comp} for further details. 

\begin{proof}

The properties $B_l^f(0) = B_u^f(0) = U_0 = \e$ follow from the definition \eqref{eq:bar_far} and \eqref{eq:sonic_Q}. 

For \eqref{eq:dZ_dY}, since $U(Y)$ is smooth near $Y = 0$ and 
\[
|U(Y) - U_0 - U_1 Y | \les C_U Y^2, \quad |U^{\pr}(Y)  - U_1  | \les C_U Y,
\]
using chain rule, we prove the first identity in \eqref{eq:dZ_dY}. The second term in \eqref{eq:dZ_dY} is a scalar and only depends on $U_0, U_1$. We do not expand it below as we can derive it symbolically.

The terms in inequality \eqref{eq:bar_far_valida}, \eqref{eq:root_ineq}  are explicit constants.

Inequality \eqref{eq:bar_far_validc} or \eqref{eq:bar_far_validd}, \eqref{eq:dZ_dY} compares two rational functions with explicit formulas.

The term in inequality \eqref{eq:bar_far_ds_a} or \eqref{eq:bar_far_ds_b} is a polynomial. 

\noindent \textbf{Methods.}
Recall that choosing $\kp$ sufficiently large is equivalent to choosing $\g$ close to $\ell^{-1/2}$ \eqref{eq:grad_smooth}, \eqref{eq:gamma_star}. Each inequality in \eqref{eq:bar_far_prop} and  \eqref{eq:root_ineq} can be reformulated equivalently as 
\beq\label{eq:poly_dec}
P(Y ; \g) = Y^i g(Y ; \g ) > 0,
\eeq
for some $i \geq 0$, where $P, g$ are polynomials  in $Y$ and continuous in $\g$. 
We verify that 
\[
 g(Y, \ell^{-1/2} ) > c > 0, \quad \mathrm{ for \ any } \quad Y \in [0, Y_O], 
\]
uniformly with some $c>0$ using computer assistance. See Appendix \ref{app:comp} for more details. Then using continuity, we prove \eqref{eq:bar_far_prop} for $\g$ close to $\ell^{-1/2}$.

\noindent \textbf{Consequences}. 
Recall $\Om_B^f$ from \eqref{eq:bar_region} and $\cR_{YU}$ from Lemma \ref{lem:bijec}. Since $U_0>0$ \eqref{eq:sonic_Q} and $U_1 = \pa_Y U(0) > 0$ \eqref{eq:root_ineq}, we obtain that $B_u^f$ \eqref{eq:bar_far_u} is increasing for $Y \in [0, Y_O)$. Therefore, for any $(Y, U) \in \Om_B^f$ \eqref{eq:bar_region}, we obtain $U > B_u^f(Y) > B_u^f(0) = U_0 > 0$, and $ \Om_B^f\subset \cR_{YU} $.

\noindent \textbf{Estimate \eqref{eq:DelZ_sign}}
Since both  $U_g$ \eqref{eq:root_DZ} and $B_u^f$ \eqref{eq:bar_far_u} are quadratic polynomials
with $U_g(0) = B_u^f(0) = U_0=\e$, we get 
\[
B_u^f(Y) - U_g(Y)
= ( U_1 - \pa_Y U_g(0) ) Y +  ( 2 - (\ell -1) ) Y^2
 = ( U_1 - \pa_Y U_g(0) ) Y + (3 - \ell )Y^2.
\]
Since $Y > 0, 3 - \ell > 0$ \eqref{eq:para}, and $ U_1 - \pa_Y U_g(0) > 0$ from \eqref{eq:root_ineq}, we prove $U_g(Y) < B_u^f(Y)$ for any $Y >0$ and obtain the first estimate in \eqref{eq:DelZ_sign}.

To prove $\D_Z( (\cZ, \cV)(Y, U)) > 0$ \eqref{eq:DelZ_sign}, using the formula \eqref{eq:DZ_UY}, we only need to show 
\beq\label{eq:DelZ_sign_pf}
 U > U_g(Y) , \quad Z =\cZ(Y, U) > 0.
\eeq
For any $(Y, U) \in \Om_B^f \subset \cR_{YU}$ with  $\cR_{YU}$ defined in Lemma \ref{lem:bijec}, 
using the first estimate in \eqref{eq:DelZ_sign} and Lemma \ref{lem:bijec}, we have $U > B_u^f(Y) > U_g(Y)$ and $\cZ( Y, U) > 0$, which implies \eqref{eq:DelZ_sign_pf}


\noindent \textbf{Estimate \eqref{eq:DelY_sign}}
For $(Y, U) \in \Om_B^f$ , the definition of $\Om_B^f$ \eqref{eq:bar_region} and \eqref{eq:bar_far_validc} imply 
\[
 d  Y -1  < 0, \quad  U - U_{\D_Y}(Y) < B_l^f(Y) -  U_{\D_Y}(Y) < 0,
\]
for any $Y \in (0, Y_O)$, where $Y_O = \f{1}{d}$. Using the definition $\D_Y = ( d  Y -1 ) ( U - U_{\D_Y}(Y))$ \eqref{eq:UY_ODE}, we prove $\D_Y > 0$ \eqref{eq:DelY_sign}. 

Similarly, using \eqref{eq:bar_far_validb}, \eqref{eq:bar_far_validc}, we get 
\[
\D_Y(Y, B_{\al}^f(Y) ) 
= (d Y - 1) ( B_{\al}^f(Y) - U_{\D_Y}(Y) ) > 0, \quad \al = l, u,
\]
for any $Y \in (0, Y_O) , Y_O = \f{1}{d}$. The above estimate and \eqref{eq:bar_far_ds_a}, \eqref{eq:bar_far_ds_b} imply that  $B_{u}^{f}, B_l^f$ are an upper, and lower barrier for the ODE with $Y \in [0, Y_O]$,  respectively. 
\end{proof}

\subsection{Local barrier}\label{sec:bar_loc}

For $\b < -n$ with large $|\b|$ to be chosen, we construct the local barrier as 
\beq\label{eq:bar_loc}
\bal
\BB_\sn{n}^{ \mw{ne} }(Y) = \sum_{i=0}^{n} U_i Y^i+  \b U_{n} Y^{n+1} .
\eal
\eeq

We define 
\beq\label{eq:bar_loc_sign}
\PP_\sn{n}^{ \mw{ne}}  =  \D_Y(Y, \BB^{ \mw{ne} }_{[n]}(Y)) 
\B( \BB^{ \mw{ne} }_{[n]} \B)^{\pr}(Y)
 - \D_U(Y, \BB^{ \mw{ne} }_{[n]}(Y) ). 
\eeq

We have the following results regarding the sign of $\PP_\sn{n}^{ \mw{ne} }$.

\begin{prop}\label{prop:bar_loc}

There exist large absolute constants $n_3,  C \gg 1$  such that for any $n > n_3$, and $ \b < - C n^2 < 0$, the following holds true.

(a) For any $ \kp \in (n- 1/2 , n)$ and  $ Y \leq \mu_n \min(  (  |(n-\kp) \b| )^{1/ { (n- 2)} }  , |\b|^{-1} ) $,  we have $\PP_\sn{n}^{ \mw{ne} } > 0$,

(b) For any $\kp \in (n, n+ 1/2)$ and $ Y \leq \mu_n \min(  (  |(n- \kp ) \b| )^{1/ { (n- 2)} }  , |\b|^{-1} ) $, 
we have $\PP_\sn{n}^{ \mw{ne} } < 0$.

Here, $0 < \mu_n < 1/2$ is some constant depending on $n$.
\end{prop}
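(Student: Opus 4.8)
The plan is to view $\BB_\sn{n}^{\mw{ne}}$ as a perturbation of the degree-$n$ Taylor polynomial $P_n(Y):=\sum_{i=0}^{n}U_iY^i$ of the local analytic solution of Proposition~\ref{prop:analy} by the single monomial $\b U_nY^{n+1}$, and to show that throughout the stated window $\PP_\sn{n}^{\mw{ne}}$ is governed by one leading monomial of order $Y^{n+1}$. Since $\D_Y$ is affine and $\D_U$ quadratic in $U$, inserting $\BB_\sn{n}^{\mw{ne}}=P_n+\b U_nY^{n+1}$ into \eqref{eq:bar_loc_sign} gives the exact identity
\beq\label{eq:bar_loc_decomp}
\PP_\sn{n}^{\mw{ne}}(Y)=G(Y)+\b U_n Y^{n}\,\mfr q(Y)+\b^2U_n^2Y^{2n+1}\big((n+1)(dY-1)-2Y\big),
\eeq
where $G(Y):=\D_Y(Y,P_n)P_n'-\D_U(Y,P_n)$ is the Taylor defect and $\mfr q(Y):=(n+1)\D_Y(Y,P_n)+Y\big((dY-1)P_n'-4P_n-2h\big)$ with $h(Y):=f(Y)+(d-1)Y(1-Y)$, so that $\D_U(Y,U)=2U\big(U+h(Y)\big)$; here $\deg G\le 2n$ and $\deg\mfr q=n+1$. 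The first key point is that, since $U_0,\dots,U_n$ obey the recursion of Lemma~\ref{lem:pow_recur} (equivalently $P_n$ matches the true Taylor expansion up to order $n$), the coefficient of $Y^k$ in $G$ vanishes for $k\le n$ and the same recursion yields $[G]_{n+1}=-\big((n+1)\lams-\laml\big)U_{n+1}=-\lams(n+1-\kp)U_{n+1}$; likewise $\mfr q(0)=\D_Y(Q_s)=0$ and, using $U_1=c_4-\lams$ (from \eqref{eq:grad_lam2}) and $c_1+c_4=\lams+\laml$ (from \eqref{eq:grad_lamb}), the $Y^1$-coefficient of $\mfr q$ equals $(n+1)\lams-\laml=\lams(n+1-\kp)$. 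Hence
\beq\label{eq:bar_loc_lead}
\big[\PP_\sn{n}^{\mw{ne}}\big]_{n+1}=\lams(n+1-\kp)\big(\b U_n-U_{n+1}\big),
\eeq
and the remainder $R(Y):=\PP_\sn{n}^{\mw{ne}}(Y)-\big[\PP_\sn{n}^{\mw{ne}}\big]_{n+1}Y^{n+1}$ is supported in degrees $n+2,\dots,2n+2$.

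Next I would fix the sign and size of \eqref{eq:bar_loc_lead}. We have $\lams>0$ and $n+1-\kp\ge\tfrac12$ in both intervals. By Lemma~\ref{lem:asym} and Corollary~\ref{cor:asym}, for $n>n_3$ large and $\kp$ in either interval $U_m$ changes sign as $\kp$ crosses $m$, so $U_n>0$ when $\kp\in(n,n+\tfrac12)$ and $U_n<0$ when $\kp\in(n-\tfrac12,n)$, while $|U_{n+1}|\le C_*(1+\d)\,\tfrac{(n+1)\kp}{|n+1-\kp|}\,|U_n|\lesssim_n n^2|U_n|$ because $|n+1-\kp|\ge\tfrac12$. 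Choosing the absolute constant $C$ in the hypothesis $\b<-Cn^2$ large enough that $|\b|>Cn^2$ exceeds $2\,|U_{n+1}/U_n|$, the term $\b U_n$ dominates $U_{n+1}$, so $\b U_n-U_{n+1}$ has the sign of $\b U_n$ — hence the sign opposite to $U_n$ — and $|\b U_n-U_{n+1}|\asymp|\b|\,|U_n|$. This makes $\big[\PP_\sn{n}^{\mw{ne}}\big]_{n+1}>0$ for $\kp\in(n-\tfrac12,n)$ and $<0$ for $\kp\in(n,n+\tfrac12)$, with $\big|\big[\PP_\sn{n}^{\mw{ne}}\big]_{n+1}\big|\asymp\lams\,|\b|\,|U_n|$; since $\lams\asymp\kp^{-1}$ and $\kp\asymp n$ on both intervals this is $\asymp_n|\b|\,|U_n|$. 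It then remains only to show $|R(Y)|<\big|\big[\PP_\sn{n}^{\mw{ne}}\big]_{n+1}\big|\,Y^{n+1}$ on the window.

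To this end I would split $R$ into three families and compare each with $\big|\big[\PP_\sn{n}^{\mw{ne}}\big]_{n+1}\big|Y^{n+1}$, using throughout that $|n-\kp|\,|U_n|$ equals the right side of \eqref{eq:recur_top} divided by $\lams$, and that this right side depends only on $U_0,\dots,U_{n-1}$, which are bounded on the two $\kp$-intervals by an explicit function of $n$ — so $M(n):=\sup_\kp\big(|n-\kp|\,|U_n|\big)<\infty$. (i) The higher defect coefficients $[G]_k$, $n+2\le k\le 2n$: each is a finite sum of products of at most two coefficients with indices summing to $k$, hence $|[G]_k|\lesssim\mathrm{poly}(n)\,|U_n|\,|U_{k-n}|$ (the extreme index $n$ dominates), with the largest term $[G]_{2n-1}\sim -nU_n^2$ at degree $2n-1$; requiring $|[G]_{2n-1}|Y^{2n-1}$ to be small compared with the leading term forces $Y^{\,n-2}\lesssim_n|(n-\kp)\b|$, i.e.\ $Y\lesssim_n|(n-\kp)\b|^{1/(n-2)}$. (ii) The $\b^2$-terms of degree $2n+1$ and $2n+2$: the dominant one, $-(n+1)\b^2U_n^2Y^{2n+1}$, forces $Y^{\,n}\lesssim_n|n-\kp|/|\b|$; splitting according to whether $|(n-\kp)\b|^{1/(n-2)}$ or $|\b|^{-1}$ realizes the minimum — in the former case automatically $|n-\kp|\le|\b|^{-(n-1)}$, which supplies exactly the extra smallness needed — one checks $Y\le\mu_n\min\big(|(n-\kp)\b|^{1/(n-2)},|\b|^{-1}\big)$ is enough. (iii) The $\b$-linear ``middle'' terms $\b U_n\,[\mfr q]_j\,Y^{n+j}$, $2\le j\le n$: here $|[\mfr q]_j|\lesssim_n\max(|U_j|,1)$, the common factor $|\b U_n|$ cancels against the leading term, and their total contribution is $\lesssim_n\sum_{2\le j\le n}\max(|U_j|,1)Y^{j-1}$, which is $\lesssim_n Y$ for $0<Y<\mu_n$ by the convergence of the Taylor series of $U$ (Proposition~\ref{prop:analy}) together with the bound $|U_n|\le M(n)/|n-\kp|$. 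Collecting the resulting thresholds — each of the form $\big(\mathrm{poly}(n)\,M(n)\big)^{-1/n}$ or $\big(\mathrm{poly}(n)\,M(n)\big)^{-1/(n-2)}$, in particular $\kp$-independent — and taking $\mu_n\in(0,\tfrac12)$ to be a small absolute constant times their minimum yields $|R(Y)|\le\tfrac12\big|\big[\PP_\sn{n}^{\mw{ne}}\big]_{n+1}\big|Y^{n+1}$ on the whole window, hence $\sgn\PP_\sn{n}^{\mw{ne}}(Y)=\sgn\big[\PP_\sn{n}^{\mw{ne}}\big]_{n+1}$, which is precisely (a) and (b).

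The main work, I expect, is carrying out (i)–(iii) uniformly in $\kp$: bounding $\sum_k|[G]_k|Y^k+\sum_j\big|\b U_n[\mfr q]_j\big|Y^{n+j}$ by a fixed fraction of the leading term over a window that shrinks with $|n-\kp|$, and in particular producing the explicit $n$-dependent bound $M(n)$ for $\sup_\kp\big(|n-\kp|\,|U_n|\big)$. This is exactly where the refined coefficient estimates of Lemma~\ref{lem:asym_refine} enter — the convexity bound \eqref{eq:Um_convex} for the products $U_iU_j$ appearing in $[G]_k$ and in the right side of \eqref{eq:recur_top} (indices up to $2n$), and the growth bounds \eqref{eq:Um_asym_ref}, \eqref{eq:Um_grow2} used to compare $|U_n|$ with $|U_{n+1}|$ and to make the final $\mu_n$ depend on $n$ only.
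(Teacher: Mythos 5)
Your strategy coincides with the paper's: isolate the $Y^{n+1}$ coefficient $((n+1)\lams-\laml)(\b U_n-U_{n+1})$, show its sign is $-\sgn(U_n)$ and its magnitude is $\asymp_n|\b|\,|U_n|\asymp_n|\b|\,|\kp-n|^{-1}$, and dominate the degrees $\ge n+2$ on exactly the stated window. Your decomposition \eqref{eq:bar_loc_decomp} into the Taylor defect $G$, the $\b$-linear piece $\b U_nY^{n}\mfr q$, and the $\b^2$ tail is a clean algebraic reorganization of what the paper does by directly estimating the power-series coefficients of $\D_Y(Y,\BB_\sn{n}^{\mw{ne}})$ and $\D_U(Y,\BB_\sn{n}^{\mw{ne}})$ and convolving; your leading-coefficient computation reproduces the paper's \eqref{eq:bar_loc_lead}, and the sign analysis of $U_n$ on $(n-\tfrac12,n)$ versus $(n,n+\tfrac12)$ is right.

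Two of the remainder estimates are, however, imprecise as written. In (i) the worst $G$-coefficient is $[G]_{2n}\asymp U_n^2$, not $[G]_{2n-1}$; the latter is $\asymp n\,U_nU_{n-1}\asymp|\kp-n|\,U_n^2$, hence smaller. This is harmless (the window $Y^{n-2}\les_n|(\kp-n)\b|$, together with $Y\le\mu_n|\b|^{-1}$, controls the $[G]_{2n}Y^{2n}$ term as well), but the identification should be corrected. More substantively, in (iii) the claim $\sum_{2\le j\le n}\max(|U_j|,1)Y^{j-1}\les_n Y$ cannot be justified by ``convergence of the Taylor series'' on a $\kp$-independent interval: the single term $|U_n|Y^{n-1}\asymp_n|\kp-n|^{-1}Y^{n-1}$ is unbounded as $\kp\to n$ for any fixed $Y$, and the constant $D$ in Proposition~\ref{prop:analy} degenerates as $\kp\to n$ (it depends on a closed subinterval $I$ of $(\g_{m+1},\g_m)$). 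What actually controls this term is the window itself: $Y^{n-1}=Y^{n-2}\cdot Y\le\mu_n^{n-1}|(\kp-n)\b|\cdot|\b|^{-1}=\mu_n^{n-1}|\kp-n|$, which exactly cancels the $|\kp-n|^{-1}$ in $|U_n|$. This is precisely what the paper's bookkeeping accomplishes through the uniform bounds $|U_i|\les_n1$ for $i\le n-1$, $|U_n|\asymp_n|\kp-n|^{-1}$, $|U_{n+1}|\les n^2|U_n|$, leading to a remainder of the schematic form $|\cE_n|\les_n|\b|\,|\kp-n|^{-1}Y^{n+2}+|\kp-n|^{-2}Y^{2n-1}+|\b|\,|\kp-n|^{-2}Y^{2n}+|\b|^2|\kp-n|^{-2}Y^{2n+1}$; you should carry out the analogous explicit accounting for your families (i)--(iii). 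With that done, the final comparison $|R(Y)|\le\tfrac12\,|[\PP_\sn{n}^{\mw{ne}}]_{n+1}|\,Y^{n+1}$ and the sign conclusion follow as you describe.
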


\begin{proof}

We require $n_3$ larger than the constant $C$ in Lemma \ref{lem:asym_refine}. Since $|n - \kp |<1/2$, we first choose $\mu_n$ small enough so that 
\beq\label{eq:bar_loc_Y}
Y \leq \mu_n \min(  (  |(n- \kp) \b| )^{1/ { (n- 2 ) } }  , |\b|^{-1} ) < 1 .
\eeq

In the following estimates, we will mainly track the dependence of the constants on $|\kp - n|$, which plays a role as a small parameter, and $n, \b$, which are large.

Below, we shall simplify $\PP_\sn{n}^\mw{ne}$ as $P$, and use $a_m$ to denote the $m$-th coefficient in the power series expansion of $a$. We will show that
\bseq\label{eq:bar_loc_est1}
\beq\label{eq:bar_loc_est1a}
 \PP_\sn{n}^\mw{ne}(Y)
 =  P_{n+1} Y^{n+1} + \cE_n,  \\
 \eeq
where $P_{n+1}$ is given by 
\beq\label{eq:bar_loc_lead}
  P_{n+1} = ( (n+1) \lams  - \laml ) (\b  U_{n} - U_{n+1}) , 
\eeq
and $\cE_n$  satisfies the bound 
\beq\label{eq:bar_loc_est1b} 
 | \cE_n|  \les_n  |\b| | \kp-n|^{-1} Y^{n+2}  +   | \kp -n|^{-2}  Y^{2n-1} + |\b| |\kp-n|^{-2}  Y^{2n}  + |\b|^2 |\kp-n|^{-2} Y^{2n+1} . 
\eeq

\eseq

From \eqref{eq:bar_loc}, \eqref{eq:bar_loc_sign},  $\PP_\sn{n}^\mw{ne}(Y)$ is a polynomial in $Y$. Since the barrier \eqref{eq:bar_loc} and the power series of $U$ agree up to $|Y|^{n+1}$, using Lemma \ref{lem:pow_recur}, we get that the term $Y^i, i \leq n$ in power series expansion of $\PP_\sn{n}^\mw{ne}$ vanishes. Moreover, using the notation $\mfr R_n$ \eqref{eq:recur_topN_coe0}, 
and the derivations \eqref{eq:recur_0}, \eqref{eq:recur_topN_0} with $N=1$ and $a_{n+1, n+1} = (n+1 ) \lams - \laml $  from  \eqref{eq:recur_top},  we obtain the coefficient of $Y^{n+1}$ in $P$
\[
\bal
P_{n+1} & = \mfr R_{n+1}(Y_0, U_0, .., U_n, \b U_n ) 
= \mfr R_{n+1}(Y_0, U_0, .., U_n, \b U_n )  - 
 \mfr R_{n+1}(Y_0, U_0, .., U_n, U_{n+1} ) \\
 & = a_{n+1, n+1} (\b U_n - U_{n+1}) ,
 \eal
\]
which is \eqref{eq:bar_loc_lead}.

Next, we estimate the coefficients $P_m$ of $P$ with $m \geq n+2$. 
Applying Corollary \ref{cor:asym} and tracking the dependence on $|\kp - n|$, we obtain
\beq\label{eq:bar_loc_coe1}
 |U_i| \les_n 1, \  \mathrm{for } \ i \leq n-1, \quad |U_n| \asymp_n |\kp-n|^{-1} ,
 \quad |U_{n+1}| \les n^2 |U_n|.
\eeq

 Next, we estimate the coefficients $\D_{Y, i}, \D_{U,i}$ of the power series of $\D_Y(Y, \BB_\sn{n}^\mw{ne}), \D_U(Y, \BB_\sn{n}^\mw{ne})$. Since $\D_Y(Y, U)$ is linear in $U$ \eqref{eq:UY_ODE}, using \eqref{eq:bar_loc_coe1}, we obtain 
\[
  |(\D_Y)_i | \les_n 1, \mathrm{\ for \ } i \leq n - 1, \quad 
 | (\D_Y)_n | \les_n |\kp-n|^{-1},  \quad 
| (\D_Y)_{i}| \les_n |\b| | \kp-n|^{-1},  \mathrm{\ for \ } i \geq n + 1 .
\]

Since $\D_U$ is quadratic in $U$ \eqref{eq:UY_ODE} with the nonlinear term $2 U^2$, using the product formula \eqref{eq:power_prod} and the estimates \eqref{eq:bar_loc_coe1},  yields 
\[
\bal
   | (\D_U)_i| &\les_n 1, &&  i \leq n- 1, \\ 
 | (\D_U)_i| &\les_n | \kp - n|^{-1},&&  i=n \\
  | (\D_U)_{i}| &\les_n |\b| | \kp-n|^{-1},  &&  n+1 \leq i \leq  2 n - 1 , \\ 
  |\D_U)_{i} |  & \les_n  |\b || \kp-n|^{-1} + | \kp-n|^{-2} ,&& i \geq 2 n.
 \eal
\]

Recall $\BB_\sn{n}^\mw{ne}$ from \eqref{eq:bar_loc}. Using
\[
 \pa_Y \BB_\sn{n}^\mw{ne} = \sum_{1\leq i\leq n} (i U_i) Y^{i-1} + (n+1) \b U_n Y^n, 
\]
the above estimates of of $\D_{Y,i}, \D_{U, i}$, and the product formula \eqref{eq:power_prod}, 
for $|Y|<1$, we obtain 
\[
P =  P_{n+1} Y^{n+1}
+ O_n( |\b| | \kp-n|^{-1} Y^{n+2} +  | \kp -n|^{-2}  Y^{2n-1} + |\b| | \kp-n|^{-2}  Y^{2n} 
+ |\b|^2 | \kp-n|^{-2} Y^{2n+1} ),
\]
and establish \eqref{eq:bar_loc_est1b}. 

Next, we estimate  $P_{n+1}$ and $\cE_n$. Using $\kp = \f{\laml}{\lams}$ \eqref{eq:kappa}, we first rewrite $P_{n+1}$ as follows 
\[
 P_{n+1}   =  ( \f{n+1}{ \kp} - 1 ) \laml (\b U_n - U_{n+1})
= \f{n+1 - \kp}{\kp} \laml (\b U_n - U_{n+1}) . 
\]
Choosing $\b < 0 $ with $n^2 \ll |\b|$ and using \eqref{eq:bar_loc_coe1}, we get $|U_{n+1}| < \f{1}{2 } |\b U_n| $. Since $\lams, \laml > 0$ \eqref{eq:lam_sign}, $|\laml|\asymp 1$, and $|\kp-n|  < \f{1}{2}$, we estimate 
\begin{gather*}
  |P_{n+1}|  \gtr_n |\b U_n - U_{n+1}| \gtr |\b U_n|
\gtr |\b| |\kp-n|^{-1},  \\
  \sgn( P_{n+1})  = \sgn(\b U_n)
= - \sgn(U_n) .
\end{gather*}

To ensure that the error term $\cE_n$ in \eqref{eq:bar_loc_est1} is smaller than $P_{n+1} Y^{n+1}
$, e.g. $|\cE_n| < \f{1}{2} |P_{n+1} Y^{n+1}| $, we require
\[
 |Y|^{n-2} \ll_n  \b |\kp-n|,
 \quad |Y| \b \ll_n 1 ,
\]
which are achieved by choosing $\mu_n$ sufficiently small in \eqref{eq:bar_loc_Y}.
As a result, we get 
\[
 \sgn(P(Y)) = \sgn(P_{n+1} Y^{n+1}) = \sgn( P_{n+1}) = - \sgn(U_n).
\]

Recall that we assume $\kp \in (n- 1/2, n)$ or $\kp \in (n, n+1/2)$. 
Since $U_{n-1} > 0$ from \eqref{eq:U_sign}, using the relation between $U_{n-1}, U_n$ in \eqref{eq:induc_asym2}, we obtain
\[
U_n < 0, \quad \kp \in (n-1/2, n), \quad U_n > 0,  \quad \kp \in (n, n+1/2),
\]
which implies 
\[
P(Y) > 0, \quad 
 \kp \in (n-1/2, n), \quad P(Y) < 0,  \quad \kp \in (n, n+1/2). 
\]
We conclude the proof.
\end{proof}

Next, we derive the relative positions among the barriers and the solution.

\begin{prop}\label{prop:bar_loc_valid}
Let $U_{\D_{\al}}$ be the root of $\D_{\al}(Y, U) = 0, \al = Y, U$ defined in \eqref{eq:root_UY} and $n_3$ be the parameter chosen in Proposition \ref{prop:bar_loc}. For any $n > n_3$ and $\b < - C n^2$ with $C$ large, we have 
\bseq\label{eq:bar_loc_valid}
\begin{align}
  \BB_\sn{n+1}^\mw{ne}(Y) - U(Y) & > 0, \quad  \mathrm{for \ any \ }  \kp \in (n +  1/2, n+1), 
\label{eq:bar_loc_valid_a}
 \\
  \BB_\sn{n}^\mw{ne}(Y) - U(Y) & < 0, \quad \mathrm{for \ any \ } \kp \in (n , n+ 1/2), 
  \label{eq:bar_loc_valid_b}
 \end{align}
\eseq
for  $0 < Y \les_{\kp, n, \b} 1$ and
\begin{align}
  B_u^{f}(Y) &<  U(Y) < B_l^f(Y) , \quad  0 < Y \ll_{\kp, n} 1 . \label{eq:bar_far_validb} 
\end{align}

\end{prop}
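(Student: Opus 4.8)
The plan is to compare, at $Y=0$, the Taylor expansions of the barriers with that of the local analytic solution $U=U^\rn{\kp}$ furnished by Proposition~\ref{prop:analy}, for which $(0,U(0))=Q_s=(0,\e)$. Each barrier is designed to agree with $U$ through some order at $Y=0$, so the sign of the difference near $Y=0$ is governed by the first coefficient at which they disagree, the (convergent) remainder being negligible for small $Y$ by the analyticity bound $|U_i|\les 4^i D^{i-2}$ of Proposition~\ref{prop:analy}. For the far-field statement \eqref{eq:bar_far_validb}: $B_u^f$ is quadratic and $B_l^f$ is analytic near $Y=0$ (its denominator equals $-1$ there), and both satisfy $B_\al^f(0)=U_0$, $(B_\al^f)'(0)=U_1$ — the latter by construction, see \eqref{eq:bar_far_l_cond1}. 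Hence
\[
B_u^f(Y)-U(Y)=(2-U_2)\,Y^2+O(Y^3),\qquad B_l^f(Y)-U(Y)=\bigl(\tfrac12\pa_Y^2 B_l^f(0)-U_2\bigr)\,Y^2+O(Y^3),
\]
and \eqref{eq:bar_far_valida} says exactly $2<U_2<\tfrac12\pa_Y^2 B_l^f(0)$; so the two leading coefficients are strictly negative, respectively strictly positive, and for $0<Y\ll_{\kp,n}1$ the quadratic terms dominate, giving $B_u^f(Y)<U(Y)<B_l^f(Y)$.

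For the local barriers, write $m=n+1$ in case \eqref{eq:bar_loc_valid_a} and $m=n$ in case \eqref{eq:bar_loc_valid_b}, so that $\kp\in(m-\tfrac12,m)$ and $\kp\in(m,m+\tfrac12)$ respectively. Since $\BB_\sn{m}^\mw{ne}$ agrees with the power series of $U$ through order $Y^m$,
\[
\BB_\sn{m}^\mw{ne}(Y)-U(Y)=(\b U_m-U_{m+1})\,Y^{m+1}-\sum_{i\ge m+2}U_i Y^i.
\]
First I would fix the sign and a lower bound for $\b U_m-U_{m+1}$. Since $n>n_3$ is large and $|\kp-m|<\tfrac12$, \eqref{eq:U_sign} gives $U_{m-1}>0$, and \eqref{eq:induc_asym2} gives $U_m\approx C_* U_{m-1}\tfrac{m\kp}{\kp-m}$ and $U_{m+1}\approx C_* U_m\tfrac{(m+1)\kp}{\kp-(m+1)}$ with $C_*>0$; reading off the signs of $\kp-m$ and $\kp-(m+1)$ yields $U_m<0$, $U_{m+1}>0$ in case \eqref{eq:bar_loc_valid_a} and $U_m>0$, $U_{m+1}<0$ in case \eqref{eq:bar_loc_valid_b}. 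The crucial point is that in both cases $\kp$ keeps a fixed distance from $m+1$, namely $|\kp-(m+1)|\ge\tfrac12$ — this is precisely why the barrier index is shifted by one between the two cases — so $|U_{m+1}|\les m^2|U_m|$, while $|U_m|\asymp|\kp-m|^{-1}\gtr1$. Hence, with the hypothesis $\b<-Cn^2$ and $C$ large, $|\b U_m|>2|U_{m+1}|$, so $\sgn(\b U_m-U_{m+1})=\sgn(\b U_m)=-\sgn(U_m)$ and $|\b U_m-U_{m+1}|\ge\tfrac12|\b U_m|\gtr|\b|$. The tail is then harmless: by Proposition~\ref{prop:analy}, $\bigl|\sum_{i\ge m+2}U_iY^i\bigr|\les 4^{m+2}D^m Y^{m+2}$ for $Y\le(8D)^{-1}$, which is $\ll|\b U_m-U_{m+1}|\,Y^{m+1}$ once $0<Y\les_{\kp,n,\b}1$. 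Thus $\BB_\sn{m}^\mw{ne}(Y)-U(Y)$ has the sign of $\b U_m-U_{m+1}$: positive in case \eqref{eq:bar_loc_valid_a}, negative in case \eqref{eq:bar_loc_valid_b}.

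The one genuine difficulty is the bookkeeping of two competing large scales: the leading coefficient $\b U_m-U_{m+1}$ is large exactly because $|U_m|\asymp|\kp-m|^{-1}$ blows up as $\kp\to m$, and one must simultaneously keep $|U_{m+1}/U_m|$ bounded polynomially in $m$ (possible only because the prescribed $\kp$-ranges stay away from $m+1$) and control the $\kp$-dependent radius of convergence of $U$. Once those scales are tracked, the proof reduces to the asymptotics and sign information of $U_n$ from Lemma~\ref{lem:asym} and Corollary~\ref{cor:asym}, the analyticity estimate of Proposition~\ref{prop:analy}, and, for the far-field part, inequality \eqref{eq:bar_far_valida} of Proposition~\ref{prop:bar_f}.
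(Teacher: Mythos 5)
Your proof is correct and follows essentially the same route as the paper: expand $\BB_\sn{m}^\mw{ne}-U$ to its leading coefficient $\b U_m - U_{m+1}$, determine its sign via Corollary~\ref{cor:asym} together with $\b < -Cn^2$, and absorb the analytic tail (which the paper writes simply as $O_{n,\kp,\b}(Y^{m+2})$); the far-field claim \eqref{eq:bar_far_validb} is the $Y^2$-coefficient comparison from \eqref{eq:bar_far_valida}. Your explicit remark that the barrier index is shifted precisely so that $|\kp-(m+1)|\ge\tfrac12$, which is what underwrites $|U_{m+1}|\les m^2|U_m|$, is a worthwhile clarification that the paper leaves implicit.
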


For $\kp \in (n, n+1)$, we will use $\BB_\sn{n+1}^\mw{ne}(Y)$ as the local upper barrier for $U(Y)$, and $\BB_\sn{n}^\mw{ne}$(Y) as the local lower barrier.

\begin{proof}

Since $U(Y)$ is local analytic near $Y = 0$, using \eqref{eq:bar_loc}, we get 
\beq\label{eq:bar_loc_valid_pf}
 \BB_\sn{n+1}^\mw{ne}(Y) - U(Y)
=  ( \b U_{n+1} - U_{n+2} ) Y^{n+2} + O_{n, \kp ,\b}(Y^{n+3}).
\eeq

For $ \kp \in (n + 1/2, n+1)$, since $U_n > 0$ from \eqref{eq:U_sign} in Lemma \ref{lem:asym}, 
using \eqref{eq:induc_asym2} in Corollary \ref{cor:asym} with $i = n, n+1$, for $ \b < - C n^2$ with large $C$, we get 
\[
\bga
 \sgn(U_{n+1}) = \sgn(\kp- n-1) \sgn(U_n) = -1, \quad  U_{n+1} < 0, \\
  |U_{n+2}| \les n^2 |U_{n+1}|, \quad \b U_{n+1} - U_{n+2} > 0.
 \ega
\]
Plugging $ \b U_{n+1} - U_{n+2} > 0$ in \eqref{eq:bar_loc_valid_pf}, we prove  \eqref{eq:bar_loc_valid_a}.

Similarly, for $\kp \in (n , n +1/2)$, we get 
\beq\label{eq:bar_loc_valid_pf2}
 \BB_\sn{n}^\mw{ne}(Y) - U(Y)
=  ( \b U_{n} - U_{n+1} ) Y^{n+1} +  O_{n, \kp ,\b}(Y^{n+2}).
\eeq
Since $U_n > 0$ from \eqref{eq:U_sign} in Lemma \ref{lem:asym}, using \eqref{eq:induc_asym2}, for $ \b < - C n^2$ with large $C$, we get 
\[
\bga
 |U_{n+1}| \les n^2 |U_{n}|, \  \b U_{n} - U_{n+ 1} < \b U_n / 2 <  0.
 \ega
\]
Plugging $ \b U_{n} - U_{n+ 1} < 0 $ in \eqref{eq:bar_loc_valid_pf2}, we prove  \eqref{eq:bar_loc_valid_b}.

For \eqref{eq:bar_far_validb}, since we construct $B_l^f(Y), B_u^f(Y)$ agreeing with $U(Y)$ at $Y = 0$ with error $O(Y^2)$, using \eqref{eq:bar_far_valida}, we prove \eqref{eq:bar_far_validb}. 
\end{proof}


\subsection{Intersection between local and far-field barriers}\label{sec:Qs_QO_inter}


In this section, we estimate the location where the local and far-field barriers intersect, and then show that the intersection occurs within the region where the local barriers are valid.

\begin{prop}\label{prop:bar_inter}

Let $\mu_i, n_3$ be the parameters chosen in Proposition \ref{prop:bar_loc}. 
For any $n, \b$ with $n > n_4$, where $n_4> n_3$ is some large parameter, and $\b$ satisfying the assumption in Proposition \ref{prop:bar_loc}, there exists $\e_{n, \b}, \mu_i^{\prime}> 0$ such that the following statements hold true.
\begin{enumerate}[(a)]
\item
 For any $ \kp \in (n+1 - \e_{n, \b}, n+1)$, there exists $Y_I > 0$ satisfying 
 \[Y_I \leq {\mu}_{n+1}^{\pr} |\kp-n-1|^{1/(n-1) } < \f{1}{2} {\mu}_{n+1} |\b|^{-1}\,,\] such that 
\beq\label{eq:bar_inter_a}
 B^{f}_u(Y) < \BB^\mw{ne}_\sn{n+1}(Y)  <   B^{f}_l(Y) , \ Y \in (0, Y_I),
 \quad  B^{f}_u( Y_I) = \BB^\mw{ne}_\sn{n+1}( Y_I).  
\eeq

\item For any $ \kp \in (n , n + \e_{n, \b} )$, there exists $Y_I > 0$ with  
\[ Y_I \leq \mu_n^{\pr}  |\kp - n|^{1 / (n-2)}  < \f{1}{2} {\mu}_n |\b|^{-1}\,,\] such that 
\beq\label{eq:bar_inter_b}
  B^{f}_u(Y) <  \BB^\mw{ne}_\sn{n}(Y) <  B^{f}_l(Y)  , \ Y \in (0, Y_I),
 \quad  B^{f}_l( Y_I) = \BB^\mw{ne}_\sn{n}( Y_I).  
\eeq
\end{enumerate}


\end{prop}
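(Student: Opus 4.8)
The plan is to treat both cases symmetrically: in each case we have a local (near-field) barrier $\BB^{\mw{ne}}_{[m]}$ (with $m = n+1$ in case (a), $m = n$ in case (b)) that agrees with the analytic solution $U(Y)$ to high order at $Y=0$ and that lies strictly between the two far-field barriers $B^f_u$ and $B^f_l$ for small $Y>0$, and we want to find the first $Y_I$ where the local barrier meets the \emph{appropriate} far-field barrier ($B^f_u$ in case (a), $B^f_l$ in case (b)). The key point is a size comparison near $Y=0$: by Proposition~\ref{prop:bar_f} and Proposition~\ref{prop:bar_loc_valid}, $B^f_u(Y) < U(Y) < B^f_l(Y)$ with a gap of size $\asymp Y^2$ between $U$ and each far-field barrier (the quadratic Taylor coefficients differ, cf.\ \eqref{eq:bar_far_valida}), whereas the local barrier differs from $U(Y)$ by $(\b U_m - U_{m+1})Y^{m+1} + O(Y^{m+2})$, which is \emph{comparable to} $|\b U_m| Y^{m+1}$ and has a definite sign determined by $\sgn(\b U_m) = -\sgn(U_m)$, which by \eqref{eq:U_sign} and \eqref{eq:induc_asym2} flips depending on whether $\kp$ lies below or above the integer.

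First I would write, for $0 < Y \ll 1$,
\[
\bal
\BB^{\mw{ne}}_{[n+1]}(Y) - B^f_u(Y) &= \big(\BB^{\mw{ne}}_{[n+1]}(Y) - U(Y)\big) + \big(U(Y) - B^f_u(Y)\big) \\
&= (\b U_{n+1} - U_{n+2}) Y^{n+2} + O_{n,\kp,\b}(Y^{n+3}) + \tfrac12\big(\pa_Y^2 U(0) - \pa_Y^2 B^f_u(0)\big) Y^2 + O(Y^3),
\eal
\]
and observe that the second group is strictly positive for small $Y$ by \eqref{eq:bar_far_valida}, while the first group is, for $\kp \in (n+\tfrac12, n+1)$, strictly \emph{negative} of size $\asymp n^{-?}|\b||U_{n+1}|Y^{n+2}$ (using $\b U_{n+1} - U_{n+2} > 0$ since $U_{n+1}<0$, so $\BB^{\mw{ne}}_{[n+1]} - U > 0$; wait — re-examine signs) — more precisely one should track that $\BB^{\mw{ne}}_{[n+1]} - U$ is \emph{positive} in case (a), so $\BB^{\mw{ne}}_{[n+1]}$ is squeezed up toward $B^f_l$, not $B^f_u$; I would therefore pair the \emph{upper} local barrier with the \emph{upper} far-field barrier by instead writing $B^f_u - \BB^{\mw{ne}}_{[n+1]} = (B^f_u - U) - (\BB^{\mw{ne}}_{[n+1]} - U)$, whose first term $\sim \tfrac12(\pa_Y^2 B^f_u(0) - \pa_Y^2 U(0))Y^2 < 0$ and whose second term $\sim (\b U_{n+1}-U_{n+2})Y^{n+2}$. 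Since $|\b U_{n+1}|$ is large compared to powers of $n$ but the competing term is $Y^2$ versus $Y^{n+2}$, the function $B^f_u - \BB^{\mw{ne}}_{[n+1]}$ starts negative (dominated by $Y^2$) and the high-order term becomes comparable precisely when $|\b||U_{n+1}| Y^{n+2} \asymp Y^2$, i.e.\ $Y^{n} \asymp 1/(|\b||U_{n+1}|) \asymp |\kp - n - 1|/|\b|$ (using $|U_{n+1}| \asymp |\kp-n-1|^{-1}$ from \eqref{eq:induc_asym2} together with $|U_n|\asymp 1$). This gives the claimed bound $Y_I \lesssim \mu'_{n+1}|\kp-n-1|^{1/(n-1)}$ up to adjusting exponents and constants, and by choosing $\e_{n,\b}$ small we force $Y_I < \tfrac12 \mu_{n+1}|\b|^{-1}$, which keeps $Y_I$ inside the validity range of Proposition~\ref{prop:bar_loc}(b) / Proposition~\ref{prop:bar_loc_valid}.

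Next I would run the intermediate value theorem: $B^f_u - \BB^{\mw{ne}}_{[n+1]}$ is negative for $Y$ just above $0$ and, by the scaling balance above together with the strict size comparison, becomes positive before $Y$ reaches $\tfrac12\mu_{n+1}|\b|^{-1}$ (here I use that on the relevant range the $O(Y^{n+3})$ and $O(Y^3)$ remainders are genuinely subdominant — this requires $|\b|\,Y \ll 1$ and $Y^{n-2}\ll |\b||\kp-n-1|$, exactly the hypotheses carried over from Proposition~\ref{prop:bar_loc} via $\mu_{n+1}$). Let $Y_I$ be the first zero; then $B^f_u(Y_I) = \BB^{\mw{ne}}_{[n+1]}(Y_I)$ and $B^f_u(Y) < \BB^{\mw{ne}}_{[n+1]}(Y)$ fails for $Y<Y_I$, i.e.\ $\BB^{\mw{ne}}_{[n+1]}(Y) < B^f_u(Y)$ — no: I need $B^f_u(Y) < \BB^{\mw{ne}}_{[n+1]}(Y) < B^f_l(Y)$ on $(0,Y_I)$, so I should take $Y_I$ to be the first zero of $\BB^{\mw{ne}}_{[n+1]} - B^f_u$ \emph{after} it has become positive, equivalently use that $\BB^{\mw{ne}}_{[n+1]}(Y) - U(Y) > 0$ forces $\BB^{\mw{ne}}_{[n+1]}$ above $U$ hence, combined with $U(Y) > B^f_u(Y)$, the lower inequality $B^f_u < \BB^{\mw{ne}}_{[n+1]}$ holds automatically on all of $(0, Y_I')$ where $Y_I'$ is where $\BB^{\mw{ne}}_{[n+1]}$ meets $B^f_l$; and the sign analysis above instead produces the crossing with $B^f_u$ because near $Y=0$ the \emph{upper} far-field barrier starts above $\BB^{\mw{ne}}$. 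I would reorganize the two cases so that in case (a) the relevant crossing is genuinely with $B^f_u$ (consistent with \eqref{eq:bar_inter_a}) and in case (b) with $B^f_l$ (consistent with \eqref{eq:bar_inter_b}), checking the three-way ordering on $(0,Y_I)$ by combining: (i) $B^f_u < U < B^f_l$ from \eqref{eq:bar_far_validb}; (ii) the sign of $\BB^{\mw{ne}}_{[m]} - U$ from Proposition~\ref{prop:bar_loc_valid}; and (iii) the fact that the far-field barriers are monotone/ordered on $(0,Y_O)$ from Proposition~\ref{prop:bar_f}. Finally, for the upper bound $Y_I \le \mu' |\kp - m|^{1/(n-1)}$ I extract it from the balance equation $|\b||U_m|Y^{m+1} \asymp Y^2$ by noting $m+1 - 2 = m - 1 \ge n - 1$ and $|\b||U_m| \gtrsim |\b|$ while a lower bound $|U_m|\lesssim_n |\kp-m|^{-1}$ gives $Y_I^{n-1} \lesssim_n |\kp - m|/|\b| \cdot (\text{const})$, whence $Y_I \lesssim_n (|\kp-m||\b|^{-1})^{1/(n-1)}$; absorbing $|\b|^{-1/(n-1)}$ and the $n$-dependence into $\mu'_m$ yields the stated form, and the requirement $Y_I < \tfrac12\mu_m|\b|^{-1}$ is then arranged by shrinking $\e_{n,\b}$ so that $|\kp - m|$ is small enough.

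\textbf{Main obstacle.} The hard part will be the careful bookkeeping of signs and of which far-field barrier is crossed first: both $B^f_u - \BB^{\mw{ne}}_{[m]}$ and $\BB^{\mw{ne}}_{[m]} - B^f_l$ change sign on the relevant interval, and one must show the crossing with the \emph{designated} barrier happens strictly before the crossing with the other one (and before $Y$ leaves the validity window of the local barrier), using only the leading Taylor data $\pa_Y^2 U(0), \pa_Y^2 B^f_{u,l}(0)$ together with $U_m \asymp |\kp-m|^{-1}$, $U_{m+1}\lesssim n^2 U_m$. This amounts to a quantitative two-scale ($Y^2$ versus $Y^{m+1}$) comparison where the two remainder terms must be controlled uniformly in the three large/small parameters $n$, $|\b|$, $|\kp - m|$ — precisely the reason the bounds are stated with the $\les_{n,\b,\kp}$ and $\mu_n$ notation inherited from Propositions~\ref{prop:bar_loc} and~\ref{prop:bar_loc_valid}, and I would lean on those propositions to supply the needed uniformity rather than re-deriving it.
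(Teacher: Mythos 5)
Your proposal has the right scaffold (compare the local barrier to the far-field ones, exploit the second-order agreement at $Y=0$, find a crossing by the intermediate value theorem), and the organizational instincts are the same as the paper's. But the quantitative step that actually produces $Y_I$ is wrong, and the error is not cosmetic.

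The gap is in the identification of the driving term. You decompose $\BB_\sn{n+1}^{\mw{ne}}-B^f_u = (\BB_\sn{n+1}^{\mw{ne}}-U) + (U-B^f_u)$, approximate the second piece by its quadratic Taylor coefficient plus ``$O(Y^3)$'', and conclude that the high-order competitor comes entirely from the first piece, $\asymp (\b U_{n+1}-U_{n+2})Y^{n+2}$. This misses the actual driving term. The ``$O(Y^3)$'' in $U-B^f_u$ contains $U_{n+1}Y^{n+1}$, whose coefficient $|U_{n+1}|\asymp_n|\kp-n-1|^{-1}$ blows up as $\kp\to(n+1)^-$ and hence is \emph{not} uniformly controlled. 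Meanwhile, on the entire validity window of the local barrier (Proposition \ref{prop:bar_loc}, condition \eqref{eq:bar_loc_Y}) one has $|\b Y|<1/2$, so $|\b U_{n+1}Y^{n+2}|<\tfrac12|U_{n+1}|Y^{n+1}$: the $\b$-term you propose to balance is always dominated by $U_{n+1}Y^{n+1}$. The correct balance is therefore $|U_{n+1}|Y^{n+1}\asymp Y^2$, giving $Y_I\asymp|\kp-n-1|^{1/(n-1)}$ with no $\b$-dependence in the exponent; your balance $|\b||U_{n+1}|Y^{n+2}\asymp Y^2$ gives $Y_I\asymp(|\kp-n-1|/|\b|)^{1/n}$, a different exponent. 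You acknowledge this as something to fix ``up to adjusting exponents,'' but it cannot be absorbed: downstream in Proposition \ref{prop:bar_exit} (choice \eqref{eq:bar_beta}) one must have $Y_I<\tfrac12 Y_{\mw{bar}}$ with $Y_{\mw{bar}}\lesssim(|\b||\kp-n-1|)^{1/(n-1)}$, and with your exponent $1/n$ this fails for $|\kp-n-1|$ small, since $|\kp-n-1|^{1/n}\gg|\kp-n-1|^{1/(n-1)}$.

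The paper avoids the two-term decomposition that trapped you (``both summands are negative, so how does the difference vanish?'') precisely because the decomposition hides $U_{n+1}Y^{n+1}$ inside a remainder. Instead it expands $\BB_\sn{n+1}^{\mw{ne}}(Y)-B^f_u(Y)$ directly as a polynomial, absorbs $\b U_{n+1}Y^{n+2}$ into $\tfrac12 U_{n+1}Y^{n+1}$ via $|\b Y|<1/2$, bounds the intermediate sum $\sum_{i=3}^n U_iY^i$ by $C_nY^3$ (these $U_i$ are $O_n(1)$ when $\kp$ is near $n+1$), keeps the negative term $\tfrac12 U_{n+1}Y^{n+1}$ with its sign, and evaluates at $Y_*=(4a|U_{n+1}|^{-1})^{1/(n-1)}$ to get a negative value. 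Finally, what you flag as the main obstacle --- proving the crossing is with the designated far-field barrier and not the other one --- is in fact the easy part: once $|\kp-n-1|$ is small enough that $Y_I$ is small, the high-order terms are $\le 0$ on $(0,Y_I)$, so $\BB_\sn{n+1}^{\mw{ne}}(Y)-B^f_u(Y)\le (U_2-\pa_Y^2B^f_u(0))Y^2+C_nY^3 < B^f_l(Y)-B^f_u(Y)$ by the strict second-derivative ordering \eqref{eq:bar_far_valida}; hence $\BB_\sn{n+1}^{\mw{ne}}$ stays strictly below $B^f_l$ up to $Y_I$. Case (b) is the mirror image with driving term $U_nY^n$ (not $\b U_nY^{n+1}$), giving the exponent $1/(n-2)$.
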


\begin{proof}

By definitions of $F = B_l^f, B_u^f, \BB_\sn{n}^\mw{ne}$ \eqref{eq:bar_far}, \eqref{eq:bar_loc}, we have 
\[
 F(0) = U_0, \quad F^{\pr}(0) = U_1 .
\]
Hence, these functions agree at $Y = 0$ up to $O(Y^2)$. Moreover, from Proposition \ref{prop:bar_loc_valid}, we have 
\[
  \pa_Y^2 B^{f}_u(0) <  \pa_Y^2 \BB^\mw{ne}_\sn{n+1}(0) = U_2,
 \quad  U_2 = \pa_Y^2 \BB^\mw{ne}_\sn{n}(0) <  \pa_Y^2  B^{f}_l(0 ) .
\]

Below, we focus on the proof of  \eqref{eq:bar_inter_a}. For $ \kp \in (n+ \f{1}{2} , n+1)$, from \eqref{eq:induc_asym2}, \eqref{eq:U_sign}, we get 
\beq\label{eq:bar_inter_pf1}
U_n > 0, 
\ U_{n+1} < 0,  \ |U_{n+1}| \asymp_n |\kp-n-1|^{-1}, \ |U_{n+2}| \les n^2 |U_{n+1}|. 
\eeq
Using the assumptions of $\b < - n^2 , \mu_n < 1$ in Proposition \ref{prop:bar_loc}, for $ Y \in (0, \f{1}{2 |\b| }\mu_{n+1} )$, we obtain
\[
 0 < Y < \f{1}{2}\mu_{n+1} |\b|^{-1} < \f{1}{2}|\b|^{-1} < \f{1}{2 n^2} , 
 \quad |\b Y | < \f{1}{2},
\]
which along with \eqref{eq:bar_inter_pf1} imply 
\beq\label{eq:bar_inter_pf2}
\bal
\BB_\sn{n+1}^\mw{ne}(Y) - B_u^f(Y) 
 & \leq \sum_{i=2}^{n} U_i Y^i + U_{n+1} Y^{n+1} + \b U_{n+1} Y^{n+2}
 - \pa_Y^2 B_u^f(0 ) Y^2 + C Y^3 \\
 & \leq (U_2 - \pa_Y^2 B_u^f(0 ) ) Y^2 
 + C_n Y^3 + U_{n+1} Y^{n+1}+ \b U_{n+1} Y^{n+2} \\
 & \leq (U_2 - \pa_Y^2 B_u^f(0 ) ) Y^2 
 + C_n Y^3 + \f{1}{2} U_{n+1} Y^{n+1} .
\eal
\eeq

Since $a = U_2 - \pa_Y^2 B_u^f(0 ) > 0$, choosing $Y_*$ with 
\[
Y_*^{n-1} = 4 a  |U_{n+1}|^{-1},
\] 
we obtain 
\[
\BB_\sn{n+1}^\mw{ne}(Y_*) - B_u^f(Y_*)  \leq (U_2 - \pa_Y^2 B_u^f(0 ) + C_n U_{n+1}^{-1/(n-1)}- 2 a ) Y_*^2 
= ( C_n U_{n+1}^{-1/(n-1)}-  a ) Y_*^2 .
\]
Using the estimate of $U_{n+1}$ in \eqref{eq:bar_inter_pf1} and requiring $|\kp - n- 1| <\e_{n, \b}$ small enough, 
we get
\[
\BB_\sn{n+1}^\mw{ne}(Y_*) - B_u^f(Y_* )  < 0.
\]

Using this estimate, \eqref{eq:bar_loc_valid_a}, and continuity, we obtain that $\BB_\sn{n+1}^\mw{ne}(Y)$ and $ B_u^f(Y)$ intersect at some $Y_I \in (0, Y_*)$. We assume that $Y_I$ is the intersection with the smallest value in $(0, Y_*)$. The smallest of $Y_I$ and \eqref{eq:bar_loc_valid_a} imply the first inequality in \eqref{eq:bar_inter_a}. Moreover, we have
\beq\label{eq:bar_inter_pf3}
0 < Y_I < Y_* \les_n | \kp -n-1|^{1/ (n-1)} .
\eeq

For a fixed $\b$, by choosing $| \kp -n-1|$ small enough, we can ensure that $Y_I < \f{1}{2} \mu_n \b^{-1}$. Moreover, since $U_{n+1}< 0$, from \eqref{eq:bar_far_valida}, we obtain. 
\[
U_2 - \pa_Y^2 B_u^f(0) < \pa_Y^2 B_l^f(0) - \pa_Y^2 B_u^f(0) . 
\]
Thus, by further requiring $|\kp -n-1|$ small, which leads to a smaller upper bound for $Y_I$ in \eqref{eq:bar_inter_pf3}, and using \eqref{eq:bar_inter_pf2}, we establish
\[
\BB_\sn{n+1}^\mw{ne}(Y) - B_u^f(Y) 
\leq (U_2 - \pa_Y^2 B_u^f(0 ) ) Y^2 
 + C_n Y^3
< B_{l}^f(Y) - B_u^f(Y)  ,
\]
for $Y \in (0, Y_I)$. Rearranging the inequality, we prove the second inequality in 
\eqref{eq:bar_inter_a} and the result (a) in Proposition \ref{prop:bar_inter}.

The result (b) is proved similarly using the property that $U_n > 0, |U_n| \asymp_n |\kp -n|^{-1}$ in such a case. 
\end{proof}

Next, we show that the solution must exit the region $\Om_{B}^f$ \eqref{eq:bar_region} via the 
boundaries $E_{B, i}$ \eqref{eq:bar_region_edge}

\begin{prop}\label{prop:bar_exit}
For any $n > n_4$ with $n_4$ chosen in Proposition \ref{prop:bar_inter}, we have the following results. For $i = 1, 2$, there exists $\kp_i \in (n, n+1 )$, such that the local smooth solution $U^{(\kp_i)}(Y)$ starting at $Q_s$ with parameter $\kp_i$ first intersects $\pa \Om_B^f$ at $E_{B, i}$. 

\end{prop}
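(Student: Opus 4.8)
The plan is to read the conclusion off the intersection estimate of Proposition~\ref{prop:bar_inter}, transferring the information about the \emph{barrier} curves to the \emph{solution} curve by means of the local barrier bounds of Propositions~\ref{prop:bar_loc} and~\ref{prop:bar_loc_valid}, and then using the geometry of $\Om_B^f$ recorded in Proposition~\ref{prop:bar_f} to identify which edge of $\pa\Om_B^f$ is reached first. Throughout, fix $n > n_4$ and $\b < -Cn^2$ as in Proposition~\ref{prop:bar_inter} (taking $\b$ negative enough that Proposition~\ref{prop:bar_loc} also applies with index $n+1$), and shrink $\e_{n,\b}$ so that $\e_{n,\b} < \tfrac12$.

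For $i = 1$ I would take any $\kp_1 \in (n + 1 - \e_{n,\b}, n+1) \subset (n+\tfrac12, n+1)$ and let $Y_I$ be the abscissa produced by Proposition~\ref{prop:bar_inter}(a), so that $B_u^f < \BB_\sn{n+1}^\mw{ne} < B_l^f$ on $(0, Y_I)$ and $\BB_\sn{n+1}^\mw{ne}(Y_I) = B_u^f(Y_I)$. The first step is to check that $Y_I$ lies inside the $Y$-window on which Proposition~\ref{prop:bar_loc}(a) (with index $n+1$) certifies $\PP_\sn{n+1}^\mw{ne} > 0$; this is precisely what the nested scale parameters $\mu_{n+1}, \mu_{n+1}^{\pr}, \e_{n,\b}, \b$ in those two statements are arranged to guarantee, using the bounds $Y_I \le \mu_{n+1}^{\pr}|\kp_1 - n - 1|^{1/(n-1)} < \tfrac12 \mu_{n+1}|\b|^{-1}$ and $|\b| \ge 1$. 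Since the sandwich also puts $\BB_\sn{n+1}^\mw{ne}(Y) \in \Om_B^f$ for $Y \in (0, Y_I)$, we have $\D_Y > 0$ along that barrier curve by~\eqref{eq:DelY_sign}, so $\PP_\sn{n+1}^\mw{ne} > 0$ says exactly that the slope of $\BB_\sn{n+1}^\mw{ne}$ strictly exceeds the ODE slope $\D_U/\D_Y$ at each of its points. Combined with $\BB_\sn{n+1}^\mw{ne} > U^\rn{\kp_1}$ for small $Y > 0$ (Proposition~\ref{prop:bar_loc_valid}, \eqref{eq:bar_loc_valid_a}), the usual one-sided comparison — a first contact $Y_1 \le Y_I$ would force $(U^\rn{\kp_1})'(Y_1) \ge (\BB_\sn{n+1}^\mw{ne})'(Y_1)$, contradicting that strict slope inequality — yields $U^\rn{\kp_1} < \BB_\sn{n+1}^\mw{ne}$ on all of $(0, Y_I]$. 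The case $i = 2$ runs symmetrically with $\kp_2 \in (n, n + \e_{n,\b}) \subset (n, n+\tfrac12)$, Proposition~\ref{prop:bar_inter}(b), Proposition~\ref{prop:bar_loc}(b) (so $\PP_\sn{n}^\mw{ne} < 0$), and~\eqref{eq:bar_loc_valid_b}, making $\BB_\sn{n}^\mw{ne}$ a lower barrier: $U^\rn{\kp_2} > \BB_\sn{n}^\mw{ne}$ on $(0, Y_I]$, where now $\BB_\sn{n}^\mw{ne}(Y_I) = B_l^f(Y_I)$.

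It then remains to turn these barrier bounds into a first-exit statement. By~\eqref{eq:bar_far_validb}, $B_u^f < U^\rn{\kp_i} < B_l^f$ for all sufficiently small $Y > 0$; let $Y^* \in (0, Y_O]$ be the supremum of $Y'$ for which $U^\rn{\kp_i}$ is defined and confined to $\Om_B^f$ on $(0, Y')$. Because $\Om_B^f \subset \cR_{YU}$ is bounded and $\D_Y > 0$ throughout $\Om_B^f$, the solution extends continuously up to $Y^*$, and if $Y^* < Y_O$ then $(Y^*, U^\rn{\kp_i}(Y^*)) \in E_{B,1} \cup E_{B,2}$. In case $i=1$, $U^\rn{\kp_1} < \BB_\sn{n+1}^\mw{ne} < B_l^f$ on $(0, Y_I]$ rules out meeting $E_{B,2}$ before $Y_I$, while $U^\rn{\kp_1}(Y_I) < \BB_\sn{n+1}^\mw{ne}(Y_I) = B_u^f(Y_I)$ against $U^\rn{\kp_1} > B_u^f$ near $0$ produces, by the intermediate value theorem, a point $Y^* \in (0, Y_I) \subset (0, Y_O)$ with $U^\rn{\kp_1}(Y^*) = B_u^f(Y^*)$; on $(0, Y^*)$ one still has $B_u^f < U^\rn{\kp_1} < B_l^f$, so the trajectory stays in $\Om_B^f$ and first meets $\pa\Om_B^f$ at a point of $E_{B,1}$. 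In case $i=2$, $U^\rn{\kp_2} > \BB_\sn{n}^\mw{ne} > B_u^f$ on $(0, Y_I]$ excludes an early exit through $E_{B,1}$, while $U^\rn{\kp_2}(Y_I) > \BB_\sn{n}^\mw{ne}(Y_I) = B_l^f(Y_I)$ against $U^\rn{\kp_2} < B_l^f$ near $0$ forces a first exit through $E_{B,2}$; the rest is identical.

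I expect the only genuinely delicate point to be the bookkeeping: making sure the intersection abscissa $Y_I$ delivered by Proposition~\ref{prop:bar_inter} falls inside the $Y$-range on which Proposition~\ref{prop:bar_loc} certifies the sign of $\PP^\mw{ne}$ — this is exactly what the layered constants $\mu,\mu^{\pr},\e_{n,\b},\b$ in those two propositions exist to enforce — and confirming, via the a priori confinement of the trajectory to the bounded set $\Om_B^f \subset \cR_{YU}$ on which $\D_Y > 0$, that $U^\rn{\kp_i}$ neither blows up nor leaves $\cR_{YU}$ before $Y^*$. Everything else is standard ODE-comparison and first-exit reasoning.
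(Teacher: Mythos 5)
Your proposal is correct and follows essentially the same route as the paper: combine the local barrier inequality from Proposition~\ref{prop:bar_loc_valid}, the sign of $\PP^{\mathrm{ne}}$ from Proposition~\ref{prop:bar_loc} and $\D_Y>0$ on $\Om_B^f$ to keep $U^{(\kp)}$ on the correct side of $\BB^{\mathrm{ne}}_{[\cdot]}$ up to $Y_I$, then use the sandwich $B_u^f<\BB^{\mathrm{ne}}<B_l^f$ on $(0,Y_I)$ from Proposition~\ref{prop:bar_inter} together with the intermediate value theorem to force the first exit through the correct edge. The one step you flag as ``bookkeeping'' — that $Y_I$ sits inside the window where Proposition~\ref{prop:bar_loc} holds — is exactly what the paper handles by imposing the explicit constraint $2\mu_{n+1}'<\mu_{n+1}|\b|^{1/(n-1)}$ in~\eqref{eq:bar_beta} (your appeal to $|\b|\ge 1$ alone does not close it, since one must beat \emph{both} terms in the $\min$ defining $Y_{\mathrm{bar}}$), but you correctly identify this as a matter of fixing $\b$ sufficiently negative, so the argument is sound.
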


\begin{proof}

From Proposition \ref{prop:bar_loc_valid} and the property that $\BB_\sn{n}^\mw{ne}, U, \BB_\sn{n+1}^\mw{ne}$ 
\eqref{eq:bar_loc} agree at $Y =0$ up to error $O(Y^n)$, we know that 
\[
  U(Y),  \ \BB_\sn{n}^\mw{ne}(Y), \ \BB_\sn{n+1}^\mw{ne}(Y)
\]
remain in $\Om_B^f$ \eqref{eq:bar_region} for $0 < Y <\d^{\pr}$ with small $\d^{\pr}$. 

We focus on the proof of the case with $i = 1$. We fix $ n > n_4$ and choose $\b < 0$ with 
\beq\label{eq:bar_beta}
 2 \mu_{n+1}^{\pr} <  \mu_{n+1}  |\b|^{1 / (n-1)} , 
 \quad |\b| > C n^2,
\eeq
where $C, \mu_{\cdot}$ are the parameters chosen in Proposition \ref{prop:bar_loc}, 
and  $\mu^{\pr}_{\cdot}$ in Proposition \ref{prop:bar_loc_valid}, respectively. From Proposition \ref{prop:bar_loc_valid} and the above discussion, we have 
\beq\label{eq:bar_pf1}
 B_u^f(Y) < U(Y) <\BB_\sn{n+1}^\mw{ne}(Y) 
\eeq
for $Y \in (0, \d^{\pr} )$ with $\d^{\pr} = \d^{\pr}(n, \b, \kp)$ sufficiently small. Since $\D_Y > 0 $ in $\Om_B^f$ from Proposition \ref{prop:bar_loc_valid}, and $\PP_\sn{n+1}^\mw{ne}(Y) > 0$ for $ \kp \in (n+1/2, n+1)$ and $Y \in (0, Y_\mw{bar} ]$ with 
\[
Y_\mw{bar} \teq \mu_{n+1} \min( | |\b| (\kp-n-1)|^{1/{(n-1)}}, |\b|^{-1} ), 
\]
using a barrier argument, we obtain that 
\beq\label{eq:bar_pf2}
U(Y) < \BB_\sn{n+1}^\mw{ne}(Y) ,\quad Y \in (0, Y_\mw{bar}). 
\eeq

Next, we choose $\kp $ in the range of $(n+1 - \e_{\b, n}, n+1)$ defined in Proposition \ref{prop:bar_inter}. From the choice of $\b$ \eqref{eq:bar_beta} and the inequality of $Y_I$ in 
Proposition \ref{prop:bar_inter}, we get 
\[
Y_I \leq  \min( {\mu}_{n+1}^{\pr} |\kp-n-1|^{1/(n-1) } , \f{1}{2} \mu_{n+1} |\b|^{-1} )
< \f{1}{2} Y_\mw{bar}. 
\]

Due to \eqref{eq:bar_pf1} and the fact that $\BB_\sn{n+1}^\mw{ne}(Y)$ intersects $B_u^f(Y)$ for $Y$ within the validity of barrier \eqref{eq:bar_pf2}, the solution $U(Y)$ must intersect $B_u^f(Y)$ at some $0 < Y < Y_I$. Using \eqref{eq:bar_inter_a} in Proposition \ref{prop:bar_inter}, we get
\[
U(Y) < \BB_\sn{n+1}^\mw{ne}(Y) \leq B_l^f(Y),\  Y \in (0, Y_I].
\]
Thus, $U(Y)$ cannot intersect $B_l^f(Y)$ for $Y \in (0, Y_I]$. We conclude the proof in the case of $i=1$. 

The proof of the case of $i=2$ is similar and is omitted.
\end{proof}

\subsection{Proof of Proposition \ref{prop:Qs_QO}}\label{sec:shoot_Qs_QO}

In this section, we first construct the smooth solution $V(Z)$ to the ODE \eqref{eq:ODE} near $Z = 0$. Then we use a shooting argument to glue the curve $(Z, V(Z))$ under the map $(\cY, \cU)$ 
 \eqref{eq:sys_UY} and the smooth solution starting from $Q_s$ and prove Proposition \ref{prop:Qs_QO}.

We have the following result from \cite[Proposition 3.3]{shao2024self} and its proof. 
\begin{prop}[Proposition 3.3, \cite{shao2024self}]\label{prop:V_near0}
Let $Z_0$ be the coordinate of the sonic point in \eqref{eq:root_P}. The ODE \eqref{eq:ODE} has a unique solution $ V_F^{(\kp)} \in C^{\infty}( [0,Z_0))$ with $V_F^{(\kp)}(0) = 0 $. Moreover, 
near $Z=0$, it has a power series expansion 
\footnote{
  In \cite{shao2024self}, the authors rewrote the ODE \eqref{eq:ODE} as an ODE for $\Phi = V / Z$ and $\varkappa = Z^2$ and expand $\Phi$ as a power series of $\varkappa$. In particular, the smooth local solution $V$ can be written as $V(Z) = Z g(Z^2)$ for some smooth functions $g$.
}
\beq\label{eq:V_near0}
V_F^\rn{\kp}(Z) = \sum_{ i \geq 0} V_i Z^i, \quad V_{2i} = 0, 
\quad 
  |V_{2i+1} | \leq \mfr C_i C^i, \ 
   \forall \ i \geq 0,
\eeq
where $\mfr C_i$ is the Catalan number \eqref{eq:cat_num} and $C$ is some absolute constant. \footnote{
Although, the constant $C$ depends on the parameters $p, d, l, \g$, since we restrict $p, d,l, \g$
 \eqref{eq:para3} to specific ranges, following the same estimates as those in \cite{shao2024self}
lead to an absolute constant.
}
\end{prop}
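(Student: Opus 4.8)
Since this statement is essentially \cite[Proposition~3.3]{shao2024self}, the plan is to recall that argument and to note that, under the normalization \eqref{eq:para1}, the constant $C$ can be taken absolute, uniformly in $\kp$. The first step is to desingularize \eqref{eq:ODE} at the origin: at $P_O=(0,0)$ both $\D_V$ and $\D_Z$ vanish, so, following \cite{shao2024self}, I would substitute $V=Z\Phi$ and $\varkappa=Z^2$. Both $\D_V$ and $\D_Z$ then acquire a common factor $Z$, and cancelling it, together with $\f{dV}{dZ}=\Phi+2\varkappa\,\f{d\Phi}{d\varkappa}$, turns \eqref{eq:ODE} into
\[
2\varkappa\,\f{d\Phi}{d\varkappa}=G(\varkappa,\Phi)\teq\f{\wt\D_V(\varkappa,\Phi)}{\wt\D_Z(\varkappa,\Phi)}-\Phi,\qquad
\wt\D_Z(\varkappa,\Phi)=(1-\varkappa\Phi)^2-\ell\varkappa(\Phi-1)^2,
\]
where $\wt\D_Z(0,\Phi)=1$, so $G$ is analytic near $(\varkappa,\Phi)=(0,\Phi_0)$ for every $\Phi_0$. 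The compatibility condition $G(0,\Phi_0)=0$ forces $\Phi_0=\f{d-1}{d(\g+1)}$, i.e.\ $V_0=0$ and $V_1=\Phi_0$; and since $\pa_\Phi G(0,\Phi_0)=-d$, matching the coefficient of $\varkappa^i$ yields a recursion of the form $(2i+d)\Phi_i=(\text{polynomial in }\Phi_0,\dots,\Phi_{i-1})$ for $i\geq1$, uniquely solvable because $2i+d>0$. Hence there is a unique formal series $\Phi=\sum_{i\geq0}\Phi_i\varkappa^i$, and a Briot--Bouquet / majorant argument shows it converges near $\varkappa=0$. Unwinding, $V_F^\rn{\kp}(Z)=Z\sum_{i\geq0}\Phi_iZ^{2i}$, which gives $V_{2i}=0$, $V_{2i+1}=\Phi_i$, and the representation $V(Z)=Zg(Z^2)$ with $g$ analytic near $0$.

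Next I would continue this germ. On $\{Z>0,\ \D_Z\neq0\}$ the field $\D_V/\D_Z$ is smooth, so standard ODE continuation extends the solution as long as $\D_Z\neq0$ and $V$ stays bounded. Using the factorization of $\D_Z$ in \eqref{eq:root_mono_ZV0}, the monotonicity of $Z_\pm(V)$ from \eqref{eq:root_mono_ZV}, and $V^2<1<\ell$ starting from $V(0)=0$, one checks (by an invariant-region/barrier argument as in \cite{shao2024self}) that the curve stays in the strip $Z_-(V)<Z<Z_+(V)$, where $\D_Z>0$, and that the right interval of existence from $Z=0$ includes $[0,Z_0)$. Uniqueness follows from uniqueness of the analytic germ at $Z=0$ together with uniqueness for the regular ODE. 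This gives $V_F^\rn{\kp}\in C^\infty([0,Z_0))$.

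Finally, and this is where the actual work lies, I would establish the Catalan-type bound $|V_{2i+1}|=|\Phi_i|\leq\mfr C_iC^i$. Clearing the Neumann series of $\wt\D_Z^{-1}$, the recursion exhibits its right-hand side as a nested convolution of the $\Phi_j$; one proves the bound by induction using the Catalan identity $\sum_{j\leq i}\mfr C_j\mfr C_{i-j}=\mfr C_{i+1}$ and $\mfr C_{i+1}/\mfr C_i\to4$ from \eqref{eq:cat_num_b}, isolating the $i$-linear leading term $(2i+d)\Phi_i$ and treating the rest perturbatively (using $|2i+d|^{-1}\les i^{-1}$ for large $i$ to close the geometric-times-Catalan ansatz), with a finite number of low-order coefficients verified directly. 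This is precisely the estimate in \cite{shao2024self}; the one addition needed for us is that, since $(d,p,\ell)$ is fixed by \eqref{eq:para1} and $\g$ is confined to a compact subinterval of $(\ell^{-1/2},\ell^{-1/2}+c)$, every quantity entering the recursion is bounded uniformly, so $C$ may be taken absolute. I expect the bookkeeping in this last step — making the induction hypothesis strong enough to absorb the Neumann series of $\wt\D_Z^{-1}$ — to be the main obstacle, entirely parallel to the analysis of $\hat U_n$ in Section~\ref{sec:pow_sonic}.
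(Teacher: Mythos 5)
Your proposal is correct and reproduces the approach the paper explicitly cites: the substitution $V=Z\Phi$, $\varkappa=Z^2$ desingularizing at the origin (as in \cite{shao2024self}), the Briot--Bouquet--type recursion $(2i+d)\Phi_i=(\text{lower order})$ with $\pa_\Phi G(0,\Phi_0)=-d$, and a Catalan-majorant induction to control $|\Phi_i|$ (the same style of estimate the paper also points to in \cite{buckmaster2022smooth}). The paper does not reprove this statement — it simply cites it and remarks on how to make the constant $C$ uniform in $\g$ on the relevant compact range — so your fleshed-out sketch is consistent with and essentially identical to the intended argument.
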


We only need the local existence of $V_F^\rn{\kp}(Z)$ for $ Z\in[0, \d_1]$ with some $\d_1 > 0$. The proof is standard and also follows from the argument in \cite[Section 2]{buckmaster2022smooth} by bounding the power series coefficients. The power series of $V_F^\rn{\kp}$ only contains the odd power $Z^{2i+1}$ due to symmetry.

Using the map $(\cY, \cU)$ \eqref{eq:sys_UY} from $(Z, V)$ to $(Y, U)$ coordinates and Proposition \ref{prop:V_near0}, we construct  
\beq\label{eq:QO_glue}
(Y_F^\rn{\kp}, U_F^{(\kp)} )(Z)
= (\cY, \cU) ( Z, V_F^{(\kp)}(Z)), \quad Z \in [0, \d ).
\eeq
where $F$ is short for \textit{far}. See the orange curve in Figure \ref{fig:coordinate2} for an illustration of $(Y_F^\rn{\kp}, U_F^{(\kp)} )(Z)$. 

We have the following asymptotics of $U_F^{(\kp)}(Z),Y_F^\rn{\kp}(Z)$ for small $Z$.
\begin{lem}\label{lem:UF_asym}
There exists $C_1$ sufficiently large and $\d_3 > 0$ sufficiently small, such that for any  $\kp> C_1$ and $Z \in (0, \d_3]$, the functions $Y_F^\rn{\kp}(Z), U_F^{(\kp)}(Z)$ defined in \eqref{eq:QO_glue}
satisfies 
\bseq\label{eq:UF_asym}
\begin{gather}
  0 < c_1 Z^2 \leq  Y_O - Y_F^\rn{\kp}(Z)  \leq c_2  Z^2 ,
  \qquad Y_F^\rn{\kp}(Z)  \in (0, 1) , 
  \label{eq:UF_asyma}  \\
\B| U_F^{(\kp)}(Z)  - \f{ C_{\infty}}{ Y_F^\rn{\kp}(Z)- Y_O} \B| \les 1, 
\qquad U_F^{(\kp)}(Z) > 0,
\label{eq:UF_asymb} 
\end{gather}
\eseq
where the implicit constants, e.g. $c_1, c_2$, are uniform in $\kp$, $C_{\infty}$
is given by
\[
    C_{\infty} = -(\g+1)^3(V_3 - V_1^2 + V_1^3) \neq 0,     
\]
and $V_1, V_3$ are the power series coefficients in \eqref{eq:V_near0} given by
\beq\label{eq:V_formula}
V_1 = \f{d-1}{d (\g+1)},
\quad V_3= \f{1}{d+2} \B( (d-1)\B(  -  \f{2 }{\g+1}  V_1^2 + V_1^3 + V_1^2 \B) + V_1(2 V_1 + \ell (V_1 - 1)^2) \B).
\eeq
\end{lem}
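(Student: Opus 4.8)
The plan is to compute the power-series expansions of $Y_F^\rn{\kp}(Z)$ and $U_F^\rn{\kp}(Z)$ near $Z=0$ by substituting the power series $V_F^\rn{\kp}(Z)=\sum_{i\ge 0}V_iZ^i$ from Proposition \ref{prop:V_near0} into the explicit formulas for $(\cY,\cU)$ in \eqref{eq:sys_UY}, and then to read off the leading behavior. First I would recall that $V_{2i}=0$, so $V_F^\rn{\kp}(Z)=V_1 Z+V_3 Z^3+O(Z^5)$, and compute $V_1,V_3$ directly from the recursion for the ODE \eqref{eq:ODE} at $Z=0$; since $d=4$, $p=7$, $\ell=5/3$ are fixed and $\g$ ranges over a compact set $I_\g$, all implicit constants are uniform in $\kp$. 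The formulas \eqref{eq:V_formula} for $V_1,V_3$ follow by matching the coefficients of $Z$ and $Z^3$ in $\D_Z(Z,V)V' = \D_V(Z,V)$; this is a routine but slightly lengthy symbolic computation which I would carry out (or verify with the light computer assistance already in use).

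Next I would expand $\cY(Z,V_F^\rn{\kp}(Z))$. Writing $V=V_1Z+O(Z^3)$, the numerator $(1-V^2)Z-(\g+1)V(1-VZ)$ vanishes to first order precisely because $V_1=\tfrac{d-1}{d(\g+1)}$ makes the $Z^1$ coefficient equal $\bigl(1-(\g+1)V_1\bigr)Z = \tfrac1d Z$; more precisely one gets $\cY = \tfrac1d + (\text{const})Z^2 + O(Z^4)$, i.e. $Y_O-Y_F^\rn{\kp}(Z)$ is of order $Z^2$ with a coefficient that is a nonzero rational function of $\g$, bounded above and below uniformly on $I_\g$. That establishes the two-sided bound \eqref{eq:UF_asyma}, and $Y_F^\rn{\kp}(Z)\in(0,1)$ for small $Z$ is then immediate. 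For \eqref{eq:UF_asymb} I would expand $\cU(Z,V)=\tfrac{(\g+1)^2(1-VZ)^2}{(1-V^2)Z^2}$: since $1-VZ = 1 - V_1Z^2 + O(Z^4)$ is bounded away from $0$ while the denominator $(1-V^2)Z^2\asymp Z^2$, we get $U_F^\rn{\kp}(Z)\asymp Z^{-2}>0$, and comparing with $Y_F^\rn{\kp}(Z)-Y_O\asymp -Z^2$ gives $U_F^\rn{\kp}(Z)\bigl(Y_F^\rn{\kp}(Z)-Y_O\bigr)\to C_\infty$ as $Z\to 0$ for the appropriate constant $C_\infty$. The claim $\bigl|U_F^\rn{\kp}(Z)-C_\infty/(Y_F^\rn{\kp}(Z)-Y_O)\bigr|\lesssim 1$ then follows by keeping one more term in each expansion: both $U_F^\rn{\kp}(Z)$ and $C_\infty/(Y_F^\rn{\kp}(Z)-Y_O)$ have the form $c_\ast Z^{-2}+O(1)$ with the same $c_\ast$, so their difference is $O(1)$ uniformly in $\kp$.

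The only genuinely delicate point is the identification of the constant: one must show $C_\infty = -(\g+1)^3(V_3-V_1^2+V_1^3)\ne 0$. I would obtain the formula by carrying the expansions of $\cY$ and $\cU$ to the next order — $\cY - Y_O = a_2(\g)Z^2 + O(Z^4)$ and $\cU = b_{-2}(\g)Z^{-2} + O(1)$ — and then computing $C_\infty = b_{-2}(\g)/a_2(\g)$ after the cancellations; the numerator of $\cY-Y_O$ at order $Z^3$ involves exactly the combination $V_3-V_1^2+V_1^3$ (the $-V_1^2$ from the $(1-V^2)Z$ term and the $+V_1^3$ from $(\g+1)V\cdot VZ$, at order $Z^3$), while $\cU$ contributes the factor $(\g+1)^2$ and a further $(\g+1)$ comes from normalizing the $Z^2$-coefficient of $\cY-Y_O$ — this is the source of the $(\g+1)^3$. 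For nonvanishing of $C_\infty$ I would substitute \eqref{eq:V_formula} and check that $V_3-V_1^2+V_1^3$, a rational function of $\g$ with $d=4,\ell=5/3$ fixed, has no zero on the compact interval $I_\g$ (equivalently for $\g$ near $\ell^{-1/2}$); this is a finite check verifiable symbolically or with the computer assistance of Appendix \ref{app:comp}. Keeping track that all error terms $O(Z^4)$, $O(1)$ are uniform over the compact $\g$-range — using the uniform bound $|V_{2i+1}|\le \mfr C_i C^i$ from Proposition \ref{prop:V_near0} to control the tail of the series — is the main bookkeeping obstacle, but it is routine rather than deep. $\blacksquare$
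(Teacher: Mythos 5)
Your proposal follows the paper's argument essentially step for step: expand $V_F^{(\kp)}(Z)=V_1Z+V_3Z^3+O(Z^5)$ via Proposition \ref{prop:V_near0}, derive $V_1,V_3$ by matching coefficients of $Z$ and $Z^3$ in $\D_Z V'=\D_V$, substitute into \eqref{eq:sys_UY} to get $\cY-Y_O=-(\g+1)(V_3-V_1^2+V_1^3)Z^2+O(Z^4)$ and $\cU=(\g+1)^2Z^{-2}+O(1)$, and then invoke uniformity over the compact $\g$-range plus a computer-assisted scalar check (the paper's Lemma \ref{lem:limit}) for the nonvanishing of $C_\infty$. One small slip: you wrote $C_\infty=b_{-2}/a_2$, but since $U_F\sim b_{-2}Z^{-2}$ and $Y_F-Y_O\sim a_2Z^2$ the correct identity is $C_\infty=\lim U_F\cdot(Y_F-Y_O)=a_2\,b_{-2}$ (a product, not a quotient); your subsequent bookkeeping of the $(\g+1)^3$ factor is consistent with the product, so this is a typo rather than a gap.
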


The proof follows from expanding $(Y, U_F^{(\kp)})(Z)$ near $Z=0$ and using the map \eqref{eq:sys_UY}, which is elementary but tedious. We defer it to Appendix \ref{app:UF_asym}.

Recall the barrier functions $B_l^f(Y), B_u^f(Y)$ from \eqref{eq:bar_far_l}, \eqref{eq:bar_far_u}. We have the following results.

\begin{lem}[Computer-assisted]\label{lem:limit}
We have 
\[
\lim_{Y \to Y_O} B_l^f(Y) \cdot (Y- Y_O) = \f{e_1 Y_O + e_2 Y_O^2 - U_0}{d} < C_{\infty}.
\]
\end{lem}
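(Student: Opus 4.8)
The statement has two parts: first, an explicit evaluation of the limit $\lim_{Y\to Y_O}B_l^f(Y)\cdot(Y-Y_O)$, and second, a comparison of that value with the constant $C_\infty$ from Lemma \ref{lem:UF_asym}. The first part is a routine residue computation: $B_l^f(Y)=\frac{e_1Y+e_2Y^2-U_0}{dY-1}$ has a simple pole at $Y=Y_O=1/d$, so writing $dY-1=d(Y-Y_O)$ gives $B_l^f(Y)(Y-Y_O)=\frac{e_1Y+e_2Y^2-U_0}{d}$, and letting $Y\to Y_O$ yields $\frac{e_1Y_O+e_2Y_O^2-U_0}{d}$. This is immediate from the definition \eqref{eq:bar_far_l} and needs no further argument.

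The substance is the strict inequality $\frac{e_1Y_O+e_2Y_O^2-U_0}{d}<C_\infty$. The plan is to reduce both sides to explicit expressions in the parameters $(d,\ell,\g)$ — equivalently in $\g$ alone once \eqref{eq:para1} fixes $d=4,\ \ell=5/3$ — and then verify the inequality. On the left, $U_0=\e$ by \eqref{eq:sonic_Q}, $e_1=dU_0-U_1$ by the computation following \eqref{eq:bar_far_l_cond1}, and $e_2$ is the (unique, rational in $\g$) solution of the linear equation \eqref{eq:bar_far_l_cond2}; $U_1=U_{1,+}$ is the positive root of the quadratic \eqref{eq:eqn_U1}, so the left side is an explicit algebraic function of $\g$. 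On the right, $C_\infty=-(\g+1)^3(V_3-V_1^2+V_1^3)$ with $V_1,V_3$ given in closed form by \eqref{eq:V_formula}, hence also an explicit rational function of $\g$. Thus the claimed inequality becomes a single inequality between two explicit functions of $\g$ on the relevant range of $\g$ (namely $\g$ close to $\g_*=\ell^{-1/2}$, or more safely $\g\in(\ell^{-1/2},\ell^{-1/2}+c)$ as in \eqref{eq:kappa_bi}).

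Following the pattern already used in the proof of Proposition \ref{prop:bar_f}, I would verify this with light computer assistance: evaluate both sides at $\g=\g_*=\ell^{-1/2}$, check that the difference (right minus left) is strictly positive there with a definite margin, and then invoke continuity in $\g$ to extend the strict inequality to a one-sided neighborhood $\g\in(\g_*,\g_*+c)$, which is the only range used in the paper (recall that "$\kp$ sufficiently large" means "$\g$ sufficiently close to $\g_*$" by \eqref{eq:grad_smooth}, \eqref{eq:gamma_star}). Concretely: form the polynomial identity obtained by clearing denominators, so that the inequality reads $P(\g)>0$ for a polynomial $P$ continuous in $\g$, and confirm $P(\g_*)>0$ numerically/symbolically with the constants from \eqref{eq:para1}, \eqref{eq:para2}; see Appendix \ref{app:comp}.

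\textbf{Main obstacle.} There is no deep difficulty — the only real work is bookkeeping: correctly assembling $e_2$ from the second-derivative condition \eqref{eq:bar_far_l_cond2}, which requires differentiating the composite expression $(B_l^f)'\D_Y(Y,B_l^f)-\D_U(Y,B_l^f)$ twice and solving a linear equation, and then carefully substituting $Y_O=1/d$, $U_0=\e$, and the value of $U_1$. Because $e_2$ and $U_1$ are messy rational/algebraic functions of $\g$, I would carry the computation symbolically and only collapse to numbers at $\g=\g_*$ for the final sign check; a sign error in the expansion of \eqref{eq:bar_far_l_cond2} is the most likely pitfall, so I would cross-check the resulting $e_2$ against the condition $\partial_Y^2\big((B_l^f)'\D_Y-\D_U\big)\big|_{Y=0}=-40$ directly. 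Everything else — the pole extraction on the left and the closed-form $C_\infty$ on the right — is mechanical.
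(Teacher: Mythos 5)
Your proposal is correct and matches the paper's proof: the paper likewise treats the limit as immediate from the factorization $dY-1=d(Y-Y_O)$ in \eqref{eq:bar_far_l}, and verifies the scalar inequality at $\g=\g_*=\ell^{-1/2}$ via interval arithmetic (see Appendix \ref{app:comp}, \textrm{Wave\_barrier.ipynb}), extending to nearby $\g$ by continuity exactly as you describe. Your additional bookkeeping remarks about assembling $e_2$ from \eqref{eq:bar_far_l_cond2} and substituting the closed forms for $V_1,V_3,U_1$ are accurate but implicit in the paper's symbolic computation.
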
 

We obtain the limit by definition of $B_l^f$ \eqref{eq:bar_far_l}.  We verify the scalar inequality in Lemma \ref{lem:limit} directly using Interval arithmetic. 

Since $Y - Y_O < 0$ and $B_u^f(Y)$ is uniformly bounded for $Y$ near $Y_O$, using Lemma \ref{lem:limit} and Lemma \ref{lem:UF_asym}, we obtain that
there exists $\d_4 \in (0, \d_3) $ with $\d_3$ chosen in Lemma \ref{lem:UF_asym} such that 
\beq\label{eq:UF_asym2}
  B_l^f(Y_F^\rn{\kp}(Z)) >  U_F^{(\kp)}(Z) > B_u^f(Y_F^\rn{\kp}(Z)) .
\eeq
for any $Z \in (0, \d_4]$ and any $\kp \in (n,n+1)$.

\subsubsection{Shooting argument}\label{sec:shoot_sub}

Denote by $U^{(\kp)}$ the local analytic function near $Q_s$ constructed in Proposition \ref{prop:analy}. We prove Proposition \ref{prop:Qs_QO} using a shooting argument. Let $n_4$ be the large parameter determined in Propositions \ref{prop:bar_inter}. We fix $n > n_4$ and consider $\kp \in (n, n+1)$. 

Recall the region $\Om_B^f$ from \eqref{eq:bar_region}. From \eqref{eq:bar_far_validd}, in Proposition \ref{prop:bar_loc_valid}, for any $\kp \in (n, n+1)$, we have $U^{(\kp)}(Y) \in \Om_B^f$ for $ 0 < Y \ll 1$. Moreover, since $\D_Y >0$ in $\Om_B^f$ from Proposition \ref{prop:bar_loc_valid}, the solution curve $(Y, U(Y))$ either remains in $\Om_B^f$ for all $Y \in [0, Y_O)$ or exits the region $\Om_B^f$ via one of the edges $E_B^i$ \eqref{eq:bar_region_edge}.
We define the extension $U_{\mw{ext}}^{(\kp)}(Y)$ of $U^{(\kp)}$ according to one of two cases.

(a) If $U^{(\kp)}(Y) \in \Om_B^f$ for $Y \in [0, Y_O)$, we define
\bseq\label{eq:U_ext}
\beq\label{eq:U_exta}
U_{\mw{ext}}^{(\kp)}(Y) = U^{(\kp)}(Y), \quad Y \in [0, Y_O).  
\eeq

(b)
Otherwise, suppose that $(Y, U(Y))$ first intersects $\Om_B^f$ at $(Y_*, U^{(\kp)}(Y_*) ) \in E_{B, i}$ with $Y_* < Y_O$ for $i \in \{1, 2\}$. We define 
\beq
\bal
& U_{\mw{ext}}^{(\kp)}(Y) = 
\begin{cases}
U^{(\kp)}(Y),  \quad & Y \in [0, Y_*], \\
B_{\al}^f(Y), \quad  & Y \in [Y_*, Y_O) , 
\end{cases}
\eal
\eeq
\eseq
with $B_{\al}^f(Y) = B_u^f(Y)$ if $i =1$ and $B_{\al}^f(Y) = B_l^f$ if $i = 2$. That is, we extend $U^{(\kp)}(Y)$ using the barrier function $B_l^f$ or $B_u^f$.

Since $(Y, U_{\mw{ext}}^{(\kp)}(Y))$ is in the closure of $\Om_B^f$, where we have $\D_Y(Y, U) >0$ if $Y > 0$, using the continuity of the ODE solution to \eqref{eq:UY_ODE} in $\kp, Y$, we get that $ U_{\mw{ext}}^{(\kp)}(Y)$ is continuous in $(Y, \kp) \in [0, Y_O) \times (n, n+1)$. 

From Proposition \ref{prop:bar_exit}, for $i = 1,2$, there exists $\kp^*_{i} \in (n, n+1)$ and  $Y_i \in (0, Y_O)$ such that:
\begin{enumerate}[i)]
    \item The solution $(Y, U^{(\kp^*_1)}(Y))$ first exit $\Om_B^f $ at 
$(Y, B_l^f(Y))$ with $Y = Y_1 \in (0, Y_O)$.
\item The solution $(Y, U^{(\kp^*_2)}(Y))$ first exit $\Om_B^f $ at 
$(Y, B_u^f(Y))$ with $Y = Y_2 \in (0, Y_O)$.
\end{enumerate}

Since $Y_1, Y_2 < Y_O$, using the estimate of $ Y_F^\rn{\kp} $ \eqref{eq:UF_asyma}, we can choose $\d_Y > 0$ small enough and independent of $\kp \in (n, n+1)$  such that
\beq\label{eq:glue_delY}
0< \d_Y < \d_4, \quad  Y_F^\rn{\kp}(\d_Y) > \max(Y_1, Y_2), \quad  Y_F^\rn{\kp}(\d_Y) < Y_O.
\eeq
We define 
\[
  g( \kp) =  U_{\mw{ext}}^{(\kp)}(  Y_F^\rn{\kp}(\d_Y) )  - U_F^{(\kp)}( \d_Y).
\]

Using the continuity of $U_{\mw{ext}}^{(\kp)}, U_F^{(\kp)}$, we obtain that $g(\kp)$ is continuous in $\kp$. Using the definition of $(Y_i , \kp^*_i)$, the bound \eqref{eq:UF_asym2}, 
and \eqref{eq:glue_delY}, 
we get
\[
g(\kp_1^*) = B_u^f( Y_F^\rn{\kp_1^* }(\d_Y)) - U_F^{( \kp_1^* )}( \d_Y)  < 0,
\quad g(\kp_2^*) = B_l^f( Y_F^\rn{\kp_2^*}(\d_Y)) - U_F^{(\kp_2^*)}( \d_Y)    > 0. 
\]
Using continuity, we obtain $g(\kp^*) = 0$ for some $\kp^*$ between $\kp^*_1, \kp^*_2$, which implies $ U_{\mw{ext}}^{(\kp^*)}( Y_F^\rn{\kp^*}(\d_Y)) = U_F^{(\kp^*)}( \d_Y) $. Using the bound \eqref{eq:UF_asym2}
for $U_F$, we yield 
\[
 B_u^f( Y_F^\rn{\kp^*}(\d_Y) ) < U_{\mw{ext}}^{(\kp^*)}(Y_F^\rn{\kp^*}(\d_Y)) < B_l^f( Y_F^\rn{\kp^*}(\d_Y) ).
\]

From the definition of $U_{\mw{ext}}^{(\kp^*)}$ in \eqref{eq:U_ext} and the above bound, we obtain that $U_{\mw{ext}}^{(\kp^*)}$ is defined via \eqref{eq:U_exta} and thus $ U^{(\kp^*)}( Y_F^{(\kp^*)}(\d_Y) ) = U_{\mw{ext}}^{(\kp^*)}( Y_F^{(\kp^*)}(\d_Y) ) =U_F^{(\kp^*)}( \d_Y ) $. Using the relation \eqref{eq:QO_glue} and inverting the maps \eqref{eq:invert}, we obtain
\beq\label{eq:glue_ODE0}
  ( \d_Y, V_F^\rn{\kp^*}(\d_Y) )
 = (\cZ, \cV) (Y_F^{\kp^*}, U_F^\rn{\kp^*}) (\d_Y) 
 = (\cZ, \cV)( Y_F^{\kp^*}(\d_Y), U^{\rn{\kp^*}}( Y_F^{\kp^*}(\d_Y)))
\eeq

\subsubsection{Gluing the solution}\label{sec:glue}


We construct solution $V(Z)$ to the ODE \eqref{eq:ODE} using the maps \eqref{eq:UY_to_VZ} from $U^\rn{\kp^*}(Y)$ and then glue it with $V_F^\rn{\kp^*}$ obtained above.

We use the map \eqref{eq:UY_to_VZ} to construct the curve
\beq\label{eq:glue_ODE1}
(Z^\rn{\kp^*}, V^\rn{\kp^*}) (Y) = (\cZ, \cV)(Y, U^\rn{\kp^*}(Y) ), \quad  Y \in J, 
\quad J = [-c, Y^\rn{\kp^*}(\d_Y)],
\eeq
for some $0 < c \ll 1$. Using \eqref{eq:glue_ODE1} with $Y = Y^\rn{\kp^*}(\d_Y)]$
and \eqref{eq:glue_ODE0}, we get 
\beq\label{eq:glue_ODE1b}
Z^\rn{\kp^*}(
Y^\rn{\kp^*}(\d_Y)) =\d_Y, 
\quad 
 V^\rn{\kp^*}(  Y^\rn{\kp^*}(\d_Y)))  =  V_F^\rn{\kp^*}( \d_Y).
\eeq

Using the second identity in \eqref{eq:Mv} and the chain rule, we obtain 
\beq\label{eq:glue_ODE2}
 \f{ \f{ d  V^\rn{\kp^*}}{d Y} }{ \f{d Z^\rn{\kp^*}}{d Y} } =
\f{ \pa_Y \cV + \pa_U \cV \f{\D_U}{\D_Y}  }{  \pa_Y \cZ + \pa_U \cZ \f{\D_U}{\D_Y}  }
= \f{ m^{-1} \D_V}{ m^{-1} \D_Z} =
  \f{\D_V}{\D_Z} (Z^\rn{\kp^*}(Y), V^\rn{\kp^*}(Y) ).
\eeq

Next, we show that $Z^\rn{\kp^*}(Y)$ is strictly decreasing and invertible. Using \eqref{eq:Mv}, we get 
\beq\label{eq:glue_dZ}
\bal
   \frac{dZ^\rn{\kp^*}}{dY}(Y)
  & =  \pa_U \cZ  \cdot \f{d U^\rn{\kp^*} }{d Y} + \pa_Y \cZ  
  =  \pa_U \cZ  \cdot \f{\D_U}{\D_Y} + \pa_Y \cZ \\
  & =  m^{-1}(Y, U) \f{\D_Z( Z^\rn{\kp^*}, V^\rn{\kp^*})}{\D_Y(Y, U)} \B|_{ U = U^\rn{\kp^*}(Y)  }.
  \eal
\eeq
Recall, in Section \ref{sec:shoot_sub}, we showed $(Y, U^\rn{\kp^*}(Y) ) \in \Om_B^f$ for $Y \in (0, Y_F^\rn{\kp^*} (\d_Y)]$; hence, from  \eqref{eq:DelY_sign} and \eqref{eq:DelZ_sign} we obtain the inequalities  $\D_Z, \D_Y>0$.  Using these sign inequalities, together with $m \neq 0$ \eqref{eq:Mv}, yields
\[
 \frac{dZ^\rn{\kp^*}}{dY}(Y) \neq 0 , \quad \mw{ for} \quad  Y \in (0, Y_F^\rn{\kp^*} (\d_Y)]. 
 \]

Using \eqref{eq:dZ_dY} and the definition of $ Z^\rn{\kp^*}(Y)
= \cZ(Y, U^\rn{\kp^*}(Y))$ \eqref{eq:glue_ODE1}, we get $(Z^\rn{\kp^*})^{\pr}(0) < 0$. Using continuity, we obtain 
\[
\frac{dZ^\rn{\kp^*}}{dY}(Y) < 0,  \quad \mw{ for} \quad    Y \in J^{\pr} \teq [ -c^{\pr}  ,  Y^\rn{\kp^*}(\d_Y)], 
\]
with $0 < c^{\pr}  \ll 1 $ and $c^{\pr} < c$. Thus, $Z^\rn{\kp^*}$ is strictly decreasing, 
invertible, and smooth on $J^{\pr}$. 
We define the inverse as 
$(Z^\rn{\kp^*})^{-1}$ and construct a solution 
\beq\label{eq:glue_ODE3}
V_\mw{ODE}^\rn{\kp^*}(Z) = V^\rn{\kp^*}( (Z^\rn{\kp^*})^{-1}(Z)  ), 
\quad Z \in [ \d_Y, Z^\rn{\kp^*}(-c)] .
\eeq
Using \eqref{eq:glue_ODE3}, \eqref{eq:glue_ODE2}, and the chain rule, we obtain a smooth solution $V_\mw{ODE}^\rn{\kp^*}$ to the ODE \eqref{eq:ODE}. Using \eqref{eq:glue_ODE1b} and \eqref{eq:glue_ODE0}, we get 
\[
V_\mw{ODE}^\rn{\kp^*}(\d_Y)
=  V^\rn{\kp^*}( (Z^\rn{\kp^*})^{-1}(\d_Y)) 
= V^\rn{\kp^*}(  Y^\rn{\kp^*}(\d_Y))) =  V_F^\rn{\kp^*}( \d_Y).
\]

Since both $V_\mw{ODE}^\rn{\kp^*}(Z), V_F^\rn{\kp^*}(Z)$ solve the ODE \eqref{eq:ODE} smoothly with the same data at $Z= \d_Y$, $ V_F^\rn{\kp^*}(Z)$ is smooth in $[0, \d_1]$ covering $Z = \d_Y$ (see Proposition \ref{prop:V_near0} and its following discussion), using the uniqueness, we obtain $V_\mw{ODE}^\rn{\kp^*} = V_F^\rn{\kp^*} $ and construct a smooth ODE solution $V(Z)$ to \eqref{eq:ODE} on $[0, Z^\rn{\kp^*}(-c) ]$ with $Z^\rn{\kp^*}(-c) > Z^\rn{\kp^*}(0) = \cZ(0, U_0)
= Z_0$ \eqref{eq:root_P}. Applying \eqref{eq:glue_ODE3} and \eqref{eq:glue_ODE1} with $Y = (Z^\rn{\kp^*})^{-1}(Z)$, we prove \eqref{eq:sonic_agree} for some $\e_1>0$.

\subsubsection{Proof of other properties}\label{sec:other_prop}
Since $(\cZ, \cV)$ map the sonic point $(0, U_0)$ to $(Z_0, V_0)$ in the $(Z, V)$ system, using \eqref{eq:glue_ODE1} with $Y = 0$ and \eqref{eq:glue_ODE3}, we get $V(Z_0) = V_0$. 
Since $V(Z) = V_F^\rn{\kp^*}(Z)$ for small $Z$ is constructed by Proposition \ref{prop:V_near0}, we have $V(0) = 0$ and $V(Z) = Z g(Z^2)$ for some $g \in C^{\infty}
([0, Z_0 + \e_1])$ with small $\e_1>0$.

To prove \eqref{eq:Qs_QO_prop1}, using Lemma \ref{lem:bijec} and the property 
that $\cV(Y, U) < \cZ(Y, U)$ \eqref{eq:sys_UY} with $(Y, U) \in \cR_{YU}$ is equivalent to 
\[
 (1 - Y) ( \f{1}{1 + \g} U + 1 - Y ) <  U + (1- Y)^2  \iff Y > - \g,
\]
we only need to estimate the $(Y, U)$ coordinate of the solution curve:
\beq\label{eq:ODE_include}
(\cY, \cU) ( Z, V(Z)) \in D \teq \{ (Y, U) : -\g < Y < 1, U > 0 \},
\quad \mw{for} \ Z \in [0, Z_0 + \e_1],
\eeq
with some $\e_1 > 0$. From Lemma \ref{lem:UF_asym} and the definitions of  $\Om_B^f$ \eqref{eq:bar_region} and $B_u^f$ \eqref{eq:bar_far_u}, we have 
\[
\bal
(Y_F^\rn{\kp_*}  (Z), U_F^\rn{\kp_*}(Z)) & \in (-\g, 1) \times [U_0, \infty) 
\subset D ,  \\
(Y,   U^\rn{\kp_*}(Y)) & \in \bar \Om_B^f  \subset [0, 1) \times [U_0, \infty) \subset D ,
\eal
\]
for $Z \in [0, \d_3]$ and $Y \in [0, Y^\rn{\kp^*}(\d_Y)] $. From the relation \eqref{eq:QO_glue} and \eqref{eq:glue_ODE1} with $Z^\rn{\kp^*}(0) = Z_0$ and \eqref{eq:glue_ODE1b}, we prove \eqref{eq:ODE_include} for $Z \in [0, Z_0]$. Using continuity and by choosing $\e_1> 0$ small enough, we prove \eqref{eq:ODE_include} for $Z \in [0, Z_0 + \e_1]$. We conclude the proof of Proposition 
\ref{prop:Qs_QO}.

\section{Lower part of $Q_s$}\label{sec:Q_lower}

In this section, we consider odd $n$ and any $\kp \in (n, n+1)$.  We use a barrier argument to show that the local analytic solution $(Y, U^{(\kp)}(Y))$ constructed in Proposition \ref{prop:analy} can be continued for $Y < 0$ and cross the curve $\D_Y =0$ (red curve) below the point $Q_s$ (see Figure \ref{fig:coordinate2}). 
We justify these in Propositions \ref{prop:bar_final}, \ref{prop:inter_final}, \ref{prop:inter_U_root}. Since the curve $\D_Y = 0 $ below $Q_s$ in the system \eqref{eq:ODE_UY} is a upper barrier and $U=0$ is another barrier, the solution curve must further cross $Y = 0$ \eqref{eq:sys_UY} (see Figure \ref{fig:coordinate2} for an illustration), 
which corresponds to the curve of $\D_V(Z, V) = 0$  in the original system  \eqref{eq:ODE} (see the red curve  between $P_2, P_s$ in Figure \ref{fig:coordinate1}). 
We justify it in Proposition \ref{prop:ds_ODE}. Afterward, in Lemma \ref{lem:far_extend} in Section \ref{sec:proof}, we can extend the solution of the 
$(Z,V)$ \eqref{eq:ODE} to $Z = \infty$. This give rises to a smooth solution $V(Z)$ to the ODE \eqref{eq:ODE} for $Z \in C^{\infty}[Z_0 - \e_1, \infty)$ for some $\e_1\ll 1 $.

\subsection{The local upper barrier} 
 We introduce a local barrier 
\beq\label{eq:up_bar}
\GG_\sn{n} = \sum_{i =0 }^n U_i Y^i .
\eeq
where $U_i,i\geq 0$ are the power series coefficients constructed in Section \ref{sec:pow_sonic}. In Proposition \ref{prop:bar_final}, we show that $\GG_\sn{n}$ remains a valid local upper barrier of $U$ for $Y \in [- 2 (  C^* \kp)^{-1},  0)$, where $ C_*$ is defined in \eqref{eq:C_asym}. In Propositions \ref{prop:inter_final}, \ref{prop:inter_U_root}
we show that in the range $Y \in [- 2 (  C^* \kp)^{-1},  0)$, the local upper barrier and the solution must intersect the global upper barrier $U_{\D_Y}$. 



\begin{prop}\label{prop:bar_final}
There exists $C$ large enough such that for any $n$ odd with $n > C$ and $\kp \in (n, n+1)$, we have
\footnote{The reader should not confuse the polynomial $\PP_\sn{n}(Y)$ with the coefficient $P_n$ of $\PP_\sn{n}^\mw{ne}(Y)$ in \eqref{eq:bar_loc_est1a}.} 
\beq\label{eq:bar_final_cond}
 \PP_\sn{n}(Y) = \GG_\sn{n}^{\pr}(Y) \D_Y( Y, \GG_\sn{n}(Y) ) - \D_U(  Y, \GG_\sn{n}(Y) ) > 0
\eeq
for $Y \in [- 2 (  C^* \kp)^{-1},  0)$, where $ C_*$ is defined in \eqref{eq:C_asym}. 
\end{prop}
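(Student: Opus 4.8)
The plan is to expand $\PP_\sn{n}(Y)$ as a polynomial in $Y$, isolate the leading term (the coefficient of $Y^n$ or $Y^{n+1}$), show it is strictly positive of the correct size, and then show the remainder is negligible on the tiny interval $Y \in [-2(C_*\kp)^{-1}, 0)$. This follows exactly the template of Proposition \ref{prop:bar_loc}, with the simplification that here the barrier $\GG_\sn{n}$ is just the truncated power series with no extra $\beta U_n Y^{n+1}$ correction term. Since $\GG_\sn{n}$ agrees with the true power series solution $U$ up to order $Y^n$, Lemma \ref{lem:pow_recur} (with $N=1$) tells us the coefficients of $Y^i$ in $\PP_\sn{n}$ vanish for all $i \le n$, so $\PP_\sn{n}(Y) = \sum_{m \ge n+1} P_m Y^m$. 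Using the identity $\mfr R_{n+1}(Y_0, U_0,\dots,U_n,0) - \mfr R_{n+1}(Y_0, U_0,\dots,U_n,U_{n+1}) = -a_{n+1,n+1}U_{n+1} = -\big((n+1)\lams - \laml\big)U_{n+1}$ and $\mfr R_{n+1}(Y_0,U_0,\dots,U_{n+1}) = 0$, I get the leading coefficient
\[
P_{n+1} = \mfr R_{n+1}(Y_0, U_0,\dots,U_n, 0) = \big(\laml - (n+1)\lams\big) U_{n+1} = \frac{\kp - (n+1)}{\kp}\laml\, U_{n+1}.
\]

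\textbf{Sign of the leading term.} Since $n$ is odd and $\kp \in (n,n+1)$, Corollary \ref{cor:asym} and \eqref{eq:U_sign} give $U_n > 0$ (because $n < \kp$) and then $U_{n+1} < 0$ (because $\kp - (n+1) < 0$, so by \eqref{eq:induc_asym2} the ratio $U_{n+1}/U_n$ has the sign of $\frac{(n+1)\kp}{\kp - (n+1)}$, which is negative). Also $\kp - (n+1) < 0$ and $\laml > 0$, so $P_{n+1} = \frac{\kp-(n+1)}{\kp}\laml\,U_{n+1}$ is a product of a negative number and a negative number, hence $P_{n+1} > 0$. Quantitatively, $|\kp - (n+1)|^{-1} \asymp_n 1$ is uncontrolled as $\kp \to n+1$, but we have freedom: the statement only requires \emph{some} $\kp \in (n,n+1)$ to work in the downstream shooting argument of Section \ref{sec:shoot}, or here we just need the estimate to hold for the range of $\kp$ relevant to Section \ref{sec:Q_lower}. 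In fact, since we work on the interval $|Y| \le 2(C_*\kp)^{-1} \asymp n^{-1}$, the leading term contributes $\gtrsim_n |U_{n+1}| n^{-(n+1)}$, which by Lemma \ref{lem:asym_refine} (estimate \eqref{eq:Um_grow2c}) is comparable to $|\kp - (n+1)|^{-1}\big((1-\d)C_*\big)^{n+1}$ up to the factor $n^{-(n+1)}$ from $Y^{n+1}$, i.e. of order $|\kp-(n+1)|^{-1}(1-\d)^{n+1}C_*^{n+1} n^{-(n+1)}\cdot n^{n+1}$-type scaling — the point being it dominates.

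\textbf{Controlling the remainder.} For $m \ge n+2$ I estimate $P_m$ by expanding $\D_Y(Y,\GG_\sn{n})$ and $\D_U(Y,\GG_\sn{n})$ as polynomials using the product formula \eqref{eq:power_prod}: since $\D_Y$ is linear and $\D_U$ quadratic in $U$, the coefficients of $\GG_\sn{n}$ (which are $U_0,\dots,U_n$) enter, and the dangerous ones are $U_n$ (size $\asymp_n |\kp-n|^{-1}$) and products $U_i U_j$ with $i+j$ large. Using the refined asymptotics of Lemma \ref{lem:asym_refine} — in particular \eqref{eq:Um_convex} and the growth estimates \eqref{eq:Um_asym_ref} — the worst remainder terms are of order $|\kp-n|^{-1}|U_{\cdot}| Y^{n+2}$ and $|U_n|^2 Y^{2n}$-type, all carrying at least one extra power of $Y$ relative to the leading term. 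On $|Y| \le 2(C_*\kp)^{-1}$, each extra power of $Y$ contributes a factor $\lesssim n^{-1}$, and comparing carefully (as in \eqref{eq:bar_loc_est1b}) shows $|\PP_\sn{n}(Y) - P_{n+1}Y^{n+1}| < \tfrac12 |P_{n+1} Y^{n+1}|$ once $n > C$ for $C$ large. Hence $\sgn\PP_\sn{n}(Y) = \sgn(P_{n+1}Y^{n+1}) = \sgn(P_{n+1}) \cdot \sgn(Y^{n+1})$; since $n$ is odd, $n+1$ is even, so $Y^{n+1} > 0$ for $Y < 0$, and $P_{n+1} > 0$, giving $\PP_\sn{n}(Y) > 0$.

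\textbf{Main obstacle.} The delicate point is the uniformity in $\kp$ as $\kp \to (n+1)^-$: the factor $|\kp - (n+1)|^{-1}$ appears in both the leading term and several remainder terms, so I must track it carefully and verify it \emph{cancels} in the ratio rather than blowing up. The cleanest way is to factor $|U_{n+1}| = |\kp-(n+1)|^{-1}\cdot(\text{bounded})$ out of everything at the start (which is legitimate by Corollary \ref{cor:asym}, since $U_{n+1}/U_n$ and $U_{n+2}/U_{n+1}$ both behave like $\frac{\cdot\,\kp}{\kp - \cdot}$), so that $\PP_\sn{n}(Y) = |\kp-(n+1)|^{-1}\big(\tilde P_{n+1}Y^{n+1} + \tilde\cE_n\big)$ with $\tilde P_{n+1} \gtrsim_n 1$ bounded below uniformly and $\tilde\cE_n$ genuinely smaller by a power of $Y$. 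After that the argument is a routine triangle-inequality estimate. A secondary bookkeeping nuisance is keeping separate the two regimes $n+1 \le m \le 2n-1$ and $m \ge 2n$ in the coefficient bounds for $\D_U$ (the quadratic term $2U^2$ is what generates the $Y^{2n}$-scale terms), but this is handled term-by-term exactly as in the proof of Proposition \ref{prop:bar_loc}.
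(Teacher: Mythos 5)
Your central claim — that on all of $|Y| \leq 2(C_*\kp)^{-1}$ the remainder $\sum_{m\geq n+2} P_{n,m}Y^m$ is dominated by the leading term $P_{n,n+1}Y^{n+1}$ — is false, and you cannot import the error estimate from Proposition~\ref{prop:bar_loc} here because the interval is dramatically larger. In Proposition~\ref{prop:bar_loc} the barrier is valid for $|Y| \lesssim \mu_n|\b|^{-1} \lesssim n^{-2}$, whereas here you must reach out to $|Y| = 2(C_*\kp)^{-1} \asymp n^{-1}$. Concretely, the top coefficient of $\PP_\sn{n}$ is $P_{n,2n} = P_{n,2n,1} U_n^2 \asymp n\,U_n^2$, and at $Y = -2(C_*\kp)^{-1}$ estimate \eqref{eq:Um_grow2c} gives $U_n \gtrsim |\kp-n|^{-1}((1-\d)C_*\kp)^n$, so
\[
\f{|P_{n,2n}Y^{2n}|}{|P_{n,n+1}Y^{n+1}|} \;\asymp\; U_n\,|Y|^{n-1}\cdot\f{C_*\kp}{\lams\kp}\;\gtrsim\; |\kp-n|^{-1}(2(1-\d))^{n-1},
\]
and since $2(1-\d) = 1.9 > 1$ this is exponentially large in $n$. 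The "one extra power of $Y$" gives a factor $n^{-1}$, but the growth $U_{m+1}/U_m \approx C_*\kp\cdot\f{m+1}{\kp-m-1}$ more than compensates for $m$ close to $n$ (and hence $m - n$ close to $n$); the product $C_*\kp\cdot|Y|$ is close to $2$, not close to $0$. So the tail of the polynomial is not a small perturbation — it is the dominant contribution on the outer half of the interval.

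What saves the argument is that these large tail terms come with the \emph{correct sign}. The paper checks directly that $P_{n,2n}Y^{2n} > 0$ and $P_{n,2n-1}Y^{2n-1} > 0$ (since $n$ odd and $P_{n,2n-1} < 0$), and then — this is the step your proposal omits entirely — for the outer range $|Y| \in [\th_2,\th_3]$ with $\th_2 = ((1+2\d)C_*\kp)^{-1}$, it pairs consecutive terms $Y^{2l+1}P_{n,2l+1} + Y^{2l}P_{n,2l}$ for $2l \geq (1+\tau)n$ and shows each pair is positive using the growth estimate \eqref{eq:Usq_grow} for the partial convolutions $T_m = (\GG_\sn{n}^2)_m$. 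The remaining middle terms ($n+2 \leq m \leq n+\tau n$, roughly) are split into three sub-regimes and shown to be genuinely small relative to $P_{n,n+1}Y^{n+1}$ via \eqref{eq:Um_asym_ref2}, \eqref{eq:Um_asym_ref3}, \eqref{eq:Um_asym_ref1}. Only on the inner range $|Y| \leq \th_2$ does the "everything is perturbation" picture hold, and even there the case split by $m-n$ is needed. In short: the absence of the $\b U_n Y^{n+1}$ correction (which in Proposition~\ref{prop:bar_loc} provides a huge leading coefficient and lets one work on a tiny interval) changes the nature of the argument — you must exploit sign cancellations among the high-order terms, not just triangle-inequality domination by $P_{n,n+1}$.
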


The proof relies on the estimates of $U_i$ in Lemmas \ref{lem:asym}, \ref{lem:asym_refine}.

\begin{proof}

Recall $\d = 0.05$ from \eqref{eq:para3}. For $q$ to be chosen, we define $\tau, q_i$ according to 
\bseq\label{eq:q_tau}
\beq\label{eq:q_taua}
\bal
 \f{\tau }{1 - \tau} & = 1 + 4 \d, \quad  q_1 = 2(1 + \d ) q  , \quad  q_2 =  (2 (1 + \d))^{ \tau } q^{1 - \tau} , \\
  q_3 & = \f{1 + \d }{1 + 2 \d} ,
\quad q_4 =  \f{ ( 1 - \d)(1 + 4 \d) }{1 + 2 \d}  .
\eal
\eeq

Since $\d = 0.05$ and $2 (1 + \d)^{1 + 4 \d} / 4 < 1$, by choosing $q \in (4^{-1}, 3^{-3})$ close to $4^{-1}$, we can obtain $\tau, q_i$ satisfying 
\beq\label{eq:q_taub}
\tau \in (\f{1}{2}, \f{3}{4}), 
\quad q_1 < 1, 
\quad q_2 =  ( (2 (1 + \d))^{1 + 4 \d} q )^{1 -\tau} < 1 , \quad 
q_3 < 1, \quad q_4 > 1.
\eeq

\eseq

Recall $C_*$ from \eqref{eq:C_asym}. We define 
\beq\label{eq:Y_thres}
\quad \th_2 =   ( (1 + 2 \d)  C_* \kp)^{-1},
\quad \th_3 =  2 ( C_* \kp)^{-1} .
\eeq

We estimate $\PP_\sn{n}(Y)$ for $Y \in [- \th_1, 0),  Y \in  [- \th_2, -\th_1], Y \in [-\th_3, -\th_2] $ separately.

\vspace{0.1in}
\paragraph{\bf{Expansion of $\PP_\sn{n}$}}

Denote by $a_m$ the $m$-th coefficient in the power series of $a(Y)$. Using the formula $ (\GG_\sn{n}^2(Y))^{\prime} = 2 \GG_\sn{n}^{\prime} \GG_\sn{n}$ and the definition of $\GG_\sn{n}$ \eqref{eq:up_bar}, 
we obtain 
\beq\label{eq:bar_iden_sq}
\bal
 (\GG_\sn{n}^{\prime} \GG_\sn{n})_m & = \f{ m + 1}{2} ( \GG_\sn{n}^2)_m , 
 \quad ( \GG^2_\sn{n})_{2n + 1} = 0, \quad  ( \GG_\sn{n}^2)_{2n } = U_n^2 ,
 \quad ( \GG_\sn{n}^2)_{2n-1} = 2 U_{n-1} U_n.
 \eal
\eeq

Next, we perform power series expansion for $\PP_\sn{n}$ \eqref{eq:bar_final_cond}. Denote by $P_{n, m}$ the $m-th$ coefficient of $Y^m$ in $\PP_\sn{n}$. Since $\GG_\sn{n}$ agree with the local solution $U(Y)$ up to $O(|Y|^{n+1})$, 
following the derivation of $P_{n+1}$ in \eqref{eq:bar_loc_lead} in the proof of Proposition \ref{prop:bar_loc}, we obtain that the term $P_{n, i} Y^i , i\leq n$ vanishes in $\PP_\sn{n}$ and the leading order term of $\PP_\sn{n}$ is given by 
\bseq\label{eq:bar_Pn_exp}
\beq\label{eq:bar_Pn_expa}
 P_{n, n+1} Y^{n+1} , \quad P_{n, n+1}=  ( (n+1) \lams - \laml )( 0 - U_{n+1}) .
\eeq

For the coefficient of $Y^i, i \geq n+2$, using the explicit formula of $\D_Y, \D_U$ \eqref{eq:UY_ODE}, we obtain 
\beq\label{eq:bar_Pn_expb}
\bal
\PP_\sn{n}(Y) & = Y^{n+1}( (n+1) \lams - \laml )( 0 - U_{n+1}) 
+ Y^{n+2} ( n  U_n  B - 2  U_n( B-(d-1) )  ) \\
& \quad + \sum_{ n+3 \leq m \leq 2 n}  Y^m ( d ( \GG_\sn{n}^{\pr} \GG_\sn{n} )_{m-1}
 - ( \GG_\sn{n}^{\pr} \GG_\sn{n} )_{m} - 2 ( \GG_\sn{n}^2)_m  ) .
 \eal
\eeq
\eseq
We note that the term $U(f(Y) + (d-1) Y(1-Y))$ in $\D_U$ and $(Y-1) f(Y)$ in $\D_Y$ \eqref{eq:UY_ODE} only contributes to the term $C_i Y^i, i \leq n+2$ in $\PP_\sn{n}$. 

Using the identity \eqref{eq:bar_iden_sq} and the expansion \eqref{eq:bar_Pn_expb}, we get 
\beq\label{eq:G_bar0}
\bal
& P_{n, m} = P_{n, m, 1}  (\GG_\sn{n}^2)_m - P_{n, m, 2} (\GG_\sn{n}^2)_{m+1}, \\
\quad 
& P_{n, m, 1} =  \f{ d m }{2} - 2  ,\quad P_{n, m, 2} = \f{m+1}{2}, 
\quad m \geq n + 2 .
\eal
\eeq

Plugging $\kp = \f{\laml}{\lams}$ \eqref{eq:kappa} to \eqref{eq:bar_Pn_expa}, we obtain 
\bseq\label{eq:G_bar1}
\beq
P_{n, n+1} = (\f{n+1}{\kp} - 1) \laml ( 0 - U_{n+1})
= - \f{n+1 - \kp }{ \kp } \laml  U_{n+1}. 
\eeq

For $\kp \in (n, n+1)$, applying $\laml >0$ \eqref{eq:lam_sign} and  the estimates of $U_n$ \eqref{eq:U_sign}, \eqref{eq:induc_asym2}, we further obtain $U_{n+1} < 0$ and 
\beq\label{eq:Pn_main_low}
P_{n, n+1} \geq (1 - \d) \laml (n+1)  C_* U_n > 0.
\eeq

\eseq 
Thus, for $n$ odd and $Y < 0$, we get $P_{n, n+1}  Y^{n+1} > 0$.

For the last two terms $P_{n, 2n} Y^{2n}, P_{n, 2n-1} Y^{2n-1}$ in the expansion \eqref{eq:bar_Pn_expb}, 
using \eqref{eq:bar_iden_sq}, \eqref{eq:G_bar0}, and then the estimates $U_n \gtr U_{n-1} n^2$ \eqref{eq:induc_asym2}, \eqref{eq:U_sign} (with $\kp \in (n,n+1)$), we get 
\beq\label{eq:G_bar_top}
\bal
 P_{n, 2n} &= P_{n, 2n, 1} (\GG_\sn{n}^2)_{2n} 
= P_{n, 2n, 1} U_n^2 > 0 , \\
 P_{n, 2n-1} &=  P_{n, 2n-1, 1} ( \GG_\sn{n}^2)_{2n-1}
 - P_{n, 2n-1, 2} ( \GG_\sn{n}^2)_{2n}  \\
 & = P_{n, 2n-1,1} 2 U_n U_{n-1} - P_{n, 2n-1,2} U_n^2 \\&
=  - (1 + O(n^{-2})) P_{n, 2n-1,2} U_n^2  < 0.
\eal
\eeq
Hence, we obtain $Y^{2n} P_{n, 2n} , Y^{2n-1} P_{n, 2n-1} > 0$. 

\vspace{0.1in}
\paragraph{\bf{Ideas of the remaining estimates}}

It remains to estimate the term $P_{n,m}Y^m$ for $ 2 \leq  m - n \leq n-2$. We use the asymptotics for $U_i$ in Lemma \ref{lem:asym_refine} to show that $P> 0$. For $|Y|$ very small, we treat all the terms $ P_{n, m} Y^m$ as perturbation to $ P_{n, n+1} Y^{n+1}$. For $|Y|$ relatively large and $m$ close to $2n$, we treat $ P_{n, m} Y^m$ as perturbation to $
P_{n,2n-1} Y^{2n-1} +  P_{n, 2n} Y^{2n}$. We choose $l_1$ to be some large absolute constant, and perform the estimates in four cases: 
\[
m- n \leq l_1, \quad l_1 < m-n \leq n / 8, \quad 
n / 8 \leq m-n\leq \tau n + 2, 
\quad  \tau n \leq m-n \leq n-2.
\]

\vspace{0.1in}
\paragraph{\bf{Case I: $ 2 \leq m - n \leq l_1$ }}

Using \eqref{eq:induc_asym2}, we obtain 
\[
 ( \GG_\sn{n}^2)_m = \sum_{i + j = m, i, j \leq n} U_i U_j 
 \les_{l_1} (m-n) U_n \les_{l_1} U_n,
 \quad  |P_{n, m} | \les_{l_1} n U_n. 
\]
Thus, for $m-n \leq l_1$, using the above estimate and the lower bound of $P_{n, n+1}$ \eqref{eq:Pn_main_low}, we obtain 
\beq\label{eq:G_bar2}
    \sum_{ n+2 \leq m \leq n+ l_1}  |P_{n, m} Y^m| 
 \leq C(l_1) P_{n, n+1} Y^{n+ 2} .
\eeq

By choosing $|Y| \ll_{l_1} 1$, we can treat it as perturbation to $P_{n, n+1} Y^{n+1}$.

For $ m \geq n + 3$, using \eqref{eq:Um_convex} in Lemma \ref{lem:asym_refine}, we obtain 
\bseq\label{eq:G_bar3}
\beq
  |( \GG_\sn{n}^2)_m | = \sum_{i+j =m, i, j \leq n} U_i U_j
  \les U_n U_{m-n} \sum_{l \geq 0} ( \f{3}{2})^{-l} \les U_n U_{m-n},
  \eeq
  which along with the decomposition of $P_{n, m}$ in \eqref{eq:G_bar0} implies
  \beq\label{eq:G_bar3b}
  \quad  |P_{n, m}| \les n U_n U_{m+1 - n}. 
\eeq
\eseq

\vspace{0.1in}
\paragraph{\bf{Case II: $l_1 <  m-n \leq  n/ 8$}}

Using \eqref{eq:Um_asym_ref2} with $l = 5$ and \eqref{eq:G_bar3b}, for $|Y| \leq \th_3 $ with $\th_3= 2 ( C_* \kp)^{-1} $ \eqref{eq:Y_thres}, we obtain 
\beq\label{eq:G_case1}
\bal
 |Y^{m} P_{n, m} |
& \les |Y|^{m} n U_n U_{m+1 - n} 
\\&\les   |Y|^{n+1} n U_n \th_3^{m-1-n} 
 (   C_* \kp)^{ m + 1 - n - 5} 4^{-(m-n)}  \\
 & 
\les |Y|^{n+1} n U_n \kp^{-2} (\th_3  C_* \kp 2^{-1})^{m-1-n} 2^{-(m-n)}
\\&\les |Y|^{n +1} n U_n  \kp^{-2} 2^{-(m-n)},
 \eal
\eeq
which is dominated by $P_{n, n+1} Y^{n+1}$ due to the estimate of $P_{n,n+1}$ \eqref{eq:Pn_main_low}.

\vspace{0.1in}
\paragraph{\bf{Case III: $ n / 8 \leq  m-n \leq  \tau n + 2$}}
Denote 
\beq\label{eq:def_m1}
m_1 = m -n.
\eeq
In this case, we have $m_1  \in [n/8, \tau n + 2]$. Using \eqref{eq:Um_asym_ref3} with $q$ chosen in \eqref{eq:q_tau} and \eqref{eq:G_bar3b}, for $|Y| \leq \th_3$ with $\th_3 =2 ( C_* \kp)^{-1} $ \eqref{eq:Y_thres}, we obtain 
\beq\label{eq:G_bar4}
\bal
 |Y ^{m} P_{n, m} |
 & \les  |Y|^{m} n U_n U_{ m -n + 1}   \\
& \les |Y|^{n+1} n U_n U_{m_1 + 1}  |Y|^{m_1 -1 } \\
&\les  |Y|^{n+1} n U_n  \th_3^{m_1- 1} 
 ( (1 + \d)  C_* \kp)^{ m_1 + 1  }  q^{\min( m_1 +1 , n - m_1-1)} \\
 & \les |Y|^{n+1} n U_n \kp^2  (2 (1 + \d))^{m_1} q^{\min(m_1, n-m_1)}.
 \eal
\eeq

Next, we further simplify the upper bound. We introduce 
\[
J = (2 (1 + \d))^{m_1} q^{\min(m_1, n-m_1)} .
 \]

We estimate $J$ in the case of $m_1 \leq n-m_1$ and $m_1 > n -m_1$ separately. Using \eqref{eq:q_tau}, we get 
\[
\bal
  J &\leq (2 (1 + \d))^{m_1} q^{m_1}  =   q_1^{m_1} \leq q_1^{n/8}, && \mathrm{for \ } n / 8 \leq m_1 \leq n /2. \\
 J & \leq C  (2 (1 + \d))^{ m_1-2} q^{\min(m_1 - 2, n- (m_1-2)}
 = C  (2 (1 + \d))^{ m_1-2} q^{  n- (m_1-2)} , &&  \mathrm{for \ } m _1 \in [n/2 ,  \tau n + 2].
  \eal
\]

Denote $\tau_0 = \f{m_1-2}{n}$. From \eqref{eq:def_m1} and the assumption of $m$, we get $\tau_0 \leq \tau$. Using $q < 1$ and $q_2$ in \eqref{eq:q_tau}, we further estimate the second case as follows 
\[
  J \leq  C 2(1 + \d )^{\tau_0 n } q^{ (1 - \tau_0) n }
\leq C (2(1 + \d )^{\tau} q^{1 - \tau})^n \leq C q_2^n .
\]

Combining the above estimates, we establish
\beq\label{eq:G_case2}
 |Y^{m} P_{n, m}| \les 
 |Y|^{n+1} n U_n n^2 q_5^{n},
 \quad q_5 = \max( q_1^{1/8},q_2) < 1.
\eeq

\vspace{0.1in}
\paragraph{\bf{Case IV: $ \tau n \leq m- n \leq n - 2 $ }}

Recall $\th_2, \th_3$ from \eqref{eq:Y_thres}. We consider $|Y| \leq \th_2$ and $|Y| \in [\th_2, \th_3]$ separately. We define $m_1 = m-n$ as \eqref{eq:def_m1}.
For $|Y| \leq \th_2$, following \eqref{eq:G_bar3} and applying \eqref{eq:Um_asym_ref1} with $l=0$, we obtain 
\beq\label{eq:G_case31}
\bal
 |Y^m P_{n, m} | 
& \les |Y|^{n+1} n U_n U_{m_1+1} |Y|^{m_1 - 1} \\&
\les |Y|^{n+1} n U_n \th_2^{m_1 - 1} 
((1 + \d)  C_* \kp)^{m_1 + 1} \\
& \les |Y|^{n+1} n U_n \kp^2  (\f{1+\d}{1 + 2\d})^{m_1}
\\
&
\les  |Y|^{n+1} n U_n n^2 q_3^{n/2}.
\eal
\eeq
Since $q_3 < 1$ \eqref{eq:q_taub}, the upper bound is very small compared to $ |Y|^{n+1} n U_n$.

Combining \eqref{eq:G_bar2}, \eqref{eq:G_case1}, \eqref{eq:G_case2}, \eqref{eq:G_bar_top}, and \eqref{eq:G_case31}, summing these estimates over $m =n+2,.. , 2n$, and using the lower bound of $P_{n, n+1}$ \eqref{eq:Pn_main_low}, we prove 
\beq\label{eq:G_bar_pf1}
\PP_\sn{n} \geq P_{n,n+1} Y^{n+1}( 1 + o_n(1))  > 0, \quad  Y \in [-\th_2, 0),
\eeq
where we use $a = o_n(1)$ to denote $\lim_{n \to \infty} |a|  = 0$. 

Next, for $ Y \in [-\th_3, -\th_2]$ and $l$ with $ \tau n + n \leq  2l , l \leq n-1$, we show that 
\[
Y^{2l + 1} P_{n, 2l+1}  + Y^{2l} P_{n, 2l} > 0.
\]

Using \eqref{eq:G_bar0} and \eqref{eq:Usq_grow}, we obtain 
\[
P_{n, 2l + 1} \leq - \f{2l+2}{2}( 1 - C n^{-1}) ( \GG_\sn{n}^2  )_{2l+2},
\quad |P_{n, 2l} | \leq \f{2l+1}{2} (1 + Cn^{-1}) (\GG_\sn{n}^2)_{2l+1}. 
\]

Since $2l \geq \tau n + n$ and $l \leq n$, using \eqref{eq:Usq_grow}, we get
\[
 \f{  |P_{n, 2l+1}| }{ |P_{n,2l}|} > \f{\tau(1-\d)}{1 - \tau} (1 - Cn^{-1}) ( C_* \kp) .
\]
Using $\f{\tau}{1-\tau } = 1 + 4 \d$ \eqref{eq:q_tau} and $|Y| \geq \th_2
= ( (1 + 2\d) C_* \kp)^{-1}$ \eqref{eq:Y_thres}, we yield 
\[
 \f{ | Y^{2 l+1} P_{n, 2 l+ 1} | }{ | Y^{2 l+} P_{n, 2 l} | }
 > \f{\tau(1-\d)}{( 1 - \tau)(1 + 2\d) } (1 - Cn^{-1})
 = \f{(1 + 4 \d) (1 - \d)}{1 + 2 \d} (1 - C n^{-1}) = q_4 (1 - C n^{-1}).
\]
Since $q_4>1$ \eqref{eq:q_tau}, for $n$ sufficiently large and $ Y < -\th_2$, since $Y^{2l+1} P_{n,2l+1} > 0$,   we prove 
\beq\label{eq:G_case32}
Y^{2 l+1} P_{n, 2 l+ 1} + Y^{2l} P_{n, 2l} > 0, \quad Y \leq -\th_2.
\eeq

Summing \eqref{eq:G_case32} over $\f{1}{2} (\tau n + n) \leq  l \leq n-1$ and then combining it with \eqref{eq:G_case1}, \eqref{eq:G_case2}, and \eqref{eq:G_bar_top}, we prove 
\[
\PP_\sn{n} \geq P_{n,n+1} Y^{n+1}( 1 + o_n(1))  > 0, \quad  Y \in [-\th_3, -\th_2).
\]

We conclude the proof. 
\end{proof}

\subsection{Intersection between the barrier functions  }

Next, we estimate the first intersection between the local barrier and the global barrier. Recall $U_{\D_Y}$ defined in \eqref{eq:root_UY}.

\begin{prop}\label{prop:inter_final}

Let $\bar C$ be the parameter defined in Lemma \ref{lem:asym_refine}. There exists $n_6 > \bar C$ large enough, such that the following statement holds true. 
For any $n > n_6$, there exists $Y_I$ with $ - (1 + 2 \d)| C_* \kp|^{-1} < - | U_n|^{1/ n}  < Y_I < 0$ such that 
\[
U_{\D_Y}(Y) < \GG_\sn{n}(Y), \quad Y \in (Y_I, 0),
\quad U_{\D_Y}(Y_I) = \GG_\sn{n}(Y_I). 
\]
As a result, we have
\[
 \D_Y(Y, \GG_\sn{n}(Y)) <0, \quad  Y \in (Y_I, 0) .
\]

\end{prop}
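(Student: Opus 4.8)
The plan is to compare the polynomial barrier $\GG_\sn{n}(Y)$ with the rational curve $U_{\D_Y}(Y)$ on the interval $[-\th_3,0)$, where $\th_3 = 2(C_*\kp)^{-1}$ is the range on which $\PP_\sn{n}>0$ by Proposition \ref{prop:bar_final}. First I would record the behavior of both curves at $Y=0$. We have $\GG_\sn{n}(0)=U_0=\e$ and $U_{\D_Y}(0) = -\frac{(0-1)f(0)}{0-1} = f(0) = -\e = -U_0$, so $\GG_\sn{n}(0) - U_{\D_Y}(0) = 2\e > 0$; moreover $U_{\D_Y}$ is smooth near $Y=0$ (the denominator $dY-1$ does not vanish there) and $\GG_\sn{n}$ is a polynomial, so the difference $\GG_\sn{n}(Y) - U_{\D_Y}(Y)$ is positive for $|Y|$ small. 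This gives the right endpoint behavior: $\GG_\sn{n}(Y) > U_{\D_Y}(Y)$ for $Y\in(-\d',0)$ with small $\d'$.

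Next I would produce a point $Y_* \in (-\th_3,0)$ at which $\GG_\sn{n}(Y_*) < U_{\D_Y}(Y_*)$, forcing a crossing in between by the intermediate value theorem. The natural choice is $Y_* = -|U_n|^{1/n}$ (or a constant multiple thereof), which by \eqref{eq:Um_grow2c} in Lemma \ref{lem:asym_refine} satisfies $|U_n|\gtrsim |\kp-n|^{-1}((1-\d)C_*n)^n$, hence $|Y_*| = |U_n|^{1/n} \gtrsim (1-\d)C_* n\,|\kp-n|^{-1/n} \gtrsim C_*\kp$ up to the stated constant, so indeed $-(1+2\d)|C_*\kp|^{-1} < Y_* < 0$ for $n$ large (here one uses $\kp\in(n,n+1)$ so $|\kp-n|^{-1/n}\to 1$ and $\kp/n\to1$). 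Wait — I need to double check the direction: I want $|Y_*|$ to be the \emph{small} quantity $|U_n|^{1/n}$; but $|U_n|$ is huge, so $|U_n|^{1/n}$ is of order $C_*\kp$, which is small. Good. At $Y = Y_*$, the dominant term of $\GG_\sn{n}$ is $U_n Y_*^n$; since $n$ is odd and $Y_* < 0$ with $U_n > 0$ (by \eqref{eq:U_sign}, as $\kp\in(n,n+1)$ means $n<\kp$), we get $U_n Y_*^n = -|U_n|\cdot|Y_*|^n = -|U_n|\cdot|U_n| = -|U_n|^2$ — actually $|U_n|\cdot|U_n|$ is wrong dimensionally; $|Y_*|^n = |U_n|$, so $U_n Y_*^n = -|U_n|^2 / |U_n|$... let me just say $U_n Y_*^n = -|U_n|\cdot |U_n| $ is not right; $|Y_*|^n = (|U_n|^{1/n})^n = |U_n|$, so $U_n Y_*^n = U_n\cdot(-1)^n|Y_*|^n = -|U_n|\cdot|U_n|$? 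No: $U_n\cdot|Y_*|^n = |U_n|\cdot|U_n| $ only if... $U_n = |U_n|$ and $|Y_*|^n = |U_n|$, so the product is $|U_n|^2$. Thus $U_n Y_*^n = -|U_n|^2$, which is a large negative number, while $U_{\D_Y}(Y_*)$ is $O(1)$ and the lower-order terms of $\GG_\sn{n}$ are controlled by the asymptotics in Lemma \ref{lem:asym_refine} (using \eqref{eq:Um_convex}, \eqref{eq:Um_asym_ref}) to be $o(|U_n|^2)$. Hence $\GG_\sn{n}(Y_*) - U_{\D_Y}(Y_*) < 0$.

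With the sign change established, let $Y_I$ be the \emph{largest} value in $(-\th_3,0)$ with $\GG_\sn{n}(Y_I) = U_{\D_Y}(Y_I)$; then by construction $\GG_\sn{n}(Y) > U_{\D_Y}(Y)$ for $Y\in(Y_I,0)$, and $-|U_n|^{1/n} \le Y_I < 0$ (and $-|U_n|^{1/n} > -(1+2\d)|C_*\kp|^{-1}$ by the computation above). Finally, the consequence $\D_Y(Y,\GG_\sn{n}(Y)) < 0$ for $Y\in(Y_I,0)$ follows from the factorization $\D_Y = (dY-1)(U - U_{\D_Y}(Y))$ in \eqref{eq:root_UY}: for $Y<0$ we have $dY - 1 < 0$, and $\GG_\sn{n}(Y) - U_{\D_Y}(Y) > 0$ on $(Y_I,0)$, so the product is negative. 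The main obstacle I anticipate is the bookkeeping in the second step — verifying that \emph{all} the lower-order coefficients $U_m Y_*^m$, $m < n$, together with the spread of $U_{\D_Y}$, are genuinely negligible compared to $|U_n|^2 = |U_n Y_*^n|$ uniformly in $\kp\in(n,n+1)$; this requires carefully invoking the various regimes of Lemma \ref{lem:asym_refine} (in particular \eqref{eq:Um_grow2a}, which says $U_n^{m/n}\gtrsim 2^{\min(n-m,m)}U_m$) rather than a single crude bound, much as in the proof of Proposition \ref{prop:bar_final}.
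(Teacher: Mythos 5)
Your overall skeleton—locate a sign change of $\GG_\sn{n}(Y)-U_{\D_Y}(Y)$ and apply the intermediate value theorem, then read off the sign of $\D_Y$ from its factorization—is the right plan and matches the paper's. However, two concrete errors break the argument.

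First, the algebra at $Y=0$ is wrong. From \eqref{eq:root_UY}, $U_{\D_Y}(Y)=-\frac{(Y-1)f(Y)}{dY-1}$, so
\[
U_{\D_Y}(0)=-\frac{(-1)f(0)}{-1}=-f(0)=-(-\e)=\e=U_0,
\]
whereas you dropped the outer minus sign and obtained $-U_0$. The two curves therefore \emph{agree} at $Y=0$ (this is precisely the sonic point $Q_s=(0,U_0)$ lying on the zero set of $\D_Y$). The positivity of $\GG_\sn{n}-U_{\D_Y}$ for small $Y<0$ is not a consequence of a constant gap; it follows from the slope comparison $U_1=\pa_Y\GG_\sn{n}(0)<a_1=\pa_Y U_{\D_Y}(0)$, so that $\GG_\sn{n}(Y)-U_{\D_Y}(Y)=(U_1-a_1)Y+O(Y^2)>0$ for $Y<0$ small.

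Second, and more seriously, you have the scale of $Y_*$ inverted. You correctly compute from \eqref{eq:Um_grow2c} that $|U_n|^{1/n}\gtrsim(1-\d)C_*n$, yet then conclude that $C_*\kp$ "is small" and that $-|U_n|^{1/n}$ lies in $(-(1+2\d)(C_*\kp)^{-1},0)$. Since $\kp\in(n,n+1)$ and $C_*$ is a fixed constant, $C_*\kp\asymp n$ is \emph{large} and $(C_*\kp)^{-1}$ is the small quantity; your $Y_*=-|U_n|^{1/n}$ has magnitude of order $n$ and lies far outside the window $(-\theta_3,0)$ with $\theta_3=2(C_*\kp)^{-1}$ that Proposition \ref{prop:bar_final} controls. (The proposition statement actually contains a typographical slip—it should read $-|U_n|^{-1/n}$, consistent with \eqref{eq:Un_est1}—but the inconsistency should have surfaced when you derived $|Y_*|\gtrsim C_*\kp$ and then asserted the opposite inequality.) With the correct test point $Y_*=-\theta=-|U_n|^{-1/n}$, your dominance argument no longer applies: for $n$ odd, $U_n Y_*^n=U_n\cdot(-1)^n|U_n|^{-1}=-1$, which is $O(1)$ and does \emph{not} dominate the lower-order sum. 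The paper circumvents this by factoring: using the quadratic lower bound $U_{\D_Y}(Y)\ge U_0+a_1Y+a_2Y^2$ one gets $\HH_n(Y)=\GG_\sn{n}(Y)-U_{\D_Y}(Y)\le YF(Y)$ with $F(Y)=\sum_{i\le n-1}U_{i+1}Y^i-a_2Y-a_1$, and then shows $F(-\theta)>0$; the crucial point is that the top term of $F$ at $-\theta$ is $U_n\theta^{n-1}=|U_n|^{1/n}\asymp C_*n$, which is large compared with $a_1=O(1)$ and with the remainder bounded via the several regimes of Lemma \ref{lem:asym_refine}. Your anticipated "bookkeeping" would need to be carried out at this scale and through this factorization, not by comparing $-|U_n|^2$ against $O(1)$.
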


\begin{proof}

Denote 
\beq\label{eq:nota_inter}
a_1 = \pa_Y U_{\D_Y}(0),
\quad \HH_n(Y) =  \GG_\sn{n}(Y) - U_{\D_Y}(Y)  ,
\quad \th = |U_n|^{ - 1 / n } .
\eeq

Firstly, using a direct computation, we know that 
\[
\GG_\sn{n}(0) = U_{\D_Y}(0),
\  0<  \pa_Y \GG_\sn{n}(0) = U_1 < a_1, 
\quad  a_1 -U_1 \les n^{-1}.
\]
Using the above estimate and the definition of $U_{\D_Y}$ \eqref{eq:root_UY}, for $ - \e_2 <  Y <0$ and $\e_2 = \e_2(n, \kp)$ sufficiently small, we obtain
\beq\label{eq:inter_final_pf0}
\HH_n(Y) > 0 . 
\eeq

Since $U_{\D_Y}$ is smooth near $Y=0$, for some absolute constant $\e_3 > 0$ and $a_2$, we have 
\beq\label{eq:inter_final_pf1}
U_{\D_Y}(Y) \geq U_0 + a_1 Y + a_2 Y^2, \quad |Y| \leq \e_3.
\eeq

Denote 
\beq\label{eq:inter_final_F}
 F(Y) = \sum_{ i\leq n-1} U_{i+1} Y^{i} - a_2 Y  - a_1.
\eeq
We have $F( Y) = U_1 - a_1 + O_n(Y)< 0$ for $|Y|$ sufficiently small. From \eqref{eq:inter_final_pf1} and \eqref{eq:inter_final_F}, we get
\beq\label{eq:inter_final_pf3}
 \HH_n(Y)  \leq Y F(Y), \quad |Y| \leq \e_3.
\eeq

Next, we show that $F(-\th) > 0$ for $\th = |U_n|^{ - 1 / n }$ \eqref{eq:nota_inter}.  Using \eqref{eq:Um_grow2} and $|U_n|^{1/n} \gtr n$, for $m \leq n-1$, we get 
\[
\f{|U_m \th^{m-1}|  }{ |U_n \th^{n-1}| } 
= 
 \f{U_m}{ U_n^{m/n}}  \leq \min( C 2^{ - \min(n-m, m)}, C_{n-m} n^{-2/3}, C_m n^{-m}),
 \quad \f{|a_2 \th|}{ |U_n \th^{n-1}|}
 \les \th^2 \les n^{-2}.
\]

It follows 
\[
\bal
\cR & \teq |a_2 \th |
+ \sum_{  2 \leq m \leq n-2 } | U_m \th^{m-1}|
\leq |U_{n} \th^{n-1}| ( C_l n^{-2/3} + C \sum_{ l\leq m \leq n-l }  2^{- \min(m-n, m)})  \\
 & \leq |U_{n} \th^{n-1}| ( C_l n^{-2/3} + C 2^{-l}) .
\eal
\]
Choosing $l$ large and then $n$ large, 
we further obtain
\[
\cR  \leq \f{1}{2} |U_n \th^{n-1}| .
\]
Since $n$ is odd, it follows 
\[
\quad F(-\th ) \geq U_n \th^{n-1} - a_1 - \cR \geq 
 \f{1}{2}  U_n \th^{n-1}   + (U_1 - a_1) 
= \f{1}{2} |U_n|^{1/n}  + (U_1 - a_1) \geq C n + (U_1 - a_1)  > 0.
\]

Since $\th = |U_n|^{-1/n} \les n^{-1}$ and $\e_3$ in 
\eqref{eq:inter_final_pf1}, \eqref{eq:inter_final_pf3} is absolute constant, by further requiring $n$ large enough, we yield $0 < \th <  \e_3$. Thus using \eqref{eq:inter_final_pf3}, we obtain
\[
 \HH_n(-\th) \leq -\th F(-\th) < 0.
\]

Since $\HH_n(Y) > 0$ for $-\e_2 < Y< 0$ \eqref{eq:inter_final_pf0} and $\HH_n$ is continuous, there exists $Y_I$ with $ -\th = -|U_n|^{1/n} < Y_I < 0$ such that $\HH_n(Y_I) = 0$ and $\HH_n(Y) > 0$ for $Y \in ( Y_I, 0]$. Using \eqref{eq:Um_grow2}, since $\d =0.05$ \eqref{eq:para3} and $(1-\d)^{-1} < 1 + 2 \d$, by further choosing $n$ large, we obtain 
\beq\label{eq:Un_est1}
|Y_I| < |U_n|^{-1/n} < (1 + 2 \d)  (C_* \kp)^{-1}.
\eeq

Finally, using the definition of $\D_Y$ \eqref{eq:UY_ODE}, we yield 
\[
  \D_Y( Y, \GG_\sn{n}(Y)) = ( d Y - 1)( \GG_\sn{n} - U_{\D_Y}) < 0 ,\quad Y \in (Y_I, 0).
\]
We conclude the proof. 
\end{proof}

Below, we show that the solution must intersect the curve $U_{\D_Y}$ \eqref{eq:root_UY}.

\begin{prop}\label{prop:inter_U_root}

For any $n$ odd large enough, we have the following results. Let $Y_I$ be defined in Proposition \ref{prop:inter_final}. There exists $Y_I^{\pr} \in [Y_I, 0)$ and $Y_I^{\pr} > -\g$ such that 
$ U(Y_I^{\pr}) = U_{\D_Y}(Y_I^{\pr} )$ and 
\beq\label{eq:inter_U_root}
 U_{\D_Y}(Y) <  U(Y) <  \GG_\sn{n}(Y)   , \quad \mw{for } \quad  Y \in ( Y_I^{\pr}, 0) .
\eeq

\end{prop}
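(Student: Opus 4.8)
The plan is to combine the barrier function $\GG_\sn{n}$ (which Proposition \ref{prop:bar_final} shows is a valid local upper barrier) with the lower barrier $U = 0$, and use a continuity/trapping argument to show the solution $U(Y)$, started from the sonic point $Q_s$ at $Y=0$ and continued for $Y < 0$, must hit the curve $U_{\D_Y}$ before $Y$ reaches $-\g$. First I would recall the local analytic solution $U^{(\kp)}(Y)$ from Proposition \ref{prop:analy}, which is defined on a neighborhood of $Y = 0$ and agrees with the power series $\sum U_i \xi^i$; near $Y=0^-$ we have $U(Y) < \GG_\sn{n}(Y)$ because the two agree to order $O(|Y|^{n+1})$ and the first discrepancy term $-U_{n+1}Y^{n+1}$ is positive for $Y<0$ odd $n$ (this is exactly the sign computation already done in \eqref{eq:Pn_main_low}, \eqref{eq:bar_loc_valid_pf2}); and $U(Y) > 0$ near $Y = 0$ since $U_0 = \e > 0$.

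The core of the argument is a standard invariant-region (barrier) argument in the strip $\{ U_{\D_Y}(Y) < U < \GG_\sn{n}(Y)\}$ for $Y \in (Y_I, 0)$, where $Y_I$ is the intersection point of $\GG_\sn{n}$ and $U_{\D_Y}$ from Proposition \ref{prop:inter_final}, with $Y_I > -(1+2\d)|C_*\kp|^{-1}$, hence certainly $Y_I > -\g$ for $\kp$ large. On the upper boundary $U = \GG_\sn{n}(Y)$, Proposition \ref{prop:bar_final} gives $\PP_\sn{n}(Y) = \GG_\sn{n}'(Y)\D_Y(Y,\GG_\sn{n}(Y)) - \D_U(Y,\GG_\sn{n}(Y)) > 0$; since Proposition \ref{prop:inter_final} also gives $\D_Y(Y,\GG_\sn{n}(Y)) < 0$ on $(Y_I,0)$, the barrier inequality $\GG_\sn{n}'(Y) < \D_U/\D_Y = dU/dY$ holds there, so $\GG_\sn{n}$ traps $U$ from above as $Y$ decreases. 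On the lower boundary $U = 0$ we have $\D_U(Y, 0) = 0$ identically (since $\D_U = 2U(\cdots)$), so $U \equiv 0$ is an exact solution and by uniqueness the solution $U(Y)$ cannot cross it; alternatively $\D_Y(Y,0) = (Y-1)f(Y)$ which one checks is $\neq 0$ near $Y = 0$ so $dU/dY$ is finite and the flow at $U=0^+$ is not directed downward fast enough — but the cleaner statement is the no-crossing from uniqueness. Therefore, as $Y$ decreases from $0$, the solution $U(Y)$ either stays strictly inside the strip all the way to $Y = Y_I$, or exits through one of the two side constraints. It cannot exit through $U = 0$. At $Y = Y_I$ the strip closes up ($\GG_\sn{n}(Y_I) = U_{\D_Y}(Y_I)$), and if $U$ were still trapped strictly between the two barriers we would need $U(Y_I)$ sandwiched between two equal values, forcing $U(Y_I) = U_{\D_Y}(Y_I) = \GG_\sn{n}(Y_I)$. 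In all cases there is a first $Y_I^{\pr} \in (Y_I, 0]$ — more precisely in $[Y_I, 0)$ — with $U(Y_I^{\pr}) = U_{\D_Y}(Y_I^{\pr})$, and for $Y \in (Y_I^{\pr}, 0)$ the strict inequalities $U_{\D_Y}(Y) < U(Y) < \GG_\sn{n}(Y)$ hold. Since $Y_I^{\pr} \geq Y_I > -(1+2\d)|C_*\kp|^{-1} > -\g$ for $\kp$ (equivalently $n$) large, we get $Y_I^{\pr} > -\g$ as claimed.

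A few technical points I would need to nail down. One must first ensure that the local analytic solution $U^{(\kp)}$ actually extends as a smooth solution of the ODE \eqref{eq:UY_ODE} on all of $(Y_I, 0]$: this follows because $\D_Y$ is bounded away from zero on the relevant region — indeed the trapping between $U_{\D_Y}$ and $\GG_\sn{n}$ with $\D_Y(Y,\GG_\sn{n}(Y))<0$ and the fact that $U > 0$ keep the solution in a region where $\D_Y < 0$, so the ODE is non-singular and the solution continues until it either leaves the strip or reaches $Y_I$; standard ODE continuation applies. Here I'd want to check that $\D_Y < 0$ throughout the open strip (not just on the boundary $U=\GG_\sn{n}$): since $\D_Y = (dY-1)(U - U_{\D_Y}(Y))$ with $dY - 1 < 0$ near $Y=0$ and $U > U_{\D_Y}(Y)$ inside the strip, indeed $\D_Y < 0$. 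I would also record that $U(Y) > 0$ on $(Y_I^{\pr}, 0)$, needed so that the point lies in the domain $D$ in \eqref{eq:ODE_include}; this is immediate from $U > U_{\D_Y}$ isn't quite enough (the root curve can be positive or negative), so I'd instead use the no-crossing of $U = 0$ directly. The main obstacle, and the place requiring the most care, is the bookkeeping at the endpoint: making sure the "first exit" $Y_I^{\pr}$ satisfies $Y_I^{\pr} < 0$ strictly (so the interval $(Y_I^{\pr},0)$ is non-degenerate) — this uses $U(Y) < \GG_\sn{n}(Y)$ strictly for $Y<0$ near $0$, from the sign of $-U_{n+1}$ — and that it is $> -\g$, which is the quantitative content coming from Proposition \ref{prop:inter_final}'s bound $|Y_I| < (1+2\d)(C_*\kp)^{-1}$ combined with $\g \to \ell^{-1/2}$ staying bounded below, so $(C_*\kp)^{-1} \to 0$ while $\g$ does not. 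Everything else is the routine invariant-region mechanics and I would keep that brief.
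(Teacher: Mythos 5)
Your proposal is correct and takes essentially the same route as the paper: establish $U_{\D_Y}(Y) < U(Y) < \GG_\sn{n}(Y)$ near $Y = 0^-$ via the Taylor-sign computation $-U_{n+1}Y^{n+1} > 0$, invoke Propositions~\ref{prop:bar_final} and~\ref{prop:inter_final} to make $\GG_\sn{n}$ a valid upper barrier on $(Y_I, 0)$ with $\D_Y < 0$ there, and use the squeeze as $\GG_\sn{n}$ and $U_{\D_Y}$ meet at $Y_I$ to force a crossing $Y_I' \in (Y_I, 0)$, then read off $Y_I' > -\g$ from $|Y_I| < (1+2\d)(C_*\kp)^{-1}$. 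One small inefficiency to flag: the paragraph about $U = 0$ as a lower barrier is a red herring — the relevant lower boundary of the trapping strip is $U_{\D_Y}$, not $U = 0$, and the positivity of $U$ on $[Y_I', 0]$ is neither needed for this proposition nor provable by pure no-crossing (the paper instead defers it to Lemma~\ref{lem:DelY}, where it is obtained from integrability of $|\D_Y|^{-1} \sim |Y - Y_I'|^{-1/2}$ together with monotonicity of $U_{\D_Y}$). Dropping that tangent and the garbled endpoint phrasing ``$(Y_I,0]$ --- more precisely $[Y_I, 0)$'' would tighten the writeup, but the substance is right.
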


\begin{proof}

Recall the definition of $\GG_{\sn{n}}$ from \eqref{eq:up_bar}. Firstly, we have $\pa_Y^i (\GG_\sn{n} - U)(0)$for $ i \leq n$ and
\[
\pa_Y^{n+1} (\GG_\sn{n} - U)(0) = - U_{n+1}.
\]
Since $n$ is odd, using $\kp \in (n, n+1)$ and \eqref{eq:U_sign}, \eqref{eq:induc_asym2}, we obtain $-U_{n+1} > 0, - U_{n+1} Y^{n+1} > 0$. Using the above estimates and
\[
 \GG_\sn{n}(0) = U_{\D_Y}(0), \quad 
\pa_Y\GG_\sn{n}(0) = U_1 < \pa_Y U_{\D_Y}(0), 
\]
we establish 
\[
  U_{\D_Y}(Y) <   U(Y) < \GG_\sn{n}(Y) ,
\]
for $ Y \in (-\d^{\pr}, 0)$ with some small parameter $\d^{\pr} = \d^{\pr}(\kp, n) > 0$.

From Proposition \ref{prop:inter_final} and Proposition \ref{prop:bar_final}, 
for some $Y_I \in ( - 2 ( C_* \kp)^{-1} ,0)$,  we have
\[
 \f{d}{d Y} ( \GG_\sn{n}(Y) - \f{ \D_U(Y, U)}{\D_Y(Y, U)} \B|_{ U = \GG_\sn{n}(Y)} = \f{\PP_\sn{n}(Y)}{ \D_Y(Y, \GG_\sn{n}(Y))}  < 0, \quad Y \in ( Y_I, 0).
 \]
 Thus, $\GG_\sn{n}(Y)$ is a upper barrier for $U(Y)$ with $Y \in (Y_I, 0)$.

 Since $\GG_\sn{n}(Y)$ intersects $  U_{\D_Y}(Y) $ at $Y_I <0$, by continuity, we obtain that 
 $U(Y)$ intersects $  U_{\D_Y}(Y)$ at some $Y_I^{\pr} \in (Y_I, 0)$. 

Since $Y_I > - (1 + 2\d) |C_* \kp|^{-1}$ from Proposition \ref{prop:inter_final} and $\g > \ell^{-1/2}$, by choosing $\kp$ large enough, we obtain $Y_I^{\pr} > Y_I > - \g$. We complete the proof. 
\end{proof}


We have the following relative positions among the roots and the barrier functions.

\begin{prop}\label{prop:root_pos} 
Let $U_g, U_{\D_U}$ be the functions defined in \eqref{eq:root_DZ} and \eqref{eq:root_UY}.  There exists $C>0$ large enough such that for any $n, \kp$ with $n$ odd, $n > C$, and $\kp \in (n, n+1)$, we have 
\beq\label{eq:root_pos}
   U_g(Y)  >  G_\sn{n}(Y) ,  \quad   U_{\D_U}(Y)  >  G_\sn{n}(Y) .
\eeq
for $ - |U_n|^{1/n} < Y < 0$, where $ C_*$ is defined in \eqref{eq:C_asym}. As a result, we have
\beq\label{eq:root_pos2}
   U_g(Y) > U(Y), \quad  U_{\D_U}(Y)  > U(Y).
\eeq
for $Y \in [ Y_I^{\pr}, 0)$, where $Y_I^{\pr}$ is defined in Proposition \ref{prop:inter_final}.

\end{prop}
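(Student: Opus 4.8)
The plan is to reduce the ``as a result'' clause \eqref{eq:root_pos2} to \eqref{eq:root_pos}, and then prove \eqref{eq:root_pos} by a direct comparison of polynomials. For the reduction: by Proposition~\ref{prop:inter_U_root} we have $U(Y) < \GG_\sn{n}(Y)$ on $(Y_I^{\pr},0)$ and $U(Y_I^{\pr}) = U_{\D_Y}(Y_I^{\pr})$, while $\GG_\sn{n}(Y) \ge U_{\D_Y}(Y)$ on $[Y_I,0)$ by Proposition~\ref{prop:inter_final}; since $Y_I^{\pr} \in [Y_I,0)$ this gives $U(Y) \le \GG_\sn{n}(Y)$ on all of $[Y_I^{\pr},0)$. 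Moreover $[Y_I^{\pr},0) \subset (-\th,0)$ with $\th := |U_n|^{-1/n}$, because $|Y_I| < \th$ (Proposition~\ref{prop:inter_final}). Hence once \eqref{eq:root_pos} is known on $(-\th,0)$, we get $U(Y) \le \GG_\sn{n}(Y) < U_g(Y)$ and $U(Y) \le \GG_\sn{n}(Y) < U_{\D_U}(Y)$ on $[Y_I^{\pr},0)$, which is \eqref{eq:root_pos2}.

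For \eqref{eq:root_pos} on $(-\th,0)$: since $U_g(0) = U_{\D_U}(0) = \GG_\sn{n}(0) = U_0 = \e$, the differences $D_g := U_g - \GG_\sn{n}$ and $D_u := U_{\D_U} - \GG_\sn{n}$ are polynomials of degree $n$ vanishing at $0$, with linear coefficients $\pa_Y U_g(0) - U_1 < -c$ and $\pa_Y U_{\D_U}(0) - U_1 < -c$ for an absolute $c>0$ by \eqref{eq:root_ineq}, and leading coefficient $-U_n < 0$ (using $U_n>0$ from \eqref{eq:U_sign}). Since $n$ is odd and $Y<0$, both the linear term $-(U_1 - \pa_Y U_g(0))Y$ and the leading term $-U_nY^n = U_n|Y|^n$ are strictly positive on $(-\th,0)$. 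Writing
\[
D_g(Y) = \big(\pa_Y U_g(0) - U_1\big)Y + \big(\tfrac{1}{2}\pa_Y^2 U_g(0) - U_2\big)Y^2 - \sum_{3 \le i \le n-1} U_i Y^i - U_n Y^n,
\]
it therefore suffices to show that on $(-\th,0)$ the ``middle part'' $\big|(\tfrac12\pa_Y^2 U_g(0) - U_2)Y^2 - \sum_{3\le i\le n-1}U_iY^i\big|$ is bounded by $\tfrac12\big((U_1 - \pa_Y U_g(0))|Y| + U_n|Y|^n\big)$, and similarly for $D_u$ (the only inputs being $\pa_Y^2 U_{\D_U}(0) = 2(\ell-d)$, \eqref{eq:root_ineq}, and the same coefficient bounds). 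This gives $D_g,D_u>0$ on $(-\th,0)$, hence \eqref{eq:root_pos}.

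The crux is this scalar estimate, which I would prove by setting $Y = -t\th$, $t\in(0,1)$, using $U_n\th^n = 1$, $|U_2|\les 1$ and $\th \les n^{-1}$ to bound the $Y^2$-term by $O(n^{-2})$, and estimating $\sum_{3\le i\le n-1}(|U_i|\th^i)\,t^i$ against $c\,t\,\th + t^n$ in two ranges of $i$. For $3 \le i \le n/2$, combining \eqref{eq:Um_asym_ref2} (with $l=2$, once $C_*\kp\th<2$ is checked from the lower bound on $U_n$ in \eqref{eq:Um_grow2c}) with \eqref{eq:Um_grow2a} gives $|U_i|\th^i \les n^{-2}2^{-i}$, so this range contributes $\les n^{-2}t^3$, which is absorbed by $\tfrac12 c\,t\,\th \asymp t/n$. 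For $i$ near $n$, say $i = n-k$ with $1\le k\le n/2$, I would iterate the sharp ratio asymptotics $U_n/U_{n-1} \asymp C_*\kp\,n/(\kp-n)$ from \eqref{eq:induc_asym2} $k$ times to get $|U_{n-k}|\th^{n-k} \les (k-1)!/n^k$ --- in particular a factor $\les 1/n$ below $U_n\th^n = 1$ --- so that $\sum_k (|U_{n-k}|\th^{n-k})\,t^{n-k} \les t^n\sum_k (k-1)!/(nt)^k$, which is $\les t^n/(nt) \ll t^n$ for $t \ge \tfrac12$ and exponentially small for $t<\tfrac12$. Summing the two ranges bounds the middle part by $\tfrac12(c\,t\,\th+t^n)$ uniformly in $t$ for $n$ large (treating $|Y|\les n^{-2}$ separately, where $D_g>0$ is immediate). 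The hard part will be exactly this near-$n$ regime: neither the linear term (of size $\asymp n^{-1}$) nor the leading term ($t^n$, tiny unless $1-t\les n^{-1}$) controls the near-$n$ middle coefficients on its own, so one must genuinely exploit the recursion-driven asymptotics \eqref{eq:induc_asym2} of the top coefficients --- the same input that powers the $F(-\th)$ estimate in the proof of Proposition~\ref{prop:inter_final}.
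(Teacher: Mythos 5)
Your reduction of \eqref{eq:root_pos2} to \eqref{eq:root_pos} is exactly the paper's, so I'll focus on \eqref{eq:root_pos}, where your route genuinely diverges from theirs.

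The paper first divides $\D_g$ by $Y$ and reduces to showing that $C_3 Y + \sum_{2\le i\le n}U_iY^{i-1}+c>0$; it then splits the indices into $i\le n/8$, $n/8\le i< n-n^{1/3}$, and $i\ge n-n^{1/3}$. The first two blocks are shown to be $o_n(1)$ directly via \eqref{eq:Um_asym_ref2} and \eqref{eq:Um_grow2a}. The last block is handled by a two-regime argument on $|Y|$: when $|Y|\le n^{-4/3}$ a crude $(Cn|Y|)^{j-1}$ bound kills everything, and when $|Y|>n^{-4/3}$ the paper exploits the alternating sign structure of consecutive monomials, pairing $U_{2j}Y^{2j-1}+U_{2j+1}Y^{2j}$ and showing that the positive term wins because $U_{2j+1}/U_{2j}\gtrsim n^{5/3}$ by \eqref{eq:induc_asym2}. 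This pairing trick is the crux of the paper's proof; it never needs to bound the full near-$n$ tail sum.

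Your plan avoids the pairing entirely and instead tries to bound the entire middle part $\sum_{3\le i\le n-1}U_i|Y|^i$ by half of (linear $+$ leading). This is a cleaner statement but lives or dies on the estimate $|U_{n-k}|\th^{n-k}\les(k-1)!/n^k$. As written, that bound is slightly too optimistic: iterating \eqref{eq:induc_asym2} $k$ times gives $U_n/U_{n-k}\gtrsim((1-\d)C_*\kp)^k\,\binom{n}{k}/(\kp-n)$, and multiplying by $U_n^{k/n}\gtrsim(\kp-n)^{-k/n}((1-\d)C_*n)^k$ (from \eqref{eq:Um_grow2c}) makes the $(1-\d)^k$ and $(\kp-n)$ factors cancel to an $O(1)$ constant, yielding
$U_{n-k}\th^{n-k}\les\binom{n}{k}^{-1}$ — which for $k\ll n$ is $\asymp k!/n^k$, a factor $\asymp k$ weaker than your claim. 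Fortunately the weaker bound is still enough: $\sum_{k\ge1}t^{n-k}\binom{n}{k}^{-1}=t^n\sum_k t^{-k}\binom{n}{k}^{-1}\les t^n/(nt)$ for $t\ge\tfrac12$ (the first term dominates a geometric-type tail), and for $c/n\le t<\tfrac12$ the prefactor $t^{n/2}\le2^{-n/2}$ makes the sum exponentially small next to $ct\th\asymp ct/n$. So the strategy does close, provided you prove the correct rate $\binom{n}{k}^{-1}$ rather than asserting $(k-1)!/n^k$, and provided you note that the iteration of \eqref{eq:induc_asym2} only runs down to $m=n_2$, with the remaining few low-order $U_m$ handled trivially by $U_m\th^m\les n^{-m}$ using $\th\les n^{-1}$. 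You've correctly identified the near-$n$ regime as the hard part — but the paper resolves that regime via the sign-cancellation pairing, not by summing, while you resolve it by summing with the precise binomial rate. Both work; the paper's version localizes the difficulty to a two-term comparison at each step, while yours leans harder on Lemma~\ref{lem:asym_refine}'s multiplicative structure and avoids the $n^{-4/3}$ threshold entirely.

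Two minor further remarks: (i) for $n/8<i\le n/2$, \eqref{eq:Um_asym_ref2} no longer applies, so the bound $|U_i|\th^i\les n^{-2}2^{-i}$ you state there should instead read $|U_i|\th^i\les2^{-i}$ from \eqref{eq:Um_grow2a} — still far below $n^{-2}$ once $i\ge n/8$, so the conclusion is unharmed, but the bookkeeping as written is imprecise. (ii) The observation that for $|Y|\les n^{-2}$ positivity is immediate from \eqref{eq:root_ineq} needs the quantitative input $|U_i|\les(Cn)^i$ from \eqref{eq:Um_asym_ref1}, which you implicitly use but should cite.
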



\begin{proof}
We consider $\ell^{-1/2} < \g < 1$. Denote $J= [ - |U_n|^{- 1/n} , 0)$. 
From the definitions of $\e$ \eqref{eq:para2}, $U_g, U_{\D_U}$ \eqref{eq:root_UY}, and $G_\sn{n}$ \eqref{eq:up_bar}, we have $U_g(0) = U_{\D_U}(0) = G_\sn{n}(0) = U_0 = \e$. Thus, we only need to show
\[
 \sum_{ 1 \leq i \leq n} U_i Y^i  < \pa_Y g(0) Y + C_2 Y^2
\]
for $Y \in J$ and some absolute constant $C_2$ independent of $n$, where $g = U_g$ or $ g = U_{\D_U}$.
Since $Y < 0$, using \eqref{eq:root_ineq}, we only need to show that the inequality
\beq\label{eq:root_pos_est0}
 C_3 Y +  \sum_{ 2 \leq i \leq n} U_i Y^{i-1} + c  > 0,
\eeq
with some absolute constant $C_3$ independent of $n$ and $c > 0$ determined in \eqref{eq:root_ineq} holds for any $Y \in J$. By requiring $n$ large, we have \eqref{eq:Un_est1} and 
\[
 |Y| < |U_n|^{-1/n} < (1 + 2 \d) ( C_* \kp)^{-1}. 
\]

Since $n$ is odd and $\kp \in (n, n+1)$, using \eqref{eq:U_sign}, we get $U_n > 0$ and $ U_n Y^{n-1} > 0$. 
Below, we further bound  $C_3 Y, U_i Y^{i-1}, i\leq n-1$.

For $i \leq \f{n}{8}$, using \eqref{eq:Um_asym_ref2} with $l = 3$ and $r =  \f{1 + 2 \d}{4} < 1$, 
for $Y \in J$,  we get 
\beq\label{eq:root_pos_est1}
\bga
 |C_3 Y | + |U_2 Y| \les n^{-1},  \\
 | U_i Y^{i-1} | \leq n^{-2} + (C_* \kp)^{i-3}   C_* \kp)^{-(i-1)}
\les n^{-2}, \quad  3 \leq i\leq  \f{n}{8}.
\ega
\eeq
For  $ \f{n}{8} \leq i < n -n^{1/3} + 2$ , applying \eqref{eq:Um_grow2a} and $U_i^{1/i} \leq n$ from \eqref{eq:Um_asym_ref1}, we obtain 
\beq\label{eq:root_pos_est2}
|U_i Y^{i-1}| = (U_i Y^i)^{ (i-1)/i} | U_i^{1/i} |
\les 2^{- \min(i, n-i) \f{i-1}{i} }  n 
\les 2^{- \f{1}{2} n^{1/3} }  n , \quad   \f{n}{8} \leq i < n - n^{1/3}.
\eeq

For $ n-1 \geq j \geq n - n^{1/3}$, we estimate the cases $|Y| \leq n^{-4/3}$ and $|Y| >
n^{-4/3}$ separately. 

For $|Y| \leq n^{-4/3}$, using \eqref{eq:Um_asym_ref1} with $l = 0$ and $j \leq n-1$, we get 
\bseq
\beq\label{eq:root_pos_est31}
| U_j Y^{j-1} | \les n ( C n \cdot n^{-4/3})^{j-1} \les n^{-2}. 
\eeq
Combining \eqref{eq:root_pos_est1}-\eqref{eq:root_pos_est31} and  $U_n Y^{n-1}>0$, we prove \eqref{eq:root_pos_est0} for $n$ large enough with $|Y| \leq n^{-4/3 }$.

For $|Y| > n^{-4/3}$, we estimate $U_{2j} Y^{2j-1} + U_{2j +1} Y^{2 j}$ together 
with $n - n^{1/3} < 2j + 1\leq n$. Using \eqref{eq:induc_asym2} and \eqref{eq:U_sign}, for $n$ large enough, we get 
\beq\label{eq:root_pos_est32}
\bga
U_{2j+1} > C U_{2j} n \cdot \f{2j + 1}{\kp - 2j - 1} > C n^{5/3} U_{2j}, \\
U_{2j} Y^{2j-1} + U_{2j +1} Y^{2 j}
= |Y^{2j-1}| ( U_{2j+1} |Y| - U_{2j} ) > 
U_{2j}|Y^{2j-1}| ( C n^{5/3} n^{-4/3} - 1 ) > 0.
\ega
\eeq
\eseq
Combining \eqref{eq:root_pos_est1},\eqref{eq:root_pos_est2},\eqref{eq:root_pos_est32}, we prove \eqref{eq:root_pos_est0} with $ n^{-4/3} \leq |Y| \leq |U_n|^{-1/n}$. 

The inequalities in \eqref{eq:root_pos2} follow from \eqref{eq:inter_U_root}, \eqref{eq:root_pos} and 
$|Y_I^{\pr}| < |Y_I| < U_n^{1/n}$ proved in Propositions \ref{prop:inter_final}, \ref{prop:inter_U_root}. 
\end{proof}

We have the following estimate of $\D_Y(Y, U(Y))$ and $U(Y)$ near $Y_I^{\pr}$. 
\begin{lem}\label{lem:DelY}
There exists constant $C_U> 0$ such that 
\[
|\D_Y(Y, U(Y)) | \geq C_U |Y- Y_I^{\pr}|^{1/2} 
\]
for any $Y \in [ Y_I^{\pr}, Y_I^{\pr}/2]$. Moreover, we have $ U(Y) \geq C_U $ for any $Y \in [Y_I^{\pr}, 0]$.
\end{lem}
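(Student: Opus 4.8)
\textbf{Proof proposal for Lemma \ref{lem:DelY}.}

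The plan is to read off both estimates from the structure $\D_Y(Y,U) = (dY-1)(U - U_{\D_Y}(Y))$ in \eqref{eq:root_UY} together with the relative positions of the three curves $U_{\D_Y}$, $U$, and $\GG_\sn{n}$ established in Propositions \ref{prop:inter_final} and \ref{prop:inter_U_root}. Throughout, $Y$ ranges over the interval $[Y_I^{\pr}, Y_I^{\pr}/2]$, which sits inside $(-2(C_*\kp)^{-1}, 0)$ for $n$ large; hence $dY - 1$ is bounded away from $0$ (it equals $-1 + O(\kp^{-1})$), and the sign and size of $\D_Y(Y,U(Y))$ are governed entirely by the factor $U(Y) - U_{\D_Y}(Y)$. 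Since $U(Y_I^{\pr}) = U_{\D_Y}(Y_I^{\pr})$ by the definition of $Y_I^{\pr}$ in Proposition \ref{prop:inter_U_root}, this factor vanishes at the left endpoint, so the claimed $|Y - Y_I^{\pr}|^{1/2}$ lower bound is a statement about how fast $U - U_{\D_Y}$ separates from zero as $Y$ moves left (recall $Y < 0$, so $Y \in [Y_I^{\pr}, Y_I^{\pr}/2]$ means $Y \geq Y_I^{\pr}$, i.e.\ we move rightward away from $Y_I^{\pr}$), and by \eqref{eq:inter_U_root} we already know $U(Y) > U_{\D_Y}(Y)$ strictly for $Y \in (Y_I^{\pr}, 0)$.

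First I would obtain the quantitative separation. Consider $h(Y) \teq U(Y) - U_{\D_Y}(Y)$. On $(Y_I^{\pr},0)$ we have $h > 0$, and the ODE \eqref{eq:UY_ODE} gives $U'(Y) = \D_U(Y,U)/\D_Y(Y,U)$, so along the solution curve
\[
h'(Y) = \frac{\D_U(Y,U(Y))}{\D_Y(Y,U(Y))} - U_{\D_Y}'(Y) = \frac{\D_U(Y,U(Y))}{(dY-1)\,h(Y)} - U_{\D_Y}'(Y).
\]
Near $Y_I^{\pr}$ the numerator $\D_U(Y,U)$ is bounded and bounded away from $0$: indeed by \eqref{eq:root_pos2} and \eqref{eq:root_UY}, $U(Y) < U_{\D_U}(Y)$, so $U - U_{\D_U}(Y) < 0$, hence $\D_U = 2U(U - U_{\D_U}(Y)) < 0$ and $|\D_U| \asymp 1$ there (using $U(Y) \asymp 1$, which I establish in the next paragraph, and $U_{\D_U}(Y_I^{\pr}) - U(Y_I^{\pr}) > c > 0$ from the limiting behaviour $\pa_Y U(0) - \pa_Y U_{\D_U}(0) > c$ in \eqref{eq:root_ineq} integrated over the short interval $[Y_I^{\pr},0]$ of length $O(\kp^{-1})$). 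Since $dY - 1 = -1 + O(\kp^{-1})$, we get $h(Y)\,h'(Y) = -\tfrac12 (h^2)'(Y) \geq c_0 > 0$ for $Y$ in a fixed-size subinterval to the right of $Y_I^{\pr}$ (the $U_{\D_Y}'$ term is bounded and absorbed since $h \to 0$). Integrating from $Y_I^{\pr}$, where $h = 0$, yields $h(Y)^2 \geq 2 c_0 (Y - Y_I^{\pr})$, hence $h(Y) \geq \sqrt{2c_0}\,|Y - Y_I^{\pr}|^{1/2}$, and therefore $|\D_Y(Y,U(Y))| = |dY-1|\,h(Y) \geq C_U |Y - Y_I^{\pr}|^{1/2}$ on $[Y_I^{\pr}, Y_I^{\pr}/2]$ (after shrinking to ensure we stay in the good subinterval, which is harmless since $|Y_I^{\pr}| = O(\kp^{-1})$ is already tiny).

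For the lower bound $U(Y) \geq C_U$ on $[Y_I^{\pr},0]$: by \eqref{eq:inter_U_root} we have $U(Y) > U_{\D_Y}(Y)$ on $(Y_I^{\pr},0)$, and $U_{\D_Y}(0) = U_0 = \e$, but $\e \to 0$ as $\g \to \ell^{-1/2}$, so this alone is not enough — I need a bound uniform in $\kp$. Instead I would use $U_{\D_Y}(Y) = -\tfrac{(Y-1)f(Y)}{dY-1}$ from \eqref{eq:root_UY}: writing $f(Y) = -\e - AY + BY^2$, at $Y_I^{\pr} = O(\kp^{-1})$ we have $f(Y_I^{\pr}) = -\e - A Y_I^{\pr} + O(\kp^{-2})$. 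Here $A = d+1-(d-1-2\ell)\g$ is bounded away from $0$ (Lemma \ref{lem:mono} shows $A > 0$, and at $\g = \ell^{-1/2}$ with $d=4,\ell=5/3$ one computes $A = 5 - (1/3)\cdot 3^{1/2}\cdot(5/3)^{1/2}\cdot\ldots$; in any case $A \gtr 1$), and $Y_I^{\pr} < 0$, so $-A Y_I^{\pr} > 0$ of size $\asymp |Y_I^{\pr}|$; however $|Y_I^{\pr}| \asymp \kp^{-1}$ is small, so I cannot conclude a fixed lower bound at the endpoint this way either. The cleanest route: on $[Y_I^{\pr},0]$ the solution has $\D_Y(Y,U(Y)) < 0$ (Proposition \ref{prop:inter_U_root}) and $U(Y) > 0$; the relevant fixed lower bound comes from noting that $U(0) = U_0 = \e$ is small but $U$ \emph{increases} as $Y$ decreases from $0$ because $U'(Y) = \D_U/\D_Y$ and on this interval $\D_U < 0$, $\D_Y < 0$, so $U' > 0$, meaning $U$ is decreasing in $Y$, i.e.\ increasing as we go left toward $Y_I^{\pr}$ — wait, this gives $U(Y_I^{\pr})$ is the \emph{largest} value, not a lower bound at $Y=0$. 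I therefore expect the main obstacle to be pinning down a $\kp$-uniform constant $C_U$: the honest fix is to establish $U(Y_I^{\pr}) = U_{\D_Y}(Y_I^{\pr}) \gtr 1$ uniformly, which follows by combining $Y_I^{\pr} > -\g$ (Proposition \ref{prop:inter_U_root}) with a lower bound $U_{\D_Y}(Y) \gtr |Y|$ for $Y \in (-\g, -c\kp^{-1})$ — but since $|Y_I^{\pr}| \asymp \kp^{-1} \to 0$, even this degenerates. Consequently I suspect the correct reading is that $C_U$ is \emph{allowed} to depend on $\kp$ (equivalently on $n$) through a power of $|\kp - n|$ or $\kp^{-1}$, matching the pattern of Lemma \ref{lem:asym_refine}; the intended statement is likely $U(Y) \geq C_U$ with $C_U = C_U(n)$, and $|\D_Y| \geq C_U(n)|Y-Y_I^{\pr}|^{1/2}$, obtained by the integration argument above with $c_0 \asymp U(Y_I^{\pr}) \cdot \min_Y|U_{\D_U}(Y) - U(Y)| \gtr \kp^{-2}$ or similar. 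I would resolve this by checking how Lemma \ref{lem:DelY} is used downstream (in the desingularized-ODE construction of Section \ref{sec:Q_lower}) to calibrate exactly which dependence on $n$ is needed, then carry out the quadratic-separation integration with that constant tracked explicitly.
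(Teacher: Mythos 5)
Your argument for the first half is essentially the paper's: you introduce $h=U-U_{\D_Y}$ with $\D_Y=(dY-1)h$, observe that $h^2$ has a derivative bounded away from $0$ near $Y_I'$ (the paper works directly with $(\D_Y^2)'$, which differs from $(h^2)'$ by the harmless factor $(dY-1)^2$), and integrate from $Y_I'$ where the quantity vanishes. The identity you wrote, $h h' = -\tfrac12(h^2)'$, has a stray sign — it is $+\tfrac12(h^2)'$ — but your subsequent integration is consistent with the correct sign, so this is a typo rather than an error. Incidentally, the paper's own proof contains a typo in the opposite direction, writing $\D_Y^2 \geq C_U(Y-Y_I')^{1/2}$ where $(Y-Y_I')$ is meant; the stated conclusion $|\D_Y|\gtrsim |Y-Y_I'|^{1/2}$ requires the latter, and your version has the exponents right.

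The second half of your proposal, however, has a genuine gap. You never actually prove $U(Y)\geq C_U$ on $[Y_I',0]$; instead you run through several candidate routes, observe that each one seems to degenerate, and settle on the meta-conclusion that $C_U$ must be allowed to depend on $\kp$. Two things go wrong along the way. First, you misread the monotonicity: on $(Y_I',0)$ both $\D_U<0$ and $\D_Y<0$, so $U'=\D_U/\D_Y>0$, hence $U$ is \emph{increasing} in $Y$ and attains its \emph{minimum} at $Y_I'$ — the opposite of what you wrote (``$U(Y_I')$ is the largest value''), which derailed the argument at the moment it was about to close. Second, the worry about $\kp$-uniformity is a red herring: the lemma only asserts the existence of some $C_U>0$, and the two downstream uses (integrability of $1/\D_Y$ to define $\eta$, and positivity of $U(Y_I'/2)$ to initialize the desingularized ODE in \eqref{eq:ODE_ds_init}) need the constant to be positive for the fixed $\kp_n$ under consideration, not uniform in $n$. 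The paper's clean route, which you should supply, is: the first-half bound gives $|\D_U/(U\,\D_Y)|\lesssim |Y-Y_I'|^{-1/2}$ near $Y_I'$, which is integrable, so $\bigl|\log U(Y_I')-\log U(Y_I'/2)\bigr|<\infty$ and hence $U(Y_I')\geq c\,U(Y_I'/2)>0$; then either the monotonicity $U'>0$ directly, or (as the paper phrases it) $U(Y)\geq U_{\D_Y}(Y)\geq U_{\D_Y}(Y_I')=U(Y_I')$ via Lemma~\ref{lem:mono}, gives $U(Y)\geq U(Y_I')>0$ on all of $[Y_I',0)$.
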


\begin{proof}

Firstly, we have
\[
  \f{d}{d Y} \D_Y^2(Y, U(Y))
 = 2 \D_Y  \B( \pa_Y \D_Y + \pa_U \D_Y \f{d U(Y)}{d Y}  \B)
 = 2 \D_Y \cdot  \pa_Y \D_Y + 2 \D_U \cdot  \pa_U \D_Y.
\]
Using $\pa_U \D_Y = d Y - 1 $ \eqref{eq:ODE}, $ 
\D_Y( Y_I^{\pr}, U(Y_I^{\pr})) = 0 $ from Proposition \ref{prop:inter_U_root} and the formula of $\D_Y$ in \eqref{eq:root_UY}, $\D_U( Y_I^{\pr}, U(Y_I^{\pr})) < 0 $ from \eqref{eq:root_pos2} and \eqref{eq:root_UY}, and the continuity of $U(Y)$ on $[Y_I^{\pr}, 0]$, we get
\[
\lim_{Y \to (Y_I^{\pr})^+}   \f{d}{d Y} \D_Y^2(Y, U(Y)) = 
\lim_{Y \to (Y_I^{\pr})^+}   2 \D_U \cdot  \pa_U \D_Y = 
2  \D_U( Y_I^{\pr}, U(Y_I^{\pr})) (d Y_I^{\pr} - 1) > 0. 
\] 

Since $\D_Y( Y_I^{\pr}, U(Y_I^{\pr})) = 0$ and $\D_Y( Y, U(Y)) \neq 0$ for any $Y \in (Y_I^{\pr}, 0)$,  we obtain 
\[
\D_Y^2(Y, U(Y)) \geq  C_U ( Y- Y_I^{\pr})^{1/2},
\quad \Rightarrow  \quad  |( \D_Y(Y, U(Y)) )^{-1}  | \les C_U |Y- Y_I^{\pr}|^{-1/2},
\]
for any $Y \in [Y_I^{\pr}, Y_I^{\pr}/2]$ and some $C_U > 0$. Since $(Y, U(Y))$ is continuous on $Y \in [Y_I^{\pr}, 0]$, we have 
$|Y, U(Y)| \les D_U$ for some constant $D_U$. Using the ODE \eqref{eq:UY_ODE}, we obtain 
\[
 \B| \f{\D_U}{ U \cdot \D_Y } \B| \les C_U |Y- Y_I^{\pr}|^{-1/2}, \quad \mw{for} \ 
 Y \in (Y_I^{\pr}, Y_I^{\pr}/2 ],
\]
for some constant $C_U >0$. Since $|Y- Y_I^{\pr}|^{-1/2}$ is integrable, we obtain $U(Y_I^{\pr}) \geq  C_{U,2} U(Y_I^{\pr}/2) > 0$ for some $C_{U,2} >0$. Since $U_{\D_Y}(Y)$ is increasing in $Y <0$ (see Lemma \ref{lem:mono}), using \eqref{eq:inter_U_root}, we get $U(Y) \geq U_{\D_Y}(Y) \geq U_{\D_Y}(Y_I^{\pr}) = U(Y_I^{\pr}) >0 $ for any $Y \in [ Y_I^{\pr}, 0)$, we conclude the proof.
\end{proof}


\subsection{Extension across $Y=0$}

In this section, we show in Proposition \ref{prop:ds_ODE} that after $U(Y)$ crosses $U_{\D_Y}(Y)$, we can smoothly extend the solution curve $(Y, U(Y))$ slightly beyond $Y = 0$. See Figure \ref{fig:coordinate2} for an illustration. In Section \ref{sec:proof}, we will use Proposition \ref{prop:ds_ODE} to prove Theorem \ref{thm:main}.

We introduce the triangle region below $\min( U_{\D_Y}(Y), U_g(Y))$
\beq\label{eq:dom_tri}
\bal
& \Om_{\mw{tri}, 1} \teq \{ (Y, U) : -\g < Y < 0,  \ 0<  U < \min(  U_g(Y), U_{\D_Y}(Y)) ,  
\  U_{\D_Y}(Y)  < U  \}, \\
& \Om_{\mw{tri}, 2} \teq \{ (Y, U) : -\g < Y < 0, \ 0 < U < \min(  U_g(Y),  U_{\D_Y}(Y)),
\   U < U_{\D_Y}(Y) \}.
\eal
\eeq
From the definition of $\Om_{\mw{tri}, i}$ in \eqref{eq:dom_tri} and \eqref{eq:root_UY}, we get 
\beq\label{eq:dom_tri_sign}
\bal
\D_U(Q) &< 0, \ \forall Q \in \Om_{\mw{tri}, 1} , \ \Om_{\mw{tri}, 2}, \\
 \D_Y(Q)  &< 0,  \ \forall Q \in \Om_{\mw{tri}, 1}, \quad 
 \D_Y(Q)  > 0, \ \forall Q \in \Om_{\mw{tri}, 2}. 
\eal
\eeq

Since the ODE becomes singular on $(Y, U) = (Y, U_{\D_Y(Y)})$, we desingularize the ODE \eqref{eq:UY_ODE} 
\bseq\label{eq:ODE_ds}
\beq
  \f{d U_\ds}{d \xi} = \D_U(Y_\ds(\xi), U_\ds(\xi)),  \quad  \f{d Y_{\ds} }{d \xi} = \D_Y(Y_\ds(\xi), U_\ds(\xi)) ,
\eeq
where 
\textit{ds} is short for desingularized. 
Let $Y_I^{\pr} \in (-\g, 0) $ be the intersection determined in Proposition \ref{prop:inter_U_root}. 
Using estimates of $U(Y)$ in \eqref{eq:inter_U_root},\eqref{eq:root_pos2}, we can pick the initial data as
\beq\label{eq:ODE_ds_init}
 ( Y_\ds, U_\ds)(0) =  ( Y_I^{\pr} / 2,  U( Y_I^{\pr} / 2) ), \quad ( Y_\ds, U_\ds)(0) \in \Om_{\mw{tri}, 1}. 
\eeq
\eseq
Define 
\beq\label{eq:Q_ds}
Q_{\ds}(\xi) = (Y_{\ds}(\xi), U_{\ds}(\xi) ). 
\eeq

 Since $\D_Y(Y, U(Y)) < 0$ for $Y \in (Y_I^{\pr}, 0)$, we define a map $\eta(Y)$ 
\beq\label{eq:ODE_ds_map}
 \f{ d \eta(Y)}{ d Y } = \f{1}{\D_Y( Y, U(Y))} < 0,
 \quad  \eta( \f{Y_I^{\pr}}{2} ) = 0.
\eeq
We have $\eta(Y)  \in C^{\infty}(Y_I^{\pr}, 0)$ and it is a bijection. For the desingularized ODE, we show:

\begin{prop}\label{prop:ds_ODE}

There exists $\xi_1, \xi_2 , \xi_3, \d_5 >0$ with $0< \xi_1< \xi_1 + \d_5 < \xi_2 < \xi_3$ such that:  \

(a) The solution $(Y_{\ds}, U_{\ds})$ can be obtained from  $(Y, U(Y))$ by a reparametrization.
\beq\label{eq:ODE_equiv2}
(Y_{\ds}, U_{\ds})(\xi) = 
 (\eta^{-1}(\xi), U( \eta^{-1}(\xi))),  \quad \mw{for} \quad 
\xi \in ( 0, \xi_1 ] ,\quad  \xi_1 =  \eta(Y_I^{\pr}) ;
\eeq

(b) For any $\xi \in [0, \xi_1]$, we have 
\[
0 < U_{\ds}(\xi) < \min( U_g( Y_{\ds}(\xi)), U_{\D_U}(Y_{\ds}(\xi) ) ), 
\quad Y_\ds(\xi) > -\g.
\]

(c) We have $ (Y_{\ds}(\xi), U_\ds(\xi) ) \in \Om_{\mw{tri}, 2}$ for any $\xi \in (\xi_1, \xi_2)$ and $Y_{\ds}(\xi_2) = 0$;

(d) For any $\xi \in (\xi_2, \xi_3]$, we have $0 < Y_{\ds}(\xi) < 1$. For any $\xi \in [\xi_2,  \xi_3]$ (including $\xi_2$), we have
\[
 0< U_\ds(\xi) < \min( U_{\D_Y}(\cdot ), U_{\D_U}(\cdot ), U_g(\cdot))(Y_{\ds}(\xi))  .
 \]

 (e) In particular, for any $\xi \in [0, \xi_3]$, we have 
 \[
  0 < U_{\ds}(\xi) < \min( U_g( Y_{\ds}(\xi)), U_{\D_U}(Y_{\ds}(\xi)) ),
  \quad 
   Y_\ds(\xi) > -\g .
 \]

\end{prop}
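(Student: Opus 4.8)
The plan is to prove Proposition \ref{prop:ds_ODE} by following the solution curve $Q_\ds(\xi)$ of the desingularized ODE \eqref{eq:ODE_ds} through a sequence of parameter intervals, using the sign information on $\D_U,\D_Y$ recorded in \eqref{eq:dom_tri_sign} together with the barrier positions established in Propositions \ref{prop:inter_U_root}, \ref{prop:root_pos} and Lemma \ref{lem:DelY}. For part (a), the key point is that $\eta(Y)$ from \eqref{eq:ODE_ds_map} is a decreasing $C^\infty$ bijection from $(Y_I^\pr,0)$ onto some interval; since $(Y,U(Y))$ solves \eqref{eq:UY_ODE}, the reparametrized curve $(\eta^{-1}(\xi),U(\eta^{-1}(\xi)))$ satisfies \eqref{eq:ODE_ds} by the chain rule, so by uniqueness of ODE solutions with the matching initial data \eqref{eq:ODE_ds_init} it coincides with $(Y_\ds,U_\ds)$ on $(0,\xi_1]$, where $\xi_1=\eta(Y_I^\pr)$ is finite precisely because $|\D_Y(Y,U(Y))|^{-1}\les|Y-Y_I^\pr|^{-1/2}$ is integrable at $Y_I^\pr$ by Lemma \ref{lem:DelY}; one must also check the limit $\xi\to\xi_1^-$ is $(Y_I^\pr,U(Y_I^\pr))\in\partial\Om_{\mathrm{tri},1}$ with $\D_Y=0$ there.

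For part (b), on $[0,\xi_1]$ I would transfer the inequalities $0<U(Y)<\min(U_g,U_{\D_U})(Y)$ on $Y\in(Y_I^\pr,0)$ — which are \eqref{eq:inter_U_root} and \eqref{eq:root_pos2} combined with the lower bound $U(Y)\geq C_U>0$ from Lemma \ref{lem:DelY} — through the reparametrization of part (a), together with $Y_\ds(\xi)=\eta^{-1}(\xi)\in(Y_I^\pr,Y_I^\pr/2]\subset(-\g,0)$. For part (c), at $\xi=\xi_1$ the curve sits at $(Y_I^\pr,U_{\D_Y}(Y_I^\pr))$ on the common boundary of $\Om_{\mathrm{tri},1}$ and $\Om_{\mathrm{tri},2}$; since $\D_Y=0$ and $\D_U<0$ there, the curve moves into $\Om_{\mathrm{tri},2}$ (here $U_\ds$ strictly decreasing, $Y_\ds$ strictly increasing). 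I would then run a standard trapping argument: while $Q_\ds\in\Om_{\mathrm{tri},2}$, $Y_\ds$ increases with speed $\D_Y>0$ bounded below away from the vertical edge, so $Y_\ds$ reaches $0$ in finite time $\xi_2$; the curve cannot exit $\Om_{\mathrm{tri},2}$ through the top edge $U=U_{\D_Y}(Y)$ because there $\D_Y=0$ while $U_\ds'=\D_U<0$ points inward (a Nagumo-type boundary check, also needed on $U=U_g$ and $U=0$), so it must exit only across $Y=0$. Part (d) is the continuation just past $Y=0$: for $Y>0$ small we have $dY-1<0$ still, and I'd again use the sign of $\D_U<0$ below the three roots $U_{\D_Y},U_{\D_U},U_g$ — all of which are positive and bounded below near $Y=0$ since $U_{\D_Y}(0)=U_{\D_U}(0)=U_g(0)=U_0=\e>0$ — to trap $U_\ds$ in $(0,\min(U_{\D_Y},U_{\D_U},U_g)(Y_\ds))$ on a short interval $[\xi_2,\xi_3]$, choosing $\xi_3$ small enough that $Y_\ds<1$. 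Part (e) is then just the conjunction of (b), (c), (d), noting $\min(U_g,U_{\D_U})$ dominates $\min$ over all three roots.

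The main obstacle I anticipate is the careful boundary analysis at the corner/transition points — especially verifying that the trajectory genuinely crosses from $\Om_{\mathrm{tri},1}$ into $\Om_{\mathrm{tri},2}$ at $\xi_1$ (the ODE is regular for the desingularized system, but $\eta$ degenerates, so one must argue with $Q_\ds$ directly rather than through the reparametrization past $\xi_1$) and that it does not re-cross $U=U_{\D_Y}(Y)$ upward inside $\Om_{\mathrm{tri},2}$. The inward-pointing check on each bounding curve requires knowing the sign of $\frac{d}{d\xi}\bigl(U_\ds(\xi)-g(Y_\ds(\xi))\bigr)=\D_U-g'(Y_\ds)\D_Y$ on the relevant edge, which for the edge $g=U_{\D_Y}$ reduces to $\D_U\cdot(\text{sign})<0$ since $\D_Y=0$ there, but for the edges $g=U_g$ and $g=U_{\D_U}$ needs the monotonicity $g'>0$ from Lemma \ref{lem:mono} together with $\D_Y<0$ (in $\Om_{\mathrm{tri},1}$) or $\D_Y>0$ (in $\Om_{\mathrm{tri},2}$ and for $Y>0$) — so the detailed bookkeeping of which region we are in at each stage, and that the finitely many exit times $\xi_1<\xi_2<\xi_3$ are well-ordered with a gap $\d_5$, is where the real work lies. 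A secondary technical point is confirming that $\xi_2<\infty$, i.e. that $Y_\ds$ does not asymptote to some $Y^*<0$; this follows because $\D_Y$ is bounded below by a positive constant on any compact subregion of $\Om_{\mathrm{tri},2}$ away from the top edge, and $U_\ds$ is trapped, so the trajectory stays in such a compact set.
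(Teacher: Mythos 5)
Your proposal is correct and follows essentially the same route as the paper: reparametrize via $\eta$ using the integrability from Lemma \ref{lem:DelY} and ODE uniqueness for (a)–(b), observe that at $\xi_1$ one has $\D_Y=0$ and $\D_U<0$ so the trajectory enters $\Om_{\mathrm{tri},2}$, and trap the trajectory in $\Om_{\mathrm{tri},2}$ until it exits through $Y=0$. One place to be careful: the claim that ``$Y_\ds$ increases with speed $\D_Y>0$ bounded below'' is not automatic from the Nagumo-type boundary check alone, since $\D_Y$ degenerates on the edge $U=U_{\D_Y}(Y)$ and a mere non-crossing statement does not preclude the trajectory from asymptotically approaching that edge as $Y_\ds\to Y^*<0$. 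What saves you — and is the mechanism the paper uses — is the quantitative monotonicity you correctly identify in your last paragraph: in $\Om_{\mathrm{tri},2}$, $\frac{d}{d\xi}\big(U_\ds - g(Y_\ds)\big)=\D_U-g'(Y_\ds)\D_Y<0$ with $g'>0$ from Lemma \ref{lem:mono} and $\D_U<0$, $\D_Y>0$, so $U_\ds-U_{\D_Y}(Y_\ds)$ strictly decreases past the value at $\xi_1+\d_5$, giving a fixed gap $c>0$ below the top edge and hence a uniform lower bound for $\D_Y$ and finiteness of $\xi_2$. You should also keep $U_\ds$ bounded away from $0$ (so that the bottom edge $U=0$ is not reached), which follows from integrating the linear lower bound $\D_U\geq -CU_\ds$ over the finite interval, as in \eqref{eq:exit_xi2_a}. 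Part (d) can be obtained by plain continuity from the strict inequalities at $\xi_2$ and $Y'(\xi_2)=\D_Y>0$; the extra trapping you sketch is sound but unnecessary.
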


\subsubsection{Estimates of $\eta$}
We define $\xi_1 = \eta(Y_I^{\pr})$. Since $\f{1}{\D_Y( Y, U(Y))}$ is integrable near $Y = Y_I^{\pr}$ from Lemma \ref{lem:DelY}, 
using the equation of $\eta$ \eqref{eq:ODE_ds_map}, we obtain that $\eta(Y_I^{\pr})$ is bounded.

Using the ODE \eqref{eq:UY_ODE}, the definition of $\eta$ \eqref{eq:ODE_ds_map}, and the chain rule, we obtain that the ODE system of $(\eta^{-1}(\xi), U( \eta^{-1}(\xi)))$ in $\xi$ is the same as \eqref{eq:ODE_ds}. Moreover, two ODEs have the same initial condition at $\xi = 0$ due to $\eta^{-1}(0) = Y_I^{\pr}/2$ (see \eqref{eq:ODE_ds_map}) and \eqref{eq:ODE_ds_init}. Using the uniqueness of the ODE solution and continuity, we prove \eqref{eq:ODE_equiv2}.

Using the relation \eqref{eq:ODE_equiv2} and the estimate of $(Y, U(Y))$ with $Y \in (Y_I^{\pr},0)$ in \eqref{eq:root_pos2}, $Y_\ds(\xi)\geq Y_I^{\pr}>-\g$, and Proposition \ref{prop:inter_U_root}, we obtain 
\beq\label{eq:ODE_van}
Q_{\ds}(\xi) \in \Om_{\mw{tri},1}, \  \forall \xi \in [0, \xi_1), 
\quad \D_Y( Y_{\ds}(\xi_1), U_{\ds}(\xi_1 )) = 0 ,
\quad \D_U( Y_{\ds}(\xi_1), U_{\ds}(\xi_1 )) < 0.
\eeq


Using \eqref{eq:root_pos2} with $Y \in [Y_I^{\pr}, 0)$ (\textit{including $Y_I^{\pr}$}), $Y_I^{\pr} > -\g$ from Proposition \ref{prop:inter_U_root}, $U(Y) > 0$ for $Y \in [Y_I^{\pr}, 0]$ from Lemma \ref{lem:DelY}, and the relation \eqref{eq:ODE_equiv2}, we prove result (b) in Proposition \ref{prop:ds_ODE}.


\subsubsection{Solving the desingularized ODE}

Next, we solve the ODE \eqref{eq:ODE_ds} for $\xi \geq \xi_1$ and prove item (c), (d), (e) in Proposition \ref{prop:ds_ODE}. We consider
\[
F(\xi) = U_{\D_Y}( Y_\ds(\xi)) - U_{\ds}(\xi).  
\]
Using \eqref{eq:ODE_van}, we get $F(\xi) < 0 $ for $\xi \in (0, \xi_1)$ and $F(\xi_1) = 0$. Moreover, using the ODE \eqref{eq:ODE_ds}, we compute 
\[
F^{\pr}(\xi_1)=  U^{\pr}_{\D_Y}(Y_{\ds}(\xi_1)) \cdot \D_Y(Q_{\ds}(Y(\xi_1)))
 - \D_U(Q_{\ds}(Y(\xi_1))) =  - \D_U(Q_{\ds}(Y(\xi_1))) > 0.
\]
We obtain the last inequality using the strict inequalities in result (a) in Proposition \ref{prop:ds_ODE} at $\xi = \xi_1$ and \eqref{eq:root_UY}. Using continuity, for $\xi \in (\xi_1, \xi_1 + \d_5]$ with small $\d_5>0$, we obtain 
\beq\label{eq:Om2_enter}
F(\xi) > 0, \quad Q_{\ds}(\xi) \in \Om_{\mw{tri}, 2} .
\eeq

\vspace{0.1in}
\paragraph{\bf{Intersect $Y = 0$}}

Next, we  show that $Q_{\ds}(\xi)$ intersects the curve $Y = 0$. 
Due to the sign conditions \eqref{eq:dom_tri_sign}, starting at 
$Q_{\ds}(\xi_1)$,
we know that if $Q_{\ds}(\xi) \in \Om_{\mw{tri}, 2}$, we have
\beq\label{eq:exit_mono}
    \f{d}{d \xi} U_{\ds} ( \xi ) =   \D_U( Q_{\ds}(\xi)) < 0,  \quad   \f{d}{d \xi} Y_{\ds} ( \xi ) 
    = \D_Y( Q_{\ds}(\xi)) > 0 .
\eeq

Due to continuity and \eqref{eq:Om2_enter}, we get $Q_{\ds}(\xi) \in \Om_{\mw{tri}, 2}$ for $\xi \in (\xi_1 ,  \xi_2)$ with some $\xi_2 > \xi_1 + \d_5 $. We assume that $\xi_2$ is maximal value such that 
$Q_{\ds}(\xi) \in \Om_{\mw{tri}, 2}$ for all $\xi \in (\xi_1 ,  \xi_2)$ and $\xi_2$ is allowed to be $\infty$. Below, we show that $\xi_2  < \infty$ and $Y(\xi_2) = 0$. 

Using the monotonicity \eqref{eq:exit_mono}, \eqref{eq:ODE_equiv2} with $\xi = \xi_1$, and $Y_I^{\pr} > -\g$ from Proposition \ref{prop:inter_U_root}, we get 
\beq\label{eq:exit_Y_low}
 0> Y_\ds(\xi ) > Y_{\ds}(\xi_1) = Y_I^{\pr} > - \g , 
 \quad  0< U_\ds(\xi) <  U_\ds(\xi_1), \quad \xi \in (\xi_1, \xi_2) .
\eeq

For $\xi \in (\xi_1 + \d_5, \xi_2)$, 
since $Q_{\ds}(\xi) \in \Om_{ \mw{tri}, 2}$ and $g(Y) =  U_{\D_Y}(Y), U_{\D_U}(Y), U_g(Y) $ is increasing in $Y$ for $ Y \in ( -\g, 0)$ from Lemma \ref{lem:mono}, using \eqref{eq:exit_mono}, we get
\beq\label{eq:exit_1}
U_\ds(\xi) - g( Y_\ds(\xi) )
\leq U_{\ds}( \xi_1 + \d_5 ) - 
 g( Y_\ds(\xi_1 + \d_5) ) < -c < 0,  
\eeq
for some constant $c > 0$ independent of $\xi$, where the last inequality follows from $Q_\ds( \xi_1 + \d_5) \in \Om_{ \mw{tri}, 2}$ \eqref{eq:dom_tri}. Using $Y_\ds(\xi) < 0< U_\ds(\xi)$ \eqref{eq:exit_Y_low}, 
\eqref{eq:exit_1}, and the uniform boundedness of $Q_\ds(\xi)$ \eqref{eq:exit_Y_low}, we estimate $\D_U, \D_Y$ in \eqref{eq:UY_ODE} as 
\bseq\label{eq:exit_xi2}
\begin{align}
\D_U(Q_{\ds}(\xi)) & = 2 U ( U + f(Y) + (d-1) Y (1 - Y))
\geq - C U_\ds(\xi) ,  \label{eq:exit_xi2_a} \\
\D_Y(Q_{\ds}(\xi) ) & = (d Y - 1) ( U - U_{\D_Y}) 
(Q_{\ds}(\xi)) > c > 0, 
\label{eq:exit_xi2_b}
\end{align}
\eseq
for all $\xi \in (\xi_1 + \d_5, \xi_2)$ and some constant $C, c > 0$ independent of $\xi$.

Due to \eqref{eq:exit_xi2_b} and $Y <0$ for any $(Y, U) \in \Om_{\mw{tri}, 2}$, the curve $Q_\ds(\xi)$ must exit $\Om_{\mw{tri},2}$ with finite $\xi_2$. Using \eqref{eq:exit_xi2_a} and the ODE of $U$ in \eqref{eq:ODE_ds}, we get $U(\xi_2) \geq U(\xi_1 + \d_5) e^{ -C(\xi_2 - \xi_1)} > 0$.
Since $Y_{\ds}, U_{\ds}$ are continuous in $\xi$, taking $\xi \to \xi_2^-$ in \eqref{eq:exit_1}, we get 
\beq\label{eq:exit_pf3}
 0 < U_\ds(\xi_2) <  \min(U_{\D_Y}( \cdot ), U_{\D_U}( \cdot ), U_g( \cdot) ) (Y_\ds(\xi_2)) .
\eeq
Using \eqref{eq:exit_pf3}, the definition of $\Om_{\mw{tri},2}$ in \eqref{eq:dom_tri}, and $U(\xi_2) > 0$, we obtain that $Q_{\ds}(\xi_2)$ can only exit  $\Om_{\mw{tri},2}$ via $Y = 0$. Thus, we prove $Y(\xi_2) = 0$ and the result (c) in Proposition \ref{prop:ds_ODE}.

\vspace{0.1in}
\paragraph{\bf{Crossing the line $Y= 0$}}

Recall that \eqref{eq:exit_xi2_b} applies to any $\xi \in (\xi_1 + \d_5, \xi_2)$. Since $U_{\ds}, Y_{\ds}$ are continuous, taking $\xi \to \xi_2$, we get 
\[
Y^{\pr}(\xi_2) =   \D_Y( Q_{\ds}(\xi_2)) > 0.
\]

Since $Y(\xi_2) = 0$, using the above estimate, \eqref{eq:exit_pf3}, and the continuities of $Y_{\ds}, U_{\ds}$, and then choosing $\xi_3>\xi_2$ with $\xi_3 -\xi_2$ small enough, we prove the result (d) in Proposition \ref{prop:ds_ODE}. 

Since $Q_\ds(\xi) \in \Om_{tri, i}$ \eqref{eq:dom_tri} implies $Y_\ds(\xi) > -\g$ and $
0 < U_\ds(\xi) < U_g( Y_\ds(\xi)), U_{\D_U}(Y_\ds(\xi))$.  Combining results (b)-(d) in Proposition \ref{prop:ds_ODE}, we prove result (e).  We complete the proof.

\section{Proof of Theorem \ref{thm:main}}\label{sec:proof}

In this section, we prove Theorem \ref{thm:main}. 

Recall $Z_{\pm}(V), Z_V(V)$ from \eqref{eq:root_mono_ZV0}, which relate to the roots of $\D_V , \D_Z$. We introduce
\beq\label{eq:dom_far}
\Om_{ \mw{far}} = \{ (Z, V):  V \in (-1, 1),  \ 
Z > Z_+(V), \ Z > Z_V(V) , Z > V \}. 
\eeq

The proof of Theorem \ref{thm:main} follows from the following two results.

\begin{prop}\label{prop:QO_Q2}
There exists $C >0$ large enough such that for any odd $n$ with $n > C$, there exists $\kp_n \in (n, n+1)$ and $\e_1 > 0$, such that the ODE \eqref{eq:ODE} admits a solution $V(Z) \in C^{\infty}([0, Z_2 ])$ with $Z_2 > Z_0$,
$V(0) = 0, \  V(Z_0) = V_0$, 
\beq\label{eq:prop_ODE1}
V(Z) \in (-1, 1) , \quad V(Z) <Z, \quad \mw{for} \quad Z \in (0, Z_2], 
\eeq
and $(Z_2, V(Z_2)) \in \Om_\mw{far}$, where $Z_0, V_0$ is defined in \eqref{eq:root_P}.

\end{prop}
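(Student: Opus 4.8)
The plan is to concatenate three smooth pieces of solution curve. The first is $V^{\rn{\kp_n}}$ on $[0, Z_0 + \e_1]$ supplied by Proposition \ref{prop:Qs_QO}, with $\kp_n \in (n,n+1)$ the parameter produced there; since $\kp_n \in (n,n+1)$, the results of Section \ref{sec:Q_lower} apply with $\kp = \kp_n$. The second piece is obtained by pushing the local analytic solution $U^{\rn{\kp_n}}(Y)$ of \eqref{eq:UY_ODE} on $(Y_I^{\pr},0)$ through the map $(\cZ,\cV)$ of \eqref{eq:UY_to_VZ}, where $Y_I^{\pr}<0$ is the intersection with $U_{\D_Y}$ from Proposition \ref{prop:inter_U_root}. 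The third piece is obtained by pushing the desingularized curve $(Y_\ds,U_\ds)(\xi)$, $\xi\in[0,\xi_3]$, from Proposition \ref{prop:ds_ODE} through $(\cZ,\cV)$; this is what carries the solution past the sonic point in $Z$ and out to the far field. The three pieces will be shown to agree on overlaps and to glue into a single $C^\infty$ solution on $[0,Z_2]$.

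First I would record the signs of $\D_Z,\D_Y,\D_V$ along the relevant part of the curve. For $Y\in(Y_I^{\pr},0)$ Proposition \ref{prop:inter_U_root} gives $U^{\rn{\kp_n}}(Y)>U_{\D_Y}(Y)$ and $dY-1<0$, hence $\D_Y(Y,U^{\rn{\kp_n}}(Y))<0$; Proposition \ref{prop:root_pos} gives $U^{\rn{\kp_n}}(Y)<U_g(Y)$, so by \eqref{eq:DZ_UY}, \eqref{eq:root_DZ} and $\cZ>0$ we get $\D_Z(\cZ,\cV)(Y,U^{\rn{\kp_n}}(Y))<0$; and a direct computation from \eqref{eq:UY_to_VZ} (or from \eqref{eq:root_mono_ZV0}) shows $\sgn \D_V(\cZ,\cV)(Y,U)=\sgn Y$ on $\cR_{YU}$. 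The same pattern persists along the desingularized curve: Proposition \ref{prop:ds_ODE}(e) gives $0<U_\ds(\xi)<U_g(Y_\ds(\xi))$ and $Y_\ds(\xi)>-\g$ for all $\xi\in[0,\xi_3]$, so $\D_Z(\cZ,\cV)(Y_\ds,U_\ds)<0$ throughout. I would also fix the sign of the scalar $m$ of \eqref{eq:Mv}: it is continuous and nonvanishing on $\cR_{YU}$, and on $\Om_B^f$, where $\D_Z,\D_Y>0$ by \eqref{eq:DelZ_sign}--\eqref{eq:DelY_sign} and $\tfrac{dZ}{dY}<0$ by \eqref{eq:dZ_dY}, \eqref{eq:glue_dZ}, one reads off $m<0$; hence $m<0$ on the connected set $\cR_{YU}$. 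Combining, $\tfrac{d}{d\xi}\cZ(Y_\ds,U_\ds)=m^{-1}\D_Z(\cZ,\cV)(Y_\ds,U_\ds)>0$ for all $\xi\in[0,\xi_3]$ (using the second identity in \eqref{eq:Mv}), so $Z_\ds(\xi)\teq\cZ(Y_\ds(\xi),U_\ds(\xi))$ is strictly increasing and smooth in $\xi$, with smooth inverse $\xi=\z(Z)$.

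Next I would assemble the solution. On $(Y_I^{\pr},0)$ the map $Z(Y)\teq\cZ(Y,U^{\rn{\kp_n}}(Y))$ has $\tfrac{dZ}{dY}=m^{-1}\D_Z/\D_Y<0$, with $Z(0^-)=\cZ(0,U_0)=Z_0$ and a finite limit $Z_{**}\teq\lim_{Y\to(Y_I^{\pr})^+}Z(Y)$ because $\tfrac{dZ}{dY}\asymp|Y-Y_I^{\pr}|^{-1/2}$ near $Y_I^{\pr}$ by Lemma \ref{lem:DelY}; inverting, $V(Z)\teq\cV(Y(Z),U^{\rn{\kp_n}}(Y(Z)))$ is a smooth solution of \eqref{eq:ODE} on $(Z_0,Z_{**})$, agreeing with $V^{\rn{\kp_n}}$ on $(Z_0,Z_0+\e_1)$ by \eqref{eq:sonic_agree} and uniqueness. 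Near $Z=Z_{**}$ I would instead use the $\xi$-parametrization: $Z_\ds(\xi_1)=\cZ(Y_I^{\pr},U^{\rn{\kp_n}}(Y_I^{\pr}))=Z_{**}$, and $Z_\ds^{\pr}\ne0$ even at $\xi_1$ (the zero of $\D_Y$ there is harmless since $\D_Z\ne0$), so $V(Z)\teq\cV(Y_\ds(\z(Z)),U_\ds(\z(Z)))$ is smooth near $Z_{**}$, solves \eqref{eq:ODE}, matches the previous formula for $Z<Z_{**}$ by Proposition \ref{prop:ds_ODE}(a), and extends the solution up to $Z_2\teq Z_\ds(\xi_3)$. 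Gluing with $V^{\rn{\kp_n}}$ on $[0,Z_0+\e_1]$ produces $V\in C^\infty([0,Z_2])$ solving \eqref{eq:ODE} with $V(0)=0$, $V(Z_0)=V_0$. To see $Z_2>Z_0$: $U_\ds$ decreases on $[\xi_1,\xi_2]$ by \eqref{eq:exit_mono}, so $U_\ds(\xi_2)<U_\ds(\xi_1)=U_{\D_Y}(Y_I^{\pr})<U_{\D_Y}(0)=U_0$ (monotonicity of $U_{\D_Y}$, Lemma \ref{lem:mono}); since $\cZ(0,\cdot)$ is decreasing when $\g<1$ and $Y_\ds(\xi_2)=0$, this gives $Z_\ds(\xi_2)=\cZ(0,U_\ds(\xi_2))>\cZ(0,U_0)=Z_0$, and $Z_2=Z_\ds(\xi_3)\geq Z_\ds(\xi_2)>Z_0$.

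Finally I would check the remaining inequalities. For $Z\in[0,Z_0+\e_1]$, $V\in(-1,1)$ and $V<Z$ come from Proposition \ref{prop:Qs_QO}; for $Z\in(Z_0,Z_2]$ the corresponding $(Y,U)$-point lies in $\cR_{YU}$ with $Y>-\g$ (Propositions \ref{prop:inter_U_root} and \ref{prop:ds_ODE}(e)), so $V=\cV\in(0,1)\subset(-1,1)$, and the equivalence ``$\cV<\cZ$ on $\cR_{YU}$ iff $Y>-\g$'' from Section \ref{sec:other_prop} gives $V<Z$. At the endpoint, set $(Y_*,U_*)=(Y_\ds(\xi_3),U_\ds(\xi_3))$; by Proposition \ref{prop:ds_ODE}(d), $Y_*\in(0,1)$ and $U_*<\min(U_{\D_Y},U_{\D_U},U_g)(Y_*)$. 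Then $V(Z_2)=\cV(Y_*,U_*)\in(0,1)$; $Z_2>V(Z_2)$ since $Y_*>-\g$; $\D_V(\cZ,\cV)(Y_*,U_*)>0$ since $Y_*>0$, so $Z_2>Z_V(V(Z_2))$ by \eqref{eq:root_mono_ZV0}; and $\D_Z(\cZ,\cV)(Y_*,U_*)<0$ since $U_*<U_g(Y_*)$, so $Z_2>Z_+(V(Z_2))$ or $Z_2<Z_-(V(Z_2))$ by \eqref{eq:root_mono_ZV0}. The latter is excluded because $\{\D_Z<0\}$ is the disjoint union of the open sets $\{Z>Z_+(V)\}$ and $\{Z<Z_-(V)\}$ (as $Z_+>Z_-$ by \eqref{eq:root_mono_ZV}), the image of the solution curve over $Z>Z_0$ lies in $\{\D_Z<0\}$ and is connected, and just past the sonic point it lies on the $\{Z>Z_+\}$ side (recall $Z_0=Z_+(V_0)$). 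Hence $(Z_2,V(Z_2))\in\Om_{\mw{far}}$. The step I expect to be the main obstacle is precisely this sign and connectedness bookkeeping: pinning down $\sgn m$, proving the finiteness of $Z_{**}$ and the genuine $C^\infty$ matching across $Z=Z_{**}$ (which forces the switch to the desingularized parametrization), and distinguishing the branch $Z>Z_+(V)$ from $Z<Z_-(V)$ at the far-field endpoint.
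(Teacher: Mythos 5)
Your proposal reproduces the paper's architecture exactly: push the local analytic $(Y,U)$-solution through $(\cZ,\cV)$ for $Y\in(Y_I^{\pr},0)$, switch to the desingularized $\xi$-parametrization to traverse the curve $\D_Y=0$ and cross $Y=0$, then glue with the piece from Proposition~\ref{prop:Qs_QO}. Three sub-steps are handled by routes that differ mildly from the paper's. (i) You fix $\sgn m<0$ once on the connected set $\cR_{YU}\setminus Q_s$ by continuity from the far-field barrier region; the paper instead re-derives $m<0$ separately along each piece of curve --- both are fine, and your version is a slight economy. (ii) For $Z_2>Z_0$, you argue via the monotonicity of $U_{\D_Y}$, the decrease of $U_\ds$ on $(\xi_1,\xi_2)$, and the fact that $\cZ(0,\cdot)$ is decreasing for $\g<1$; the paper instead simply notes $Z_{\mathrm b}(\xi)$ is strictly increasing and $Z_{\mathrm b}(0)=Z_{\mathrm a}(Y_I^{\pr}/2)>Z_{\mathrm a}(0)=Z_0$, which is shorter. (iii) To rule out $Z_2<Z_-(V_2)$ you invoke connectedness of the curve image inside $\{\D_Z<0\}=\{Z>Z_+\}\sqcup\{Z<Z_-\}$ together with the fact that the branch near the sonic point is $\{Z>Z_+\}$; the paper dispatches this with a one-line algebraic chain $Z_2>Z_V(V_2)>V_2>Z_-(V_2)$, valid since $0<V_2<1<\ell$, which avoids any topological bookkeeping. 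Also note the clean identity you implicitly used, $\D_V\circ(\cZ,\cV)=\tfrac{d-1}{\g+1}(1-\cV^2)^2\,\cZ\, Y$, so $\sgn\D_V=\sgn Y$ on $\cR_{YU}$; this is a helpful shortcut the paper does not spell out. None of these differences affects correctness, but if you are optimizing for length you should adopt the paper's algebraic exclusion in step (iii).
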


\begin{lem}\label{lem:far_extend}
Given any $(Z_2, V_2)$ with $Z_2>0$ and $(Z_2, V_2) \in \Om_{\mw{far}}$,  the ODE solution to \eqref{eq:ODE} starting at $Z = Z_2, V(Z_2) = V_2$ admits a smooth solution  $(Z, V(Z))$ for any $Z \geq Z_2$ with 
$(Z, V(Z)) \in \Om_{\mw{far}}$ and $\lim_{Z\to\infty} V(Z) = V_{\infty}$ for some $V_{\infty} \in (- 1, 1)$. 
\end{lem}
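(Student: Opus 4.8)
\textbf{Proposal for the proof of Lemma \ref{lem:far_extend}.}

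The plan is to show that $\Om_{\mw{far}}$ is forward-invariant for the $(Z,V)$-ODE \eqref{eq:ODE}, that the denominator $\D_Z$ does not vanish there, and that the monotonicity forced by the sign of $\D_V/\D_Z$ drives $V(Z)$ to a finite limit as $Z \to \infty$. First I would determine the signs of $\D_Z$ and $\D_V$ on $\Om_{\mw{far}}$. By the factorizations in \eqref{eq:root_mono_ZV0}, on the region $|V|<1$ we have $V^2-\ell<0$ and $Z>0$, so $\D_Z = Z(V^2-\ell)(Z - Z_+(V))(Z-Z_-(V))/(\text{implied constant})$; since $Z > Z_+(V) > Z_-(V)$ by \eqref{eq:root_mono_ZV} and $V^2-\ell<0$, we get $\D_Z < 0$ strictly on $\Om_{\mw{far}}$. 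Likewise $\D_V = \tfrac{d-1}{\g+1}(1-V^2)(1+\g V^2)(Z - Z_V(V))$, and on $\Om_{\mw{far}}$ we have $Z > Z_V(V)$ together with $1-V^2>0$, $1+\g V^2>0$, so $\D_V > 0$. Hence $\tfrac{dV}{dZ} = \D_V/\D_Z < 0$ on $\Om_{\mw{far}}$: the solution is strictly decreasing in $Z$.

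Next I would check invariance of $\Om_{\mw{far}}$ by examining each defining inequality along the flow. Since $V$ is decreasing and stays below its starting value $V_2<1$, the bound $V<1$ is preserved; I must rule out $V \to -1$, which I address below via the limit argument. For the constraint $Z > Z_V(V)$: as $Z$ increases, $Z_V(V(Z))$ changes at rate $Z_V'(V)\cdot V'(Z)$, and $Z_V'>0$ by \eqref{eq:root_mono_ZV} while $V'<0$, so $Z_V(V(Z))$ is decreasing in $Z$ whereas $Z$ itself increases — the gap $Z - Z_V(V)$ only widens. The same reasoning applies verbatim to $Z - Z_+(V)$ using $Z_+'>0$, and to $Z-V$ using $\tfrac{d}{dZ}(Z-V) = 1 - V'(Z) > 1 > 0$. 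So all four inequalities are strictly maintained, which also means the solution cannot exit $\Om_{\mw{far}}$ through any of its boundary faces; combined with $\D_Z \neq 0$ (no singularity) and the a priori bound $V\in(-1,1)$ giving boundedness of the right-hand side away from $Z=Z_2$, standard ODE continuation extends the solution to all $Z \ge Z_2$. To get the limit: $V(Z)$ is decreasing and bounded below by $-1$, so $V_\infty := \lim_{Z\to\infty} V(Z) \in [-1,1)$ exists. It remains to show $V_\infty > -1$. For this I would estimate $\tfrac{dV}{dZ}$ from above (in absolute value) for large $Z$: with $|V|<1$ bounded, $\D_Z \asymp -\ell Z^3$ as $Z\to\infty$ uniformly in $V$ on compact $V$-ranges, while $\D_V = O(Z)$, so $|V'(Z)| = O(Z^{-2})$, which is integrable at infinity; therefore the total decrease $\int_{Z_2}^\infty |V'(Z)|\,dZ$ is finite, and by choosing — or rather, using that $(Z_2,V_2)\in\Om_{\mw{far}}$ is given with $V_2 > -1$ — we conclude $V_\infty \geq V_2 - \int_{Z_2}^\infty |V'|\,dZ$. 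The subtlety is that this integral bound needs $V$ to stay in a compact subset of $(-1,1)$ to make the $O(Z^{-2})$ uniform; this is handled by a bootstrap: the bound $|V'(Z)| \le C Z^{-2}$ holds as long as $V(Z) \ge -1 + \delta$ for any fixed $\delta>0$, and integrating shows the total variation past a suitably large $Z_2' \ge Z_2$ is as small as we like, so $V$ never reaches $-1+\delta$ for $\delta$ small enough depending on $1+V_2$.

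The main obstacle is the uniformity of the decay estimate $|V'(Z)| = O(Z^{-2})$ near the potential escape $V\to -1$, since there $1-V^2 \to 0$ in $\D_V$ (helping) but one must be careful that $Z_V(V), Z_+(V)$ and the factor $V^2-\ell$ in $\D_Z$ stay bounded away from degeneracy — they do, since $V^2-\ell \to 1-\ell < 0$ and $Z_V(-1), Z_+(-1)$ are finite, so in fact $\D_V \to 0$ at the rate $1-V^2$ while $\D_Z$ stays $\asymp -Z^3$, making the decrease even slower near $V=-1$. So the bootstrap closes comfortably. A minor point to get right is verifying $\Om_{\mw{far}} \subset \cR_{ZV}$ or at least that the ODE \eqref{eq:ODE} is smooth there (no vanishing of $\D_Z$), which follows from $\D_Z < 0$ established above. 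I would organize the write-up as: (i) sign computations for $\D_Z, \D_V$ on $\Om_{\mw{far}}$; (ii) invariance of each inequality and global continuation; (iii) monotone bounded limit plus the $O(Z^{-2})$ integrable-slope bootstrap to pin $V_\infty \in (-1,1)$.
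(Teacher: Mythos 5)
Your proposal is correct, and the invariance and sign computations are essentially identical to the paper's: both establish $\D_Z<0$, $\D_V>0$ on $\Om_{\mw{far}}$, deduce $V'<0$, and preserve the inequalities $Z > g(V)$ for $g = v, Z_V, Z_+$ via $Z$ increasing, $V$ decreasing, and $g$ increasing. Where you diverge is the final step: the paper's written proof stops once it establishes $Z_3 = \infty$ (via a local Gronwall bound $|\D_V| \le C_b(1-V^2)$ on $|Z|\le b$ ruling out a finite-time exit), and never explicitly establishes $V_\infty > -1$ — the local Gronwall constant $C_b$ grows like $b$, so that estimate alone degenerates as $b\to\infty$. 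Your extra observation that $\D_Z \asymp Z^3$ at infinity, so $|V'| \lesssim Z^{-2}(1+V)$, and that integrating $\tfrac{d}{dZ}\log(1+V) \ge -CZ^{-2}$ over $[Z_2',\infty)$ keeps $\log(1+V)$ bounded below, is exactly what pins $V_\infty \in (-1,1)$; this is a genuine and useful addition. One small simplification: because the factor $1-V^2$ in $\D_V$ gives the $(1+V)$ on the right-hand side of the derivative bound automatically, the separate "bootstrap" layer you describe is unnecessary — the $\log(1+V)$ integral estimate already closes unconditionally, with no threshold $\delta$ required.
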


We defer the proofs of Proposition \ref{prop:QO_Q2} and Lemma \ref{lem:far_extend} 
to Sections \ref{sec:QO_Q2} and \ref{sec:global}, respectively.
See the black curve in Figure \ref{fig:coordinate1} for an illustration of the solution curve $(Z, V)$. 
The region $\Om_{ \mw{far}}$ lies to the right of the red and blue curves in Figure \ref{fig:coordinate1}.

\begin{proof}[Proof of Theorem \ref{thm:main}]


Using Proposition \ref{prop:QO_Q2}, there exists $C$ large enough, such that for any $n > C$, $n$ is odd, and some $\kp_n \in (n,n+1)$,  we can construct a smooth solution $V^\rn{\kp_n}(Z)$ to the ODE \eqref{eq:ODE} in $Z \in [0, Z_2]$ with the properties in Proposition \ref{prop:QO_Q2}.  Since 
$(Z_2, V^\rn{\kp_n}(Z_2))\in \Om_\mw{far}$ and $ \Om_\mw{far}$ \eqref{eq:dom_far} is open, we can choose $Z_2^{\pr} < Z_2$ with $(Z_2^{\pr}, V^\rn{\kp_n}(Z_2^{\pr})) \in \Om_\mw{far}$. Applying Lemma \ref{lem:far_extend} with $(Z_2, V_2)  \rightsquigarrow (Z_2^{\pr}, V(Z_2^{\pr})) $, and using the uniqueness of ODE, we construct a global smooth solution to the ODE \eqref{eq:ODE} with 
$V^\rn{\kp_n}(0)  = 0$ and the estimates \eqref{eq:thm_prop}.

From Proposition \ref{prop:Qs_QO}, we obtain $V^\rn{\kp_n}(Z) = Z g(Z^2)$ for some $g \in C^{\infty}([0, Z_0 + \e_1]$ with some $\e_1>0$. Since $V^\rn{\kp_n} \in C^{\infty}([0, \infty))$, we can extend this property for some $g  \in C^{\infty}([0, \infty))$.

Next, we estimate the profile $W$ for $\rho$ \eqref{eq:SS_ansatz}, which satisfies \eqref{eq:W_ODE}: 
\[
\f{\pa_Z W}{W} = J_W(Z), \quad  J_W(Z) = \f{2}{(p-1)\ell} \cdot \f{1}{Z-V}
\B(  (d-1) \f{V}{Z (1 - V^2)}  - \f{d-1}{\g+1} - \f{Z V - 1}{ 1 - V^2}  \cdot V^{\pr}  \B).
\]

From \eqref{eq:thm_prop}  and Lemma \ref{lem:far_extend}, we obtain 
$C^{\infty}([0, \infty))$, 
$V(0) = 0$,
\beq\label{eq:limit_V}
|V| < 1, \quad V(Z) < Z \ \mw{for} \ Z > 0, \quad \lim_{Z \to \infty} V(Z) = V_{\infty} \in (-1, 1) .
\eeq
Using the power series expansion \eqref{eq:V_near0} in Proposition \ref{prop:V_near0} and \eqref{eq:V_formula}, we have $|V^{\pr}(Z) - \f{d-1}{d (\g+1)} | \les Z^2$. Thus, using the above estimates, we obtain that the denominators in $J_W(Z)$ are non-zero for $Z > 0$, the singularity $Z$ is cancelled near $Z = 0$, and $J_W(Z) \in C^{\infty}([0, \infty))$. 

For any $Z > 2$ large enough, 
estimating $V^{\pr}$ using the ODE \eqref{eq:ODE}, we derive the asymptotics 
\[
 |V^{\pr}(Z)| = |\f{\D_V}{\D_Z}| \les Z^{-2}, 
 \quad  \B| J_W(Z) + \f{2(d-1)}{ (p-1) \ell (\g +1)} \f{1}{Z} \B| \les Z^{-2}. 
\]

Choosing $W(0) = 1$, we obtain 
\beq\label{eq:limit_W}
W(Z) =  \exp( \int_0^{Z} J_W(Z) d Z ) \in C^{\infty}([0, \infty) ),
\quad  \lim_{Z \to \infty}  W(Z) Z^a  = W_{\infty} \neq 0,
\eeq
with $a = \f{2(d-1)}{ (p-1) \ell (\g +1)}$. We further construct the profile $\Phi$ using \eqref{eq:Phi_ODE} with $\Phi(0) = 0$. Using these profiles, the self-similar ansatz \eqref{eq:SS_var}, \eqref{eq:SS_ansatz}, and \eqref{eq:rel_eulera}, we construct the smooth profile $\Phi, W$ for $\phi, \rho$ and obtain a smooth self-similar imploding solution \eqref{eq:euler_implod} to the relativistic Euler equations \eqref{eq:rel_euler}. The asymptotics \eqref{eq:blow_asym} follows from the limits of $V, W$ in 
\eqref{eq:limit_V}, \eqref{eq:limit_W} and the formulas of $u^0, u^i$ in \eqref{eq:rel_eulera}, 
\eqref{eq:euler_implod},\eqref{eq:SS_var}.  We complete the proof.
\end{proof}

\subsection{Proof of Proposition \ref{prop:QO_Q2}}\label{sec:QO_Q2}

We assume that $n$ is odd and large enough. Using Proposition \ref{prop:Qs_QO}, we construct a smooth solution $V^\rn{\kp_n} \in C^{\infty}[0, Z_0 + \e_1]$ with some $\e_1>0$ and
\beq\label{eq:glue_final1}
 (Z, V^\rn{\kp_n}(Z)) = (\cZ, \cV)( Y, U^{(\kp_n)}(Y)), 
 \quad |Y| < \e_1.
 \eeq

\vspace{0.1in}
\paragraph{\bf{First gluing}}
In Proposition \ref{prop:inter_U_root}, we show that $U^{(\kp_n)}(Y)$ can be extended smoothly for $Y < 0$ up to $Y = Y_I^{\pr}$. We define the map following \eqref{eq:glue_ODE1}
\[
 (Z_\mw{a}, V_\mw{a}) (Y) = (\cZ, \cV)(Y, U^\rn{\kp_n}(Y) ), \quad  Y \in J,
\quad J = (  Y_I^{\pr}, \e_1), 
\] 

From Proposition \ref{prop:inter_U_root}, \eqref{eq:root_pos2}, we have $ U_{\D_Y}(Y) <  U^\rn{\kp_n}(Y) < U_g(Y)$ for $Y \in (  Y_I^{\pr}, 0)$. Thus, along the solution curve $( Y, U^\rn{\kp_n}(Y) ) $ with $Y \in (  Y_I^{\pr}, 0)$, using \eqref{eq:root_UY}, \eqref{eq:DZ_UY} and following the proof of \eqref{eq:DelZ_sign}, we obtain $\D_Y  < 0  , \  \D_Z <  0 $. Following \eqref{eq:glue_dZ}, we obtain 
\[
\frac{dZ_\mw{a}}{dY}(Y) = m^{-1}(Y, U^\rn{\kp_n}(Y)) \f{\D_Z}{\D_Y} \neq 0,
\]
for any $Y \in (Y_I^{\pr}, 0)$. Using $Z_\mw{a}^{\pr}(0) < 0$ and continuity, we obtain 
\beq\label{eq:glue_m_sign}
m(Y, U^\rn{\kp_n}(Y)) < 0, 
\quad
 Z_\mw{a}^{\pr}(Y) < 0,  \quad Y \in ( Y_I^{\pr}, 0) ,
\eeq
and  $Z_\mw{a}(Y)$ is invertible  for $Y \in (Y_I^{\pr}, 0)$. Constructing a solution $V_\mw{a}(Z_\mw{a}^{-1}(Z))$ to the ODE \eqref{eq:ODE}, 
and using the gluing argument in Section \ref{sec:glue}, we extend the relation 
\eqref{eq:glue_final1} to $(Y_I^{\pr}, \e_1)$: 
\beq\label{eq:glue_final2}
 (Z, V^\rn{\kp_n}(Z)) = (\cZ, \cV)( Y, U^{(\kp_n)}(Y)),  \quad Y_I^{\pr} < Y < \e_1. 
\eeq
Moreover, $V^\rn{\kp_n}(Z)$ solves the ODE \eqref{eq:ODE} smoothly for $Z \in [0, \cZ(Y_I^{\pr}, U^\rn{\kp_n}(Y_I^{\pr})  ) ) $.

\vspace{0.1in}
\paragraph{\bf{Second gluing}}
Next, we construct another solution using the desingularized ODE \eqref{eq:ODE_ds}
\beq\label{eq:glue_final3}
 (Z_\mw{b}, V_\mw{b}) (\xi) = (\cZ, \cV)(Y_\ds(\xi), U_\ds(\xi) ), \quad \xi \in [0, \xi_3]
\eeq
where $\xi_i$ is defined in Proposition \ref{prop:ds_ODE}. 
Below, we consider arbitrary $\xi \in [0, \xi_3]$. We show that $Z_\mw{b}^{\pr}(\xi) > 0$.
Using the ODE \eqref{eq:ODE_ds} 
and the second identity in \eqref{eq:Mv}, we yield 
\[
\bal
 \frac{dZ_\mw{b}}{d\xi}(\xi)
 & = \pa_Y \cZ  \cdot Y_\ds^{\pr} 
  + \pa_U \cZ  \cdot U_\ds^{\pr} 
  = \pa_Y \cZ  \cdot \D_Y  
  + \pa_U \cZ  \cdot \D_U \\
  & = m^{-1}(Y_\ds(\xi), U_\ds(\xi) ) \D_Z(Z_\mw{b}(\xi), V_\mw{b}(\xi)) ,
  \eal
\]
From result (e) in Proposition \ref{prop:ds_ODE}, we have $U_\ds(\xi) < U_g( Y_\ds(\xi))$. 
Using \eqref{eq:DZ_UY}, we obtain $\D_Z(Z_\mw{b}(\xi), V_\mw{b}(\xi) ) <  0 $. Since  $(Y_\ds(\xi), U_\ds(\xi) ) \neq Q_s$, due to \eqref{eq:Mv}, we obtain $m(Y_\ds(\xi), U_\ds(\xi)) \neq 0$. Using \eqref{eq:glue_m_sign} and continuity, we obtain 
$m < 0$ and $Z_\mw{b}^{\pr}(\xi) > 0$.
Thus, $Z_\mw{b}(\xi)$ is invertible.

Following the gluing argument in Section \ref{sec:glue}, we construct a smooth solution 
\beq\label{eq:glue_final4}
V_\mw{ODE}^\rn{\kp_n}(Z )= V_\mw{b}(Z_\mw{b}^{-1}(Z)), \quad Z  \in Z_\mw{b}([0, \xi_3]), 
\eeq
to the ODE \eqref{eq:ODE}. Using the definition of $V_\mw{ODE}^\rn{\kp_n}$,  \eqref{eq:glue_final3}, \eqref{eq:ODE_ds_init}, and \eqref{eq:glue_final2} in order, we get 
\[
\bal
(Z_\mw{b}(0), V_\mw{ODE}^\rn{\kp_n}( Z_\mw{b}( 0 ))
 & = ( Z_\mw{b}(0), V_\mw{b}( 0)
= (\cZ, \cV)( Y_\ds( 0 ), U_\ds( 0 ) ) 
 =(\cZ, \cV)( Y_I^{\pr} / 2, U^\rn{\kp_n}( Y_I^{\pr}  / 2) ) \\
 & = ( Z, V^\rn{\kp_n}(Z) ) |_{Z = \cZ (  Y_I^{\pr} / 2, U^\rn{\kp_n}( Y_I^{\pr}  / 2) )  }. 
 \eal
\] 
 Using the uniqueness of ODE, we obtain $V_\mw{ODE}^\rn{\kp_n} =  V^\rn{\kp_n}$. Since $Z_\mw{b}$ is increasing, gluing $V_\mw{ODE}^\rn{\kp_n}$ and $V^\rn{\kp_n}$, we construct the smooth ODE solution to \eqref{eq:ODE} $V^\rn{\kp_n} \in C^{\infty}( [0,  Z_\mw{b}(\xi_3) ])$
 with $ Z_\mw{b}(\xi_3) > Z_0$.

\vspace{0.1in}
\paragraph{\bf{Estimates \eqref{eq:prop_ODE1}}}
Recall that $(Z_0, V_0)$ is the sonic point \eqref{eq:root_P}. We define 
\beq\label{eq:Z2_V2}
 Z_2 = Z_\mw{b}(\xi_3), \quad V_2 =  V^\rn{\kp_n}(Z_2) .
\eeq
From Propositions \ref{prop:inter_U_root}, \ref{prop:ds_ODE}, and Lemma \ref{lem:DelY}, the solution curves $(Y, U^\rn{\kp_n}(Y))$ with $Y \in [Y_I^{\pr}, 0]$ and $(Y_{\ds}(\xi), U_{\ds}(\xi)) $ with $\xi \in [0, \xi_3]$ are in $ \{ (Y, U) : 1 > Y > -\g, U > 0 \} $. From \eqref{eq:glue_final2}, \eqref{eq:glue_final3}, 
under the $(\cZ, \cV)$ map \eqref{eq:UY_to_VZ}, these curves are mapped to $(Z, V^\rn{\kp_n}(Z) )$ with $Z \in [Z_0, Z_\mw{b}(\xi_3)]$. Following the proof of \eqref{eq:Qs_QO_prop1} 
in Section \ref{sec:other_prop},
we prove the estimates \eqref{eq:prop_ODE1} with $Z \in [Z_0, Z_2]$. Estimates \eqref{eq:prop_ODE1} with $Z \in ( 0, Z_0]$ have been established in Proposition \ref{prop:Qs_QO} .

\vspace{0.1in}
\paragraph{\bf{Location}} 

Finally, 
we prove $P = (Z_2, V_2) \in \Om_\mw{far} $ for $Z_2, V_2$ defined in \eqref{eq:Z2_V2} and $\Om_\mw{far} $ defined in \eqref{eq:dom_far}. Using \eqref{eq:glue_final4} with $Z_2 =  Z_\mw{b}(\xi_3)$ and then \eqref{eq:glue_final3}, we yield 
\[
(Z_2, V_2) = P = ( Z_\mw{b}(\xi_3), V_\mw{b}(\xi_3))
= (\cZ, \cV)( Y_\ds(\xi_3), U_{\ds}(\xi_3)). 
\]
Using result (d) in Proposition \ref{prop:ds_ODE}, we get $1 > Y_\ds(\xi_3) > 0, U_\ds(\xi_3) < U_g( Y_\ds(\xi_3))$. Using Lemma \ref{lem:bijec} for the map $(\cZ, \cV)$, 
\eqref{eq:DZ_UY} for $\D_Z$,  \eqref{eq:ODE} and \eqref{eq:sys_UY} for $\D_V$, we obtain 
\[
\D_V(P)> 0,  \quad \D_Z(P) < 0 ,\quad  Z_2 >  0, \quad  1 > V_2 > 0.
\]

Using the identities \eqref{eq:root_mono_ZV0} and $\D_V(P)> 0$, we obtain $Z_2 > Z_V(V_2)$. 
From \eqref{eq:root_mono_ZV0}, \eqref{eq:root_mono_ZV},
condition $\D_Z(P) < 0$ implies $Z_2 > Z_+(V_2)$ or $Z_2 < Z_-(V_2)$. 
Since $ 0 < V_2 <1 < \ell$, we get 
\[
Z_2 > Z_V(V_2) = \f{(1 + \g)V_2}{ 1 + \g V_2^2}  > V_2 > \f{\ell^{1/2} V_2-1}{ \ell^{1/2} - V_2 }
= Z_-(V_2).
\]
It follows $Z_2 >Z_+(V_2)$ and $Z_2 > V_2$. We conclude the proof of Proposition \ref{prop:QO_Q2}.


\subsection{Proof of Lemma \ref{lem:far_extend}}\label{sec:global}

By continuity, there exists $Z_3 > Z_2$ such that the $C^1$ solution  $V(Z)$ to the ODE \eqref{eq:ODE}
exists and $(Z, V(Z)) \in \Om_{\mw{far}}$ for $Z \in [Z_2, Z_3)$. 
We assume that $Z_3$ is the maximal value with such a property. Our goal is to show that $Z_3 = \infty$. Since $\D_Z, \D_V$ do not vanish for $(Z, V) \in \Om_{\mw{far}}$, and $\D_Z, \D_V$ are polynomials in $Z, V$, we get $V(Z) \in C^{\infty}[Z_2, Z_3)$.

For any $Z \in [Z_2, Z_3)$, using sign properties in $\Om_{\mw{far}}$ \eqref{eq:dom_far} and \eqref{eq:root_mono_ZV0}, we get
\bseq\label{eq:far_est1}
\beq
\D_Z(Z, V(Z)) < 0, \quad \D_V(Z, V(Z)) > 0, \quad 
 \f{d V}{d Z} = \f{\D_V}{\D_Z} \B|_{(Z, V(Z))} < 0.
\eeq
Thus, $V(Z)$ is decreasing in $Z$ for $Z \in [Z_2, Z_3)$.

Since $g(v) = v, Z_{\pm}(v), Z_V(v)$ is increasing in $v$ \eqref{eq:root_mono_ZV} and $(Z_2, V_2) \in \Om_{\mw{far}}$, we obtain 
\beq\label{eq:far_est1b}
\bal
Z - g(V(Z)) > Z_2 - g( V(Z_2)) = c_g > 0,  \quad g(v) = v, Z_{\pm}(v), Z_V(v) .
\eal
\eeq

Since $V(Z) \in (-1, 1)$ for all $Z \in [Z_2, Z_3)$, it follows 
\beq\label{eq:far_est1c}
\D_Z(Z, V(Z)) \leq Z_2 (1 - \ell) c < 0 .
\eeq
for some constant $c>0$. For  $Z \in [Z_2, Z_3)$ with $|Z| \leq b$, since $|V(Z)| < 1$, we estimate $\D_V$ \eqref{eq:ODE}:
\beq\label{eq:far_est1d}
  |\D_V| \leq C_b (1 - V^2) \leq C_b \min(1 - V, 1 + V) ,
\eeq
\eseq
for some constant $C_b > 0$. For any $Z \in [Z_2, Z_3)$ with $|Z| \leq b$, 
combining  \eqref{eq:far_est1}, we have 
\[
 \f{d (V+1)}{d Z} \geq - C_b( V+1),
 \quad  \f{d (1 - V)}{d Z} \geq - C_b( 1 - V),
\]
which implies 
\beq\label{eq:far_est2}
V(Z) + 1 \geq e^{- C_b( b-Z_2)} (V_2 + 1) > 0, \quad
1- V(Z)  \geq e^{- C_b( b-Z_2)} (1 - V_2 ) > 0.
\eeq

If $Z_3$ is bounded, we choose $b > Z_3$. For $Z \in (Z_2, Z_3)$, since $ V(Z)$ is bounded, $\D_V$ is bounded \eqref{eq:far_est1d}, and $\D_Z$ is bounded away from $0$ \eqref{eq:far_est1c}, we can solve the ODE \eqref{eq:ODE} with $C^{\infty}$ solution in $(Z_2, Z_3 + \d_7)$ with $\d_7>0$ sufficiently small. 
Using the uniform estimates \eqref{eq:far_est1b},\eqref{eq:far_est2} and choosing $\d_7$ small enough, we obtain that $(Z, V(Z)) \in \Om_{\mw{far}}$ \eqref{eq:dom_far} for $ Z \in (Z_2, Z_3 + \d_7)$. 
This contradicts the maximality of $Z_3$. Thus, we have $Z_3 = \infty$.

\appendix
\section{Some derivations and estimates }
In this section, we derive the ODE \eqref{eq:ODE} in Appendix \ref{app:ODE_deri} 
and prove Lemma \ref{lem:UF_asym} in Appendix \ref{app:UF_asym}.

\subsection{Equations of the profiles}\label{app:ODE_deri}

The ODE \eqref{eq:ODE} has been first derived in \cite{shao2024self}. For completeness, we derive it below. With the ODE solution, we can further construct the profiles $W, \Phi$ in \eqref{eq:SS_ansatz}.

We consider radially symmetric solutions  $\phi, \rho$ to \eqref{eq:wave_phase} and \eqref{eq:vel}. Recall the self-similar ansatz \eqref{eq:SS_ansatz} and the notations \eqref{eq:SS_var} 
\beq\label{eq:ODE_deri_nota}
\bga
  r = |x|,   \quad  z = \f{x}{ T- t}, \quad Z = |z|, \\
 \rho^{- \f{p-1}{2} } \pa_t \phi =  \f{1}{  (1 - V^2)^{1/2}} \teq C(Z), 
 \quad  \rho^{- \f{p-1}{2} } \pa_r \phi =  \f{V}{  (1 - V^2)^{1/2}} \teq D(Z).
 \ega
\eeq
We will \textit{only} use the notations $C, D$ in this section.

Since $\pa_r$ and $\pa_t$ commute for smooth functions, we get 
\beq\label{eq:ODE_deri1}
 \pa_t ( D \rho^{\f{p-1}{2} } ) = \pa_r ( C \rho^{\f{p-1}{2} }  )
\eeq

Using the self-similar ansatz \eqref{eq:SS_ansatz} with $c=1$, we get
 \beq\label{eq:ODE_deri2}
\pa_t \rho = (T-t)^{-1} ( a \rho + Z \pa_Z \rho) 
=a  (T-t)^{-1}  \rho + Z \pa_r \rho.
 \eeq
 We further compute $\pa_r  \rho / \rho $. Dividing $\rho^{ (p-1)/2}$ on both sides of \eqref{eq:ODE_deri1} and using  \eqref{eq:ODE_deri2}, we get 
 \[
0 =  \f{p-1}{2} \cdot \f{ D \pa_t \rho  - C \pa_r \rho }{\rho} 
  + \pa_t D - \pa_r C = \f{p-1}{2} (D Z -C) \f{\pa_r \rho}{\rho}
+ \f{p-1}{2} a \f{D}{T-t} + \pa_t D - \pa_r C, 
 \]
which implies 
\beq\label{eq:ODE_deri3}
 \f{\pa_r \rho}{\rho} = \f{1}{C - D Z} \B( a \f{D}{T-t} + \f{2}{p-1}(\pa_t D - \pa_r C)  \B) .
\eeq
Next, we use \eqref{eq:wave_phase} to  derive the equation for $V$. 
Since $\phi$ is radially symmetric, using $\D \phi = \pa_r \pa_r \phi + \f{d-1}{r} \pa_r \phi$, we can rewrite \eqref{eq:wave_phase} as 
\[
\pa_t (\pa_t \phi \rho^2) =
 \pa_r ( \pa_r \phi \rho^2 ) + \f{d-1}{r} \pa_r \phi \rho^2.
\]

Using \eqref{eq:ODE_deri_nota} and $\rho^{ (p-1)/2 + 2} = \rho^{(p+3)/2} $, we further rewrite it as 
\[
 \pa_t ( C \rho^{ (p+3)/2} ) - \pa_r( D \rho^{ (p+3)/2}   )
 = (d-1) r^{-1} D \rho^{ (p+3)/2}.
\]

Dividing $\rho^{ (p+3)/2}$ on both sides and using \eqref{eq:ODE_deri2}, we obtain 
\bseq
\begin{align}
 \f{p+3}{2} \cdot \f{C \pa_t \rho - D \pa_r \rho  }{\rho}
+ \pa_t C - \pa_r D &=  (d-1) \f{1}{r} D   \label{eq:ODE_deri4a},  \\
\f{p+3}{2}  \cdot \B(  \f{ (C Z - D) \pa_r \rho  }{\rho}
+  a \f{C}{T-t} \B) + \pa_t C - \pa_r D &=  (d-1) \f{1}{r} D  \label{eq:ODE_deri4b} .
\end{align}
\eseq

Since $Z = \f{r}{T-t}$, for $F(Z) = C(Z), D(Z)$, we obtain 
\beq\label{eq:ODE_deri5}
\pa_t F(Z) = \f{1}{T-t} Z \pa_Z F, \quad \pa_r F(Z) =\f{1}{T-t} \pa_Z F,
\quad \f{1}{r} D = \f{1}{T-t} \cdot \f{1}{Z} D.
\eeq

Substituting \eqref{eq:ODE_deri3}, \eqref{eq:ODE_deri5} in \eqref{eq:ODE_deri4b} and cancelling the factor $(T-t)^{-1}$, we obtain
\beq\label{eq:ODE_deri6}
\f{p+3}{2} \B(  a C + \f{C Z - D}{C - D Z} ( a D + \f{2}{p-1} ( Z \pa_Z D - \pa_Z C) )  \B)
+ Z \pa_Z C -\pa_Z D = (d-1) \f{D}{Z}. 
\eeq

Using the formulas of $C, D$ \eqref{eq:ODE_deri_nota}, we compute 
\beq\label{eq:ODE_deri7}
\f{1}{C} \pa_Z C = \f{1}{C} V^{\pr} \cdot V (1 - V^2)^{-3/2}
= V^{\pr}  \cdot \f{V}{ 1 - V^2} ,
\quad  \f{1}{C} \pa_Z D =\f{1}{C} V^{\pr} \cdot  (1 - V^2)^{-3/2} 
= V^{\pr} \cdot  \f{1}{1 - V^2}.
\eeq

From \eqref{eq:ODE_deri_nota}, $p = \f{4}{\ell-1} + 1$ \eqref{eq:para}, we have 
\[
D = C V, \quad \f{p+3}{p-1} = \ell, \quad a \f{p+3}{2} = ( \f{4 }{\ell-1} + 4) \f{2(d-1)}{ (p-1) \ell (\g+1)}
=  \f{d-1}{\g+1}. 
\]

 Thus dividing $C$ on both sides of \eqref{eq:ODE_deri6} and using \eqref{eq:ODE_deri7}, we obtain
\beq\label{eq:ODE_deri8}
 \ell \f{ Z - V}{ 1 - Z V}  \cdot \f{Z -V}{1 - V^2}   \cdot V^{\pr}
+ \f{Z V - 1}{ 1 - V^2}  \cdot V^{\pr}
 + \f{d-1}{\g+1} \B( 1 + \f{Z -V}{1 - V Z} \cdot V  \B) = (d-1) \f{V}{Z (1 - V^2)} 
\eeq

Rewriting the above equations, we derive the ODE \eqref{eq:ODE}:
\[
\f{d V}{d Z} = \f{(d-1) (1- V^2) \B(  \f{1}{\g + 1} (1 -V^2) Z -  V (1 - V Z)  \B) }{ 
Z( (1 - Z V)^2 - \ell (V - Z)^2 )  } .
\]

Once we construct the ODE solution $V \in C^{\infty}([0, \infty))$ with $V \in (-1, 1)$, we further construct the profile $W$ \eqref{eq:SS_ansatz} for $\rho$. Plugging \eqref{eq:SS_ansatz2},
\eqref{eq:ODE_deri5}, \eqref{eq:ODE_deri7}, $a = \f{2 (d-1)}{ (p-1) \ell (\g+1)}$, $D = C V$ in \eqref{eq:ODE_deri3}, and cancelling the power of $T-t$, we obtain 
\bseq\label{eq:W_ODE}
\beq
\f{ \pa_Z W}{W} = \f{1}{1 - V Z} \B(  \f{2 (d-1)}{ (p-1) \ell (\g+1)} V + \f{2}{p-1} \cdot  \f{Z-V}{1 - V^2} V^{\pr}(Z) \B) \teq J_W(Z).
\eeq

Using \eqref{eq:ODE_deri8} $\times  \f{2}{(p-1)\ell} \cdot \f{1}{Z-V}$, we can rewrite the right hand side as 
\beq
J_W(Z) = \f{2}{(p-1)\ell} \cdot \f{1}{Z-V}
\B(  (d-1) \f{V}{Z (1 - V^2)}  - \f{d-1}{\g+1} - \f{Z V - 1}{ 1 - V^2}  \cdot V^{\pr}  \B).
\eeq
\eseq


Using the equation for $\pa_r \phi$ in \eqref{eq:ODE_deri_nota} and the ansatz \eqref{eq:SS_ansatz}, we obtain 
\beq\label{eq:Phi_ODE}
 \pa_Z \Phi(Z ) =  \f{ V \cdot  W^{ (p-1)/2 } }{ (1 -V^2)^{3/2}}. 
\eeq
After we construct $W$, we can construct $\Phi$ from the above ODE.

\subsection{Proof of Lemma \ref{lem:UF_asym}}\label{app:UF_asym}

In this section,  we prove Lemma \ref{lem:UF_asym}. We simplify $
Y_F^\rn{\kp}, U_F^{(\kp)}$ constructed via \eqref{eq:QO_glue} as $Y, U$,
and $ V_F^\rn{\kp}$ as $V$. 

Using Proposition \ref{prop:V_near0}, we can expand the smooth solution $V(Z)$ near $Z=0$ as follows 
\beq\label{eq:V_asym}
V(Z) = V_1 Z +  V_3 Z^3 +  O(Z^5),  \quad 
V^{\pr}(Z) = V_1 + 3 V_3 Z^2 + O(Z^4), \quad \forall Z \ll 1 .
\eeq
for $Z \in [0, \d_V]$. In the above and the following derivations, the implicit constants and $\d_V > 0$ are uniformly in $\g$ due to Proposition \ref{prop:V_near0}.

Firstly, we compute  $V_1 = \f{d}{d Z} V$. Using the ODE \eqref{eq:ODE} and evaluating at $(Z, V) = 0$, we obtain 
\beq\label{eq:V_asym_V1}
V_1 =  \f{(d-1) }{\g + 1} - (d-1) V_1 \quad  \Rightarrow \quad V_1 = \f{d-1}{ d (\g + 1)} .
\eeq

Next, we compute $ V_3$. We expand $\D_Z(Z,V(Z)), \D_V(Z,V(Z))$ near $Z =0$ up to the term $Z^3$
\[
\bal
\D_Z  &= Z ( (1 - Z V)^2 - \ell ( V-Z)^2)
= Z ( (1 - V_1 Z^2 )^2 -  \ell ( V_1 Z-Z)^2 ) + O(Z^4) \\
& = Z( 1 - (2 V_1 + \ell (V_1 - 1)^2) Z^2  ) + O(Z^4) , \\
\D_V & = (d-1) (1- V^2) \B(  \f{1}{\g + 1} (1 -V^2) Z -  V (1 - V Z)  \B) \\
& = (d-1)(1 - V_1^2 Z^2)  \B(  \f{1}{\g + 1} (1 -V_1^2 Z^2) Z -  (V_1 Z + V_3 Z^3) (1 - V_1 Z^2)  \B
) + O(Z^4) \\
& = Z (d-1) (1 - V_1^2 Z^2)\B( \f{1}{\g+1} (1 -V_1^2 Z^2) - ( V_1 + (V_3 - V_1^2) Z^2 ) \B) 
+ O(Z^4) \\
& = Z(d-1) \B(  \f{1}{\g+1} - V_1 +  (  -  \f{2 }{\g+1}  V_1^2 + V_1^3 - V_3 + V_1^2 ) Z^2  \B) + O(Z^4) . 
\eal
\]

Using the ODE $\D_Z \f{d V}{d Z} = \D_V$ \eqref{eq:ODE} and tracking the term up to $O(Z^4)$, we get 
\[
Z( 1 - (2 V_1 + \ell (V_1 - 1)^2) Z^2  ) (V_1 + 3 V_3 Z^2)
 =  Z(d-1) \B(  \f{1}{\g+1} - V_1 +  (  -  \f{2 }{\g+1}  V_1^2 + V_1^3 - V_3 + V_1^2 ) Z^2  \B) + O(Z^4) .
\]
Matching the $Z$ term yields $V_1$. Matching $Z^3$ term, we get 
\[
(3 + (d-1) )V_3
- V_1 (2 V_1 + \ell (V_1 - 1)^2)
= (d-1)\B(  -  \f{2 }{\g+1}  V_1^2 + V_1^3 + V_1^2 \B) .
\]
Rearranging the identity, we obtain $V_3$ in \eqref{eq:V_formula}. Identity \eqref{eq:V_asym} gives the formula of $V_1$ in \eqref{eq:V_formula}.

Plugging the asymptotics \eqref{eq:V_asym} into \eqref{eq:sys_UY}, near $Z=0$, we get 
\beq\label{eq:UY_asym1}
\bal
 \cY(Z, V(Z))- Y_O 
 & = 1 - \f{1}{d} - (\g+1) \f{V}{Z} \cdot \f{1- VZ}{1-V^2}  \\
 & = 1 - d^{-1} - (\g+1) (V_1 + V_3 Z^2) (1 - V Z )(1 + V^2) + O(Z^4) \\
 & =1 - d^{-1} - (\g+1) (V_1 + V_3 Z^2) (1 - V_1 Z^2 )(1 + V_1^2 Z^2) + O(Z^4) \\
 & = - (\g +  1)(V_3 - V_1^2 + V_1^3) Z^2 + O(Z^4) , \\
 \cU(Z, V(Z)) & = \f{(\g+1)^2}{Z^2} + O(1). 
 \eal
 \eeq
The constant term in $\cY- Y_O$ is $0$ since $1 - d^{-1} - (\g + 1) V_1 =0$ \eqref{eq:V_asym_V1}. 
Using Lemma \ref{lem:limit}, we obtain $ - (\g +  1)(V_3 - V_1^2 + V_1^3) < c < 0$ uniformly for $\g$ close to $\ell^{-1/2}$ and some absolute constant $c$. 
Choosing $\d_3 \in (0,\d_V)$ small enough, for any $Z \in [0, \d_3]$, estimate \eqref{eq:UY_asym1} implies \eqref{eq:UF_asyma}.

 For $Z \in [0, \d_3]$, using the estimates of $\cU, \cY$ \eqref{eq:UY_asym1} and the definition \eqref{eq:QO_glue}, we prove
 \[
 \B| U(Z) - \f{ (-(\g+1)^3(V_3 - V_1^2 + V_1^3)) }{Y(Z) - Y_O} \B|
 = \B|U(Z) - \f{ (\g+1)^2 }{Z^2} + O(1) \B| = O(1) ,
 \]
 and \eqref{eq:UF_asymb}. We conclude the proof of Lemma \ref{lem:UF_asym}.

\section{Details of the computer-assisted part}\label{app:comp}

In this section, we discuss the companion files for computer-assisted proof. We performed the rigorous computation using SageMath, and the code is attached to the paper. There are two Sage files: 
\textbf{Wave\_induction.ipynb, Wave\_barrier.ipynb}, which can be implemented on a personal laptop in less than 1 minute. Moreover, all the outputs are recorded in a Jupyter Notebook without implementing the codes. 

For the variables/functions with different notations in this paper  (left side) and in the code (right side), we provide the relation between two notations below: 
\[
\bal
& \ \e,  \ \ell, \  \g,   \ f(Y) ,
\ \hat {\d}, 
 \ \laml, \ \lams , \ \mfr{C} , \  \hat U
 \\
\rightsquigarrow  & \ \mw{ep}, \ l, \  \mathrm{ga}, \ \mw{fy},  \ \mw{del\_hat},
\ \mw{lam\_l }, \ \mw{ lam\_s } , 
\  \mw{ C\_mfr} , \ \mw{U\_hat}  \\
& \ U_{\D_Y}, \ U_{\D_U},  \ \D_U, \ \D_Y , 
\ \D_{U, n}, \ \D_{Y, n}, \ c_{U, n}, \ c_{Y, n}  \\
\rightsquigarrow 
& \ \mw{U\_DelY},\ \mw{U\_DelU},  \ \mw{del\_U}, \ \mw{del\_Y} ,  
\ \mw{DelUn}, \ \mw{DelYn} , \ \mw{dU\_DelUn}, \ \mw{dU\_DelYn} ,
\ 
\eal
\]
The functions and parameters are defined in: $\e$ \eqref{eq:para2}, $\ell$ \eqref{eq:para1}, $\D_U, \D_Y, f(Y)$ \eqref{eq:UY_ODE}, 
 $\hat \d$ \eqref{eq:para3}, $\laml, \lams$ \eqref{eq:grad_lam}, 
 $U_{\D_Y}, U_{\D_U}$ \eqref{eq:root_UY},  $\D_{U, n}, \D_{Y, n}$ \eqref{eq:Tal_del}, 
 $c_{U, n}, c_{Y, n}$ \eqref{eq:recur_const}.

We need to use computer assistance to verify Lemmas \ref{lem:asym_claim}, \ref{lem:induc_claim},  \ref{lem:limit}, and Proposition \ref{prop:bar_f}. 

\vspace{0.1in}
\paragraph{\bf{Wave\_induction.ipynb}}

The goal is to verify Lemmas \ref{lem:asym_claim},\ref{lem:induc_claim}. It consists of three steps.

(1) Derive the power series coefficients $U_i$ at the limit case $\g = \ell^{-1/2}$ using the recursive formula \eqref{eq:recur_top} in Lemma \ref{lem:pow_recur}. 

(2) Follow Section \ref{sec:induc_pf} to derive the constants 
$C_{J_i}, C_{\cE}$ in Lemma \ref{lem:induc_claim} and verify Lemma \ref{lem:induc_claim}.

(3) Verify Lemma \ref{lem:asym_claim} using the value $U_i$ at the limit case $\g = \ell^{-1/2}$ derived in Step (1).

\vspace{0.1in}
\paragraph{\bf{Wave\_barrier.ipynb}}

The goal is to verify \eqref{eq:bar_far_prop} and \eqref{eq:root_ineq} in Proposition \ref{prop:bar_f} and the scalar inequality in Lemma \ref{lem:limit}.  It consists of the following steps.

(1) Derive the power series coefficients $U_i$ at the limit case $\g = \ell^{-1/2}$, which is the same as those in \textrm{Wave\_induction.ipynb}. We only need $U_0, U_1, U_2$.

(2) Introduce a function \textrm{Poly\_sign} to derive upper and lower bounds of a polynomial $P(t)$ over $[a, b]$ with $0 \leq a \leq b $. We can decompose $P(t) = P_+(t) - P_-(t)$ with $P_+, P_-$ being polynomials with non-negative coefficients. For $t \geq 0$, $P_{\pm}$ is increasing. Thus, by dividing $[a, b]$ into $a=  t_0< t_1 <.. < t_m = b$ and using 
\[
P(t)  \geq P_{+, l} - P_{-, u}, \quad P(t) \leq P_{+, u} - P_{-, l},
\]
for $t \in [t_i, t_{i+1}]$, where 
\[
\quad P_{\pm, l} = 
\min( P_{\pm}( t_i ), P_{\pm}(t_{i+1}) ),
\quad P_{\pm, u} = 
\max( P_{\pm}( t_i ), P_{\pm}(t_{i+1}) ),
\]
we estimate the upper and lower bound of $P$ in each interval $[t_i, t_{i+1}]$. Maximize or minimize the estimates over all sub-intervals yields the bounds of $P$ in $[a, b]$.

(3) Construct the barrier functions $B_l^f, B_u^f$ following Section \ref{sec:bar_far}. Verify 
\eqref{eq:bar_far_prop} and \eqref{eq:root_ineq} in the limit case $\g = \ell^{-1/2}$ following the ideas in the proof of Proposition \ref{prop:bar_f} and verify Lemma \ref{lem:limit}.

\section*{Acknowledgments}
 The work of T.B.~ has been supported in part by the NSF grants DMS-2243205 and DMS-2244879, as well as the Simons Foundation Mathematical and Physical Sciences collaborative grant 'Wave Turbulence.' The work of J.C.~has been supported in part by the NSF grant DMS-2408098. We would like to acknowledge the insightful and fruitful discussions we had with Hans Lindblad and 
Jalal Shatah.

\bibliographystyle{plain}
\bibliography{selfsimilar}

\end{document}